\renewcommand\eqref[1]{(\ref{#1})} 
\numberwithin{equation}{section}
\theoremstyle{plain}
\newtheorem{thm}{Theorem}[section]
\newtheorem{prop}[thm]{Proposition}
\newtheorem{cor}[thm]{Corollary}
\newtheorem{lem}[thm]{Lemma}
 \newtheorem{exa}[thm]{Example}
\theoremstyle{definition}
\newtheorem{rem}[thm]{Remark}
\renewcommand{\wp}{\mathfrak S}
\newcommand{\Rn}{\mathbb R^{n}}
\newcommand{\G}{\mathbb G}
\def\R{\mathcal R}
\def\R{\mathcal R}
\def\e[#1]{{\textrm{e}}^{#1}}
\def\G{{\mathbb G}}
\def\L{\mathcal{L}}
\begin{document}

   \title[Hypoelliptic functional inequalities]
 {Hypoelliptic functional inequalities}

\author[M. Ruzhansky]{Michael Ruzhansky}
\address{
  Michael Ruzhansky:
  \endgraf
  Department of Mathematics
  \endgraf
  Imperial College London
  \endgraf
  180 Queen's Gate, London SW7 2AZ
  \endgraf
  United Kingdom
  \endgraf
  {\it E-mail address} {\rm m.ruzhansky@imperial.ac.uk}
  }

\author[N. Yessirkegenov]{Nurgissa Yessirkegenov}
\address{
  Nurgissa Yessirkegenov:
  \endgraf
  Institute of Mathematics and Mathematical Modelling
  \endgraf
  125 Pushkin str.
  \endgraf
  050010 Almaty
  \endgraf
  Kazakhstan
  \endgraf
  and
  \endgraf
  Department of Mathematics
  \endgraf
  Imperial College London
  \endgraf
  180 Queen's Gate, London SW7 2AZ
  \endgraf
  United Kingdom
  \endgraf
  {\it E-mail address} {\rm n.yessirkegenov15@imperial.ac.uk}
  }

 \dedicatory{Dedicated to the 70$^{th}$ birthday of Fulvio Ricci}

\thanks{The authors were supported in parts by the EPSRC
 grant EP/R003025/1 and by the Leverhulme Grant RPG-2017-151, as well as by the MESRK grant AP05133271. No new data was collected
 or
generated during the course of research.}

     \keywords{Hardy and critical Hardy inequalities, Rellich inequality, Hardy-Littlewood-Sobolev inequality, Caffarelli-Kohn-Nirenberg inequality, Gagliardo-Nirenberg inequality, Trudinger-Moser inequality, Rockland operator, graded Lie group, stratified group.}
     \subjclass[2010]{46E35, 22E30, 43A80}

     \begin{abstract} In this paper we derive a variety of functional inequalities for general homogeneous invariant hypoelliptic differential operators on nilpotent Lie groups. The obtained inequalities include Hardy, Rellich, Hardy-Littllewood-Sobolev, Gag\-li\-ardo-Nirenberg, Caffarelli-Kohn-Nirenberg and Trudinger-Moser inequalities. So\-me of these estimates have been known in the case of the sub-Laplacians, however, for more general hypoelliptic operators almost all of them appear to be new as no approaches for obtaining such estimates have been available. Moreover, we obtain several versions of local and global weighted Trudinger-Moser inequalities with remainder terms, critical Hardy and weighted Gagliardo-Nirenberg inequalities, which appear to be new also in the case of the sub-Laplacian. Curiously, we also show the equivalence of many of these critical inequalities as well as asymptotic relations between their best constants. The approach developed in this paper relies on establishing integral versions of Hardy inequalities on homogeneous groups, for which we also find necessary and sufficient conditions for the weights for such inequalities to be true. Consequently, we link such integral Hardy inequalities to different hypoelliptic inequalities by using the Riesz and Bessel kernels associated to the described hypoelliptic operators.
     \end{abstract}
     \maketitle

       \tableofcontents

\section{Introduction}
\label{SEC:intro}

In this paper we are interested in developing approaches that allow one to derive a variety of functional inequalities for general homogeneous invariant hypoelliptic differential operators on nilpotent Lie groups. Inequalities of such type are important by themselves but also play an important role in wider analysis, in particular in view of the seminal results of Rothschild and Stein \cite{RS76} linking the analysis of hypoelliptic differential operators on nilpotent Lie groups to differential operators on manifolds. 

To give an idea of the obtained results and to put them in perspective we start by describing a collection of some of the obtained inequalities in the setting of sub-Laplacians on stratified Lie groups (homogeneous Carnot groups).

\subsection{Hardy-Sobolev inequalities on stratified groups}

Hardy inequalities on stratified groups are extremely well investigated topic, with different versions of such inequalities known, also with best constants. While we can not possibly give a comprehensive bibliography for it here, we can refer to \cite{RS17_JDE} and to \cite{RS17_AMg} for the literature reviews of the subject for the horizontal norm and for norms given in terms of the fundamental solutions of the sub-Laplacian, respectively.

However, the starting point for the investigation of this paper is the following version of the Hardy inequality recently obtained by Ciatti, Cowling and Ricci \cite{CCR15}. Let $\mathbb{G}$ be a stratified group of homogeneous dimension $Q$ and let $\L$ be a sub-Laplacian on $\G$. Let $|\cdot|$ be homogeneous norm on $\mathbb{G}$. We refer to Section \ref{SEC:prelim} for more details of this classical setting.

Let $1<p<\infty$ and let
$T_{\gamma}f:=|\cdot|^{-\gamma}\L^{-\gamma/2}f$ with
$0<\gamma<Q/p$.
Then, as it was shown in \cite[Theorem A]{CCR15}, the Hardy inequality for the fractional order operator $\L^{\gamma/2}$ can take the following form:
the operator $T_{\gamma}$ extends uniquely to a bounded operator on $L^{p}(\G)$, and we have
\begin{equation}\label{Ricci1}
\|T_{\gamma}\|_{L^{p}(\G)\rightarrow L^{p}(\G)}\lesssim1+C\gamma+O(\gamma^{2}).
\end{equation}
We also refer to \cite{CCR15} for the history of \eqref{Ricci1}.

Among other things, in this paper we extend the boundedness in \eqref{Ricci1} to the setting of general homogeneous invariant hypoelliptic differential operators taking place of the operator $\L$. Moreover, we extend such estimates to the range of $L^{p}-L^{q}$ estimates as well as give their critical versions in the case of $\gamma=Q/p$.

Let us list some of such results still in the simplified setting of the sub-Laplacians.
First we observe that by combining \eqref{Ricci1} with Sobolev inequalities for the sub-Laplacian, we have the following extended version of \eqref{Ricci1}:
\begin{itemize}
\item ({\bf Hardy-Sobolev inequalities on stratified groups}) Let $1<p\leq q<\infty$ and $0<a<Q/p$. Let $0\leq b<Q$ and $\frac{a}{Q}=\frac{1}{p}-\frac{1}{q}+\frac{b}{qQ}$. Then there exists a positive constant $C$ such that
\begin{equation}\label{Hardy_new1_intro1}
\left\|\frac{f}{|x|^{\frac{b}{q}}}\right\|_{L^{q}(\G)}\leq
C\|(-\L)^{\frac{a}{2}}f\|_{L^{p}(\G)}
\end{equation}
holds for all $f\in \dot{L}^{p}_{a}(\G)$.
\end{itemize}
Here the space $ \dot{L}^{p}_{a}(\G)$ is the homogeneous Sobolev space over $L^p$ of order $a$, based on the sub-Laplacian $\L$. The theory of such spaces has been extensively developed by Folland \cite{F75}.
Consequently, more general results of this paper yield the following two new versions of the critical case of \eqref{Hardy_new1_intro1} for $a=Q/p$: the global logarithmic version and the local version without logarithmic  term:
\begin{itemize}
\item ({\bf Critical global Hardy inequality for $a=Q/p$ on stratified groups}) Let $1<p<r<\infty$ and $p<q<(r-1)p'$, where $1/p+1/p'=1$. Then there exists a positive constant $C_{6}=C_{6}(p, q, r, Q)$ such that
\begin{equation}\label{Hardy_crit2_intro_str}
\left\|\frac{f}{\left(\log\left(e+\frac{1}{|x|}\right)\right)^{\frac{r}{q}}|x|^{\frac{Q}{q}}}\right\|_{L^{q}(\G)}\leq
C_{6}(\|f\|_{L^{p}(\G)}+\|(-\L)^{\frac{Q}{2p}} f\|_{L^{p}(\G)})
\end{equation}
holds for all $f\in L^{p}_{Q/p}(\G)$.

\item ({\bf Critical local Hardy inequality for $a=Q/p$ on stratified groups}) Let $1<p<\infty$ and $\beta\in[0,Q)$. Let $r>0$ be given and let
    $x_{0}$ be any point of $\G$. Then for any $p\leq q<\infty$ there exists a positive constant $C_{4}=C_{4}(p, Q, \beta, r, q)$ such that
\begin{equation}\label{Hardy_crit1_intro_str}
\left\|\frac{f}{|x|^{\frac{\beta}{q}}}\right\|_{L^{q}(B(x_{0},r))}\leq C_{4}q^{1-1/p}(\|f\|_{L^{p}(B(x_{0},r))}+\|(-\L)^{\frac{Q}{2p}} f\|_{L^{p}(B(x_{0},r))})
\end{equation}
holds for all $f\in L^{p}_{Q/p}(B(x_{0},r))$, and such that $\underset{q\rightarrow\infty}{\rm limsup\;}C_{4}(p, Q, \beta, r, q)<\infty$. The asymptotically sharp constant for \eqref{Hardy_crit1_intro_str} in the sense of Remark \ref{rem_F} is given in Theorem \ref{Hardy_Rock_thm}.
\end{itemize}
The space $L_{Q/p}^{p}(B(x_{0},r))$ appearing in \eqref{Hardy_crit1_intro_str} is defined as follows. Let $B(x_{0},r)$ be the quasi-ball of radius $r$ in a stratified group $\G$ with respect to $|\cdot|$, centred at $x_{0}$. Then $L_{Q/p}^{p}(B(x_{0},r))$ with $1<p<\infty$ denotes the completion of $C_{0}^{\infty}(B(x_{0},r))$ with respect to the norm
\begin{equation}\label{def_space_str}
\|f\|_{L_{Q/p}^{p}(B(x_{0},r))}:=\left(\int_{B(x_{0},r)}(|(-\L)^{\frac{Q}{2p}}f(x)|^{p}+|f(x)|^{p})dx\right)^{1/p}.
\end{equation}
Here we understand the operator $(-\L)^{\frac{Q}{2p}}$ as defined by the functional calculus on the whole group $\G$.

In the case $p=Q$, we have the following homogeneous improvement of \eqref{Hardy_crit1_intro_str}, where $\nabla_{H}$ is the horizontal gradient in $\G$:

\begin{itemize}
\item ({\bf Hardy inequalities for $p=Q$ on stratified groups})
Let $Q\geq 3$ and $\beta\in[0,Q)$. Let $r>0$ be given and let
$x_{0}$ be any point of $\G$. Then for any $Q\leq q<\infty$ there exists a positive constant $C_{5}=C_{5}(Q, \beta, r, q)$ such that
\begin{equation}\label{Hardy_Rock1_pQ_intro}
\left\|\frac{f}{|\cdot|^{\frac{\beta}{q}}}\right\|_{L^{q}(B(x_{0},r))}\leq C_{5}\|\nabla_{H}f\|_{L^{Q}(B(x_{0},r))}
\end{equation}
holds for all $f\in L^{Q}_{1}(B(x_{0},r))$, and such that $\underset{q\rightarrow\infty}{\rm limsup\;}C_{5}(Q, \beta, r, q)<\infty$. The asymptotically sharp constant for \eqref{Hardy_Rock1_pQ_intro} in the sense of Remark \ref{rem_B7} is given in Theorem \ref{Hardy_Rock_thm_pQ}.
\end{itemize}

Thus, \eqref{Hardy_crit2_intro_str}, \eqref{Hardy_crit1_intro_str} and \eqref{Hardy_Rock1_pQ_intro} give the critical cases of the Hardy type inequalities in \cite[Theorem A]{CCR15}.

Actually, in Section \ref{SEC:Hardy} we obtain all of the above inequalities  for more general hypoelliptic operators on more general nilpotent groups, namely, on graded groups. As far as we are aware there are no other Hardy type inequalities known on graded groups in the literature.

\subsection{Trudinger-Moser inequality on stratified groups}

If $\beta=0$, the inequality \eqref{Hardy_Rock1_pQ_intro} can be also understood as a local Sobolev embedding. In turn, the weighted case links to the following weighted Trudinger-Moser inequality established in this paper, which appears to be new also in the setting of stratified groups:
\begin{itemize}
\item ({\bf Trudinger-Moser inequality on stratified groups}) Let $Q\geq 3$ and let $|\cdot|$ be a homogeneous quasi-norm on $\G$. Let $\alpha_{Q}$ be the constant rather explicitly given in Theorem \ref{Tyson_thm2}. Then we have
\begin{equation}\label{weightedTrud1_Qp2_intro}
\sup_{\|f\|_{L^{Q}_{1}(\G)}\leq1}\int_{\G}\frac{1}{|x|^{\beta}}\left(\exp(\alpha|f(x)|^{Q'})
-\sum_{k=0}^{Q-2}\frac{\alpha^{k}|f(x)|^{kQ'}}{k!}\right)dx
< \infty
\end{equation}
for any $\beta\in[0,Q)$ and $\alpha\in (0,\alpha_{Q}(1-\beta/Q))$, where $Q'=Q/(Q-1)$. When $\alpha>\alpha_{Q}(1-\beta/Q)$, the integral in \eqref{weightedTrud1_Qp2_intro} is still finite for any $f\in L^{Q}_{1}(\G)$, but the supremum is infinite.
\end{itemize}

Let us put this inequality in perspective. It has been known since the work of Saloff-Coste \cite{S88}, that there exist positive constants $\alpha_{Q}$ and $C$ such that the following version of the Trudinger-Moser inequality holds for any bounded smooth domain $\Omega$ in a startiifed group $\G$, and $f\in L^{Q}_{1}(\Omega)$ on $\G$:
\begin{equation}\label{intro:trud1}
\int_{\Omega}\exp\left(\alpha_{Q}\left(\frac{|f(x)|}{\|\nabla_{H} f\|_{L^{Q}(\Omega)}}\right)^{\frac{Q}{Q-1}}\right)dx\leq C|\Omega|,
\end{equation}
where $L^{Q}_{1}(\Omega)$ is the completion of $C_{0}^{\infty}(\Omega)$ with respect to the norm
$\|f\|_{L^{Q}(\Omega)}+\|\nabla_{H} f\|_{L^{Q}(\Omega)}$. In the Euclidean space $\Rn$, the inequality \eqref{intro:trud1} was proved independently by Poho\v{z}aev \cite{P65}, Yudovi\v{c} \cite{Y61} and Trudinger \cite{T67}. The optimal constant $\alpha_{n}$ was found by Moser \cite{M79}.

In the setting of stratified groups, the sharp exponent $\alpha_{Q}$ for the Heisenberg groups was obtained in \cite{CL01} and for the general stratified groups in \cite{BMT03}. However, when $\Omega$ has infinite volume, such results and inequality \eqref{intro:trud1} are not meaningful.

According to our knowledge, most of the existing proofs of the Trudinger-Moser inequalities on unbounded domain of the Euclidean space are based on rearrangement arguments, which are not available on stratified (Carnot) Lie groups. However, the Trudinger-Moser inequalities on the entire Heisenberg group without using rearrangement argument were obtained by Yang \cite{Yang14}, namely, the author showed the following inequality by gluing local estimates with the help of cut-off functions:
Let $Q=2n+2$ be the homogeneous dimension of $\mathbb{H}^{n}$ and let $Q'=Q/(Q-1)$. Let $\tau>0$ and let $\omega_{2n-1}$ be the surface area of the unit sphere in $\mathbb{R}^{2n}$. Let $\sigma_{Q}=\Gamma(1/2)\Gamma(n+1/2)\omega_{2n-1}/n!$ and $\alpha_{Q}=Q\sigma_{Q}^{1/(Q-1)}$. Then for any $\beta\in [0,Q)$ and $\alpha\in(0,\alpha_{Q}(1-\beta/Q))$ we have
\begin{equation}\label{intro:trud3}
\sup_{\|f\|_{1,\tau}\leq1}\int_{\mathbb{H}^{n}}\frac{1}{(\rho(\xi))^{\beta}}\left(\exp(\alpha|f(\xi)|^{Q'})-\sum_{k=0}^{Q-2}
\frac{\alpha^{k}|f(\xi)|^{kQ'}}{k!}\right)d\xi<\infty,
\end{equation}
where
$$\|f\|_{1,\tau}=\left(\int_{\mathbb{H}^{n}}(|\nabla_{H}f(\xi)|^{Q}+\tau|f(\xi)|^{Q})d\xi\right)^{1/Q}$$
and $\rho(\xi)=(|z|^{4}+t^{2})^{1/4}$ with $z=(x,y)\in \mathbb{R}^{2n}$ and $\xi=(z,t)\in \mathbb{H}^{n}$. Moreover, it is shown that when $\alpha>\alpha_{Q}(1-\beta/Q)$ the integral in \eqref{intro:trud3} is still finite for any $f\in L^{Q}_{1}(\mathbb{H}^{n})$, but the supremum is infinite.
In this respect inequality \eqref{intro:trud1} extends \eqref{intro:trud3} to the setting of general stratified groups.
We also refer to \cite{CL02}, \cite{LLT12}, \cite{LL12}, \cite{CLLY12}, \cite{LLT14} and \cite{LT13} for different versions of the Trudinger-Moser inequalities in the special cases of Heisenberg groups. 

In this paper, however, our methods allow us to obtain the weighted Moser-Trudinger inequality \eqref{intro:trud3} with a
sharp constant $\alpha_{Q}$ for general stratified groups (see Theorem \ref{locweightedTrud_thm_Qp2}), where $\alpha_{Q}$ is given in terms
of an integral on a \textquote{unit sphere} of the horizontal gradient of a certain
homogeneous norm.
We note that to obtain the weighted Trudinger-Moser inequality on fractional order Sobolev space even on the Heisenberg groups, the simplest non-trivial stratified groups, is a much more delicate matter. However, in this paper we show the Trudinger-Moser inequality on fractional order Sobolev space in a much more general setting, namely that of graded groups, which includes the cases of $\mathbb R^n$, Heisenberg, and general stratified Lie groups, building on the strategy developed in \cite{Oz97} and \cite{INW14}.

\subsection{Hardy-Sobolev inequalities on graded groups}

The setting of graded groups as developed by Folland and Stein \cite{FS-book} allows one to work efficiently with higher order hypoelliptic operators, contrary to only sub-Laplacians appearing on stratified groups.

We assume now that $\mathbb{G}$ is a nilpotent Lie group with a compatible dilation structure, i.e. a homogeneous group. We refer to
Section \ref{SEC:prelim} for a precise (well-known) definition.
Let $Q$ be the  homogeneous dimension of $\mathbb{G}$ and let  $|\cdot|$ be a homogeneous
quasi-norm on $\mathbb{G}$. Let $\mathcal{R}$ be a positive left-invariant homogeneous
hypoelliptic invariant differential operator on $\mathbb{G}$ of homogeneous degree $\nu$. Such operators are called {\em Rockland operators}.

In particular, the existence of such an operator is equivalent to the condition that the group is graded, and such operators can be characterised in terms of the representation theory of the group by the celebrated result of Helffer and Nourrigat \cite{HN-79}. We note that examples of graded groups include $\Rn$, the Heisenberg group, and general stratified groups.  Again, for brevity, we refer to Section \ref{SEC:prelim} for precise definitions.

Therefore, results for Rockland operators on graded groups can be viewed as the most general {\em differential} results in the setting on nilpotent Lie groups. As far as we know, none of the inequalities we now describe are known in such settings.

From now on we let $\mathcal{R}$ be a positive Rockland operator, that is, a positive left-invariant homogeneous
hypoelliptic invariant differential operator on $\mathbb{G}$ of homogeneous degree $\nu$.
Its powers $\R^{a}$ are understood through the functional calculus on the whole of $\G$, extensively analysed in \cite{FR16,FR:Sobolev}.

We start with the following analogue of
\eqref{Hardy_new1_intro1}, which we also call the Hardy-Sobolev inequalities since it contains the classical Hardy, Rellich and Sobolev inequalities:

\begin{itemize}
\item ({\bf Hardy-Sobolev inequalities on graded groups}) Let $1<p\leq q<\infty$ and $0<a<Q/p$. Let $0\leq b<Q$ and $\frac{a}{Q}=\frac{1}{p}-\frac{1}{q}+\frac{b}{qQ}$. Then there exists a positive constant $C$ such that
\begin{equation}\label{Hardy_new1_intro_11}
\left\|\frac{f}{|x|^{\frac{b}{q}}}\right\|_{L^{q}(\G)}\leq
C\|\R^{\frac{a}{\nu}}f\|_{L^{p}(\G)}
\end{equation}
holds for all $f\in \dot{L}^{p}_{a}(\G)$.
\end{itemize}
In particular, for $p=q$ we obtain the general hypoelliptic family of the Hardy inequalities:
\begin{equation}\label{Hardy_new1_intro-gr}
\left\|\frac{f}{|x|^{a}}\right\|_{L^{p}(\G)}\leq
C\|\R^{\frac{a}{\nu}}f\|_{L^{p}(\G)}, \;\; 1<p<\infty,\; 0<a<Q/p.
\end{equation}
In particular, for $a=1$ and $a=2$ we obtain the hypoelliptic versions of Hardy and Rellich inequalities, respectively, which in this form are new already on the stratified groups since the operator $\R$ does not have to be a sub-Laplacian and can be of any order.
At the same time, for $b=0$, \eqref{Hardy_new1_intro_11} gives a simple proof of the Sobolev inequality obtained in \cite{FR:Sobolev}:
\begin{equation}\label{Hardy_new1_intro-sob}
\left\|f\right\|_{L^{q}(\G)}\leq
C\|\R^{\frac{a}{\nu}}f\|_{L^{p}(\G)},\;\; 1<p<q<\infty,\;  a=Q\left(\frac{1}{p}-\frac{1}{q}\right).
\end{equation}

The homogeneous and inhomogeneous Sobolev spaces $\dot{L}^{p}_{a}(\mathbb{G})$ and ${L}^{p}_{a}(\mathbb{G})$ based on the
positive left-invariant hypoelliptic differential Rockland operator $\R$ have been extensively investigated in \cite{FR:Sobolev}
and \cite[Section 4.4]{FR16} to which we refer for the details of their properties. In these works, the authors generalised to
graded groups the Sobolev spaces based on the sub-Laplacian on stratified groups analysed by Folland in \cite{F75}.

We also note that in Remark \ref{rem_crit_embed} we discuss a byproduct of the obtained family of Gagliardo-Nirenberg inequalities yielding the critical case $a=Q/p$ of the embedding $L^{p}_{a}(\G)\hookrightarrow L^{q}(\G)$ with $1/q=1/p-a/Q$ and $0<a<Q/p$ shown in \cite[Proposition 4.4.13]{FR16}, namely, the continuous embedding
$L^{p}_{Q/p}(\G)\hookrightarrow L^{q}(\G)$ for $1<p<\infty$, $p\leq q<\infty$. The critical case of these embeddings with $q=\infty$ are the Trudinger-Moser inequalities, also obtained in several forms in this paper.

As a consequence of \eqref{Hardy_new1_intro_11}, we also get the following
\begin{itemize}
\item ({\bf Uncertainty type principle on graded groups}). Let $1<p\leq q<\infty$ and $0<a<Q/p$. Let $0\leq b<Q$ and $\frac{a}{Q}=\frac{1}{p}-\frac{1}{q}+\frac{b}{qQ}$. Then there exists a positive constant $C$ such that
\begin{equation}\label{uncer_grad1_intro}
\|\R^{\frac{a}{\nu}}f\|_{L^{p}(\G)}\||x|^{\frac{b}{q}}f\|_{L^{q'}(\G)}\geq
C\int_{\G}|f(x)|^{2}dx
\end{equation}
holds for all $f\in \dot{L}^{p}_{a}(\G)$, where $1/q+1/q'=1$.
\end{itemize}
As in the stratified case, we have the following critical cases of Hardy-Sobolev inequalities:
\begin{itemize}
\item{\bf (Critical global Hardy inequality for $a=Q/p$ on graded groups)}. Let $1<p<r<\infty$ and $p<q<(r-1)p'$, where $1/p+1/p'=1$. Then there exists a positive constant $C_{6}=C_{6}(p, q, r, Q)$ such that
\begin{equation}\label{Hardy_wholeG2_log_intro1}
\left\|\frac{f}{\left(\log\left(e+\frac{1}{|x|}\right)\right)^{\frac{r}{q}}|x|^{\frac{Q}{q}}}\right\|_{L^{q}(\G)}\leq
C_{6}\|f\|_{L^{p}_{Q/p}(\G)}
\end{equation}
holds for all $f\in L^{p}_{Q/p}(\G)$.

\item{\bf (Critical local Hardy inequality for $a=Q/p$ on graded groups)}.
Let $1<p<\infty$ and $\beta\in[0,Q)$. Let $r>0$ be given and let $x_{0}$ be any point of $\G$. Then for any $p\leq q<\infty$ there exists a positive constant $C_{4}=C_{4}(p, Q, \beta, r, q)$ such that
\begin{equation}\label{Hardy_Rock1_intro}
\left\|\frac{f}{|x|^{\frac{\beta}{q}}}\right\|_{L^{q}(B(x_{0},r))}\leq C_{4}q^{1-1/p}\|f\|_{L^{p}_{Q/p}(B(x_{0},r))}
\end{equation}
holds for all $f\in L^{p}_{Q/p}(B(x_{0},r))$, and such that $\underset{q\rightarrow\infty}{\rm limsup\;}C_{4}(p, Q, \beta, r, q)<\infty$, where the space $L^{p}_{Q/p}(B(x_{0},r))$ is defined as the completion of $C_{0}^{\infty}(B(x_{0},r))$ with respect to the norm
\begin{equation}\label{def_space_str_gr}
\|f\|_{L_{Q/p}^{p}(B(x_{0},r))}:=\left(\int_{B(x_{0},r)}(|\R^{\frac{Q}{\nu p}}f(x)|^{p}+|f(x)|^{p})dx\right)^{1/p},
\end{equation}
and $B(x_{0},r)$ is the quasi-ball of radius $r$ in $\G$ centred at $x_{0}$. Here and elsewhere in similar expressions, we understand $\R^{\frac{Q}{\nu p}}$ through the functional calculus on the whole of $\G$, which we then integrate over $B(x_{0},r)$.

The asymptotically sharp constant for \eqref{Hardy_Rock1_intro} in the sense of Remark \ref{rem_F} is given in Theorem \ref{Hardy_Rock_thm}.
\end{itemize}

Moreover, the critical Hardy type inequalities \eqref{Hardy_Rock1_intro} are actually equivalent to the local weighted Trudinger-Moser inequalities with remainder terms \eqref{weightedTrud1_intro} that we will now describe.

\subsection{Trudinger-Moser inequalities on graded groups}

In this paper we establish the following local and global versions of Trudinger-Moser inequalities with remainder terms on general graded groups. It is interesting to note that the local Trudinger-Moser inequalities \eqref{weightedTrud1_intro} are equivalent to the critical Hardy type inequalities \eqref{Hardy_Rock1_intro}, with an asymptotic relation between best constants.

\begin{itemize}
\item{\bf (Local weighted Trudinger-Moser inequalities with remainder terms)}. Let $1<p<\infty$ and $\beta\in[0,Q)$. Let $r>0$ be given and let
    $x_{0}$ be any point of $\G$. Then we have
$$\int_{B(x_{0},r)}\frac{1}{|x|^{\beta}}\left(\exp(\alpha|f(x)|^{p'})-\sum_{0\leq
k<p-1,\;k\in\mathbb{N}}\frac{\alpha^{k}|f(x)|^{kp'}}{k!}\right)dx$$
\begin{equation}\label{weightedTrud1_intro}
\leq C_{1}\|f\|^{p}_{L^{p}_{Q/p}(B(x_{0},r))},
\end{equation}
for any $\alpha\in[0,C_{2})$ and for all $f\in L^{p}_{Q/p}(B(x_{0},r))$ with $\|f\|_{L^{p}_{Q/p}(B(x_{0},r))}\leq1$, where the space $L^{p}_{Q/p}(B(x_{0},r))$ is defined in \eqref{def_space_str_gr}, and $B(x_{0},r)$ is the quasi-ball of radius $r$ in $\G$ centred at $x_{0}$. Here $1/p+1/p'=1$, and $C_{1}=C_{1}(p,Q,\alpha,\beta, r)$ and $C_{2}=C_{2}(p,Q,\beta)$ are given in Theorem
\ref{locweightedTrud_thm}.
\item {\bf (Global weighted Trudinger-Moser inequalities with remainder terms)}.
Let $1<p<\infty$ and $\beta\in[0,Q)$ with $\mu>Q/(Q-\beta)$. Then we have
$$\int_{\G}\frac{1}{|x|^{\beta}}\left(\exp(\alpha|f(x)|^{p'})-\sum_{0\leq
k<p-1,\;k\in\mathbb{N}}\frac{\alpha^{k}|f(x)|^{kp'}}{k!}\right)dx$$
\begin{equation}\label{Tru-Mos1_intro}
\leq C_{3}(\|f\|^{p}_{L^{p}(\G)}+\|f\|^{p/\mu}_{L^{p}(\G)}),
\end{equation}
for all $\alpha\in(0,C_{2})$, and for all functions $f\in L_{Q/p}^{p}(\G)$ with $\|\R^{\frac{Q}{\nu p}}f\|_{L^{p}(\G)}\leq1$,
where $1/p+1/p'=1$, and $C_{2}=C_{2}(p, Q, \beta, \mu)$ and $C_{3}=C_{3}(p,Q,\alpha, \beta, \mu)$ are given in Theorem
\ref{Tru-Mos_thm}.
\end{itemize}

When $p=Q$ and $\|f\|_{L^{p}_{Q/p}(\G)}\leq1$, the inequality \eqref{Tru-Mos1_intro} extends the weighted Trudinger-Moser inequality on the Heisenberg group obtained by Yang (see \cite[Theorem 1.1]{Yang14}).

\subsection{Caffarelli-Kohn-Nirenberg and Gagliardo-Nirenberg inequalities on gra\-ded groups}

The techniques developed in this paper also allow us to derive general hypoelliptic versions of Caffarelli-Kohn-Nirenberg and weighted Gagliardo-Nirenberg inequalities. Moreover, we can obtain asymptotically sharp constants as well as show that the obtained weighted Gagliardo-Nirenberg inequalities \eqref{Hardy_GN_Rock1_intro} are actually equivalent to the weighted
Trudinger-Moser inequalities with remainder terms given in \eqref{Tru-Mos1_intro}.

\begin{itemize}
\item {\bf (Caffarelli-Kohn-Nirenberg inequalities on graded groups)}. Let $1<p,q<\infty$, $\delta\in(0,1]$ and $0<r<\infty$ with $r\leq \frac{q}{1-\delta}$ for $\delta\neq1$. Let $0<a<Q/p$ and $\beta$, $\gamma\in\mathbb{R}$ with $\delta r (Q-ap-\beta p)\leq p(Q+r\gamma-r\beta)$ and $\beta (1-\delta)-\delta a \leq \gamma \leq \beta(1-\delta)$. Assume that $\frac{r(\delta Q+p(\beta(1-\delta)-\gamma-a\delta))}{pQ}+\frac{(1-\delta)r}{q}=1$. Then there exists a positive constant $C$ such that
\begin{equation}\label{CKN_thm2_intro}
\||x|^{\gamma}f\|_{L^{r}(\mathbb{G})}
\leq C \left\|\R^{\frac{a}{\nu}}f\right\|^{\delta}_{L^{p}(\mathbb{G})}
\left\||x|^{\beta}f\right\|^{1-\delta}_{L^{q}(\mathbb{G})}
\end{equation}
holds for all $f\in \dot{L}^{p}_{a}(\G)$.
\item {\bf (Weighted Gagliardo-Nirenberg inequalities on graded groups)}.
Let $1<p<\infty$ and $\beta\in[0,Q)$ with $Q/(Q-\beta)<\mu<\infty$. Then for any $p\leq q<\infty$ there exists a positive constant $C_{7}=C_{7}(p,Q,\beta,\mu,q)$ such that
$$\left\|\frac{f}{|x|^{\frac{\beta}{q}}}\right\|_{L^{q}(\G)}\leq C_{7}q^{1-1/p}\times$$
\begin{equation}\label{Hardy_GN_Rock1_intro}
\times(\|\R^{\frac{Q}{\nu p}}f\|_{L^{p}(\G)}^{1-p/q}\|f\|_{L^{p}(\G)}^{p/q}+\|\R^{\frac{Q}{\nu
p}}f\|_{L^{p}(\G)}^{1-p/(q\mu)}\|f\|_{L^{p}(\G)}^{p/(q\mu)})
\end{equation}
holds for all $f\in L^{p}_{Q/p}(\G)$, and such that $\underset{q\rightarrow\infty}{\rm limsup\;}C_{7}(p,Q,\beta,\mu,q)<\infty$. The asymptotically sharp constant for \eqref{Hardy_GN_Rock1_intro} in the sense of Remark \ref{rem_B} is given in Theorem \ref{Hardy_GN_Rock_thm}. Moreover, the weighted Gagliardo-Nirenberg inequalities \eqref{Hardy_GN_Rock1_intro} are actually equivalent to the weighted
Trudinger-Moser inequalities with remainder terms \eqref{Tru-Mos1_intro}.
\end{itemize}

\smallskip

 Similarly to \eqref{Hardy_wholeG2_log_intro1}, \eqref{Tru-Mos1_intro} and \eqref{Hardy_GN_Rock1_intro} were investigated in the Euclidean setting in \cite{NW10}. We also refer to \cite{RSY17_NoDEA} for the related analysis on stratified groups, \cite{RSY18_Tran} and \cite{RSY17_hom2} on homogeneous groups,
 namely, for Caffarelli-Kohn-Nirenberg type inequalities in terms of parameters but with radial derivative operator or horizontal gradient instead of Rockland operators.

 We note that for $\beta=\gamma=0$ \eqref{CKN_thm2_intro}
 also recoveres the Garliardo-Nirenberg inequality
 \eqref{CKN_thm2_exa_GN}, that is
\begin{equation}\label{CKN_thm2_exa_GNi}
\|f\|_{L^{r}(\mathbb{G})}
\leq C \left\|\R^{\frac{a}{\nu}}f\right\|^{\delta}_{L^{p}(\mathbb{G})}
\left\|f\right\|^{1-\delta}_{L^{q}(\mathbb{G})}
\end{equation}
for all $f\in \dot{L}^{p}_{a}(\G)\cap L^{q}(\G)$, previously established in \cite{RT17} and \cite{RTY17} with application to the global-in-time well-posedness of nonlinear hypoelliptic evolutions on graded groups, where $a>0$, $1<p<Q/a$, $1< q\leq r\leq pQ/(Q-ap)$ and $\delta=(1/q-1/r)(a/Q+1/q-1/p)^{-1}$.

We also refer to \cite{BFG12} for another type of Garliardo-Nirenberg inequality involving Besov norms on graded groups. 

\subsection{Integral Hardy inequalities on homogeneous groups}
\label{SEC:inthom}

The described hypoelliptic Hardy-Sobolev inequalities and their critical versions on graded groups follow from the following integral versions of Hardy inequalities that we can established in the setting of general homogeneous groups. For example, we can obtain the Hardy-Sobolev inequalities \eqref{Hardy_new1_intro} by taking $T^{(1)}_{a}$ in the following result to be the Riesz kernel of a positive Rockland operator. Similarly, we obtain its critical versions by taking $T^{(2)}_{a}$ in \eqref{Hardy_wholeG2_log_intro} to be a combination of Riesz and Bessel kernels.

Thus, let now $\mathbb{G}$ be a homogeneous group of homogeneous dimension $Q$, equipped with any fixed homogeneous quasi-norm $|\cdot|$. Then we have the following

\begin{itemize}
\item ({\bf Integral Hardy inequality on homogeneous groups}) Let $1<p\leq q<\infty$ and $0<a<Q/p$. Let $0\leq b<Q$ and $\frac{a}{Q}=\frac{1}{p}-\frac{1}{q}+\frac{b}{qQ}$. Assume that $|T^{(1)}_{a}(x)|\leq C_{2} |x|^{a-Q}$ for some positive $C_{2}=C_{2}(a,Q)$. Then there exists a positive constant $C_{1}=C_{1}(p,q,a,b)$ such that
\begin{equation}\label{Hardy_new1_intro}
\left\|\frac{f\ast T^{(1)}_{a}}{|x|^{\frac{b}{q}}}\right\|_{L^{q}(\G)}\leq
C_{1}\|f\|_{L^{p}(\G)}
\end{equation}
holds for all $f\in L^{p}(\G)$.
\item ({\bf Critical integral Hardy inequality on homogeneous groups}) let $1<p<r<\infty$ and $p<q<(r-1)p'$, where $1/p+1/p'=1$. Assume that for $a=Q/p$ we have
\begin{equation}\label{T2_B_intro}
|T^{(2)}_{a}(x)|\leq
C_{2}\begin{cases} |x|^{a-Q}, \text{\;for}\;x\in\mathbb{G}\backslash\{0\},\\
|x|^{-Q},   \text{\;for}\;x\in\mathbb{G}\; \text{\;with}\;|x|\geq1,\end{cases}
\end{equation} for some positive $C_{2}=C_{2}(a,Q)$. Then there exists a positive constant $C_{1}=C_{1}(p, q, r, Q)$ such that
\begin{equation}\label{Hardy_wholeG2_log_intro}
\left\|\frac{f\ast T^{(2)}_{Q/p}}{\left(\log\left(e+\frac{1}{|x|}\right)\right)^{\frac{r}{q}}|x|^{\frac{Q}{q}}}\right\|_{L^{q}(\G)}\leq
C_{1}\|f\|_{L^{p}(\G)}
\end{equation}
holds for all $f\in L^{p}(\G)$.
\end{itemize}
In the proof of \eqref{Hardy_new1_intro} and \eqref{Hardy_wholeG2_log_intro} the following characterisation of weighted integral Hardy type inequalities plays an important role. In fact, the following results provide the characterisation of pairs of weights for the integral versions of Hardy inequalities to hold. For brevity, we only indicate the type of the obtained results referring to the corresponding theorems for precise characterising conditions.

\begin{itemize}
\item ({\bf Integral Hardy inequality for $p\leq q$ on homogeneous groups}) Let $\{\phi_{i}\}_{i=1}^{2}$ and $\{\psi_{i}\}_{i=1}^{2}$ be positive functions on $\G$, and $1<p\leq q<\infty$. Then we have
\begin{equation}\label{high_Hardy1_intro}
\left(\int_{\G}\left(\int_{B(0,|x|)}f(z)dz\right)^{q}\phi_{1}(x)dx\right)^{\frac{1}{q}}\leq C_{1}
\left(\int_{\G}(f(x))^{p}\psi_{1}(x)dx\right)^{\frac{1}{p}}
\end{equation}
and
\begin{equation}\label{high_Hardy2_intro}
\left(\int_{\G}\left(\int_{\G\backslash B(0,|x|)}f(z)dz\right)^{q}\phi_{2}(x)dx\right)^{\frac{1}{q}}\leq C_{2}
\left(\int_{\G}(f(x))^{p}\psi_{2}(x)dx\right)^{\frac{1}{p}}
\end{equation}
hold for all $f\geq 0$ a.e. on $\G$ if and only if $A_{i}(\phi_{i},\psi_{i})<\infty$, $i=1,2$, where $\{A_{i}\}_{i=1}^{2}$ are given in \eqref{high_Hardy3}-\eqref{high_Hardy4}.
\item ({\bf Integral Hardy inequality for $p>q$ on homogeneous groups}) Let $\{\phi_{i}\}_{i=3}^{4}$ and $\{\psi_{i}\}_{i=3}^{4}$ be positive functions on $\G$, and $1<q<p<\infty$ with $1/\delta=1/q-1/p$. Then we have
\begin{equation}\label{high2_Hardy1_intro}
\left(\int_{\G}\left(\int_{B(0,|x|)}f(z)dz\right)^{q}\phi_{3}(x)dx\right)^{\frac{1}{q}}\leq C_{1}
\left(\int_{\G}(f(x))^{p}\psi_{3}(x)dx\right)^{\frac{1}{p}}
\end{equation}
and
\begin{equation}\label{high2_Hardy2_intro}
\left(\int_{\G}\left(\int_{\G\backslash B(0,|x|)}f(z)dz\right)^{q}\phi_{4}(x)dx\right)^{\frac{1}{q}}\leq C_{2}
\left(\int_{\G}(f(x))^{p}\psi_{4}(x)dx\right)^{\frac{1}{p}}
\end{equation}
hold for all $f\geq 0$ if and only if $A_{i}(\phi_{i},\psi_{i})<\infty$, $i=3,4$, where $\{A_{i}\}_{i=3}^{4}$ are given in \eqref{high2_Hardy3}-\eqref{high2_Hardy4}.

\item ({\bf Weighted Hardy inequality on homogeneous groups}) Let $\phi_{5},\psi_{5}$ be positive weight functions on $\G$ and let $1<p\leq q<\infty$. Then there exists a positive constant $C$ such that
\begin{equation}\label{weight_hardy1_intro}
\left(\int_{\G}\phi_{5}(x)|f(x)|^{q}dx\right)^{1/q}\leq C\left(\int_{\G}\psi_{5}(x)|\R_{|x|} f(x)|^{p}dx\right)^{1/p}
\end{equation}
holds for radial functions $f$ with $f(0)=0$ if and only if $A_{5}(\phi_{5},\psi_{5})<\infty$, where $A_{5}$ is given in \eqref{weight_hardy2} and $\mathcal{R}_{|x|}:=\frac{d}{d|x|}$ is the radial derivative.
\end{itemize}
We note that Hardy, Rellich and other related inequalities with respect to the radial derivative $\mathcal{R}_{|x|}$
have been investigated in \cite{RS17_AM}.

\subsection{Weighted Hardy-Littlewood-Sobolev inequalities}

Let us give another illustration of the method of applying inequalities on homogeneous groups to obtain the corresponding hypoelliptic inequalities. First, in this paper we show that the integral Hardy inequality \eqref{Hardy_new1_intro} implies the following weighted version of Hardy-Littlewood-Sobolev inequalities, still on general homogeneous groups:

\begin{itemize}
\item ({\bf Weighted Hardy-Littlewood-Sobolev inequalities on homogeneous groups}) Let $0<\lambda<Q$ and $1<p,q<\infty$ be such that $1/p+1/q+(\alpha+\lambda)/Q=2$ with $0\leq \alpha <Q/p'$ and $\alpha+\lambda\leq Q$, where $1/p+1/p'=1$. Then there exists a positive constant $C=C(Q,\lambda, p, \alpha)$ such that
\begin{equation}\label{HLS_ineq1_intro}
\left|\int_{\G}\int_{\G}\frac{\overline{f(x)}g(y)}{|x|^{\alpha}|y^{-1}x|^{\lambda}}dxdy\right|\leq C\|f\|_{L^{p}(\G)}\|g\|_{L^{q}(\G)}
\end{equation}
holds for all $f\in L^{p}(\G)$ and $g\in L^{q}(\G)$.
\end{itemize}
Consequently, similar to the outline of Section \ref{SEC:inthom}, by working with Riesz kernels of positive Rockland operators, we subsequently obtain the following hypoelliptic differential version of Hardy-Littlewood-Sobolev inequalities:
\begin{itemize}
\item {\bf (Weighted Hardy-Littlewood-Sobolev inequalities on graded groups)}. Let $1<p,q<\infty$, $0\leq a<Q/p$ and $0\leq b<Q/q$. Let $0<\lambda<Q$, $0\leq \alpha <a+Q/p'$ and $0\leq \beta\leq b$ be such that $(Q-ap)/(pQ)+(Q-q(b-\beta))/(qQ)+(\alpha+\lambda)/Q=2$ and $\alpha+\lambda\leq Q$, where $1/p+1/p'=1$. Then there exists a positive constant $C=C(Q,\lambda, p, \alpha, \beta, a, b)$ such that
\begin{equation}\label{HLS_ineq1_grad_intro}
\left|\int_{\G}\int_{\G}\frac{\overline{f(x)}g(y)}{|x|^{\alpha}|y^{-1}x|^{\lambda}|y|^{\beta}}dxdy\right|\leq C\|f\|_{\dot{L}^{p}_{a}(\G)}\|g\|_{\dot{L}^{q}_{b}(\G)}
\end{equation}
holds for all $f\in \dot{L}^{p}_{a}(\G)$ and $g\in \dot{L}^{q}_{b}(\G)$.
\end{itemize}
Certainly, the Hardy-Littlewood-Sobolev inequalities is a very classical subject going back to  Hardy-Littlewood \cite{HL28}, \cite{HL30} and Sobolev \cite{Sob38}. In the setting of homogeneous groups, it was established by Folland and Stein \cite{FS74} on the Heisenberg group, and its sharp constants were also investigated in \cite{Lie83} and \cite{FL12} in the Euclidean and in the Heisenberg group settings.

\smallskip

The organisation of the paper is as follows. In Section \ref{SEC:prelim} we briefly recall the necessary concepts of homogeneous Lie
groups and fix the notation. In Section \ref{SEC:Hardy} we introduce the weighted integral Hardy inequalities and in Section \ref{SEC:HLS} we apply them to obtain the Hardy-Littlewood-Sobolev inequalities on homogeneous groups. The Hardy, Rellich and Caffarelli-Kohn-Nirenberg inequalities on graded groups are established in Section \ref{SEC:Hardy_grad}. In Section \ref{SEC:weighted_Trudinger} we give the local and global weighted Trudinger-Moser inequalities with remainder terms, and show their equivalence with the critical Hardy type inequalities on graded groups. Finally, the weighted Gagliardo-Nirenberg type inequalities and their equivalence to the weighted Trudinger-Moser type inequalities with remainder terms are presented in Section \ref{SEC:GN}.

\smallskip

The authors would like to thank Fulvio Ricci for a valuable discussion.

\section{Preliminaries}
\label{SEC:prelim}

Following Folland and Stein \cite[Chapter 1]{FS-book} and the recent exposition in \cite[Chapter 3]{FR16} let us recall that a family of dilations of a Lie algebra $\mathfrak{g}$ is a family of linear mappings of the following form
$$D_{\lambda}={\rm Exp}(A \,{\rm ln}\lambda)=\sum_{k=0}^{\infty}
\frac{1}{k!}({\rm ln}(\lambda) A)^{k},$$
where $A$ is a diagonalisable linear operator on $\mathfrak{g}$ with positive eigenvalues. We also recall that $D_{\lambda}$ is a morphism of $\mathfrak{g}$ if it is a linear mapping from $\mathfrak{g}$ to itself satisfying the property
$$\forall X,Y\in \mathfrak{g},\, \lambda>0,\;
[D_{\lambda}X, D_{\lambda}Y]=D_{\lambda}[X,Y],$$
where $[X,Y]:=XY-YX$ is the Lie bracket. Then, a {\em homogeneous group} $\G$ is a connected simply connected Lie group whose Lie algebra is equipped with a morphism family of dilations. It induces the dilation structure on $\mathbb G$ which we denote by $D_{\lambda}x$ or just by $\lambda x$.

We call $\mathbb{G}$ a {\em graded Lie group} if its Lie algebra $\mathfrak{g}$ admits a gradation
$$\mathfrak{g}=\bigoplus_{i=1}^{\infty}\mathfrak{g}_{i},$$
where the $\mathfrak{g}_{1}, \mathfrak{g}_{2},...,$ are vector subspaces of the Lie algebra $\mathfrak{g}$, all but finitely
many equal to $\{0\}$, and satisfying
$$[\mathfrak{g}_{i},\mathfrak{g}_{j}]\subset \mathfrak{g}_{i+j} \;\;\forall i, j\in \mathbb{N}.$$

Every graded Lie group is also a homogeneous group with the dilation structure induced by the commutator relations.

The triple $\mathbb{G}=(\mathbb{R}^{n}, \circ, D_{\lambda})$ is called a {\em stratified group} if it satisfies the conditions:
\begin{itemize}
\item For some natural numbers $N=N_{1},N_{2},...,N_{r}$ with $N+N_{2}+\ldots+N_{r}=n$, the following decomposition $\mathbb{R}^{n}=\mathbb{R}^{N}\times\ldots\times\mathbb{R}^{N_{r}}$ is valid, and for each $\lambda>0$ the dilation $D_{\lambda}:\mathbb{R}^{n}\rightarrow\mathbb{R}^{n}$ defined by
    $$D_{\lambda}(x)=D_{\lambda}(x',x^{(2)},\ldots,x^{(r)}):=
    (\lambda x', \lambda^{2}x^{(2)},\ldots,\lambda^{r}x^{(r)})$$ is an automorphism of the stratified group $\mathbb{G}$. Here $x'\equiv x^{(1)}\in\mathbb{R}^{N}$ and $x^{(k)}\in\mathbb{R}^{N_{k}}$ for $k=2,\ldots,r$.
\item Let $N$ be as in above and let $X_{1}, \ldots, X_{N}$ be the left invariant vector fields on the stratified group $\mathbb{G}$ such that $X_{k}(0)=\frac{\partial}{\partial x_{k}}|_{0}$ for $k=1, \ldots, N$. Then
    $${\rm rank}({\rm Lie}\{X_{1}, \ldots, X_{N}\})=n,$$
for each $x\in\mathbb{R}^{n}$, that is, the iterated commutators of $X_{1}, \ldots, X_{N}$ span the Lie algebra of the stratified group $\mathbb{G}$.
\end{itemize}

Note that the left invariant vector fields $X_{1}, \ldots, X_{N}$ are called the (Jacobian) generators of the stratified group $\mathbb{G}$ and $r$ is called a step of this stratified group $\mathbb{G}$.
For the expressions for left invariant vector fields on $\G$ in terms of the usual (Euclidean) derivatives and further properties see e.g. \cite[Section 3.1.5]{FR16}.

As usual we always assume that $\G$ is connected and simply connected. If we fix a basis $\{X_{1},\ldots,X_{n}\}$ of
$\mathfrak{g}$ adapted to the gradation, then by the exponential mapping $\exp_{\mathbb{G}}:\mathfrak{g}\rightarrow\mathbb{G}$
we obtain points $x\in\mathbb{G}$:
$$x=\exp_{\mathbb{G}}(x_{1}X_{1}+\ldots+x_{n}X_{n}).$$
Let $A$ be a diagonalisable linear operator on the Lie algebra $\mathfrak{g}$ with positive eigenvalues. Then, a family of
linear mappings of the form
$$D_{r}={\rm Exp}(A \,{\rm ln}r)=\sum_{k=0}^{\infty}
\frac{1}{k!}({\rm ln}(r) A)^{k}$$
is a family of dilations of the Lie algebra $\mathfrak{g}$. Each $D_{r}$ is a morphism of $\mathfrak{g}$, that is, $D_{r}$ is a
linear mapping from the Lie algebra $\mathfrak{g}$ to itself with the following property
$$\forall X,Y\in \mathfrak{g},\, r>0,\;
[D_{r}X, D_{r}Y]=D_{r}[X,Y],$$
where $[X,Y]:=XY-YX$ is the Lie bracket. We can always extend these dilations through the exponential mapping to the group $\G$
by
\begin{equation}\label{dil_weight}
D_{r}(x)=rx:=(r^{\nu_{1}}x_{1},\ldots,r^{\nu_{n}}x_{n}), \;\;x=(x_{1},\ldots,x_{n})\in\mathbb{G},\;\;r>0,
\end{equation}
where $\nu_{1},\ldots,\nu_{n}$ are weights of the dilations. The sum of these weights
$$
Q:={\rm Tr}\, A=\nu_1+\cdots+\nu_n
$$
is called the homogeneous dimension of $\G$. Recall the fact that the standard Lebesgue measure $dx$ on $\mathbb{R}^{n}$ is the
Haar measure for $\mathbb{G}$ (see, e.g. \cite[Proposition 1.6.6]{FR16}). The continuous non-negative function
$$\mathbb{G}\ni x\mapsto |x|\in [0,\infty)$$
satisfying the following properties:
\begin{itemize}
\item   $|x^{-1}| = |x|$ for any $x\in \mathbb{G}$,
\item  $|\lambda x|=\lambda |x|$ for any
$x\in \mathbb{G}$ and $\lambda >0$,
\item  $|x|= 0$ if and only if $x=0$,
\end{itemize} is called a {homogeneous quasi-norm} on $\mathbb G$.

In the sequel we will need the following well-known facts, see e.g. \cite[Proposition 3.1.38 and Theorem 3.1.39]{FR16}:
\begin{prop}\label{triangle_euc}
Let $\G$ be a homogeneous Lie group and let $|\cdot|$ be an arbitrary homogeneous quasi-norm on $\G$. Then there exists a
constant $C_{0}$ such that
\begin{equation}\label{triangle}
|xy|\leq C_{0}(|x|+|y|)
\end{equation}
holds for all $x,y\in\G$.
At the same time, there always exists a homogeneous quasi-norm $|\cdot|$ on $\G$ which satisfies the triangle inequality
\begin{equation}\label{triangle2}
|xy|\leq |x|+|y|
\end{equation}
for all $x,y\in\G$.
\end{prop}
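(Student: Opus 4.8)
The plan is to prove the two assertions separately, obtaining \eqref{triangle} from compactness and the dilation structure, and \eqref{triangle2} from an explicit gauge construction.

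\smallskip

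For \eqref{triangle} I would first reduce everything to a compactness statement. Fixing exponential coordinates that identify $\mathbb{G}$ with $\mathbb{R}^{n}$, consider the explicit continuous homogeneous norm $\rho(x)=\left(\sum_{j=1}^{n}|x_{j}|^{2m/\nu_{j}}\right)^{1/(2m)}$, where $2m$ is a common multiple of the dilation weights $\nu_{1},\dots,\nu_{n}$; it is positive away from the origin, satisfies $\rho(D_{r}x)=r\rho(x)$, and its level set $\Sigma=\{\rho=1\}$ is compact. Since an arbitrary homogeneous quasi-norm $|\cdot|$ is continuous, it attains a strictly positive minimum $a$ and a finite maximum $b$ on $\Sigma$; writing $x=D_{\rho(x)}\big(D_{1/\rho(x)}x\big)$ with $D_{1/\rho(x)}x\in\Sigma$ and using homogeneity gives $a\rho(x)\leq|x|\leq b\rho(x)$ for every $x$, so all homogeneous quasi-norms are equivalent and the closed unit ball $\{|x|\leq1\}$ is compact. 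I would then set $C_{0}:=\sup\{|xy|:|x|\leq1,\ |y|\leq1\}$, which is finite because $(x,y)\mapsto|xy|$ is continuous on the compact product $\{|x|\leq1\}\times\{|y|\leq1\}$. For arbitrary $x,y$ not both zero, put $t=|x|+|y|>0$; then $D_{1/t}x$ and $D_{1/t}y$ lie in the unit ball, and since each dilation is a group automorphism, $D_{1/t}(xy)=(D_{1/t}x)(D_{1/t}y)$, whence $|xy|/t=|D_{1/t}(xy)|\leq C_{0}$, that is $|xy|\leq C_{0}(|x|+|y|)$, the case $x=y=0$ being trivial.

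\smallskip

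For the existence of a quasi-norm satisfying the genuine triangle inequality \eqref{triangle2} I would construct one as a Minkowski gauge. Fixing the Euclidean norm $|\cdot|_{E}$ in exponential coordinates and a small $\epsilon>0$, define $\|x\|$ to be the unique $r>0$ with $|D_{1/r}x|_{E}=\epsilon$, and $\|0\|=0$; this is well defined because $r\mapsto|D_{r}x|_{E}$ is continuous and strictly increasing from $0$ to $\infty$ (all $\nu_{j}>0$). By construction $\|\cdot\|$ is a continuous homogeneous quasi-norm whose closed unit ball is the convex symmetric Euclidean ball $\{|x|_{E}\leq\epsilon\}$, and the symmetry $\|x^{-1}\|=\|x\|$ holds since inversion is the map $x\mapsto -x$ in exponential coordinates. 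It then remains to verify $\|xy\|\leq\|x\|+\|y\|$, and here I would again use homogeneity to localise: by the automorphism property it suffices to prove the inequality when $\|x\|,\|y\|\leq\delta$ for one fixed $\delta>0$, since applying a dilation $D_{t}$ propagates it to all $x,y$. Near the identity the Baker--Campbell--Hausdorff formula expresses $xy$ as $x+y$ plus commutator corrections that are homogeneous of degree strictly greater than $1$, hence of lower order on a small ball. The main obstacle is precisely to make this quantitative: I expect to have to choose $\epsilon$ (equivalently $\delta$) small enough that these higher-order terms are absorbed by the convexity margin of the Euclidean ball, so that the triangle inequality for $|\cdot|_{E}$ survives the passage from vector addition to the group law. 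This is the content of the Hebisch--Sikora argument and is the only step that does not follow directly from compactness and scaling.
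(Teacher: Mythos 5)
The paper does not actually prove this proposition: it is quoted as a known fact with the reference \cite[Proposition 3.1.38 and Theorem 3.1.39]{FR16}, so your attempt has to be judged against those standard arguments. Your proof of the quasi-triangle inequality \eqref{triangle} is complete, correct, and essentially identical to the cited one: all homogeneous quasi-norms are equivalent (compactness of the unit sphere of the explicit gauge $\rho$, plus continuity, which is part of this paper's definition of a quasi-norm), hence the unit ball of $|\cdot|$ is compact, $C_{0}:=\sup\{|xy|:|x|\le 1,\ |y|\le 1\}$ is finite, and the fact that dilations are automorphisms propagates the bound to all of $\G\times\G$.

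The second assertion \eqref{triangle2}, however, is not proven in your proposal; there is a genuine gap, which you yourself flag. You set up the right construction (the gauge $\|\cdot\|$ whose unit ball is a small Euclidean ball, i.e.\ the Hebisch--Sikora norm), and the reduction by scale invariance to a local triangle inequality is valid, but the local inequality itself is deferred to ``the Hebisch--Sikora argument'' rather than carried out. Worse, the mechanism you sketch --- BCH corrections are of higher order, hence ``absorbed by the convexity margin of the Euclidean ball'' --- fails as stated, because the margin can be exactly zero. Writing $x=D_{s}a$, $y=D_{t}b$ with $|a|_{E}=|b|_{E}=\epsilon$ and $s+t=1$, one has $|D_{s}a+D_{t}b|_{E}\le\epsilon$ (this already requires the standard normalization that all dilation weights are $\ge 1$, which your sketch never invokes), but equality holds whenever $a=b$ lies in the span of the weight-one coordinates; at such pairs no choice of small $\epsilon$ lets a crude bound of the form $|R(D_{s}a,D_{t}b)|_{E}\le C\,st\,\epsilon^{2}$ on the BCH remainder be absorbed, since there is nothing to absorb it into. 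The inequality survives only because the remainder also vanishes identically at these degenerate pairs (all iterated brackets of parallel first-layer vectors vanish), so the actual proof must compare, quantitatively and simultaneously, the rate at which the margin opens and the rate at which the remainder switches on as $(a,b,s)$ moves away from the degenerate set. That two-sided estimate is precisely the content of the Hebisch--Sikora theorem, i.e.\ of the proof of \cite[Theorem 3.1.39]{FR16}, and it is the step missing from your argument: as it stands, you have proven the first half of the proposition, and for the second half you have a correct construction and reduction with the core estimate replaced by a citation.
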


The quasi-ball centred at $x\in\mathbb{G}$ with radius $R > 0$ can be defined by
$$B(x,R):=\{y\in \mathbb{G}: |x^{-1}y|<R\}.$$

There exists a (unique) positive Borel measure $\sigma$ on the sphere
\begin{equation}\label{EQ:sphere}
\wp:=\{x\in \mathbb{G}:\,|x|=1\},
\end{equation}
such that for all $f\in L^{1}(\mathbb{G})$ there holds
\begin{equation}\label{EQ:polar}
\int_{\mathbb{G}}f(x)dx=\int_{0}^{\infty}
\int_{\wp}f(ry)r^{Q-1}d\sigma(y)dr.
\end{equation}

We denote by $\widehat{\mathbb{G}}$ the unitary dual of $\mathbb{G}$ and by $\mathcal{H}_{\pi}^{\infty}$ the space of smooth
 vectors for a representation $\pi\in\widehat{\mathbb{G}}$. If the left-invariant differential operator $\mathcal{R}$ on
 $\mathbb{G}$, which is homogeneous of positive degree, satisfies the following condition:

({\bf Rockland condition}) for every representation $\pi\in\widehat{\mathbb{G}}$, except for the trivial representation, the
operator $\pi(\R)$ is injective on $\mathcal{H}_{\pi}^{\infty}$, that is,
$$\forall \upsilon \in \mathcal{H}_{\pi}^{\infty}, \;\;\pi(\R)\upsilon=0\Rightarrow \upsilon=0,$$
then the left-invariant differential operator $\mathcal{R}$ is called a Rockland operator. Here, $\pi(\R):=d\pi(\R)$ is the
infinitesimal representation of the Rockland operator $\R$ as of an element of the universal enveloping algebra of $\G$.

Different characterisations of the Rockland operators have been obtained by Rockland \cite{Rockland} and Beals
\cite{Beals-Rockland}. We refer to \cite{FR:Sobolev} and \cite[Chapter 4]{FR16} for an extensive presentation about Rockland
operators and for the theory of Sobolev spaces on graded groups, and refer to \cite{CR17} for the Besov spaces on graded Lie
groups.

By Helffer and Nourrigat \cite{HN-79}, we know that one can also define {\em Rockland operators as left-invariant homogeneous
hypoelliptic differential operators on $\G$}, since this is equivalent to the Rockland condition.

Since we will deal with the Riesz and Bessel potentials, let us recall them on graded groups, and prove some useful estimates. Let $\R$ be a positive Rockland operator of homogeneous degree $\nu$. Then, the operators $\R^{-a/\nu}$ for $\{a\in\mathbb{R}, 0<a<Q\}$ and $(I+\R)^{-a/\nu}$ for $a\in \mathbb{R}_{+}$ are called Riesz and Bessel potentials, respectively. If we denote their kernels by $\mathcal{I}_{a}$ and $\mathcal{B}_{a}$, then we have
\begin{equation}\label{Rie_pot}
\mathcal{I}_{a}(x):=\frac{1}{\Gamma\left(\frac{a}{\nu}\right)}\int_{0}^{\infty}
t^{\frac{a}{\nu}-1}h_{t}(x)dt
\end{equation}
for $0<a<Q$ with $a\in\mathbb{R}$, and
\begin{equation}\label{Bes_pot}
\mathcal{B}_{a}(x):=\frac{1}{\Gamma\left(\frac{a}{\nu}\right)}\int_{0}^{\infty}
t^{\frac{a}{\nu}-1}e^{-t}h_{t}(x)dt
\end{equation}
for $a>0$, where $\Gamma$ denotes the Gamma function, and $h_{t}$ is the heat kernel associated to the positive Rockland operator $\R$. We refer for more details to \cite[Section 4.3.4]{FR16}.

Before using $\mathcal{I}_{a}(x)$ and $\mathcal{B}_{a}(x)$, we recall the following results:
\begin{thm}[{\cite[Theorem 4.2.7]{FR16}}]
\label{FR__Bes_thm} Let $\R$ be a positive Rockland operator on a graded Lie group $\G$. Let $|\cdot|$ be a fixed homogeneous quasi-norm. Let $h_{t}$ be a heat kernel associated with the Rockland operator. Then each $h_{t}$ is Schwartz and we have
\begin{equation}\label{ht_1}
\forall s,t>0\;\;h_{t}\ast h_{s}=h_{t+s},
\end{equation}
\begin{equation}\label{ht_2}
\forall x\in \G, r,t>0\;\;h_{r^{\nu}t}(rx)=r^{-Q}h_{t}(x),
\end{equation}
\begin{equation}\label{ht_3}
\forall x\in \G\;\;h_{t}(x)=\overline{h_{t}(x^{-1})},
\end{equation}
\begin{equation}\label{ht_4}
\int_{\G}h_{t}(x)dx=1.
\end{equation}
Moreover, we have
\begin{equation}\label{ht_5}
\exists C=C_{\alpha,N,\ell}>0\;\;\forall t\in(0,1]\;\;\sup_{|x|=1}|\partial_{t}^{\ell}X^{\alpha}h_{t}(x)|\leq C_{\alpha, N}t^{N}
\end{equation}
for any $N\in\mathbb{N}_{0}$, $\alpha\in \mathbb{N}_{0}^{n}$ and $\ell\in \mathbb{N}_{0}$.
\end{thm}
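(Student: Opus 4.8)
The plan is to realise each $h_t$ as the convolution kernel of the heat semigroup $e^{-t\R}$ and then to read the four identities off from structural properties of $\R$, isolating the one genuinely analytic claim (that $h_t$ is Schwartz) as a separate step. Since $\R$ is positive and, by the Rockland/Helffer--Nourrigat characterisation, (essentially) self-adjoint on $L^2(\G)$, the spectral calculus produces a strongly continuous semigroup $\{e^{-t\R}\}_{t>0}$; as $\R$ is left-invariant, each $e^{-t\R}$ is a left-invariant bounded operator and hence a right convolution, $e^{-t\R}f=f\ast h_t$. The semigroup law $e^{-(t+s)\R}=e^{-t\R}e^{-s\R}$ then reads $f\ast h_{t+s}=f\ast(h_s\ast h_t)$, which gives \eqref{ht_1}. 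The adjoint of $f\mapsto f\ast h_t$ is $f\mapsto f\ast\widetilde h_t$ with $\widetilde h_t(x)=\overline{h_t(x^{-1})}$, so self-adjointness of $e^{-t\R}$ forces $h_t=\widetilde h_t$, which is \eqref{ht_3}. For \eqref{ht_4} I would use that $\R$, being homogeneous of positive degree, carries no constant term and therefore annihilates constants; hence $e^{-t\R}1=1$, and since $1\ast h_t=\int_\G h_t$ (using unimodularity of $\G$), this yields $\int_\G h_t=1$.

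For the scaling identity \eqref{ht_2} I would begin from homogeneity, $\R(\phi\circ D_r)=r^\nu(\R\phi)\circ D_r$, which at the level of the functional calculus intertwines the semigroups as $e^{-(r^\nu t)\R}=\delta_r^{-1}e^{-t\R}\delta_r$, where $\delta_r\phi:=\phi\circ D_r$. Writing both sides as convolution operators and changing variables (the automorphism $D_r$ has Jacobian $r^Q$) identifies the kernel of the right-hand side as $x\mapsto r^{-Q}h_t(D_{1/r}x)$; comparing with $h_{r^\nu t}$ and replacing $x$ by $D_r x$ gives exactly \eqref{ht_2}.

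The genuine difficulty is the assertion that each $h_t$ is Schwartz; by \eqref{ht_2} it suffices to treat $h_1$. Smoothness of $h_1$ is automatic from hypoellipticity of the heat operator $\partial_t+\R$, which the Rockland condition guarantees, so the real content is the rapid decay of $h_1$ together with all of its derivatives. Here I would invoke a Hulanicki-type principle: because $\lambda\mapsto e^{-\lambda}$ is rapidly decaying on the spectrum $[0,\infty)$ of $\R$, the operator $e^{-\R}$ has a Schwartz convolution kernel. Establishing this is the step I expect to be hardest --- it rests on the subelliptic estimates encoded in the Rockland condition together with the spectral and representation-theoretic calculus for $\R$ --- whereas everything surrounding it is essentially formal.

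Granting that $h_1$ is Schwartz, the estimate \eqref{ht_5} follows by a short scaling argument. Taking $r=t^{-1/\nu}$ in \eqref{ht_2} gives $h_t(x)=t^{-Q/\nu}h_1(t^{-1/\nu}x)$, and the heat equation $\partial_t h_t=-\R h_t$ lets me replace $\partial_t^\ell X^\alpha$ by the single left-invariant operator $D:=(-1)^\ell X^\alpha\R^\ell$, which is homogeneous of degree $d=[\alpha]+\ell\nu$. Homogeneity of $D$ together with the scaling relation then yields $Dh_t(x)=t^{-(Q+d)/\nu}(Dh_1)(t^{-1/\nu}x)$. Since $Dh_1$ is Schwartz, for every $M$ there is $C_M$ with $|(Dh_1)(y)|\leq C_M(1+|y|)^{-M}$; evaluating at $|x|=1$ with $t\in(0,1]$ (so that $t^{-1/\nu}\geq1$) bounds this by $C_M\,t^{(M-Q-d)/\nu}$, and choosing $M\geq N\nu+Q+d$ produces the claimed bound $\leq C\,t^{N}$, i.e. \eqref{ht_5}.
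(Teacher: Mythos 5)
First, a framing point: the paper does not prove Theorem \ref{FR__Bes_thm} at all --- it is imported verbatim from \cite[Theorem 4.2.7]{FR16}, so there is no in-paper proof to compare against, and your proposal has to be measured against the standard argument in that reference. Measured that way, your sketch follows essentially the same route: realise $h_t$ as the convolution kernel of $e^{-t\R}$ (this rests on essential self-adjointness of positive Rockland operators, a genuine theorem which does \emph{not} follow immediately from the Helffer--Nourrigat characterisation, though hypoellipticity is an ingredient in its proof); read \eqref{ht_1} and \eqref{ht_3} off the semigroup law and self-adjointness; obtain \eqref{ht_2} by conjugating the spectral calculus with dilations (your computation is correct, and is legitimate because $r^{Q/2}\delta_r$, with $\delta_r\phi=\phi\circ D_r$, is unitary, so conjugation commutes with the functional calculus); isolate the Schwartz property as the analytic core and attribute it to a Hulanicki-type theorem (Hulanicki's 1984 theorem does cover exactly Rockland operators, so this is a fair citation rather than a gap); and deduce \eqref{ht_5} by the scaling-plus-heat-equation argument $\partial_t^{\ell}X^{\alpha}h_t=(-1)^{\ell}X^{\alpha}\R^{\ell}h_t$ together with $Dh_t(x)=t^{-(Q+d)/\nu}(Dh_1)(t^{-1/\nu}x)$, which you carry out correctly.

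Two points need repair. The argument for \eqref{ht_4} is not valid as stated: the constant function $1$ is not in $L^{2}(\G)$, so ``$e^{-t\R}1=1$'' is not an instance of the spectral calculus, and extending the semigroup to bounded functions and asserting it fixes constants is essentially the statement you are trying to prove; note also that for homogeneous degree $\nu>2$ the semigroup need not be positivity preserving, so Markovian or conservativeness arguments are unavailable. The standard fix: once $h_t$ is known to be Schwartz, integration by parts gives $\frac{d}{dt}\int_{\G}h_t\,dx=-\int_{\G}\R h_t\,dx=0$ (every term of $\R$ is a derivative along left-invariant vector fields, with no zeroth-order term), so $\int_{\G}h_t\,dx$ is independent of $t$, and the approximate-identity property $h_t\rightarrow\delta_{e}$ in $\mathcal{S}'(\G)$ as $t\rightarrow0^{+}$ forces this constant to equal $1$. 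Relatedly, your ordering is backwards: a priori the kernel of a bounded left-invariant operator on $L^{2}(\G)$ is only a tempered distribution, so the manipulations behind \eqref{ht_1}, \eqref{ht_3} and especially $1\ast h_t=\int_{\G}h_t\,dx$ (which needs $h_t\in L^{1}(\G)$) are not yet available at the point where you invoke them. The cure is simply to establish the Schwartz property (your Hulanicki step) first and then run the formal arguments; with that reordering and the corrected proof of \eqref{ht_4}, the proposal is a sound reconstruction of the proof in \cite{FR16}.
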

\begin{lem}[{\cite[Lemma 4.3.8]{FR16}}]
\label{FR__Rie_lem} Let $\R$ be a positive Rockland operator on graded group $\G$ and let $h_{t}$ be its heat kernel as in Theorem \ref{FR__Bes_thm}. Let $|\cdot|$ be a homogeneous quasi-norm and $\alpha\in \mathbb{N}^{n}_{0}$ be a multi-index. Then for any real number $a$ with $0<a<(Q+[\alpha])/\nu$ there exists a positive constant $C$ such that
\begin{equation}\label{FR__Rie_lem_eq1}
\int_{0}^{\infty}t^{a-1}|X^{\alpha}h_{t}(x)|dt\leq C|x|^{-Q-[\alpha]+\nu a}.
\end{equation}
\end{lem}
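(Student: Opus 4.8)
The plan is to exploit the exact scaling of the heat kernel under dilations to reduce the one-dimensional integral in \eqref{FR__Rie_lem_eq1} to an integral over the unit sphere, and then to estimate that spherical integral by splitting the $t$-range at $t=1$. The point is that the homogeneity relation \eqref{ht_2} forces $X^{\alpha}h_{t}$ to obey a precise scaling law, which produces the power $|x|^{-Q-[\alpha]+\nu a}$ automatically, after which only a uniform bound over the set $\{|x|=1\}$ remains to be checked.

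First I would record the scaling law for the derivatives of the heat kernel. Since each left-invariant vector field $X_{j}$ is homogeneous of degree $\nu_{j}$ with respect to the dilations, the operator $X^{\alpha}$ is homogeneous of degree $[\alpha]=\sum_{j}\nu_{j}\alpha_{j}$, that is, $X^{\alpha}(\varphi\circ D_{r})=r^{[\alpha]}(X^{\alpha}\varphi)\circ D_{r}$. Applying $X^{\alpha}$ in the variable $x$ to \eqref{ht_2}, namely to $h_{r^{\nu}t}(D_{r}x)=r^{-Q}h_{t}(x)$, we obtain
\begin{equation*}
(X^{\alpha}h_{r^{\nu}t})(D_{r}x)=r^{-Q-[\alpha]}(X^{\alpha}h_{t})(x),\qquad r,t>0.
\end{equation*}
Now fix $x\neq 0$, set $r=|x|$, and write $x=D_{|x|}\omega$ with $|\omega|=1$. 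Taking $r^{\nu}s=t$ in the displayed relation and then changing variables $t\mapsto s=t/|x|^{\nu}$ in the integral, a direct computation yields
\begin{equation*}
\int_{0}^{\infty}t^{a-1}|X^{\alpha}h_{t}(x)|\,dt
=|x|^{-Q-[\alpha]+\nu a}\int_{0}^{\infty}s^{a-1}|(X^{\alpha}h_{s})(\omega)|\,ds .
\end{equation*}
Thus it suffices to bound $J(\omega):=\int_{0}^{\infty}s^{a-1}|(X^{\alpha}h_{s})(\omega)|\,ds$ uniformly over the unit sphere $|\omega|=1$.

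To control $J(\omega)$ I would split the integral at $s=1$. On $(0,1]$ the estimate \eqref{ht_5} (with $N=0$) gives $\sup_{|\omega|=1}|X^{\alpha}h_{s}(\omega)|\leq C$, so $\int_{0}^{1}s^{a-1}|X^{\alpha}h_{s}(\omega)|\,ds\leq C/a<\infty$, which is exactly where the hypothesis $a>0$ enters. On $[1,\infty)$ I would invoke the scaling law once more: taking $t=1$, $r=s^{1/\nu}$, and the base point $x=D_{s^{-1/\nu}}\omega$ so that $D_{r}x=\omega$, one gets
\begin{equation*}
(X^{\alpha}h_{s})(\omega)=s^{-(Q+[\alpha])/\nu}(X^{\alpha}h_{1})(D_{s^{-1/\nu}}\omega).
\end{equation*}
Since $X^{\alpha}h_{1}$ is Schwartz by Theorem \ref{FR__Bes_thm} and $D_{s^{-1/\nu}}\omega\to 0$ as $s\to\infty$, this is bounded by $C\,s^{-(Q+[\alpha])/\nu}$ uniformly in $\omega$. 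Hence $\int_{1}^{\infty}s^{a-1}|X^{\alpha}h_{s}(\omega)|\,ds\leq C\int_{1}^{\infty}s^{a-1-(Q+[\alpha])/\nu}\,ds$, which converges precisely when $a<(Q+[\alpha])/\nu$. Combining the two ranges gives $\sup_{|\omega|=1}J(\omega)<\infty$, and the lemma follows.

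The only genuinely delicate point is the behaviour as $s\to\infty$: the decay rate $s^{-(Q+[\alpha])/\nu}$ produced by the scaling law is sharp, so the upper restriction $a<(Q+[\alpha])/\nu$ is exactly the integrability threshold and cannot be relaxed. Everything else---the homogeneity of $X^{\alpha}$, the change of variables, and the small-$s$ bound from \eqref{ht_5}---is routine once the scaling identity for $X^{\alpha}h_{t}$ is in place.
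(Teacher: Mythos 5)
Your proof is correct. Note that the paper does not prove this lemma itself but quotes it from \cite[Lemma 4.3.8]{FR16}, and your argument --- using the dilation homogeneity of $X^{\alpha}h_{t}$ to factor out $|x|^{-Q-[\alpha]+\nu a}$ and reduce to a spherical integral, then splitting at $s=1$ and invoking \eqref{ht_5} for small times and the Schwartz bound on $X^{\alpha}h_{1}$ for large times --- is essentially the standard proof given there, with the only cosmetic difference that the source splits the original integral at $t=|x|^{\nu}$ rather than rescaling to the unit sphere first.
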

Replacing $a$ by $a/\nu$ and putting $\alpha=0$ in Lemma \ref{FR__Rie_lem}, and using the representation \eqref{Rie_pot} for $\mathcal{I}_{a}(x)$, we obtain
\begin{lem}\label{Rie_lem} Let $|\cdot|$ be a homogeneous quasi-norm. Let $0<a<Q$ and $a\in\mathbb{R}$. Then there exists a positive constant $C=C(Q,a)$ such that
\begin{equation}\label{Rie_lem1}
|\mathcal{I}_{a}(x)|\leq C|x|^{-(Q-a)}.
\end{equation}
\end{lem}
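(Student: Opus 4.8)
The plan is to estimate $|\mathcal{I}_{a}(x)|$ directly from the integral representation \eqref{Rie_pot} of the Riesz kernel, reducing everything to a single scalar bound on the heat-kernel integral supplied by Lemma \ref{FR__Rie_lem}. First I would start from
\[
\mathcal{I}_{a}(x)=\frac{1}{\Gamma\left(\frac{a}{\nu}\right)}\int_{0}^{\infty}t^{\frac{a}{\nu}-1}h_{t}(x)\,dt,
\]
pass to absolute values inside the integral, and pull out the constant $1/\Gamma(a/\nu)$, which is finite and positive because $0<a<Q$ forces $a/\nu>0$. This gives $|\mathcal{I}_{a}(x)|\leq \frac{1}{\Gamma(a/\nu)}\int_{0}^{\infty}t^{\frac{a}{\nu}-1}|h_{t}(x)|\,dt$, so the whole problem is now to control this last integral.

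Next I would invoke Lemma \ref{FR__Rie_lem} with the multi-index $\alpha=0$, so that $[\alpha]=0$ and $X^{\alpha}h_{t}=h_{t}$, and with the free parameter of that lemma taken to be $a/\nu$. The admissibility condition of Lemma \ref{FR__Rie_lem}, namely that the parameter lie strictly between $0$ and $(Q+[\alpha])/\nu=Q/\nu$, becomes precisely $0<a/\nu<Q/\nu$, i.e. $0<a<Q$, which is exactly the hypothesis of the statement. The lemma then yields a constant $C$ with
\[
\int_{0}^{\infty}t^{\frac{a}{\nu}-1}|h_{t}(x)|\,dt\leq C\,|x|^{-Q-[\alpha]+\nu\cdot\frac{a}{\nu}}=C\,|x|^{-(Q-a)},
\]
where the exponent simplifies because $[\alpha]=0$ and $\nu\cdot(a/\nu)=a$.

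Combining the two displays gives $|\mathcal{I}_{a}(x)|\leq \frac{C}{\Gamma(a/\nu)}\,|x|^{-(Q-a)}$, which is the claimed estimate with a constant depending only on $Q$ and $a$ (the homogeneous degree $\nu$ being fixed throughout). I do not expect any genuine obstacle here: the entire content is the bookkeeping of matching the parameters of Lemma \ref{FR__Rie_lem} to the representation \eqref{Rie_pot}. The only points requiring a moment's care are verifying that the substitution of $a/\nu$ for the lemma's parameter turns its range $0<(\cdot)<Q/\nu$ into the stated range $0<a<Q$, and that the resulting power of $|x|$ collapses to $-(Q-a)$; both are immediate.
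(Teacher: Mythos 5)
Your proof is correct and is exactly the paper's argument: the paper derives Lemma \ref{Rie_lem} precisely by replacing the parameter of Lemma \ref{FR__Rie_lem} by $a/\nu$, taking $\alpha=0$, and combining with the representation \eqref{Rie_pot}, which is the same parameter bookkeeping you carry out. No issues.
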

Now let us prove the following useful lemma for $\mathcal{B}_{a}$, which may be not optimal, but sufficient for our purposes.
\begin{lem}\label{Bes_lem} Let $|\cdot|$ be a homogeneous quasi-norm. Let $0<a<Q$ and $a\in\mathbb{R}$. Then there exists a positive constant $C=C(Q,a)$ such that
\begin{equation}\label{Bes_lem1}
|\mathcal{B}_{a}(x)|\leq
\begin{cases} C|x|^{-(Q-a)}, \text{\;for}\;x\in\mathbb{G}\backslash\{0\},\\
C|x|^{-Q},   \text{\;for}\;x\in\mathbb{G}\; \text{\;with}\;|x|\geq1.\end{cases}
\end{equation}
\end{lem}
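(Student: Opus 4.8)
The plan is to start from the representation \eqref{Bes_pot} of the Bessel kernel,
$$\mathcal{B}_{a}(x)=\frac{1}{\Gamma(a/\nu)}\int_{0}^{\infty}t^{a/\nu-1}e^{-t}h_{t}(x)\,dt,$$
and to treat the two claimed bounds separately. The first bound, valid for all $x\in\G\setminus\{0\}$, is essentially free: since $e^{-t}\leq 1$ for every $t>0$, we have $|\mathcal{B}_{a}(x)|\leq \frac{1}{\Gamma(a/\nu)}\int_{0}^{\infty}t^{a/\nu-1}|h_{t}(x)|\,dt$, and the right-hand side is exactly the quantity bounded by $C|x|^{-(Q-a)}$ in Lemma \ref{FR__Rie_lem} (taken with $\alpha=0$ and with $a$ there replaced by $a/\nu$), which is the estimate underlying Lemma \ref{Rie_lem}. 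So only the second, sharper bound for $|x|\geq1$ requires the factor $e^{-t}$ to be used in an essential way.

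For the second bound the plan is to establish pointwise heat-kernel estimates that are jointly good in $t$ and $|x|$, and then to split the $t$-integral at $t=|x|^{\nu}$. Using the scaling relation \eqref{ht_2} with $r=|x|$ and time variable $\tau=t/|x|^{\nu}$, one writes $h_{t}(x)=|x|^{-Q}h_{t/|x|^{\nu}}(x/|x|)$. This yields two facts. First, for $t\leq|x|^{\nu}$ one has $t/|x|^{\nu}\in(0,1]$, so \eqref{ht_5} with $\ell=0$ and $\alpha=0$ gives $\sup_{|\omega|=1}|h_{t/|x|^{\nu}}(\omega)|\leq C$, whence $|h_{t}(x)|\leq C|x|^{-Q}$ (and in fact $|h_{t}(x)|\leq C_{N}|x|^{-Q-\nu N}t^{N}$ for every $N$ if one keeps the full decay in \eqref{ht_5}). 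Second, a global sup bound $|h_{t}(x)|\leq \|h_{1}\|_{\infty}\,t^{-Q/\nu}$ follows from \eqref{ht_2} taken instead with $r=t^{1/\nu}$, together with the fact recorded in Theorem \ref{FR__Bes_thm} that $h_{1}$ is Schwartz and hence bounded.

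With these estimates in hand, the conclusion for $|x|\geq1$ is routine. On the range $0<t\leq|x|^{\nu}$ I would use the first estimate to bound the integrand by $C|x|^{-Q}t^{a/\nu-1}e^{-t}$ and then integrate, using $\int_{0}^{\infty}t^{a/\nu-1}e^{-t}\,dt=\Gamma(a/\nu)<\infty$, to obtain a contribution $\leq C|x|^{-Q}$. On the range $t\geq|x|^{\nu}$ I would use the global sup bound to reach $\|h_{1}\|_{\infty}\int_{|x|^{\nu}}^{\infty}t^{(a-Q)/\nu-1}e^{-t}\,dt$; since $0<a<Q$ the exponent $(a-Q)/\nu-1$ is negative and $t\geq|x|^{\nu}\geq1$, so $t^{(a-Q)/\nu-1}\leq1$ and this is at most $\int_{|x|^{\nu}}^{\infty}e^{-t}\,dt=e^{-|x|^{\nu}}$, which is $\leq C|x|^{-Q}$ for $|x|\geq1$ because the exponential decays faster than any power of $|x|$. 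Adding the two contributions gives $|\mathcal{B}_{a}(x)|\leq C|x|^{-Q}$ for $|x|\geq1$, completing the proof.

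The only genuinely substantive point, and hence the step I would watch most carefully, is extracting the joint decay of $h_{t}(x)$ in the two regimes from the scaling identity \eqref{ht_2} and the rapid-decay estimate \eqref{ht_5}; everything after the split of the integral is elementary. As the lemma statement itself signals, the second bound is not expected to be sharp, since on the near-diagonal range we discard the full exponential smallness available from \eqref{ht_5}; the stated $|x|^{-Q}$ decay is all that is needed in the later applications.
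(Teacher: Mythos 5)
Your proof is correct, and for the substantive half of the lemma --- the bound $C|x|^{-Q}$ for $|x|\geq 1$ --- it coincides with the paper's argument: the same splitting of the $t$-integral at $t=|x|^{\nu}$, the same use of the scaling identity \eqref{ht_2} together with \eqref{ht_5} to get $|h_{t}(x)|\leq C|x|^{-Q}$ on the near range, and the same use of $h_{t}(x)=t^{-Q/\nu}h_{1}(t^{-1/\nu}x)$ with $h_{1}$ bounded on the far range. (Your handling of the tail, bounding $t^{(a-Q)/\nu-1}\leq 1$ for $t\geq|x|^{\nu}\geq1$ and then using $e^{-|x|^{\nu}}\leq C|x|^{-Q}$, is a trivial variant of the paper's step \eqref{Bes_lem_eq32}, which instead absorbs $t^{-Q/\nu}\leq|x|^{-Q}$ and keeps $\int t^{a/\nu-1}e^{-t}dt\leq\Gamma(a/\nu)$.) Where you genuinely diverge is the first bound: the paper re-derives $|\mathcal{B}_{a}(x)|\leq C|x|^{a-Q}$ by hand in \eqref{Bes_lem_eq2}--\eqref{Bes_lem_eq3}, repeating for the Bessel kernel the two-regime computation that underlies the Riesz estimate, whereas you simply discard the exponential via $e^{-t}\leq1$ and quote Lemma \ref{FR__Rie_lem} (equivalently Lemma \ref{Rie_lem}), since the resulting integral is exactly the one bounded there. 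Your route is shorter and makes transparent that the factor $e^{-t}$ is only needed for the $|x|\geq1$ bound; the paper's self-contained computation, in exchange, keeps the proof independent of that citation and reuses the displays \eqref{Bes_lem_eq2}--\eqref{Bes_lem_eq3} as the template for \eqref{Bes_lem_eq22}--\eqref{Bes_lem_eq32}. Either way the result and the constants' dependence $C=C(Q,a)$ are the same.
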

\begin{proof}[Proof of Lemma \ref{Bes_lem}] We split the integral in \eqref{Bes_lem1} as follows
\begin{equation}\label{Bes_lem_eq1}
\begin{split}
\mathcal{B}_{a}(x)&=\frac{1}{\Gamma\left(\frac{a}{\nu}\right)}\int_{0}^{\infty}
t^{\frac{a}{\nu}-1}e^{-t}h_{t}(x)dt\\&=\frac{1}{\Gamma\left(\frac{a}{\nu}\right)}\int_{0}^{|x|^{\nu}}
t^{\frac{a}{\nu}-1}e^{-t}h_{t}(x)dt+\frac{1}{\Gamma\left(\frac{a}{\nu}\right)}\int_{|x|^{\nu}}^{\infty}
t^{\frac{a}{\nu}-1}e^{-t}h_{t}(x)dt\\&=:J_{1}+J_{2}.
\end{split}
\end{equation}
To estimate $J_{1}$ using the property of homogeneity of $h_{t}$ in \eqref{ht_2}, we calculate
\begin{equation}\label{Bes_lem_eq2}
\begin{split}
|J_{1}|&=\left|\frac{1}{\Gamma\left(\frac{a}{\nu}\right)}\int_{0}^{|x|^{\nu}}
t^{\frac{a}{\nu}-1}e^{-t}|x|^{-Q}h_{|x|^{-\nu}t}\left(\frac{x}{|x|}\right)dt\right|\\&
\leq \frac{1}{\Gamma\left(\frac{a}{\nu}\right)}|x|^{-Q}\left(\sup_{|y|=1, 0\leq t_{1}\leq 1}|h_{t_{1}}(y)|\right)\int_{0}^{|x|^{\nu}}
t^{\frac{a}{\nu}-1}dt\\&=
\frac{\nu}{a\Gamma\left(\frac{a}{\nu}\right)}|x|^{a-Q}\left(\sup_{|y|=1, 0\leq t_{1}\leq 1}|h_{t_{1}}(y)|\right)\\&
\leq C|x|^{a-Q},
\end{split}
\end{equation}
where we have used that $\underset{|y|=1, 0\leq t_{1}\leq 1}{\rm sup}|h_{t_{1}}(y)|$ is finite by \eqref{ht_5}.

Now we estimate $J_{2}$. A direct calculation gives that
\begin{equation}\label{Bes_lem_eq3}
\begin{split}
|J_{2}|&=\left|\frac{1}{\Gamma\left(\frac{a}{\nu}\right)}\int_{|x|^{\nu}}^{\infty}
t^{\frac{a}{\nu}-1}e^{-t}h_{t}(x)dt\right|\\&
\leq \frac{1}{\Gamma\left(\frac{a}{\nu}\right)}\int_{|x|^{\nu}}^{\infty}
t^{\frac{a}{\nu}-1}t^{-\frac{Q}{\nu}}|h_{1}(t^{-\frac{1}{\nu}}x)|dt\\&\leq
\frac{1}{\Gamma\left(\frac{a}{\nu}\right)}\|h_{1}\|_{L^{\infty}(\G)}\int_{|x|^{\nu}}^{\infty}
t^{\frac{a}{\nu}-1-\frac{Q}{\nu}}dt\\&
\leq C|x|^{a-Q},
\end{split}
\end{equation}
where we have used that $\|h_{1}\|_{L^{\infty}(\G)}$ is finite since $h_{1}$ is Schwartz. Combining \eqref{Bes_lem_eq1}, \eqref{Bes_lem_eq2} and \eqref{Bes_lem_eq3}, we obtain \eqref{Bes_lem1}.

On the other hand, when $|x|\geq1$, one has for $J_{1}$
\begin{equation}\label{Bes_lem_eq22}
\begin{split}
|J_{1}|&=\left|\frac{1}{\Gamma\left(\frac{a}{\nu}\right)}\int_{0}^{|x|^{\nu}}
t^{\frac{a}{\nu}-1}e^{-t}|x|^{-Q}h_{|x|^{-\nu}t}\left(\frac{x}{|x|}\right)dt\right|\\&
\leq \frac{1}{\Gamma\left(\frac{a}{\nu}\right)}|x|^{-Q}\left(\sup_{|y|=1, 0\leq t_{1}\leq 1}|h_{t_{1}}(y)|\right)\int_{0}^{|x|^{\nu}}
t^{\frac{a}{\nu}-1}e^{-t}dt\\&\leq
|x|^{-Q}\left(\sup_{|y|=1, 0\leq t_{1}\leq 1}|h_{t_{1}}(y)|\right)\\&
\leq C|x|^{-Q}.
\end{split}
\end{equation}
It remains to estimate $J_{2}$ for $|x|\geq1$. By a direct calculation, we obtain
\begin{equation}\label{Bes_lem_eq32}
\begin{split}
|J_{2}|&=\left|\frac{1}{\Gamma\left(\frac{a}{\nu}\right)}\int_{|x|^{\nu}}^{\infty}
t^{\frac{a}{\nu}-1}e^{-t}h_{t}(x)dt\right|\\&
= \left|\frac{1}{\Gamma\left(\frac{a}{\nu}\right)}\int_{|x|^{\nu}}^{\infty}
t^{\frac{a}{\nu}-1-\frac{Q}{\nu}}e^{-t}h_{1}(t^{-\frac{1}{\nu}}x)dt\right|\\&\leq
\frac{1}{\Gamma\left(\frac{a}{\nu}\right)}|x|^{-Q}\|h_{1}\|_{L^{\infty}(\G)}\Gamma\left(\frac{a}{\nu}\right)\\&
\leq C|x|^{-Q}.
\end{split}
\end{equation}
Combining \eqref{Bes_lem_eq1}, \eqref{Bes_lem_eq22} and \eqref{Bes_lem_eq32}, we obtain \eqref{Bes_lem1} for $|x|\geq1$.
\end{proof}

\section{Weighted integral Hardy inequalities on homogeneous groups}
\label{SEC:Hardy}
In this section we introduce various types of weighted $L^{p}-L^{q}$ inequalities for the Hardy operator on homogeneous groups for different ranges of indices $1<p,q<\infty$. We obtain necessary and sufficient condition on weights for such inequalities to be true. Subsequently, we apply them to obtain an integral Hardy inequality on general homogeneous groups which will be crucial for the further investigation of this paper.

\begin{thm}\label{high_Hardy_thm} Let $\mathbb{G}$ be a homogeneous group
of homogeneous dimension $Q$. Let $\{\phi_{i}\}_{i=1}^{2}$ and $\{\psi_{i}\}_{i=1}^{2}$ be positive functions on $\G$, and let $1<p\leq q<\infty$. Then the inequalities
\begin{equation}\label{high_Hardy1}
\left(\int_{\G}\left(\int_{B(0,|x|)}f(z)dz\right)^{q}\phi_{1}(x)dx\right)^{\frac{1}{q}}\leq C_{1}
\left(\int_{\G}(f(x))^{p}\psi_{1}(x)dx\right)^{\frac{1}{p}}
\end{equation}
and
\begin{equation}\label{high_Hardy2}
\left(\int_{\G}\left(\int_{\G\backslash B(0,|x|)}f(z)dz\right)^{q}\phi_{2}(x)dx\right)^{\frac{1}{q}}\leq C_{2}
\left(\int_{\G}(f(x))^{p}\psi_{2}(x)dx\right)^{\frac{1}{p}}
\end{equation}
hold for all $f\geq 0$ a.e. on $\G$ if and only if, respectively, we have
\begin{equation}\label{high_Hardy3}
A_{1}:=\sup_{R>0}\left(\int_{\{|x|\geq R\}}\phi_{1}(x)dx\right)^{\frac{1}{q}}
\left(\int_{\{|x|\leq R\}}(\psi_{1}(x))^{-(p'-1)}dx\right)^{\frac{1}{p'}}<\infty
\end{equation}
and
\begin{equation}\label{high_Hardy4}
A_{2}:=\sup_{R>0}\left(\int_{\{|x|\leq R\}}\phi_{2}(x)dx\right)^{\frac{1}{q}}
\left(\int_{\{|x|\geq R\}}(\psi_{2}(x))^{-(p'-1)}dx\right)^{\frac{1}{p'}}<\infty.
\end{equation}
Moreover, if $\{C_{i}\}_{i=1}^{2}$ are the smallest constants for which \eqref{high_Hardy1} and \eqref{high_Hardy2} hold, then
\begin{equation}\label{high_Hardy5}
A_{i}\leq C_{i}\leq (p')^{\frac{1}{p'}}p^{\frac{1}{q}}A_{i},\;\; i=1,2.
\end{equation}
\end{thm}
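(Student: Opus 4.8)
The plan is to recognise \eqref{high_Hardy1}--\eqref{high_Hardy2} as the homogeneous-group incarnation of Muckenhoupt's weighted Hardy inequality on the half-line, and to prove the two directions separately while tracking the constants. Throughout I write $V(R):=\int_{\{|z|\le R\}}\psi_1(z)^{-(p'-1)}\,dz$, which is nondecreasing in $R$, and I note that by the polar decomposition \eqref{EQ:polar} its radial derivative is $V'(R)=\int_{\wp}\psi_1(Ry)^{-(p'-1)}R^{Q-1}\,d\sigma(y)$; the analogous radial primitives of $\phi_1$ and of $f^{p}\psi_1$ are the only place where the group structure enters, everything else being one-dimensional.

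Necessity is the easy direction and yields $A_1\le C_1$. Fixing $R>0$, I test \eqref{high_Hardy1} with $f=\psi_1^{-(p'-1)}\chi_{\{|x|\le R\}}$. For every $x$ with $|x|\ge R$ the ball $B(0,|x|)$ contains $\{|z|\le R\}$, so the inner integral equals $I_R:=\int_{\{|z|\le R\}}\psi_1^{-(p'-1)}\,dz$; hence the left-hand side is at least $I_R\big(\int_{\{|x|\ge R\}}\phi_1\big)^{1/q}$, while, since $-(p'-1)p+1=-(p'-1)$, the right-hand side equals $C_1 I_R^{1/p}$. Dividing and using $1-1/p=1/p'$ reproduces exactly the factor in \eqref{high_Hardy3}, and taking the supremum over $R$ gives $A_1\le C_1$ (the degenerate cases $I_R\in\{0,\infty\}$ are removed by a routine truncation). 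The same scheme with $f=\psi_2^{-(p'-1)}\chi_{\{|x|\ge R\}}$ gives $A_2\le C_2$ for \eqref{high_Hardy2}.

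Sufficiency is the main point and the \emph{principal obstacle}, since it must also deliver the sharp constant in \eqref{high_Hardy5}. Assuming $A_1<\infty$, for each $x$ I factor $f=\big(f\,\psi_1^{1/p}V^{\gamma}\big)\cdot\big(\psi_1^{-1/p}V^{-\gamma}\big)$ inside $B(0,|x|)$, with a free parameter $\gamma\in(0,1/p')$, and apply Hölder with exponents $p,p'$. The dual factor is computed \emph{exactly} from the primitive above: since $\psi_1(z)^{-(p'-1)}V(|z|)^{-\gamma p'}=V'(|z|)V(|z|)^{-\gamma p'}$ integrates radially to $\tfrac{1}{1-\gamma p'}V(|x|)^{1-\gamma p'}$, one obtains
\begin{equation*}
\int_{B(0,|x|)}f\,dz\le\Big(\tfrac{1}{1-\gamma p'}\Big)^{1/p'}V(|x|)^{(1-\gamma p')/p'}\Big(\int_{B(0,|x|)}f^{p}\psi_1\,V(|z|)^{\gamma p}\,dz\Big)^{1/p}.
\end{equation*}
Raising to the power $q$, multiplying by $\phi_1$ and passing to polar coordinates reduces the left-hand side of \eqref{high_Hardy1} to the one-dimensional integral $\big(\tfrac{1}{1-\gamma p'}\big)^{q/p'}\int_0^\infty V(r)^{q(1-\gamma p')/p'}\Phi(r)G(r)^{q/p}\,dr$, where $\Phi(r):=\int_{\wp}\phi_1(ry)r^{Q-1}\,d\sigma(y)$ and $G(r):=\int_{B(0,r)}f^{p}\psi_1 V^{\gamma p}$ are nondecreasing.

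The final step is purely one-dimensional and is where $q\ge p$ is used. Writing $G^{q/p}=\tfrac qp\int_0^{\cdot}G^{q/p-1}G'$ and applying Fubini, the tail $\int_s^{\infty}V(r)^{q(1-\gamma p')/p'}\Phi(r)\,dr$ is estimated by integrating by parts and inserting the hypothesis in the form $\int_{\{|x|\ge r\}}\phi_1\le A_1^{q}V(r)^{-q/p'}$; the exponents are arranged so that the boundary term at infinity vanishes and the tail is bounded by $\tfrac{A_1^{q}}{p'\gamma}V(s)^{-\gamma q}$. Bounding $G(s)\le V(s)^{\gamma p}\int_{B(0,s)}f^{p}\psi_1$ by monotonicity of $V$ then collapses the remaining nested integral to $\tfrac pq\big(\int_{\G}f^{p}\psi_1\big)^{q/p}$, giving $C_1\le(1-\gamma p')^{-1/p'}(p'\gamma)^{-1/q}A_1$. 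Choosing $\gamma=\tfrac{1}{pp'}$ yields $1-\gamma p'=1/p'$ and $p'\gamma=1/p$, hence exactly $C_1\le (p')^{1/p'}p^{1/q}A_1$, the bound in \eqref{high_Hardy5}. The companion inequality \eqref{high_Hardy2} is proved by the identical scheme with $V$ replaced by the nonincreasing tail $\int_{\{|z|\ge R\}}\psi_2^{-(p'-1)}\,dz$, the integration by parts now run from $|x|$ to $\infty$, producing $A_2\le C_2\le (p')^{1/p'}p^{1/q}A_2$. The remaining care is technical: degenerate or non-integrable weights, and the vanishing of the boundary terms, are handled by approximating $f$ by bounded compactly supported functions and by monotone convergence.
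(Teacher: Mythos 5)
Your proof is correct and lands on exactly the constants of the paper, so the verdict is positive; the interest lies in where your route departs from the paper's. The overall architecture is the same Muckenhoupt-type scheme: for necessity you test with $f=\psi_1^{1-p'}\chi_{\{|x|\le R\}}$, which is precisely the paper's test function, and for sufficiency you apply H\"older against a power of the primitive $V(|x|)=\int_{B(0,|x|)}\psi_1^{1-p'}\,dz$; the paper's weight $g(s)=V(s)^{1/(pp')}$ is your $\gamma=1/(pp')$, except that the paper fixes this exponent from the outset while you carry a free $\gamma$ and optimize at the end, which makes transparent why $1/(pp')$ is the right choice. The genuine divergence is in the device that carries the condition $p\le q$: after the H\"older step the paper reduces to a one-dimensional integral and invokes the continuous Minkowski inequality \eqref{Mink_for} (cited from \cite{DHK97}) with $\theta=q/p\ge 1$, then evaluates the resulting inner integrals by integration by parts against \eqref{high_Hardy3}; you avoid that lemma entirely, writing $G(r)^{q/p}=\tfrac qp\int_0^r G^{q/p-1}G'$, applying Tonelli, estimating the tail $\int_s^\infty V^{q(1-\gamma p')/p'}\Phi\,dr$ by integration by parts together with the hypothesis in the form $\int_{\{|x|\ge r\}}\phi_1\le A_1^{q}V(r)^{-q/p'}$, and then using the monotonicity bound $G\le V^{\gamma p}F$ --- which is the precise point where $q\ge p$ enters in your argument, through $t\mapsto t^{q/p-1}$ being nondecreasing --- to collapse everything to $\int_0^\infty F^{q/p-1}F'=\tfrac pq\bigl(\int_{\G}f^p\psi_1\bigr)^{q/p}$. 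Both routes yield $C_1\le(1-\gamma p')^{-1/p'}(\gamma p')^{-1/q}A_1=(p')^{1/p'}p^{1/q}A_1$ and hence the identical two-sided bound \eqref{high_Hardy5}; what your version buys is self-containedness (no external Minkowski-type lemma) and an explanation of the optimal exponent, while the paper's version is shorter once \eqref{Mink_for} is granted. The points you defer to routine care (truncation when $\int_{B(0,R)}\psi_1^{1-p'}$ could be infinite, the sign of the boundary term at infinity, absolute continuity of $V$, $F$, $G$ via polar coordinates) are indeed routine and do not constitute gaps.
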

\begin{rem}\label{high_Hardy_thm_rem} In the abelian case $\mathbb{G}=(\Rn,+)$ and $Q=n$, if we take $p=q>1$ and $\phi_{1}(x)=|B(0,|x|)|^{-p}$ and $\psi_{1}(x)=1$ in \eqref{high_Hardy1}, then we have $A_{1}=(p-1)^{-1/p}$ and
\begin{equation}\label{high_Hardy1_rem}
\left(\int_{\Rn}\left|\frac{1}{|B(0,|x|)|}\int_{B(0,|x|)}f(z)dz\right|^{p}dx\right)^{\frac{1}{p}}\leq \frac{p}{p-1}
\left(\int_{\Rn}|f(x)|^{p}dx\right)^{\frac{1}{p}},
\end{equation}
where $|B(0,|x|)|$ is the volume of the ball $B(0,|x|)$. The inequality \eqref{high_Hardy1_rem} was obtained in \cite{CG95}.
\end{rem}
\begin{proof}[Proof of Theorem \ref{high_Hardy_thm}] We prove \eqref{high_Hardy1}$\Leftrightarrow$\eqref{high_Hardy3}, the case \eqref{high_Hardy2}$\Leftrightarrow$\eqref{high_Hardy4} can be proved similarly.

First, we show \eqref{high_Hardy3}$\Rightarrow$\eqref{high_Hardy1}. Then, using polar coordinates on $\G$ and denoting $r=|x|$, we write
\begin{equation}\label{high1}
\begin{split}
\int_{\G}&\phi_{1}(x)\left[\int_{B(0,r)}f(z)dz\right]^{q}dx\\&
=\int_{0}^{\infty}\int_{\wp}r^{Q-1}\phi_{1}(ry)\left[\int_{0}^{r}\int_{\wp}s^{Q-1}f(sy)d\sigma(y) ds\right]^{q}d\sigma(y) dr.
\end{split}
\end{equation}
Setting
\begin{equation}\label{g}
g(r)=\left\{\int_{\wp}\int_{0}^{r}s^{Q-1}(\psi_{1}(sy))^{1-p'}dsd\sigma(y)\right\}^{1/(pp')},
\end{equation}
and using H\"{o}lder's inequality, we calculate
\begin{equation}\label{high2}
\begin{split}
\int_{0}^{r}\int_{\wp}s^{Q-1}f(sy)d\sigma(y) ds&=\int_{\wp}\int_{0}^{r}s^{(Q-1)/p}f(sy)(\psi_{1}(sy))^{1/p}g(s)s^{(Q-1)/p'}\\&
\times \left((\psi_{1}(sy))^{1/p}g(s)\right)^{-1}dsd\sigma(y)\\&
\leq\left(\int_{\wp}\int_{0}^{r}s^{Q-1}\left[f(sy)(\psi_{1}(sy))^{1/p}g(s)\right]^{p}dsd\sigma(y)\right)^{1/p}\\&
\times\left(\int_{\wp}\int_{0}^{r}s^{Q-1}\left[(\psi_{1}(sy))^{1/p}g(s)\right]^{-p'}dsd\sigma(y)\right)^{1/p'}.
\end{split}
\end{equation}
If we define $U, V$ and $W_{1}$ by
\begin{equation}\label{U}
U(s)=\int_{\wp}s^{Q-1}\left(f(sy)(\psi_{1}(sy))^{1/p}g(s)\right)^{p}d\sigma(y),
\end{equation}
\begin{equation}\label{V}
V(r)=\int_{0}^{r}\int_{\wp}s^{Q-1}\left((\psi_{1}(sy))^{1/p}g(s)\right)^{-p'}d\sigma(y)ds,
\end{equation}
\begin{equation}\label{W}
W_{1}(r)=\int_{\wp}r^{Q-1}\phi_{1}(ry)d\sigma(y),
\end{equation}
for $s, r>0$, respectively, then plugging \eqref{high2} into \eqref{high1} we obtain
\begin{equation}\label{high3}
\int_{\G}\phi_{1}(x)\left(\int_{B(0,r)}f(z)dz\right)^{q}dx
\leq\int_{0}^{\infty}W_{1}(r)\left(\int_{0}^{r}U(s)ds\right)^{q/p}(V(r))^{q/p'}dr.
\end{equation}
Now we need to use the following continuous version of Minkowski's inequality (see e.g. \cite[Formula 2.1]{DHK97}): Let $\theta\geq1$. Then for all $f_{1}(x),f_{2}(x)\geq0$ on $(0,\infty)$, we have
\begin{equation}\label{Mink_for}
\int_{0}^{\infty}f_{1}(x)\left(\int_{0}^{x}f_{2}(z)dz\right)^{\theta}dx
\leq \left(\int_{0}^{\infty}f_{2}(z)\left(\int_{z}^{\infty}f_{1}(x)dx\right)^{1/\theta}dz\right)^{\theta}.
\end{equation}
Using this with $\theta=q/p\geq1$ in the right hand side of \eqref{high3}, we get
\begin{multline}\label{high4}
\int_{\G}\phi_{1}(x)\left(\int_{B(0,r)}f(z)dz\right)^{q}dx\\
\leq \left(\int_{0}^{\infty}U(s)\left(\int_{s}^{\infty}W_{1}(r)(V(r))^{q/p'}dr\right)^{p/q}ds\right)^{q/p}.
\end{multline}
In order to simplify the right hand side of above, denoting $$T(s):=\int_{\wp}s^{Q-1}(\psi_{1}(sy))^{1-p'}d\sigma(y),$$ and using \eqref{g}, \eqref{V}, the integration by parts, \eqref{high_Hardy3} and \eqref{W} we compute
\begin{equation*}
\begin{split}
V(r)&=\int_{\wp}\int_{0}^{r}s^{Q-1}(\psi_{1}(sy))^{1-p'}\left(\int_{0}^{s}\int_{\wp}t^{Q-1}(\psi_{1}(tw))^{1-p'}d\sigma(w) dt\right)^{-1/p}dsd\sigma(y)\\&
=\int_{0}^{r}T(s)\left(\int_{0}^{s}T(t)dt\right)^{-1/p}ds=p'\int^{r}_{0}\frac{d}{ds}\left(\int_{0}^{s}T(t)dt\right)^{1/p'}ds\\&
=p'\left(\int_{0}^{r}T(s)ds\right)^{1/p'}=p'\left(\int_{0}^{r}\int_{\wp}s^{Q-1}(\psi_{1}(sy))^{1-p'}d\sigma(y) ds\right)^{1/p'}\\&
\leq p' A_{1}\left(\int_{r}^{\infty}s^{Q-1}\int_{\wp}\phi_{1}(sw)d\sigma(w) ds\right)^{-1/q}=p'A_{1}\left(\int_{r}^{\infty}W_{1}(s)ds\right)^{-1/q}.
\end{split}
\end{equation*}
Similarly, applying the integration by parts and \eqref{high_Hardy3}, we have from above
\begin{equation}\label{high5}
\begin{split}\int_{s}^{\infty}&W_{1}(r)(V(r))^{q/p'}dr\\&
=(p'A_{1})^{q/p'}\int_{s}^{\infty}W_{1}(r)\left(\int_{r}^{\infty}W_{1}(s)ds\right)^{-1/p'}dr\\&
=(p'A_{1})^{q/p'}p\left(\int_{s}^{\infty}W_{1}(r)dr\right)^{1/p}\\&
=(p'A_{1})^{q/p'}p\left(\int_{s}^{\infty}\int_{\wp}r^{Q-1}\phi_{1}(ry)d\sigma(y) dr\right)^{1/p}\\&
\leq (p'A_{1})^{q/p'}pA_{1}^{q/p}\left(\int_{0}^{s}r^{Q-1}\int_{\wp}(\psi_{1}(ry))^{1-p'}d\sigma(y) dr\right)^{-q/(p'p)}\\&
=A_{1}^{q}(p')^{q/p'}p(g(s))^{-q},
\end{split}
\end{equation}
where we have used \eqref{g} in the last line. Putting \eqref{high5} in \eqref{high4} and recalling \eqref{U}, we obtain
\begin{equation}\label{high6}
\begin{split}
\int_{\G}\phi_{1}(x)\left(\int_{B(0,r)}f(z)dz\right)^{q}dx&
\leq \left(\int_{0}^{\infty}U(s)A_{1}^{p}(p')^{p-1}p^{p/q}(g(s))^{-p}ds\right)^{q/p}\\&
= A_{1}^{q}(p')^{q/p'}p\left(\int_{0}^{\infty}U(s)(g(s))^{-p}ds\right)^{q/p}\\&
=A_{1}^{q}(p')^{q/p'}p\left(\int_{0}^{\infty}\int_{\wp}s^{Q-1}(f(sy))^{p}\psi_{1}(sy)d\sigma(y)ds\right)^{q/p}\\&
=A_{1}^{q}(p')^{q/p'}p\left(\int_{\G}\psi_{1}(x)(f(x))^{p}dx\right)^{q/p},
\end{split}
\end{equation}
yielding \eqref{high_Hardy1} with $C_{1}=A_{1}(p')^{1/p'}p^{1/q}$.

Now it remains to show \eqref{high_Hardy1}$\Rightarrow$\eqref{high_Hardy3}. For that, we take $f(x)=(\psi_{1}(x))^{1-p'}\chi_{(0, R)}(|x|)$ with $R>0$ to get
\begin{multline}\label{high7}
\left(\int_{\G}\psi_{1}(x)(f(x))^{p}dx\right)^{1/p}\left(\int_{|x|\leq  R}(\psi_{1}(x))^{1-p'}dx\right)^{-1/p}\\=\left(\int_{|x|\leq R}(\psi_{1}(x))^{1-p'}dx\right)^{1/p}\left(\int_{|x|\leq  R}(\psi_{1}(x))^{1-p'}dx\right)^{-1/p}=1.
\end{multline}
Consequently, by \eqref{high_Hardy1} we have
\begin{multline}\label{high8}
C=C\left(\int_{\G}\psi_{1}(x)(f(x))^{p}dx\right)^{1/p}\left(\int_{|x|\leq  R}(\psi_{1}(x))^{1-p'}dx\right)^{-1/p}\\
\geq\left(\int_{\G}\phi_{1}(x)\left(\int_{|z|\leq|x|}f(z)dz\right)^{q}dx\right)^{1/q}\left(\int_{|x|\leq  R}(\psi_{1}(x))^{1-p'}dx\right)^{-1/p}\\
\geq\left(\int_{|x|\geq  R}\phi_{1}(x)\left(\int_{|z|\leq|x|}f(z)dz\right)^{q}dx\right)^{1/q}\left(\int_{|x|\leq  R}(\psi_{1}(x))^{1-p'}dx\right)^{-1/p}\\
=\left(\int_{|x|\geq R}\phi_{1}(x)dx\right)^{1/q}\left(\int_{|z|\leq R}(\psi_{1}(z))^{1-p'}dz\right)^{1/p'}.
\end{multline}
Combining \eqref{high7} and \eqref{high8}, we obtain \eqref{high_Hardy3} with $C\geq A_{1}$.
\end{proof}

Now we show the case $q<p$ of Theorem \ref{high_Hardy_thm}:
\begin{thm}\label{high2_Hardy_thm} Let $\mathbb{G}$ be a homogeneous group
of homogeneous dimension $Q$. Let $\{\phi_{i}\}_{i=3}^{4}$ and $\{\psi_{i}\}_{i=3}^{4}$ be positive functions on $\G$, and let $1<q<p<\infty$ with $1/\delta=1/q-1/p$. Then the inequalities
\begin{equation}\label{high2_Hardy1}
\left(\int_{\G}\left(\int_{B(0,|x|)}f(z)dz\right)^{q}\phi_{3}(x)dx\right)^{\frac{1}{q}}\leq C_{1}
\left(\int_{\G}(f(x))^{p}\psi_{3}(x)dx\right)^{\frac{1}{p}}
\end{equation}
and
\begin{equation}\label{high2_Hardy2}
\left(\int_{\G}\left(\int_{\G\backslash B(0,|x|)}f(z)dz\right)^{q}\phi_{4}(x)dx\right)^{\frac{1}{q}}\leq C_{2}
\left(\int_{\G}(f(x))^{p}\psi_{4}(x)dx\right)^{\frac{1}{p}}
\end{equation}
hold for all $f\geq 0$ if and only if, respectively, we have
\begin{equation}\label{high2_Hardy3}
A_{3}:=\int_{\G}\left(\int_{\G\backslash B(0,|x|)} \phi_{3}(z)dz\right)^{\delta/q}
\left(\int_{B(0,|x|)}(\psi_{3}(z))^{1-p'}dz\right)^{\delta/q'}(\psi_{3}(x))^{1-p'}dx<\infty
\end{equation}
and
\begin{equation}\label{high2_Hardy4}
A_{4}:=\int_{\G}\left(\int_{B(0,|x|)} \phi_{4}(z)dz\right)^{\delta/q}
\left(\int_{\G\backslash B(0,|x|)}(\psi_{4}(z))^{1-p'}dz\right)^{\delta/q'}(\psi_{4}(x))^{1-p'}dx<\infty.
\end{equation}
\end{thm}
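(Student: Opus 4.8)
The plan is to follow the blueprint of the proof of Theorem \ref{high_Hardy_thm}: reduce the group inequalities \eqref{high2_Hardy1}--\eqref{high2_Hardy2} to one-dimensional weighted Hardy inequalities and then invoke the corresponding characterisation, the difference being that in the range $q<p$ the characterising quantity is the \emph{integral} expression $A_3$ (respectively $A_4$) rather than the supremum of Theorem \ref{high_Hardy_thm}. Using the polar decomposition \eqref{EQ:polar} and denoting $r=|x|$, the inner integral $\int_{B(0,|x|)}f$ depends only on $r$, so setting $F(r)=\int_{B(0,r)}f$ and $W_{3}(r)=\int_{\wp}r^{Q-1}\phi_{3}(ry)\,d\sigma(y)$ rewrites the left-hand side of \eqref{high2_Hardy1} as $\left(\int_{0}^{\infty}F(r)^{q}W_{3}(r)\,dr\right)^{1/q}$. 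The angular dependence of $f$ on each sphere $\wp$ is controlled by Hölder's inequality exactly as in \eqref{high2}, which naturally brings in the radialised weight $\int_{\wp}r^{Q-1}\psi_{3}(ry)^{1-p'}\,d\sigma(y)$ that governs the right-hand side; it is convenient to also record $\Phi_{3}(r):=\int_{r}^{\infty}W_{3}=\int_{\G\backslash B(0,r)}\phi_{3}$ and $V_{3}(r):=\int_{B(0,r)}\psi_{3}^{1-p'}$.

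For the sufficiency direction I would first use Hölder's inequality to obtain the pointwise bound $F(r)\leq\left(\int_{B(0,r)}f^{p}\psi_{3}\right)^{1/p}V_{3}(r)^{1/p'}$, so that with $G(r)=\int_{B(0,r)}f^{p}\psi_{3}$ one is reduced to estimating $\int_{0}^{\infty}G(r)^{q/p}V_{3}(r)^{q/p'}W_{3}(r)\,dr$. The decisive structural point is that, since $q<p$, the exponent $q/p<1$ and Minkowski's integral inequality \eqref{Mink_for}, which drove the case $p\leq q$, is no longer available; in its place I would apply Hölder's inequality with the conjugate exponents $p/q$ and $\delta/q$ (note that $\frac{q}{p}+\frac{q}{\delta}=1$), combined with an integration by parts transferring $W_{3}$ onto its tail $\Phi_{3}$. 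The bookkeeping of the $V_{3}$- and $\Phi_{3}$-powers is then arranged using the identity $\frac{1}{q'}-\frac{1}{p'}=-\frac{1}{\delta}$, so that exactly the integrand $\Phi_{3}^{\delta/q}V_{3}^{\delta/q'}\psi_{3}^{1-p'}$ of $A_{3}$ emerges and the whole quantity is bounded by a constant multiple of $A_{3}^{q/\delta}\left(\int_{\G}f^{p}\psi_{3}\right)^{q/p}$, which is \eqref{high2_Hardy1}.

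For necessity, a single-power test function of the form $\psi_{3}^{1-p'}V_{3}(|x|)^{\gamma}$ (which sufficed for $p\leq q$) only reproduces a supremum condition, so I would instead test \eqref{high2_Hardy1} with a function $f=\psi_{3}^{1-p'}\,\Phi_{3}(|x|)^{a}V_{3}(|x|)^{b}$, truncated to $1/N\leq|x|\leq N$, with the exponents $a,b$ tuned to $\delta$. Using $(\psi_{3}^{1-p'})^{p}\psi_{3}=\psi_{3}^{1-p'}$ and the monotonicity of $\Phi_{3}$ (non-increasing) and $V_{3}$ (non-decreasing) to compare $\int_{B(0,|x|)}f$ with a power of the weights, both sides collapse to powers of $\int_{1/N\leq|x|\leq N}\Phi_{3}(|x|)^{\delta/q}V_{3}(|x|)^{\delta/q'}\psi_{3}^{1-p'}\,dx$, and letting $N\to\infty$ forces $A_{3}<\infty$. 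The equivalence \eqref{high2_Hardy2}$\Leftrightarrow$\eqref{high2_Hardy4} is entirely symmetric, exchanging $B(0,|x|)$ with its complement and $V_{3}$ with the tail integral $\int_{\G\backslash B(0,|x|)}\psi_{4}^{1-p'}$. I expect the main obstacle to lie precisely in the one-dimensional estimate in this genuinely singular regime $q<p$: without Minkowski's inequality one must balance the Hölder exponents $p/q$, $\delta/q$ and the integration by parts so that the integral quantity $A_{3}$ (and not merely the weaker supremum $A_{1}$) appears, verify that all boundary terms vanish (using $V_{3}(0)=0$ and $\Phi_{3}(\infty)=0$), and ensure that the necessity test function genuinely recovers the integral condition rather than a pointwise one.
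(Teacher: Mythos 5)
Your necessity argument is fine and essentially coincides with the paper's: the paper tests \eqref{high2_Hardy1} with exactly the family you describe, namely $f_{k}=\Phi_{3}(|x|)^{\delta/(pq)}\bigl(\int_{\alpha_{k}\le|z|\le|x|}\psi_{3}^{1-p'}\bigr)^{\delta/(pq')}\psi_{3}^{1-p'}\chi_{(\alpha_{k},\beta_{k})}(|x|)$, so your exponents should be $a=\delta/(pq)$, $b=\delta/(pq')$. The sufficiency argument, however, has a fatal gap, and it occurs at your very first step. The pointwise H\"older bound $\int_{B(0,r)}f\le G(r)^{1/p}V_{3}(r)^{1/p'}$ is too lossy in the regime $q<p$: the inequality you reduce to, $\int_{0}^{\infty}W_{3}\,G^{q/p}V_{3}^{q/p'}\,dr\le C\,A_{3}^{q/\delta}\,G(\infty)^{q/p}$, is \emph{false} for general weights with $A_{3}<\infty$, so no choice of H\"older exponents or integration by parts downstream can complete your plan. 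Concretely, radialise so that $V_{3}(r)=r$ (take $\psi_{3}^{1-p'}(x)=|x|^{1-Q}/|\wp|$) and take $\phi_{3}$ radial with tail $\Phi_{3}(r)=(1+r)^{-q/p'}\bigl(\log(e+r)\bigr)^{-\gamma}$, where $q/\delta<\gamma\le1$ (possible since $\delta=pq/(p-q)>q$). Since $\delta/q'-\delta/p'=-1$, the integrand of $A_{3}$ behaves like $r^{-1}(\log r)^{-\gamma\delta/q}$ at infinity, so $A_{3}<\infty$; but if $f$ concentrates a unit amount of $\int f^{p}\psi_{3}$ near $|x|=1$, then $G\to\chi_{(1,\infty)}$ while $\int_{0}^{\infty}W_{3}G^{q/p}V_{3}^{q/p'}dr\to\int_{1}^{\infty}W_{3}(r)r^{q/p'}dr\approx\int_{1}^{\infty}r^{-1}(\log r)^{-\gamma}dr=\infty$ because $\gamma\le1$. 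The mechanism is that under concentration $\int_{B(0,r)}f$ stays small (of order $n^{-1/p'}$) while $G$ jumps to its terminal value, so the H\"older substitution destroys exactly the cancellation that makes \eqref{high2_Hardy1} true. Note that even in the case $p\le q$ the paper does not use the naive pointwise H\"older: in \eqref{high2} it inserts the auxiliary weight $g(s)$ precisely to avoid this loss.

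The paper's actual route keeps the density $f$ (rather than its running integral $G$) alive through the key step. Writing $f=h\psi_{3}^{1-p'}$ and integrating by parts, the $q$-th power of the left-hand side equals $q\int_{0}^{\infty}G(s)\bigl(\int_{0}^{s}G\bigr)^{q-1}\Phi_{3}(s)\,ds$ with $G(s)$ the radialisation of $h\psi_{3}^{1-p'}$; then a three-factor H\"older inequality with exponents $p$, $p/(q-1)$, $p/(p-q)$ splits this into (i) $\bigl(\int_{\G}h^{p}\psi_{3}^{1-p'}\bigr)^{1/p}=\bigl(\int_{\G}f^{p}\psi_{3}\bigr)^{1/p}$, (ii) the $(q-1)/p$-th power of a weighted $L^{p}$ bound for the averaging operator $h\mapsto\bigl(\int_{B(0,|x|)}\psi_{3}^{1-p'}\bigr)^{-1}\int_{B(0,|x|)}h\psi_{3}^{1-p'}$, which is disposed of by the already proved case $p=q$ of Theorem \ref{high_Hardy_thm} after checking that its condition \eqref{high_Hardy3} holds with the uniform constant $(p-1)^{-1/p}$, and (iii) exactly $A_{3}^{(p-q)/p}$. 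So the workable replacements for your middle step are either this decomposition (integration by parts first, three-factor H\"older, plus the $p=q$ case as a lemma) or the classical Mazya--Rozin scheme; the pointwise-H\"older-first reduction cannot be repaired.
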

\begin{proof}[Proof of Theorem \ref{high2_Hardy_thm}] We show \eqref{high2_Hardy1}$\Leftrightarrow$\eqref{high2_Hardy3}, the case \eqref{high2_Hardy2}$\Leftrightarrow$\eqref{high2_Hardy4} can be proved similarly.

First, we prove \eqref{high2_Hardy3}$\Rightarrow$\eqref{high2_Hardy1}. Denote
\begin{equation}\label{W2}
W_{2}(r):=\int_{\wp}r^{Q-1}\phi_{3}(ry)d\sigma(y)
\end{equation}
and
\begin{equation}\label{G}
G(s):=\int_{\wp}s^{Q-1}h(sy)(\psi_{3}(sy))^{1-p'}d\sigma(y)
\end{equation}
for $h\geq 0$ on $\G$. Then using polar coordinates on $\G$, we calculate
\begin{equation*}
\begin{split}
\int_{\G}&\phi_{3}(x)\left(\int_{B(0,|x|)}h(z)(\psi_{3}(z))^{1-p'}dz\right)^{q}dx\\&
=\int_{0}^{\infty}\int_{\wp}r^{Q-1}\phi_{3}(rw)d\sigma(w)
\left(\int_{0}^{r}\int_{\wp}s^{Q-1}h(sy)(\psi_{3}(sy))^{1-p'}d\sigma(y) ds\right)^{q}dr\\&
=\int_{0}^{\infty}W_{2}(r)\left(\int_{0}^{r}G(s)ds\right)^{q}dr\\&
=q\int_{0}^{\infty}G(s)\left(\int_{0}^{s}G(r)dr\right)^{q-1}\left(\int_{s}^{\infty}W_{2}(r)dr\right)ds\\&
=q\int_{\wp}\int_{0}^{\infty}s^{Q-1}h(sy)(\psi_{3}(sy))^{1-p'}
\left(\int_{0}^{s}\int_{\wp}r^{Q-1}h(rw)(\psi_{3}(rw))^{1-p'}d\sigma(w)dr\right)^{q-1}\\&
\times\left(\int_{s}^{\infty}W_{2}(r)dr\right)dsd\sigma(y)\\&
=q\int_{\wp}\int_{0}^{\infty}s^{Q-1}h(sy)(\psi_{3}(sy))^{(1-p')(\frac{1}{p}+\frac{q-1}{p}+\frac{p-q}{p})}
\\ &\times\left(\frac{\int_{\wp}\int_{0}^{s}r^{Q-1}h(rw)(\psi_{3}(rw))^{1-p'}drd\sigma(w)}
{\int_{\wp}\int_{0}^{s}r^{Q-1}(\psi_{3}(rw))^{1-p'}drd\sigma(w)}\right)^{q-1}\\&
\times
\left(\left(\int_{\wp}\int_{0}^{s}r^{Q-1}(\psi_{3}(rw))^{1-p'}drd\sigma(w)\right)^{q-1}
\left(\int_{s}^{\infty}W_{2}(r)dr\right)\right)dsd\sigma(y).
\end{split}
\end{equation*}
Here, using H\"{o}lder's inequality (with three factors) for $\frac{1}{p}+\frac{q-1}{p}+\frac{p-q}{p}=1$ we get
\begin{equation}\label{K123}\int_{\G}\phi_{3}(x)\left(\int_{B(0,|x|)}h(z)(\psi_{3}(z))^{1-p'}dz\right)^{q}dx
\leq qK_{1}K_{2}K_{3},
\end{equation}
where
\begin{equation}\label{K1}
K_{1}=\left(\int_{\wp}\int_{0}^{\infty}s^{Q-1}(h(sy))^{p}(\psi_{3}(sy))^{1-p'}dsd\sigma(y)\right)^{1/p}
=\left(\int_{\G}(h(x))^{p}(\psi_{3}(x))^{1-p'}dx\right)^{1/p},
\end{equation}
\begin{equation}\label{K2}
K_{2}=\left(\int_{\wp}\int_{0}^{\infty}s^{Q-1}(\psi_{3}(sy))^{1-p'}
\left(\frac{\int_{\wp}\int_{0}^{s}r^{Q-1}h(rw)(\psi_{3}(rw))^{1-p'}drd\sigma(w)}
{\int_{\wp}\int_{0}^{s}r^{Q-1}(\psi_{3}(rw))^{1-p'}drd\sigma(w)}\right)^{p}dsd\sigma(y)\right)^{\frac{q-1}{p}}
\end{equation}
and
\begin{multline}\label{K3}
K_{3}=\left(\int_{\wp}\int_{0}^{\infty}s^{Q-1}(\psi_{3}(sy))^{1-p'}\left(\int_{\wp}\int_{0}^{s}r^{Q-1}(\psi_{3}(rw))^{1-p'}dr d\sigma(w)\right)^{\frac{(q-1)p}{p-q}}\right.\\
\left.\times\left(\int_{s}^{\infty}W_{2}(r)dr\right)^{\frac{p}{p-q}}dsd\sigma(y)\right)^{\frac{p-q}{p}}.
\end{multline}

We have for $K_{2}$ that
$$K_{2}=
\left(\int_{\G}\frac{(\psi_{3}(x))^{1-p'}}{(\int_{B(0,|x|)}(\psi_{3}(z))^{1-p'}dz)^{p}}
\left(
\int_{B(0,|x|)}(\psi_{3}(z))^{1-p'}h(z)dz\right)^{p}dx\right)^{\frac{q-1}{p}}.$$
To apply \eqref{high_Hardy1} for $K_{2}$ with $p=q$, $f(x)=(\psi_{3}(x))^{1-p'}h(x)$ and
$$\phi_{1}(x)=\frac{(\psi_{3}(x))^{1-p'}}{(\int_{B(0,|x|)}(\psi_{3}(z))^{1-p'}dz)^{p}},\;\; \psi_{1}(x)=(\psi_{3}(x))^{(1-p')(1-p)},$$
we need to check the condition that
\begin{multline}\label{check1111}
A_{1}(R)=\left(\int_{|x|\geq R}(\psi_{3}(x))^{1-p'}\left(\int_{B(0,|x|)}(\psi_{3}(z))^{1-p'}dz\right)^{-p}dx\right)^{1/p}\\\times
\left(\int_{|x|\leq R }(\psi_{3}(x))^{1-p'}dx\right)^{1/p'}<\infty
\end{multline}
holds uniformly for all $R>0$. Indeed, once \eqref{check1111} has been established, the inequality \eqref{high_Hardy1} implies that
\begin{equation}\label{K2_2}
K_{2}\leq C\left(\int_{\G}(\psi_{3}(x))^{(1-p')(1-p+p)} (h(x))^{p}dx\right)^{\frac{q-1}{p}}=C\left(\int_{\G}(h(x))^{p}(\psi_{3}(x))^{1-p'}dx\right)^{\frac{q-1}{p}}.
\end{equation}
To check \eqref{check1111}, denoting $S(s)=\int_{\wp}s^{Q-1}(\psi_{3}(sw))^{1-p'}d\sigma(w)$ and using the integration by parts we compute
\begin{equation*}
\begin{split}
A_{1}(R)&=
\left(\int_{\wp}\int_{R}^{\infty}r^{Q-1}(\psi_{3}(rw))^{1-p'}
\left(\int_{0}^{r}S(s)ds\right)^{-p}drd\sigma(w)\right)^{1/p}
\left(\int_{0}^{R}S(s)ds\right)^{1/p'}\\&
=\left(\int_{R}^{\infty}\left(\int_{0}^{r}S(s)ds\right)^{-p}S(r)dr\right)^{1/p}
\left(\int_{0}^{R}S(s)ds\right)^{1/p'}\\&
\leq \left(\frac{1}{p-1}\left(\int_{0}^{R}S(s)ds\right)^{1-p}\right)^{1/p}
\left(\int_{0}^{ R}S(s)ds\right)^{1/p'}=(p-1)^{-1/p}<\infty.
\end{split}
\end{equation*}
Next, for $K_{3}$, taking into account $\frac{1}{\delta}=\frac{1}{q}-\frac{1}{p}=\frac{p-q}{pq}$ and using \eqref{high2_Hardy3}, we have
\begin{equation}\label{K3_3}
\begin{split}
K_{3}&=
\left(\int_{0}^{\infty}\int_{\wp}\left(\int_{s}^{\infty}W_{2}(r)dr\right)^{\delta/q}
\left(\int_{\wp}\int_{0}^{r}r^{Q-1}(\psi_{3}(rw))^{1-p'}drd\sigma(w)\right)^{\delta/q'}\right.\\&
\left. \times s^{Q-1}(\psi_{3}(sy))^{1-p'}d\sigma(y) ds\right)^{\frac{p-q}{p}}\\&
=\left(\int_{\G}\left(\int_{\G\backslash B(0,|x|)}\phi_{3}(z)dz\right)^{\delta/q}\left(\int_{B(0,|x|)}(\psi_{3}(z))^{1-p'}dz\right)^{\delta/q'}
(\psi_{3}(x))^{1-p'}dx\right)^{\frac{p-q}{p}}\\&
=A_{3}^{\frac{p-q}{p}}<\infty.
\end{split}
\end{equation}
Now, plugging \eqref{K1}, \eqref{K2_2} and \eqref{K3_3} into \eqref{K123}, we obtain
$$\int_{\G}\phi_{3}(x)\left(\int_{B(0,|x|)}h(z)(\psi_{3}(z))^{1-p'}dz\right)^{q}dx\leq CA_{3}^{\frac{p-q}{p}}\left(\int_{\G}(h(x))^{p}(\psi_{3}(x))^{1-p'}dx\right)^{\frac{1}{p}+\frac{q-1}{p}},$$
which implies \eqref{high2_Hardy1} after the setting $h:=f\psi_{3}^{p'-1}$.

To show \eqref{high2_Hardy1}$\Rightarrow$\eqref{high2_Hardy3}, putting the functions
 $$f_{k}(x)=
\left(\int_{|y|\geq |x|}\phi_{3}(z)dz\right)^{\delta/(pq)}
\left(\int_{\alpha_{k}\leq |z|\leq |x|} (\psi_{3}(z))^{1-p'}dz\right)^{\delta/(pq')}$$
$$\times (\psi_{3}(x))^{1-p'} \chi_{(\alpha_{k},\beta_{k})}(|x|), \;k=1,2,\ldots,$$
instead of $f(x)$ in \eqref{high2_Hardy1}, we get \eqref{high2_Hardy3}, where $0<\alpha_{k}<\beta_{k}$ with $\alpha_{k}\searrow 0$ and $\beta_{k}\nearrow \infty$ for $k\rightarrow \infty$.
\end{proof}
We also note another version of weighted Hardy inequalities with the radial derivative.
\begin{thm}\label{weight_hardy_thm} Let $\mathbb{G}$ be a homogeneous group
of homogeneous dimension $Q$. Let $\phi_{5},\psi_{5}$ be positive weight functions on $\G$ and let $1<p\leq q<\infty$. Then there exists a positive constant $C$ such that
\begin{equation}\label{weight_hardy1}
\left(\int_{\G}\phi_{5}(x)|f(x)|^{q}dx\right)^{1/q}\leq C\left(\int_{\G}\psi_{5}(x)|\R_{|x|} f(x)|^{p}dx\right)^{1/p}
\end{equation}
holds for all radial functions $f$ with $f(0)=0$ if and only if
\begin{equation}\label{weight_hardy2}
A_{5}:=\sup_{R>0}\left(\int_{|x|\geq R}\phi_{5}(x)dx\right)^{1/q}\left(\int_{0}^{ R}\left(\int_{\wp}r^{Q-1}\psi_{5}(ry)d\sigma(y)\right)^{1-p^{\prime}}dr\right)^{1/p^{\prime}}<\infty,
\end{equation}
where $\mathcal{R}_{|x|}:=\frac{d}{d|x|}$ is the radial derivative.
\end{thm}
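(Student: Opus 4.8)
The plan is to use the radiality of $f$ to collapse \eqref{weight_hardy1} to a one-dimensional weighted Hardy inequality, and then to invoke the classical Muckenhoupt--Bradley characterisation (the same one-dimensional mechanism that already drives the proof of Theorem \ref{high_Hardy_thm}). Writing $f(x)=F(|x|)$, so that $\mathcal{R}_{|x|}f=F'(|x|)$, the hypothesis $f(0)=0$ and the fundamental theorem of calculus give the pointwise bound
$$|f(x)|\leq\int_{0}^{|x|}|F'(s)|\,ds=\int_{0}^{|x|}g(s)\,ds,\qquad g(s):=|F'(s)|\geq 0.$$
First I would substitute this into the left-hand side of \eqref{weight_hardy1} and pass to polar coordinates \eqref{EQ:polar}; introducing the one-dimensional weights
$$u(r):=r^{Q-1}\int_{\wp}\phi_{5}(ry)\,d\sigma(y),\qquad v(r):=r^{Q-1}\int_{\wp}\psi_{5}(ry)\,d\sigma(y),$$
and noting that $g(|x|)^{p}=|\mathcal{R}_{|x|}f(x)|^{p}$ is itself radial, the two sides of \eqref{weight_hardy1} turn into $\big(\int_{0}^{\infty}u(r)(\int_{0}^{r}g(s)\,ds)^{q}\,dr\big)^{1/q}$ and $\big(\int_{0}^{\infty}v(r)g(r)^{p}\,dr\big)^{1/p}$, respectively.

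Thus the claim is equivalent to the one-dimensional inequality $\big(\int_{0}^{\infty}u(r)(\int_{0}^{r}g)^{q}dr\big)^{1/q}\leq C\big(\int_{0}^{\infty}v\,g^{p}\big)^{1/p}$ for all $g\geq 0$, which for $1<p\leq q<\infty$ holds if and only if $\sup_{R>0}(\int_{R}^{\infty}u)^{1/q}(\int_{0}^{R}v^{1-p'})^{1/p'}<\infty$. Unwinding the definitions of $u$ and $v$ yields $\int_{R}^{\infty}u(r)\,dr=\int_{|x|\geq R}\phi_{5}(x)\,dx$ and $\int_{0}^{R}v(r)^{1-p'}\,dr=\int_{0}^{R}\big(\int_{\wp}r^{Q-1}\psi_{5}(ry)\,d\sigma(y)\big)^{1-p'}\,dr$, so this supremum is exactly $A_{5}$ of \eqref{weight_hardy2}. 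If I prefer to be self-contained rather than quote the scalar result (cf. \cite{DHK97}), I can simply repeat the argument of Theorem \ref{high_Hardy_thm} in the single radial variable, using H\"{o}lder's inequality and the Minkowski-type inequality \eqref{Mink_for} with $\theta=q/p\geq 1$; the computation is identical once the weights $u,v$ replace $W_{1}$ and $\psi_{1}$ there.

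For the converse, to deduce $A_{5}<\infty$ from \eqref{weight_hardy1}, I would test on the radial functions $f(x)=\int_{0}^{|x|}g_{0}(s)\,ds$ with $g_{0}(s)=v(s)^{1-p'}\chi_{(0,R)}(s)$, exactly mirroring the choice used in the proof of \eqref{high_Hardy1}$\Rightarrow$\eqref{high_Hardy3}; such an $f$ is radial with $f(0)=0$ and $\mathcal{R}_{|x|}f=g_{0}$, and substituting reproduces the two factors of $A_{5}$ and forces its finiteness. The only genuinely delicate point is the role of the fundamental-theorem step in the two directions: for sufficiency the inequality $|f|\leq\int_{0}^{|x|}|F'|$ is all that is needed, whereas for necessity one must take $g_{0}\geq 0$ so that this becomes an equality and no sharpness is lost. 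Everything else is routine bookkeeping of the polar weights, together with the measurability and finiteness checks required to apply Fubini and the scalar Hardy inequality, so I do not anticipate any serious obstacle beyond correctly organising this reduction.
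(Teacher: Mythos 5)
Your proposal is correct and is essentially the paper's own argument: the paper likewise writes $f(x)=\tilde f(|x|)$, passes to polar coordinates with the radialized weights $\Phi(r)=\int_{\wp}r^{Q-1}\phi_{5}(ry)\,d\sigma(y)$, $\Psi(r)=\int_{\wp}r^{Q-1}\psi_{5}(ry)\,d\sigma(y)$, uses $f(0)=0$ to write $\tilde f(r)=\int_{0}^{r}\R_{s}\tilde f(s)\,ds$, and then invokes the Muckenhoupt-type characterisation (via its Theorem \ref{high_Hardy_thm}, i.e. \eqref{high_Hardy1} and \eqref{high_Hardy3}) for the resulting one-dimensional inequality, which is exactly your reduction to the weights $u,v$ and the condition $A_{5}<\infty$. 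Your explicit test function for the necessity direction is simply the step the paper leaves implicit inside the ``if and only if'' of that theorem.
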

In the abelian case $\mathbb{G}=(\Rn,+)$ and $Q=n$, \eqref{weight_hardy1} was obtained in \cite{DHK97} and in \cite{Saw84}.
\begin{proof}[Proof of Theorem \ref{weight_hardy_thm}] If we denote $\tilde{f}(r)=f(x)$ for $r=|x|$ and
$$\Phi(r)=\int_{\wp}r^{Q-1}\phi_{5}(ry)d\sigma(y),\ \ \ \ \ \ \ \Psi(r)=\int_{\wp}r^{Q-1}\psi_{5}(ry)d\sigma(y),$$
then using $\tilde{f}(0)=0$ we have
\begin{equation*}
\begin{split}
&\left(\int_{\G}\phi_{5}(x)|f(x)|^{q}dx\right)^{1/q}
=\left(\int_{\wp}\int_{0}^{\infty}r^{Q-1}\phi_{5}(ry)|\tilde{f}(r)|^{q}drd\sigma(y)\right)^{1/q}\\&
=\left(\int_{0}^{\infty}\Phi(r)|\tilde{f}(r)|^{q}dr\right)^{1/q}=
\left(\int_{0}^{\infty}\Phi(r)\left|\int_{0}^{r}\R_{r}\tilde{f}(r)dr\right|^{q}dr\right)^{1/q}\\&
\leq C\left(\int_{0}^{\infty}\Psi(r)\left|\R_{r}\tilde{f}(r)\right|^{p}dr\right)^{1/p}
=C\left(\int_{\G}\psi_{5}(x)|\R_{|x|} f(x)|^{p}dx\right)^{1/p}
\end{split}
\end{equation*}
if and only if the condition \eqref{weight_hardy2} holds by Theorem \ref{high_Hardy_thm}, namely by \eqref{high_Hardy1} and \eqref{high_Hardy3}.
\end{proof}
Now we introduce another integral Hardy inequality.
\begin{thm}\label{Hardy_thm_new} Let $\mathbb{G}$ be a homogeneous Lie group of homogeneous dimension $Q$. Let $|\cdot|$ be an arbitrary homogeneous quasi-norm. Let $1<p\leq q<\infty$ and $0<a<Q/p$. Let $0\leq b<Q$ and $\frac{a}{Q}=\frac{1}{p}-\frac{1}{q}+\frac{b}{qQ}$. Assume that $|T^{(1)}_{a}(x)|\leq C_{2} |x|^{a-Q}$ for some positive $C_{2}=C_{2}(a,Q)$. Then there exists a positive constant $C_{1}=C_{1}(p,q,a,b)$ such that
\begin{equation}\label{Hardy_new1}
\left\|\frac{f\ast T^{(1)}_{a}}{|x|^{\frac{b}{q}}}\right\|_{L^{q}(\G)}\leq
C_{1}\|f\|_{L^{p}(\G)}
\end{equation}
holds for all $f\in L^{p}(\G)$.
\end{thm}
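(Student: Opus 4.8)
The plan is to reduce the statement to a weighted estimate for the purely radial Riesz kernel and then to split the convolution into three regions according to the relative size of $|y|$ and $|x|$, handling the two off-diagonal regions by the weighted integral Hardy inequalities of Theorem \ref{high_Hardy_thm} and the remaining diagonal region by a scale-localised fractional integration argument. First, since $|f\ast T^{(1)}_{a}(x)|\leq C_{2}\int_{\G}|f(y)|\,|y^{-1}x|^{a-Q}dy$ by the kernel bound, it suffices to prove the inequality for $f\geq 0$ and for the kernel $|x|^{a-Q}$ itself. Fixing the quasi-triangle constant $C_{0}$ from \eqref{triangle}, I would decompose $\G$, for each $x$, into $E_{1}(x)=\{|y|\leq |x|/(2C_{0})\}$, $E_{3}(x)=\{|y|\geq 2C_{0}|x|\}$ and the comparable region $E_{2}(x)=\{|x|/(2C_{0})<|y|<2C_{0}|x|\}$. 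Using $|x|\leq C_{0}(|y|+|y^{-1}x|)$ and $|y^{-1}x|\leq C_{0}(|y|+|x|)$ one checks that $|y^{-1}x|\sim|x|$ on $E_{1}(x)$ and, symmetrically, $|y^{-1}x|\sim|y|$ on $E_{3}(x)$; since $a-Q<0$ this yields $|y^{-1}x|^{a-Q}\sim|x|^{a-Q}$ on $E_{1}(x)$ and $|y^{-1}x|^{a-Q}\sim|y|^{a-Q}$ on $E_{3}(x)$.

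For the region $E_{1}$ one has, using $f\geq 0$ and $E_{1}(x)\subset B(0,|x|)$, the bound $|x|^{-b/q}\int_{E_{1}(x)}f(y)|y^{-1}x|^{a-Q}dy\lesssim |x|^{a-Q-b/q}\int_{B(0,|x|)}f(y)dy$, which is exactly of the form treated by \eqref{high_Hardy1} with $\phi_{1}(x)=|x|^{(a-Q)q-b}$ and $\psi_{1}\equiv1$. Evaluating $A_{1}$ from \eqref{high_Hardy3} in polar coordinates \eqref{EQ:polar}, both factors are pure powers of $R$, and the requirement that their product be independent of $R$ reduces precisely to the hypothesis $\frac{a}{Q}=\frac1p-\frac1q+\frac{b}{qQ}$; the same relation forces the power exponents to have the correct sign so that the integrals converge (here $0\leq b<Q$ is used), whence $A_{1}<\infty$. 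Symmetrically, on $E_{3}$ one gets $|x|^{-b/q}\int_{E_{3}(x)}f(y)|y^{-1}x|^{a-Q}dy\lesssim|x|^{-b/q}\int_{\G\setminus B(0,|x|)}f(y)|y|^{a-Q}dy$, and applying \eqref{high_Hardy2} to $g(y)=f(y)|y|^{a-Q}$ with $\phi_{2}(x)=|x|^{-b}$ and $\psi_{2}(y)=|y|^{(Q-a)p}$, the finiteness of $A_{2}$ in \eqref{high_Hardy4} again reduces to the same exponent relation, the convergence at infinity of $\int_{\{|x|\geq R\}}|x|^{-(Q-a)p'}dx$ being guaranteed by $a<Q/p$.

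The delicate part, and the \emph{main obstacle}, is the comparable region $E_{2}$, where $|y|\sim|x|$ (so $|x|^{-b/q}\sim|y|^{-b/q}$) and the kernel retains its full singularity at $y=x$, so that the Hardy inequalities no longer apply. I would treat it by the dyadic decomposition $\G=\bigcup_{k}A_{k}$ with $A_{k}=\{2^{k}\leq|x|<2^{k+1}\}$: for $x\in A_{k}$ the set $E_{2}(x)$ lies in a fixed dilate $\widetilde{A}_{k}$ of $A_{k}$, a union of boundedly many consecutive annuli. On each scale the classical (unweighted) fractional integration theorem on homogeneous groups gives $\big\|\int_{\widetilde A_{k}}f(y)|y^{-1}x|^{a-Q}dy\big\|_{L^{q_{0}}(\G)}\lesssim\|f\|_{L^{p}(\widetilde A_{k})}$ with $\tfrac1{q_{0}}=\tfrac1p-\tfrac aQ$; since this is the unweighted statement there is no circularity with the weighted Hardy--Littlewood--Sobolev inequality derived later in Section \ref{SEC:HLS}. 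Because $b\geq0$ forces $q\leq q_{0}$, Hölder's inequality on the finite-measure annulus $A_{k}$ costs a factor $|A_{k}|^{1/q-1/q_{0}}\sim 2^{kQ(1/q-1/q_{0})}=2^{kb/q}$, while the weight contributes $|x|^{-b/q}\sim 2^{-kb/q}$ on $A_{k}$; these cancel exactly, leaving the scale-uniform bound $\lesssim\|f\|_{L^{p}(\widetilde A_{k})}$. Finally I would sum over $k$: raising to the $q$-th power and using $q\geq p$ together with the embedding $\ell^{p}\hookrightarrow\ell^{q}$ and the bounded overlap of the dilated annuli $\widetilde A_{k}$ yields $\big(\sum_{k}\|f\|_{L^{p}(\widetilde A_{k})}^{q}\big)^{1/q}\leq\big(\sum_{k}\|f\|_{L^{p}(\widetilde A_{k})}^{p}\big)^{1/p}\lesssim\|f\|_{L^{p}(\G)}$. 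Combining the contributions of $E_{1}$, $E_{2}$ and $E_{3}$ gives \eqref{Hardy_new1}.
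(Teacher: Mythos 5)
Your proposal is correct, and its skeleton coincides with the paper's proof: the same three-way decomposition according to whether $|y|\lesssim|x|$, $|y|\sim|x|$ or $|y|\gtrsim|x|$, with the two off-diagonal regions handled exactly as in the paper by the weighted integral Hardy inequalities \eqref{high_Hardy1} and \eqref{high_Hardy2} with the same choice of weights (your verifications that $A_{1}<\infty$ and $A_{2}<\infty$ are the computations \eqref{check1_new} and \eqref{check2_new}). The only genuine divergence is in the diagonal region. The paper estimates each dyadic piece by Young's convolution inequality applied to $f\cdot\chi_{\{2^{k-1}\leq|\cdot|<2^{k+2}\}}$ and the \emph{truncated} kernel $\widetilde{I_{a}}\cdot\chi_{\{|\cdot|<3\cdot2^{k+1}\}}$, whose $L^{r}$-norm (with $1+1/q=1/r+1/p$) supplies the factor $2^{kb}$ that cancels the weight; you instead invoke the classical unweighted fractional integration theorem ($L^{p}\to L^{q_{0}}$, $1/q_{0}=1/p-a/Q$) and then H\"older on the finite-measure annulus, the factor $|A_{k}|^{1/q-1/q_{0}}\sim 2^{kb/q}$ playing the same cancelling role; both routes then sum over scales via $\ell^{p}\hookrightarrow\ell^{q}$ and bounded overlap, exactly as in \eqref{K2_new}. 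What the paper's route buys is self-containedness: Young's inequality is elementary and no fractional integration theorem is needed. What your route buys is uniformity in $b$: your H\"older step is valid for all $0\leq b<Q$ (for $b=0$ it is vacuous since $q=q_{0}$), whereas the paper's truncated-kernel computation rests on $\frac{(a-Q)pq}{pq+p-q}+Q=\frac{bp}{pq+p-q}>0$, which degenerates at $b=0$ (the $L^{r}$-norm of the truncated kernel then diverges logarithmically), so that endpoint case must anyway be referred back to the classical Sobolev--Hardy--Littlewood--Sobolev inequality --- precisely the result you import. Your explicit remark that importing the unweighted theorem involves no circularity with Section \ref{SEC:HLS} is correct and worth keeping.
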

\begin{proof}[Proof of Theorem \ref{Hardy_thm_new}] We split the integral into three parts:
\begin{equation}\label{K123_new}
\int_{\G}|(f\ast T^{(1)}_{a})(x)|^{q}\frac{dx}{|x|^{b}}\leq 3^{q}(M_{1}+M_{2}+M_{3}),
\end{equation}
where
$$M_{1}:=\int_{\G}\left(\int_{\{2|y|<|x|\}}|T^{(1)}_{a}(y^{-1}x)f(y)|dy\right)^{q}
\frac{dx}{|x|^{b}},$$
$$M_{2}:=\int_{\G}\left(\int_{\{|x|\leq2|y|<4|x|\}}|T^{(1)}_{a}(y^{-1}x)f(y)|dy\right)^{q}\frac{dx}
{|x|^{b}}$$
and
$$M_{3}:=\int_{\G}\left(\int_{\{|y|>2|x|\}}|T^{(1)}_{a}(y^{-1}x)f(y)|dy\right)^{q}\frac{dx}
{|x|^{b}}.$$
First, let us estimate $M_{1}$. We can assume that $|\cdot|$ is a norm
without loss of generality because of the existence of a homogeneous norm (Proposition \ref{triangle_euc}) and since replacing the seminorm by an equivalent one only changes the appearing constants. Although we could give a proof without this hypothesis, it simplifies the arguments below. Then, by the reverse triangle inequality and $2|y|<|x|$ we have
\begin{equation}
\label{quasi_Euc_norm_new}
|y^{-1}x|\geq |x|-|y|>|x|-\frac{|x|}{2}=\frac{|x|}{2},
\end{equation}
which is $|x|<2|y^{-1}x|$. Taking into account this and that $T^{(1)}_{a}(x)$ is bounded by a radial function which is non-increasing with respect to $|x|$, we calculate
\begin{equation}\label{Log_HardyK1_1_new}
\begin{split}
M_{1}&\leq \int_{\G}\left(\int_{\{2|y|<|x|\}}|f(y)|dy\right)^{q}\left(\sup_{\{|x|<2|z|\}}|T^{(1)}_{a}(z)|\right)^{q}
\frac{dx}{|x|^{b}}\\&
\leq C\int_{\G}\left(\int_{\{2|y|<|x|\}}|f(y)|dy\right)^{q} \left(\frac{|x|}{2}\right)^{(a-Q)q}
\frac{dx}{|x|^{b}}.
\end{split}
\end{equation}
In order to apply \eqref{high_Hardy1} for $M_{1}$, let us check the condition \eqref{high_Hardy3}, that is, that
\begin{equation}\label{check1_new}
\left(\int_{\{2R<|x|\}}\left(\frac{|x|}{2}\right)^{(a-Q)q}
\frac{dx}{|x|^{b}}\right)^{\frac{1}{q}}
\left(\int_{\{|x|<R\}}dx\right)^{\frac{1}{p^{\prime}}}\leq A_{1}
\end{equation}
holds for all $R>0$. To check this, we consider two cases: $R\geq1$ and $0<R<1$. Then, we compute for $R\geq1$
\begin{equation}\label{check1_1_new}
\begin{split}\left(\int_{\{2R<|x|\}}\left(\frac{|x|}{2}\right)^{(a-Q)q}
\frac{dx}{|x|^{b}}\right)^{\frac{1}{q}}&
\left(\int_{\{|x|<R\}}dx\right)^{\frac{1}{p^{\prime}}}
\\&\leq C R^{\frac{Q}{p'}}
\left(\int_{\{2R<|x|\}}\left(\frac{|x|}{2}\right)^{(a-Q)q}
\frac{dx}{|x|^{b}}\right)^{\frac{1}{q}}\\&\leq CR^{\frac{Q}{p'}}\left(\int_{\{2R<|x|\}}
|x|^{(a-Q)q-b}dx\right)^{\frac{1}{q}}\\&\leq CR^{\frac{Q}{p'}}R^{\frac{(a-Q)q-b+Q}{q}}\\&\leq C,
\end{split}
\end{equation}
since $\frac{a}{Q}=\frac{1}{p}-\frac{1}{q}+\frac{b}{qQ}$ and $(a-Q)q-b+Q=-\frac{Qq}{p'}\neq 0$.
Now we check the condition \eqref{check1_new} for $0<R<1$. Here, taking into account $(a-Q)q-b+Q=-\frac{Qq}{p'}\neq 0$ we have
\begin{equation}\label{check2_3_01_new}
\int_{\{2R<|x|\}}\left(\frac{|x|}{2}\right)^{(a-Q)q}
\frac{dx}{|x|^{b}}\leq C R^{(a-Q)q-b+Q}.
\end{equation}
It follows with $\frac{a}{Q}=\frac{1}{p}-\frac{1}{q}+\frac{b}{qQ}$ that
\begin{equation}\label{check2_3_new}
\begin{split}\left(\int_{\{2R<|x|\}}\left(\frac{|x|}{2}\right)^{(a-Q)q}
\frac{dx}{|x|^{b}}\right)^{\frac{1}{q}}&
\left(\int_{\{|x|<R\}}dx\right)^{\frac{1}{p^{\prime}}}
\\&\leq CR^{a-Q-\frac{b}{q}+\frac{Q}{q}}R^{Q/p'}\leq C
\end{split}
\end{equation}
for any $0<R<1$. Thus, we have checked \eqref{check1_new}, then we can apply \eqref{high_Hardy1} for $M_{1}$ to obtain
\begin{equation}\label{Log_Hardy_K1_2_new}
M_{1}^{\frac{1}{q}}\leq(p^{\prime})^{\frac{1}{p^{\prime}}}p^{\frac{1}{q}}{A}_{1}\|f\|_{L^{p}(\G)}.
\end{equation}
Now let us estimate $M_{3}$. Without loss of generality, we may assume again $|\cdot|$ is the norm. Then, similarly to \eqref{quasi_Euc_norm_new} we note that $2|x|<|y|$ implies $|y|<2|y^{-1}x|$. Taking into account this we obtain for $M_{3}$ that
$$M_{3}\leq \int_{\G}\left(\int_{\{|y|>2|x|\}}\left(\frac{|y|}{2}\right)^{(a-Q)}|f(y)|dy\right)^{q}\frac{dx}
{|x|^{b}}.$$
To apply \eqref{high_Hardy2} for $M_{3}$, we check the following condition:
\begin{equation}\label{check2_new}
\left(\int_{\{|x|<R\}}\frac{dx}{|x|^{b}}\right)^{\frac{1}{q}}
\left(\int_{\{2R<|x|\}}\left(\frac{|x|}{2}\right)^{(a-Q)p^{\prime}}dx\right)^{\frac{1}{p^{\prime}}}\leq
A_{2}.
\end{equation}
To check this, we consider two cases: $R\geq1$ and $0<R<1$. Then, for $R\geq1$ using $|T^{(1)}_{a}(x)|\leq C |x|^{a-Q}$ and $Q\neq ap$, one gets
\begin{equation}\label{check2_1_new}
\left(\int_{\{2R<|x|\}}\left(\frac{|x|}{2}\right)^{(a-Q)p^{\prime}}dx\right)^{\frac{1}{p^{\prime}}}
\leq C\left(\int_{\{2R<|x|\}}|x|^{(a-Q)p^{\prime}}dx\right)^{\frac{1}{p^{\prime}}}\leq CR^{a-\frac{Q}{p}}.
\end{equation}
It follows for $R\geq1$ that
\begin{equation*}
\left(\int_{\{|x|<R\}}\frac{dx}{|x|^{b}}\right)^{\frac{1}{q}}
\left(\int_{\{2R<|x|\}}\left(\frac{|x|}{2}\right)^{(a-Q)p^{\prime}}dx\right)^{\frac{1}{p^{\prime}}}\leq
CR^{a-\frac{Q}{p}+\frac{Q-b}{q}}\leq C,
\end{equation*}
since $b< Q$ and $\frac{a}{Q}=\frac{1}{p}-\frac{1}{q}+\frac{b}{qQ}$. Now we check the condition \eqref{check2_new} for $0<R<1$. In this case, noting $ap-Q<0$ we have
\begin{equation}\label{check2_3_02_new}
\int_{\{2R<|x|\}}\left(\frac{|x|}{2}\right)^{(a-Q)p^{\prime}}dx
\leq C\int_{\{2R<|x|\}}|x|^{(a-Q)p'}dx\leq C R^{(a-Q)p'+Q}.
\end{equation}
Then, it gives with
$$\int_{\{ |x|<R\}}\frac{dx}{|x|^{b}}\leqslant CR^{Q-b}$$
that
\begin{equation}\label{check2_4_new}
\begin{split}
\left(\int_{\{|x|<R\}}\frac{dx}{|x|^{b}}\right)^{\frac{1}{q}}&\left(\int_{\{2R<|x|\}} \left(\frac{|x|}{2}\right)^{(a-Q)p^{\prime}}dx\right)^{\frac{1}{p^{\prime}}}\\&\leqslant CR^{\frac{Q-b}{q}}R^{\frac{(a-Q)p'+Q}{p'}}\\&
\leqslant C,
\end{split}
\end{equation}
since $Q>b$ and $\frac{a}{Q}=\frac{1}{p}-\frac{1}{q}+\frac{b}{qQ}$. Thus, we have checked \eqref{check2_new}, then we can apply \eqref{high_Hardy2} for $M_{3}$ to get
\begin{equation}\label{K3_new}
M_{3}^{\frac{1}{q}}\leq(p^{\prime})^{\frac{1}{p^{\prime}}}p^{\frac{1}{q}}{A}_{2}\|f\|_{L^{p}(\G)}.
\end{equation}
Finally, we estimate $M_{2}$. We write
$$
M_{2}=\sum_{k\in\mathbb{Z}}\int_{\{2^{k}\leqslant |x|<2^{k+1}\}}\left(\int_{\{|x|\leqslant 2|y|\leqslant 4|x|\}}|T^{(1)}_{a}(y^{-1}x)f(y)|dy\right)^{q}\frac{dx}{|x|^{b}}.
$$
Since $|x|\leqslant 2|y|\leqslant 4|x|$ and $2^{k}\leqslant |x|<2^{k+1}$, we have $2^{k-1}\leqslant |y|<2^{k+2}$.  As in \eqref{quasi_Euc_norm_new}, assuming $|\cdot|$ is the norm and using the triangle inequality, we have
\begin{equation}
\label{quasi_Euc_norm2_new} 3|x|=|x|+2|x|\geq |x|+|y|\geq |y^{-1}x|,
\end{equation}
which implies $0\leq |y^{-1}x|\leq3|x|<3\cdot 2^{k+1}$. If we denote $\widetilde{I_{a}}(x):=C_{2}|x|^{a-Q}$, then $|T^{(1)}_{a}(x)|\leq \widetilde{I_{a}}(x)$. Taking into account these, applying Young's inequality (well-known, see e.g. \cite[Proposition 1.5.2]{FR16}) for $1+\frac{1}{q}=\frac{1}{r}+\frac{1}{p}$ with $r\in [1,\infty]$ we estimate $M_{2}$ by
\begin{equation}\label{K2_new}
\begin{split}
M_{2}&\leq \sum_{k\in\mathbb{Z}}2^{-kb}\int_{\G}(([f\cdot \chi_{\{2^{k-1}\leqslant |\cdot|<2^{k+2}\}}]\ast \widetilde{I}_{a})(x))^{q}dx\\&
= \sum_{k\in\mathbb{Z}}2^{-kb}\|[f\cdot \chi_{\{2^{k-1}\leqslant |\cdot|<2^{k+2}\}}]\ast \widetilde{I}_{a}\|^{q}_{L^{q}(\G)}\\&
\leq \sum_{k\in\mathbb{Z}}2^{-kb}\|\widetilde{I}_{a}\cdot \chi_{\{0\leqslant |\cdot|<3\cdot2^{k+1}\}}\|^{q}_{L^{r}(\G)}\|f\cdot \chi_{\{2^{k-1}\leqslant |\cdot|<2^{k+2}\}}\|^{q}_{L^{p}(\G)}\\&
= C_{2}\sum_{k\in\mathbb{Z}}2^{-kb}\left(\int_{|x|<3\cdot2^{k+1}}|x|^{(a-Q)r}dx\right)^{\frac{q}{r}}\|f\cdot \chi_{\{2^{k-1}\leqslant |x|<2^{k+2}\}}\|^{q}_{L^{p}(\G)}\\&
\leq C\sum_{k\in\mathbb{Z}}2^{-kb}(3\cdot2^{k+1})^
{\left(\frac{(a-Q)pq}{pq+p-q}+Q\right)\frac{pq+p-q}{p}}\|f\cdot \chi_{\{2^{k-1}\leqslant |x|<2^{k+2}\}}\|^{q}_{L^{p}(\G)}\\&
=C\sum_{k\in\mathbb{Z}}2^{-kb}(3\cdot2^{k+1})^
{b}\|f\cdot \chi_{\{2^{k-1}\leqslant |x|<2^{k+2}\}}\|^{q}_{L^{p}(\G)}\\&
\leq C\sum_{k\in\mathbb{Z}}\|f\cdot \chi_{\{2^{k-1}\leqslant |x|<2^{k+2}\}}\|^{q}_{L^{p}(\G)}\\&
\leq C\|f\|^{q}_{L^{p}(\G)},
\end{split}
\end{equation}
since $\frac{(a-Q)pq}{pq+p-q}+Q=\frac{bp}{pq+p-q}>0$ and $q\geq p$.

Thus, \eqref{Log_Hardy_K1_2_new}, \eqref{K3_new} and \eqref{K2_new} complete the proof of Theorem \ref{Hardy_thm_new}.
\end{proof}
\begin{rem}\label{Schurtest_proof_hom}  In the case $p=q$, we can also prove Theorem \ref{Hardy_thm_new} by using Schur's test \cite{FR74_Schur}. Since $p=q$, we have $b=ap$ from $\frac{a}{Q}=\frac{1}{p}-\frac{1}{q}+\frac{b}{qQ}$. Let $S_{a}f:=|x|^{-b/p}(f\ast |x|^{a-Q})$, then $S^{*}_{a}g:=(|x|^{-b/p}g)\ast |x|^{a-Q}$, where $(f,S^{*}_{a}g)=(S_{a}f,g)$. Since the integral kernel of $S_{a}$ is positive, by Schur's test we see that instead of proving the estimate
$$\|S_{a}f\|_{L^{p}(\G)}\leq A_{a,p}^{1/p'}B_{a,p}^{1/p}\|f\|_{L^{p}(\G)}$$
for all $f\in L^{p}(\G)$, it is enough to exhibit a positive function $h$ and constants $A_{a,p}$ and $B_{a,p}$ such that
$$S_{a}(h^{p'})(x)\leq A_{a,p} (h(x))^{p'}\;\;\text{and}\;\;S^{*}_{a}(h^{p})(x)\leq B_{a,p} (h(x))^{p}$$
for almost all $x\in \G$.

Let us take $h_{c}(x):=|x|^{c-Q}$ with $c>0$ and consider the convolution integrals
$$h_{c}^{p'}\ast |x|^{a-Q}\;\;\text{and}\;\;(|x|^{-b/p}h_{c}^{p})\ast |x|^{a-Q},$$
which arise in the computation of $S_{a}(h_{c}^{p'})$ and $S^{*}_{a}(h_{c}^{p})$. We see that the homogeneity orders of $h_{c}^{p'}$ and $|x|^{-b/p}h_{c}^{p}$ are $(c-Q)p'$ and $(c-Q)p-b/p$, respectively. Then, the homogeneity of $h_{c}^{p'}\ast |x|^{a-Q}$ and $(|x|^{-b/p}h_{c}^{p})\ast |x|^{a-Q}$ are $a-Q+(c-Q)p'$ and $a-Q+(c-Q)p-b/p$, respectively. Therefore, these convolution integrals converge absolutely in $\G\backslash\{0\}$ if and only if $0<(c-Q)p'+Q<Q-a$ and $0<(c-Q)p-b/p+Q<Q-a$, that is,
$$\max\left(\frac{Q}{p},\frac{a}{p}+\frac{Q}{p'}\right)<c<Q-\frac{a}{p'}$$
since $b=ap$. This condition is true if $0<a<Q/p$.

Thus, we have obtained
$$\||x|^{-b/p}(f\ast |x|^{a-Q})\|_{L^{p}(\G)}\leq A_{a,p}^{1/p'}B_{a,p}^{1/p}\|f\|_{L^{p}(\G)},$$
where $0<a<Q/p$, $1<p<\infty$, $f\in L^{p}(\G)$ and $b=ap$.

Taking into account this and $|T^{(1)}_{a}(x)|\leq C |x|^{a-Q}$, we obtain
\begin{multline}\label{Schur1_hom}\left\|\frac{f\ast T^{(1)}_{a}}{|x|^{\frac{b}{p}}}\right\|_{L^{p}(\G)}\leq C
\left\|\frac{|f|\ast |T^{(1)}_{a}|}{|x|^{\frac{b}{p}}}\right\|_{L^{p}(\G)}\\ \leq
C \||x|^{-b/p}(|f|\ast |x|^{a-Q})\|_{L^{p}(\G)} \leq C \|f\|_{L^{p}(\G)}.
\end{multline}
\end{rem}

Now we also show the critical case $b=Q$ of Theorem \ref{Hardy_thm_new}.
\begin{thm}\label{Log_Hardy_thm} Let $\mathbb{G}$ be a homogeneous Lie group of homogeneous dimension $Q$. Let $|\cdot|$ be an arbitrary homogeneous quasi-norm and let $1<p<r<\infty$ and $p<q<(r-1)p'$, where $1/p+1/p'=1$. Assume that for $a=Q/p$ we have
\begin{equation}\label{T2_B}
|T^{(2)}_{a}(x)|\leq
C_{2}\begin{cases} |x|^{a-Q}, \text{\;for}\;x\in\mathbb{G}\backslash\{0\},\\
|x|^{-Q},   \text{\;for}\;x\in\mathbb{G}\; \text{\;with}\;|x|\geq1,\end{cases}
\end{equation} for some positive $C_{2}=C_{2}(a,Q)$. Then there exists a positive constant $C_{1}=C_{1}(p, q, r, Q)$ such that
\begin{equation}\label{Hardy_wholeG2_log}
\left\|\frac{f\ast T^{(2)}_{Q/p}}{\left(\log\left(e+\frac{1}{|x|}\right)\right)^{\frac{r}{q}}|x|^{\frac{Q}{q}}}\right\|_{L^{q}(\G)}\leq
C_{1}\|f\|_{L^{p}(\G)}
\end{equation}
holds for all $f\in L^{p}(\G)$.
\end{thm}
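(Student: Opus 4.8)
The plan is to mimic the three-region decomposition used in the proof of Theorem \ref{Hardy_thm_new}, but now exploiting the improved decay of $T^{(2)}_{Q/p}$ at infinity together with the logarithmic weight to reach the critical exponent $b=Q$. Writing $a=Q/p$ and denoting by $w(x)=\left(\log\left(e+\frac{1}{|x|}\right)\right)^{-r}|x|^{-Q}$ the $q$-th power of the weight, I would split
\[
\int_{\G}|(f\ast T^{(2)}_{a})(x)|^{q}w(x)\,dx\leq 3^{q}(M_{1}+M_{2}+M_{3}),
\]
where $M_{1},M_{2},M_{3}$ correspond to the regions $\{2|y|<|x|\}$, $\{|x|\leq 2|y|\leq 4|x|\}$ and $\{|y|>2|x|\}$ respectively. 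By \eqref{T2_B}, the kernel $T^{(2)}_{a}$ is dominated by the radial, non-increasing profile $\widetilde T(z)=C_{2}|z|^{a-Q}$ for $|z|<1$ and $\widetilde T(z)=C_{2}|z|^{-Q}$ for $|z|\geq 1$; this is the only place where the two-sided bound enters, and the faster decay $|z|^{-Q}$ for large $|z|$ is what will make the far-field integrals converge in the critical regime.

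For $M_{1}$, as in \eqref{quasi_Euc_norm_new} the condition $2|y|<|x|$ gives $|x|<2|y^{-1}x|$, so monotonicity yields $\widetilde T(y^{-1}x)\leq\widetilde T(|x|/2)$, and I would apply the integral Hardy inequality \eqref{high_Hardy1} of Theorem \ref{high_Hardy_thm} with $\psi_{1}\equiv 1$ and $\phi_{1}(x)=\widetilde T(|x|/2)^{q}w(x)$. The task reduces to verifying \eqref{high_Hardy3}, i.e. $\sup_{R>0}\big(\int_{|x|\geq R}\phi_{1}\big)^{1/q}\big(\int_{|x|\leq R}dx\big)^{1/p'}<\infty$; passing to polar coordinates via \eqref{EQ:polar} and using $a-Q=-Q/p'$, one finds that for large $R$ the supremand behaves like $R^{-Q/p}$, while for small $R$ it behaves like $\left(\log(e+1/R)\right)^{-r/q}$, both bounded. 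Symmetrically, for $M_{3}$ the condition $|y|>2|x|$ gives $|y|<2|y^{-1}x|$ and $\widetilde T(y^{-1}x)\leq\widetilde T(|y|/2)$; after enlarging the inner region to $\{|y|>|x|\}$ (legitimate since the integrand is already nonnegative and $x$-free), I would apply \eqref{high_Hardy2} with $\phi_{2}=w$ and $\psi_{2}(y)=\widetilde T(|y|/2)^{-p}$, so that the right-hand side of \eqref{high_Hardy2} becomes exactly $C\|f\|_{L^{p}(\G)}^{q}$.

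The decisive point, and the main obstacle, is the verification of condition \eqref{high_Hardy4} for $M_{3}$, since $\psi_{2}^{-(p'-1)}=\widetilde T(|x|/2)^{p'}$ and the exponent $(a-Q)p'=-Q$ is critical. In polar coordinates one computes, for small $R$, that $\big(\int_{|x|\leq R}w\big)^{1/q}\sim(\log(e+1/R))^{(1-r)/q}$ (convergent at the origin precisely because $r>1$), whereas $\big(\int_{|x|\geq R}\widetilde T(|x|/2)^{p'}\big)^{1/p'}\sim(\log(e+1/R))^{1/p'}$, the logarithm now arising from the borderline integral $\int_{R}^{1}s^{-1}ds$. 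Their product is $(\log(e+1/R))^{(1-r)/q+1/p'}$, which stays bounded as $R\to 0$ if and only if $\tfrac{1}{p'}\leq\tfrac{r-1}{q}$, i.e. $q\leq(r-1)p'$; the hypothesis $q<(r-1)p'$ makes it tend to $0$. For large $R$ the same product is controlled by $(\log R)^{1/q}R^{-Q/p}\to 0$. This is exactly where both the logarithmic weight and the precise range $p<q<(r-1)p'$ are used.

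Finally, for the diagonal term $M_{2}$ I would follow \eqref{K2_new}: decompose $\G$ into the dyadic shells $\{2^{k}\leq|x|<2^{k+1}\}$, on which $|y|\in[2^{k-1},2^{k+2})$ and $|y^{-1}x|<3\cdot 2^{k+1}$, and bound the weight crudely by $w(x)\leq|x|^{-Q}\leq 2^{-kQ}$ (the logarithmic factor only helps here). Using $|T^{(2)}_{a}(z)|\leq C_{2}|z|^{a-Q}$ and Young's convolution inequality with $1+\tfrac1q=\tfrac1m+\tfrac1p$ and $1<m<Q/(Q-a)$ (so that $\widetilde T$ is locally $L^{m}$ at the origin), a homogeneity count shows that $2^{-kQ}\|\,|\cdot|^{a-Q}\chi_{\{|\cdot|<3\cdot 2^{k+1}\}}\|_{L^{m}}^{q}$ is independent of $k$, since $-Q+(a-Q)q+Qq/m=0$ by the relation between $a=Q/p$ and $b=Q$. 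This leaves $M_{2}\leq C\sum_{k}\|f\chi_{\{2^{k-1}\leq|\cdot|<2^{k+2}\}}\|_{L^{p}(\G)}^{q}$, and since $q/p\geq 1$ the elementary inequality $\sum_{k}a_{k}^{q/p}\leq(\sum_{k}a_{k})^{q/p}$ together with the finite overlap of the shells yields $M_{2}\leq C\|f\|_{L^{p}(\G)}^{q}$. Combining the three estimates completes the proof.
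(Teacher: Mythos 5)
Your proposal is correct, and on the two off-diagonal regions it coincides with the paper's own proof: the same reductions $|x|<2|y^{-1}x|$ and $|y|<2|y^{-1}x|$ (after the standing reduction to a homogeneous norm via Proposition \ref{triangle_euc}, which you should state explicitly since your reverse-triangle steps and the bound $|y^{-1}x|\leq 3|x|$ in $M_{2}$ all rely on it), the same application of Theorem \ref{high_Hardy_thm}, and the same verification of \eqref{high_Hardy3} and \eqref{high_Hardy4}; in particular your small-$R$ computation for $M_{3}$, where the product $\left(\log\left(e+\frac{1}{R}\right)\right)^{\frac{1-r}{q}+\frac{1}{p'}}$ stays bounded precisely because $q\leq(r-1)p'$, is exactly the paper's \eqref{check2_3}--\eqref{check2_4}. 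Where you genuinely depart from the paper is the diagonal term. The paper splits $N_{2}=N_{21}+N_{22}$ at a scale $2^{k_{0}}$ below which $\left(\log\frac{1}{|x|}\right)^{r}|x|^{Q}$ is monotone: for $N_{22}$ it bounds the weight by a constant and invokes the global $L^{\tilde r}$-integrability of the kernel \eqref{Bes_est}, which uses both bounds in \eqref{T2_B} and $q>p$; for $N_{21}$ it transfers the logarithmic weight onto the kernel by monotonicity and applies Young's inequality to the log-weighted kernel $g$, whose $L^{\tilde r}$-norm is finite again because $q<(r-1)p'$. You instead discard the logarithm, bound the weight by $2^{-kQ}$ on each shell, and rerun the scale-invariant Young argument of \eqref{K2_new} from the subcritical Theorem \ref{Hardy_thm_new}; your exponent count $-Q+(a-Q)q+Qq/m=0$ is the identity $\left((a-Q)m+Q\right)q/m=b$ of \eqref{K2_new} specialised to $b=Q$, and the required local integrability holds since $1/m=1/p'+1/q$ gives $m<p'$, so the shell contributions are uniformly bounded and sum by finite overlap together with $q\geq p$. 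This is a genuine simplification, and it isolates the critical nature of the theorem: the diagonal part needs neither the logarithmic weight, nor the decay bound $|x|^{-Q}$ in \eqref{T2_B}, nor the restriction $q<(r-1)p'$ — all of these, and the hypothesis $r>1$, are forced only by the far-field term $M_{3}$ — whereas in the paper's argument the decay bound and the condition $q<(r-1)p'$ are used a second time inside $N_{2}$.
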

\begin{proof}[Proof of Theorem \ref{Log_Hardy_thm}] Let us split the integral into three parts
\begin{equation}\label{N123}
\int_{\G}|(f\ast T^{(2)}_{Q/p})(x)|^{q}\frac{dx}{\left|\log\left(e+\frac{1}{|x|}\right)\right|^{r}|x|^{Q}}\leq 3^{q}(N_{1}+N_{2}+N_{3}),
\end{equation}
where
$$N_{1}:=\int_{\G}\left(\int_{\{2|y|<|x|\}}| T^{(2)}_{Q/p}(y^{-1}x)f(y)|dy\right)^{q}
\frac{dx}{\left|\log\left(e+\frac{1}{|x|}\right)\right|^{r}|x|^{Q}},$$
$$N_{2}:=\int_{\G}\left(\int_{\{|x|\leq2|y|<4|x|\}}| T^{(2)}_{Q/p}(y^{-1}x)f(y)|dy\right)^{q}\frac{dx}
{\left|\log\left(e+\frac{1}{|x|}\right)\right|^{r}|x|^{Q}}$$
and
$$N_{3}:=\int_{\G}\left(\int_{\{|y|>2|x|\}}| T^{(2)}_{Q/p}(y^{-1}x)f(y)|dy\right)^{q}\frac{dx}
{\left|\log\left(e+\frac{1}{|x|}\right)\right|^{r}|x|^{Q}}.$$
First, let us estimate $N_{1}$. Similar to \eqref{quasi_Euc_norm_new} from $2|y|<|x|$ we get
\begin{equation}
\label{quasi_Euc_norm}
|y^{-1}x|\geq |x|-|y|>|x|-\frac{|x|}{2}=\frac{|x|}{2},
\end{equation}
which is $|x|<2|y^{-1}x|$. Denote
\begin{equation}\label{T2_B_denote}
|T^{(2)}_{a}(x)|\leq \widetilde{B}_{a}(x):=
C_{2}\begin{cases} |x|^{a-Q}, \text{\;for}\;x\in\mathbb{G}\backslash\{0\},\\
|x|^{-Q},   \text{\;for}\;x\in\mathbb{G}\; \text{\;with}\;|x|\geq1.\end{cases}
\end{equation} Since $ T^{(2)}_{Q/p}(x)$ is bounded by $\widetilde{B}_{Q/p}(x)$ which is non-increasing with respect to $|x|$, then using \eqref{quasi_Euc_norm} we get
\begin{equation}\label{Log_HardyK1_1}
\begin{split}
N_{1}&\leq \int_{\G}\left(\int_{\{2|y|<|x|\}}|f(y)|dy\right)^{q}\left(\sup_{\{|x|<2|z|\}}| T^{(2)}_{Q/p}(z)|\right)^{q}
\frac{dx}{\left|\log\left(e+\frac{1}{|x|}\right)\right|^{r}|x|^{Q}}\\&
\leq \int_{\G}\left(\int_{\{2|y|<|x|\}}|f(y)|dy\right)^{q} \left(\widetilde{B}_{Q/p}\left(\frac{x}{2}\right)\right)^{q}
\frac{dx}{\left|\log\left(e+\frac{1}{|x|}\right)\right|^{r}|x|^{Q}}.
\end{split}
\end{equation}
To apply \eqref{high_Hardy1} for $N_{1}$, we need to check the condition \eqref{high_Hardy3}, that is, that
\begin{equation}\label{check1}
\left(\int_{\{2R<|x|\}}\left(\widetilde{B}_{Q/p}\left(\frac{x}{2}\right)\right)^{q}
\frac{dx}{\left|\log\left(e+\frac{1}{|x|}\right)\right|^{r}|x|^{Q}}\right)^{\frac{1}{q}}
\left(\int_{\{|x|<R\}}dx\right)^{\frac{1}{p^{\prime}}}\leq A_{1}
\end{equation}
holds for all $R>0$. In order to check this, let us consider two cases: $R\geq1$ and $0<R<1$. Then, for $R\geq1$ using the second equality in \eqref{T2_B_denote}, one calculates
\begin{equation}\label{check1_1}
\begin{split}\left(\int_{\{2R<|x|\}}\left(\widetilde{B}_{Q/p}\left(\frac{x}{2}\right)\right)^{q}
\right.&\left.\frac{dx}{\left|\log\left(e+\frac{1}{|x|}\right)\right|^{r}|x|^{Q}}\right)^{\frac{1}{q}}
\left(\int_{\{|x|<R\}}dx\right)^{\frac{1}{p^{\prime}}}
\\&\leq C R^{\frac{Q}{p'}}
\left(\int_{\{2R<|x|\}}\left(\widetilde{B}_{Q/p}\left(\frac{x}{2}\right)\right)^{q}
\frac{dx}{|x|^{Q}}\right)^{\frac{1}{q}}\\&=CR^{\frac{Q}{p'}}\left(\int_{\{2R<|x|\}}
|x|^{-Qq-Q}dx\right)^{\frac{1}{q}}\\&\leq CR^{-Q}R^{\frac{Q}{p'}}\\&\leq C.
\end{split}
\end{equation}
Now let us check \eqref{check1} for $0<R<1$. We write
\begin{multline}\label{check2_3_01}
\int_{\{2R<|x|\}}\left(\widetilde{B}_{Q/p}\left(\frac{x}{2}\right)\right)^{q}
\frac{dx}{\left|\log\left(e+\frac{1}{|x|}\right)\right|^{r}|x|^{Q}}
\\=
\int_{\{2R<|x|<2\}}\left( \widetilde{B}_{Q/p}\left(\frac{x}{2}\right)\right)^{q}\frac{dx}{\left|\log\left(e+\frac{1}{|x|}\right)\right|^{r}|x|^{Q}}
\\+\int_{\{|x|\geqslant 2\}}\left( \widetilde{B}_{Q/p}\left(\frac{x}{2}\right)\right)^{q}\frac{dx}{\left|\log\left(e+\frac{1}{|x|}\right)\right|^{r}|x|^{Q}}.
\end{multline}
We note that the second integral in the right hand side of \eqref{check2_3_01} is integrable by the second equality in \eqref{T2_B_denote}. Then, using the first equality in \eqref{T2_B_denote} we get for the first integral that
\begin{equation*}
\begin{split}
\int_{\{2R<|x|<2\}}\left| \widetilde{B}_{Q/p}\left(\frac{x}{2}\right)\right|^{q}&
\frac{dx}{\left|\log\left(e+\frac{1}{|x|}\right)\right|^{r}|x|^{Q}}\\&
\leq \int_{\{2R<|x|<2\}}\left| \widetilde{B}_{Q/p}\left(\frac{x}{2}\right)\right|^{q}
\frac{dx}{|x|^{Q}}\\&\leq C
\int_{\{2R<|x|<2\}}|x|^{-Qq/p'-Q}dx\\&
\leq C R^{-Qq/p'}.
\end{split}
\end{equation*}
It implies with \eqref{check2_3_01} that
\begin{equation*}
\begin{split}\left(\int_{\{2R<|x|\}}\left| \widetilde{B}_{Q/p}\left(\frac{x}{2}\right)\right|^{q}
\frac{dx}{\left|\log\left(e+\frac{1}{|x|}\right)\right|^{r}|x|^{Q}}\right)^{\frac{1}{q}}&
\left(\int_{\{|x|<R\}}dx\right)^{\frac{1}{p^{\prime}}}
\\&\leq C(R^{-Q/p'}+1)R^{Q/p'}\leq C
\end{split}
\end{equation*}
for any $0<R<1$. Thus, we have checked \eqref{check1}, then applying \eqref{high_Hardy1} for $N_{1}$ one gets
\begin{equation}\label{Log_Hardy_K1_2}
N_{1}^{\frac{1}{q}}\leq(p^{\prime})^{\frac{1}{p^{\prime}}}p^{\frac{1}{q}}{A}_{1}\|f\|_{L^{p}(\G)}.
\end{equation}
Now we estimate $N_{3}$. Without loss of generality, we may assume again that $|\cdot|$ is the norm. Similarly to \eqref{quasi_Euc_norm} we obtain $|y|<2|y^{-1}x|$ from $2|x|<|y|$. Then, we have for $N_{3}$ that
$$N_{3}\leq \int_{\G}\left(\int_{\{|y|>2|x|\}}\left| \widetilde{B}_{Q/p}\left(\frac{y}{2}\right)\right||f(y)|dy\right)^{q}\frac{dx}
{\left|\log\left(e+\frac{1}{|x|}\right)\right|^{r}|x|^{Q}}.$$
In order to apply \eqref{high_Hardy2} for $N_{3}$, we need to check the following condition:
\begin{equation}\label{check2}
\left(\int_{\{|x|<R\}}\frac{dx}{\left|\log\left(e+\frac{1}{|x|}\right)\right|^{r}|x|^{Q}}\right)^{\frac{1}{q}}
\left(\int_{\{2R<|x|\}}\left| \widetilde{B}_{Q/p}\left(\frac{x}{2}\right)\right|^{p^{\prime}}dx\right)^{\frac{1}{p^{\prime}}}\leq
A_{2}.
\end{equation}
To check this, let us consider the cases: $R\geq1$ and $0<R<1$. Then, for $R\geq1$ by the second equality in \eqref{T2_B_denote}, we get
\begin{equation}\label{check2_1}
\left(\int_{\{2R<|x|\}}\left| \widetilde{B}_{Q/p}\left(\frac{x}{2}\right)\right|^{p^{\prime}}dx\right)^{\frac{1}{p^{\prime}}}
\leq C\left(\int_{\{2R<|x|\}}|x|^{-Qp^{\prime}}dx\right)^{\frac{1}{p^{\prime}}}\leq CR^{-\frac{Q}{p}}.
\end{equation}
Moreover, we have
\begin{multline*}
\int_{\{|x|<R\}}\frac{dx}{\left|\log\left(e+\frac{1}{|x|}\right)\right|^{r}|x|^{Q}}
=\int_{\{|x|<\frac{1}{2}\}}\frac{dx}{\left|\log\left(e+\frac{1}{|x|}\right)\right|^{r}|x|^{Q}}\\+
\int_{\left\{\frac{1}{2}\leqslant |x|<R\right\}}\frac{dx}{\left|\log\left(e+\frac{1}{|x|}\right)\right|^{r}|x|^{Q}},
\end{multline*}
and we note that the first summand in the right hand side of above is integrable since $r>1$. For the second term, we get
\begin{equation}\label{check2_2}
\int_{\left\{\frac{1}{2}\leq |x|<R\right\}}\frac{dx}{\left|\log\left(e+\frac{1}{|x|}\right)\right|^{r}|x|^{Q}}
\leq \int_{\left\{\frac{1}{2}\leq |x|<R \right\}}\frac{dx}{|x|^{Q}}\leq C(1+\log R).
\end{equation}
Combining \eqref{check2_1} and \eqref{check2_2}, we have for $R\geq1$ that
\begin{multline*}
\left(\int_{\{|x|<R\}}\frac{dx}{\left|\log\left(e+\frac{1}{|x|}\right)\right|^{r}|x|^{Q}}\right)^{\frac{1}{q}}
\left(\int_{\{2R<|x|\}}\left| \widetilde{B}_{Q/p}\left(\frac{x}{2}\right)\right|^{p^{\prime}}dx\right)^{\frac{1}{p^{\prime}}}\\ \leq
CR^{-\frac{Q}{p}}(1+\log R)^{\frac{1}{q}}\leq C.
\end{multline*}
Now let us check the condition \eqref{check2} for $0<R<1$. We split the integral
\begin{equation}\label{check2_3_02}
\int_{\{2R<|x|\}}\left| \widetilde{B}_{Q/p}\left(\frac{x}{2}\right)\right|^{p^{\prime}}dx=
\int_{\{2R<|x|<2\}}\left| \widetilde{B}_{Q/p}\left(\frac{x}{2}\right)\right|^{p^{\prime}}dx+\int_{\{|x|\geqslant 2\}}\left| \widetilde{B}_{Q/p}\left(\frac{x}{2}\right)\right|^{p^{\prime}}dx.
\end{equation}
We note that the second integral in the right hand side of above is integrable by the second equality in \eqref{T2_B_denote}. Then, using the first equality in \eqref{T2_B_denote} we get for the first integral that
\begin{equation*}
\int_{\{2R<|x|<2\}}\left| \widetilde{B}_{Q/p}\left(\frac{x}{2}\right)\right|^{p'}dx\leq C
\int_{\{2R<|x|<2\}}|x|^{-Q}dx
\leq C \log\left(\frac{1}{R}\right),
\end{equation*}
which implies with \eqref{check2_3_02} that
\begin{equation}\label{check2_3}
\int_{\{2R<|x|\}}\left| \widetilde{B}_{Q/p}\left(\frac{x}{2}\right)\right|^{p^{\prime}}dx\leq C\left(1+\log\left(\frac{1}{R}\right)\right).
\end{equation}
Since
$$\int_{\{ |x|<R\}}\frac{dx}{\left|\log\left(e+\frac{1}{|x|}\right)\right|^{r}|x|^{Q}}\leqslant C\left(\log\left(e+\frac{1}{R}\right)\right)^{-(r-1)},$$
and \eqref{check2_3}, and taking into account $r>1$ and $q<(r-1)p^{\prime}$ we obtain that
\begin{equation}\label{check2_4}
\begin{split}
\left(\int_{\{|x|<R\}}\frac{dx}{\left|\log\left(e+\frac{1}{|x|}\right)\right|^{r}|x|^{Q}}\right)^{\frac{1}{q}}&\left(\int_{\{2R<|x|\}} \left| \widetilde{B}_{Q/p}\left(\frac{x}{2}\right)\right|^{p^{\prime}}dx\right)^{\frac{1}{p^{\prime}}}\\&\leqslant C\left(\log \left(e+\frac{1}{R}\right)\right)^{-\frac{r-1}{q}}\left(1+\left(\log \left(\frac{1}{R}\right)\right)^{\frac{1}{p^{\prime}}}\right)\\&
\leqslant C.
\end{split}
\end{equation}
Thus, we have checked \eqref{check2}, then applying \eqref{high_Hardy2} for $N_{3}$ we obtain
\begin{equation}\label{N3}
N_{3}^{\frac{1}{q}}\leq(p^{\prime})^{\frac{1}{p^{\prime}}}p^{\frac{1}{q}}{A}_{2}\|f\|_{L^{p}(\G)}.
\end{equation}
Now let us estimate $N_{2}$. We write
$$N_{2}=\sum_{k\in\mathbb{Z}}\int_{\{2^{k}\leqslant |x|<2^{k+1}\}}\left(\int_{\{|x|\leqslant 2|y|\leqslant 4|x|\}}| T^{(2)}_{Q/p}(y^{-1}x)f(y)|dy\right)^{q}\frac{dx}{\left|\log\left(e+\frac{1}{|x|}\right)\right|^{r}|x|^{Q}}.$$
Since the function $\left(\log\left(\frac{1}{|x|}\right)\right)^{r}|x|^{Q}$ is non-decreasing with respect to $|x|$ near the origin, there exists an integer $k_{0}\in\mathbb{Z}$ with $k_{0}\leqslant -3$ such that this function is non-decreasing in $|x|\in(0,2^{k_{0}+1})$. We decompose $N_{2}$ with $k_{0}$ as follows
\begin{equation}\label{N2}
N_{2}=N_{21}+N_{22},
\end{equation}
where
$$N_{21}:=\sum_{k=-\infty}^{k_{0}}\int_{\{2^{k}\leqslant |x|<2^{k+1}\}}\left(\int_{\{|x|\leqslant 2|y|\leqslant 4|x|\}}| T^{(2)}_{Q/p}(y^{-1}x)f(y)|dy\right)^{q}\frac{dx}{\left|\log\left(e+\frac{1}{|x|}\right)\right|^{r}|x|^{Q}}$$
and
$$N_{22}:=\sum_{k=k_{0}+1}^{\infty}\int_{\{2^{k}\leqslant |x|<2^{k+1}\}}\left(\int_{\{|x|\leqslant 2|y|\leqslant 4|x|\}}| T^{(2)}_{Q/p}(y^{-1}x)f(y)|dy\right)^{q}\frac{dx}{\left|\log\left(e+\frac{1}{|x|}\right)\right|^{r}|x|^{Q}}.$$
Let us first estimate $N_{22}$. Since $|x|\leqslant 2|y|\leqslant 4|x|$ and $2^{k}\leqslant |x|<2^{k+1}$, we have $2^{k-1}\leqslant |y|<2^{k+2}$. Before starting to estimate $N_{22}$, using \eqref{T2_B} and $q>p$, let us show that
\begin{equation}\label{Bes_est}
\begin{split}
\int_{\G}| T^{(2)}_{Q/p}(x)|^{\tilde{r}}dx&=\int_{|x|<1}| T^{(2)}_{Q/p}(x)|^{\tilde{r}}dx+
\int_{|x|\geq1}| T^{(2)}_{Q/p}(x)|^{\tilde{r}}dx\\&\leq C_{2}\left(
\int_{|x|<1}|x|^{-\frac{Qq(p-1)}{pq+p-q}}dx+
\int_{|x|\geq1}|x|^{-\frac{Qpq}{pq+p-q}}dx\right)<\infty,
\end{split}
\end{equation}
where $\tilde{r}\in [1,\infty]$ is such that $1+\frac{1}{q}=\frac{1}{\tilde{r}}+\frac{1}{p}$.

Then, \eqref{Bes_est} and Young's inequality (e.g. \cite[Proposition 1.5.2]{FR16}) for $1+\frac{1}{q}=\frac{1}{\tilde{r}}+\frac{1}{p}$ with $\tilde{r}\in [1,\infty]$ imply that
\begin{equation}\label{N22}
\begin{split}
N_{22}&\leqslant C \sum_{k=k_{0}+1}^{\infty}\int_{\{2^{k}\leqslant |x|<2^{k+1}\}}\left(\int_{\{|x|\leqslant 2|y|\leqslant 4|x|\}}| T^{(2)}_{Q/p}(y^{-1}x)f(y)|dy\right)^{q}dx\\&
\leqslant C \|[f\cdot\chi_{\{2^{k-1}\leqslant |\cdot|<2^{k+2}\}}]\ast  T^{(2)}_{Q/p}\|^{q}_{L^{q}(\G)}\\& \leqslant C\| T^{(2)}_{Q/p}\| ^{q}_{L^{\tilde{r}}(\G)}\sum_{k=k_{0}+1}^{\infty}\|f\cdot\chi_{\{2^{k-1}\leqslant |\cdot|<2^{k+2}\}}\|^{q}_{L^{p}(\G)}\\&
=C \sum_{k=k_{0}+1}^{\infty}\left(\int_{\{2^{k}\leqslant |x|<2^{k+1}\}}|f(x)|^{p}dx\right)^{\frac{q}{p}}\\& \leqslant C\left(\sum_{k\in\mathbb{Z}}\int_{\{2^{k}\leqslant |x|<2^{k+1}\}}|f(x)|^{p}dx\right)^{\frac{q}{p}}\\&
=C\|f \|^{q}_{L^{p}(\G)}.
\end{split}
\end{equation}

To complete the proof it is left to estimate $N_{21}$. As in \eqref{quasi_Euc_norm}, assuming $|\cdot|$ is the norm and using the triangle inequality, we have
\begin{equation}
\label{quasi_Euc_norm2}3|x|=|x|+2|x|\geq |x|+|y|\geq |y^{-1}x|,
\end{equation}
where we have used $|y|\leqslant 2|x|$. Since $\left(\log\left(\frac{1}{|x|}\right)\right)^{r}|x|^{Q}$ is non-decreasing in $|x| \in (0,2^{k_{0}+1})$ and $3|x|\geqslant |y^{-1}x|$, we have $$\left(\log\left(\frac{1}{|x|}\right)\right)^{r}|x|^{Q}\geq \left(\log\left(\frac{1}{\left|\frac{y^{-1}x}{3}\right|}\right)\right)^{r}\left|\frac{y^{-1}x}{3}\right|^{Q}.$$
Then, these and \eqref{T2_B} yield
$$N_{21}\leq C\sum_{k=-\infty}^{k_{0}}\int_{\{2^{k}\leq|x|<2^{k+1}\}}\left(\int_{\{|x|\leq2|y|\leq4|x|\}}
|y^{-1}x|^{-\frac{Q}{p^{\prime}}}|f(y)|dy\right)^{q}\frac{dx}{\left(\log\left(\frac{1}{|x|}\right)\right)^{r}|x|^{Q}}$$
$$= C\sum_{k=-\infty}^{k_{0}}\int_{\{2^{k}\leq|x|<2^{k+1}\}}\left(\int_{\{|x|\leq2|y|\leq4|x|\}}\frac{|y^{-1}x|
^{-\frac{Q}{p^{\prime}}}|f(y)|}{\left(\left(\log\left(\frac{1}{|x|}\right)\right)^{r}|x|^{Q}\right)^{\frac{1}{q}}}dy\right)^{q}dx$$
$$\leq C\sum_{k=-\infty}^{k_{0}}\int_{\{2^{k}\leq|x|<2^{k+1}\}}\left(\int_{\{|x|\leq2|y|\leq4|x|\}}\frac{|y^{-1}x|
^{-\frac{Q}{p^{\prime}}}|f(y)|}{\left(\left(\log\left(\frac{1}{|(y^{-1}x)/3|}\right)\right)^{r}|(y^{-1}x)/3|^{Q}\right)^{\frac{1}{q}}}dy\right)^{q}dx.$$
Since $|x|\leq2|y|\leq4|x|$ and $2^{k}\leq|x|<2^{k+1}$ with $k\leq k_{0}$, we get $2^{k-1}\leq|y|<2^{k+2}$ and $|y^{-1}x|\leq3|x|<3\cdot 2^{k_{0}+1}\leq3/4$ by \eqref{quasi_Euc_norm2} and $k_{0}\leq-3$. Taking into account these and setting $$g(x):=\frac{\chi_{B_{\frac{3}{4}}(0)}(x)}{\left(\log\left(\frac{1}{|x|}\right)\right)^{\frac{r}{q}}|x|^{\frac{Q}{q}+\frac{Q}{p'}}},$$
we have for $N_{21}$ that
$$N_{21}\leq C\sum_{k=-\infty}^{k_{0}}\int_{\{2^{k}\leq|x|<2^{k+1}\}}\left(\int_{\{|x|\leq2|y|\leq4|x|\}}\frac{|f(y)|}{
{\left(\log\left(\frac{1}{|y^{-1}x|}\right)\right)^{\frac{r}{q}}|y^{-1}x|^{\frac{Q}{q}+\frac{Q}{p'}}}}dy\right)^{q}dx$$
$$\leq C\sum_{k=-\infty}^{k_{0}}\|[f\cdot\chi_{\{2^{k-1}\leq |\cdot|<2^{k+2}\}}]\ast g\|^{q}_{L^{q}(\G)}.$$
Since $p<q<(r-1)p'$, we use Young's inequality for $1+\frac{1}{q}=\frac{1}{\tilde{r}}+\frac{1}{p}$ with $\tilde{r}\in[1,\infty)$ to get
\begin{equation}\label{N21}
N_{21}\leq C \|g\|^{q}_{L^{\tilde{r}}(\G)}\sum_{k=-\infty}^{k_{0}}\|f\cdot\chi_{\{2^{k-1}\leq|\cdot|<2^{k+2}\}}\|^{q}_{L^{p}(\G)}\leq C\|f\|^{q}_{L^{p}(\G)},
\end{equation}
provided that $g\in L^{\tilde{r}}(\G)$. Since $\left(\frac{Q}{q}+\frac{Q}{p'}\right)\tilde{r}=Q$, $\frac{r\tilde{r}}{q}=\frac{rp'}{p'+q}$ and $q<(r-1)p'$, then changing variables, we obtain
$$\|g\|^{\tilde{r}}_{L^{\tilde{r}}(\G)}=
\int_{B(0,3/4)}\frac{dx}{\left(\log\left(\frac{1}{x}\right)\right)^{\frac{rp'}{p'+q}}|x|^{Q}}
=C\int_{\log\left(\frac{4}{3}\right)}^{\infty}\frac{dt}{t^{\frac{rp'}{p'+q}}}<\infty.$$
Thus, \eqref{Log_Hardy_K1_2}, \eqref{N3}, \eqref{N2}, \eqref{N22}, \eqref{N21} and \eqref{N123} complete the proof of Theorem \ref{Log_Hardy_thm}.
\end{proof}

\section{Hardy-Littlewood-Sobolev inequalities on homogeneous groups}
\label{SEC:HLS}

In this section we apply the integral Hardy inequality from the previous section to obtain the Hardy-Littlewood-Sobolev inequality on homogeneous groups. We also discuss the reversed Hardy-Littlewood-Sobolev inequalities on general homogeneous groups.

Now we start with the Hardy-Littlewood-Sobolev inequality (see \cite{HL28}, \cite{HL30} and \cite{Sob38}). We also refer to \cite{FS74} for the case of the Heisenberg group and to \cite{Lie83} and \cite{FL12} for sharp constants of the Hardy-Littlewood-Sobolev inequality. Here, we investigate the weighted Hardy-Littlewood-Sobolev inequality on general homogeneous groups.

\begin{thm}\label{HLS_thm} Let $\mathbb{G}$ be a homogeneous Lie group of homogeneous dimension $Q$ and let $|\cdot|$ be an arbitrary homogeneous quasi-norm. Let $0<\lambda<Q$ and $1<p,q<\infty$ be such that $1/p+1/q+(\alpha+\lambda)/Q=2$ with $0\leq \alpha <Q/p'$ and $\alpha+\lambda\leq Q$, where $1/p+1/p'=1$. Then there exists a positive constant $C=C(Q,\lambda, p, \alpha)$ such that
\begin{equation}\label{HLS_ineq1}
\left|\int_{\G}\int_{\G}\frac{\overline{f(x)}g(y)}{|x|^{\alpha}|y^{-1}x|^{\lambda}}dxdy\right|\leq C\|f\|_{L^{p}(\G)}\|g\|_{L^{q}(\G)}
\end{equation}
holds for all $f\in L^{p}(\G)$ and $g\in L^{q}(\G)$.
\end{thm}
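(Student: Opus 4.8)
The plan is to deduce the weighted Hardy--Littlewood--Sobolev inequality \eqref{HLS_ineq1} directly from the integral Hardy inequality of Theorem \ref{Hardy_thm_new} by a duality argument. First I would rewrite the bilinear form using the group convolution: since $(g\ast|\cdot|^{-\lambda})(x)=\int_{\G}g(y)|y^{-1}x|^{-\lambda}dy$, we have
\begin{equation*}
\int_{\G}\int_{\G}\frac{\overline{f(x)}g(y)}{|x|^{\alpha}|y^{-1}x|^{\lambda}}dxdy
=\int_{\G}\overline{f(x)}\,\frac{(g\ast|\cdot|^{-\lambda})(x)}{|x|^{\alpha}}\,dx.
\end{equation*}
Applying H\"older's inequality with the exponents $p$ and $p'$ then bounds the modulus of the left-hand side of \eqref{HLS_ineq1} by
\begin{equation*}
\|f\|_{L^{p}(\G)}\left\|\frac{g\ast|\cdot|^{-\lambda}}{|x|^{\alpha}}\right\|_{L^{p'}(\G)},
\end{equation*}
so the whole theorem reduces to establishing the single mapping estimate
\begin{equation*}
\left\|\frac{g\ast|\cdot|^{-\lambda}}{|x|^{\alpha}}\right\|_{L^{p'}(\G)}\leq C\|g\|_{L^{q}(\G)}.
\end{equation*}

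The second step is to recognise this estimate as an instance of Theorem \ref{Hardy_thm_new}. I would invoke that theorem with the following choices, stated in its own notation: take its exponent $p$ to be $q$, its exponent $q$ to be $p'$, its order $a$ to be $Q-\lambda$, its weight parameter $b$ to be $\alpha p'$, and its kernel $T^{(1)}_{a}(x)=|x|^{-\lambda}$, which satisfies $|T^{(1)}_{a}(x)|\leq|x|^{a-Q}$ with equality precisely because $a=Q-\lambda$. Under these choices the weight $|x|^{-b/q}$ appearing in Theorem \ref{Hardy_thm_new} becomes exactly $|x|^{-\alpha p'/p'}=|x|^{-\alpha}$, the input space is $L^{q}(\G)$ and the target space is $L^{p'}(\G)$, so the conclusion of Theorem \ref{Hardy_thm_new} is verbatim the mapping estimate required above.

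The crux, which is essentially the only thing to verify, is that the hypotheses of Theorem \ref{Hardy_thm_new} translate precisely into the standing assumptions here. Rewriting the defining relation $1/p+1/q+(\alpha+\lambda)/Q=2$ as $\tfrac{1}{q}-\tfrac{1}{p'}=\tfrac{Q-\alpha-\lambda}{Q}$, the hypothesis $\alpha+\lambda\leq Q$ yields $q\leq p'$, which is the admissibility requirement $p\leq q$ of Theorem \ref{Hardy_thm_new} after the substitution; the very same identity is the homogeneity balance $\tfrac{a}{Q}=\tfrac1p-\tfrac1q+\tfrac{b}{qQ}$, since it reads $\tfrac{Q-\lambda}{Q}=\tfrac1q-\tfrac1{p'}+\tfrac{\alpha}{Q}$. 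The range conditions $0<a<Q/p$ and $0\leq b<Q$ become $0<Q-\lambda$, guaranteed by $\lambda<Q$, together with $0\leq\alpha p'<Q$, that is $0\leq\alpha<Q/p'$, which is exactly the standing hypothesis; note that the strict inequality $\alpha<Q/p'$ is precisely what forces $b<Q$ and hence $a<Q/p$. Consequently all hypotheses of Theorem \ref{Hardy_thm_new} hold, the mapping estimate follows, and combining it with the H\"older bound gives \eqref{HLS_ineq1}. I expect the only delicate point to be this bookkeeping of exponents, in particular seeing that the single scaling identity simultaneously encodes both the admissibility range $q\leq p'$ and the balance condition of the integral Hardy inequality; all of the analytic work has already been carried out in the proof of Theorem \ref{Hardy_thm_new}.
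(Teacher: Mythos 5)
Your proposal is correct and follows essentially the same route as the paper's own proof: rewrite the bilinear form as $\int_{\G}\overline{f(x)}\,|x|^{-\alpha}(g\ast|\cdot|^{-\lambda})(x)\,dx$, apply H\"older with exponents $p,p'$, and then invoke Theorem \ref{Hardy_thm_new} with the kernel $|x|^{-\lambda}=|x|^{(Q-\lambda)-Q}$ and the exponent substitutions $(p,q,a,b)\mapsto(q,p',Q-\lambda,\alpha p')$, checking that $\alpha+\lambda\leq Q$ gives $q\leq p'$ and that $\alpha<Q/p'$ gives the range conditions. Your bookkeeping of the hypotheses, including the observation that the scaling identity makes $a<Q/p$ equivalent to $b<Q$, matches the paper's verification exactly.
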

\begin{proof}[Proof of Theorem \ref{HLS_thm}] Let $T^{(3)}_{a}(x):=|x|^{a-Q}$ with $0<a<Q/r$ for some $1<r<\infty$. Then, using H\"{o}lder's inequality we calculate
\begin{equation}\label{HLS_ineq2}
\begin{split}
\left|\int_{\G}\int_{\G}\frac{\overline{f(x)}g(y)}{|x|^{\alpha}|y^{-1}x|^{\lambda}}dxdy\right|
&=\left|\int_{\G}\overline{f(x)}\frac{(g\ast T^{(3)}_{Q-\lambda})(x)}{|x|^{\alpha}}dx\right| \\&
\leq \|f\|_{L^{p}(\G)}\left\|\frac{g\ast T^{(3)}_{Q-\lambda}}{|x|^{\alpha}}\right\|_{L^{p'}(\G)}.
\end{split}
\end{equation}
Note that the conditions $\alpha+\lambda\leq Q$ and $1/p+1/q+(\alpha+\lambda)/Q=2$ imply $q\leq p'$, while  $0<\lambda<Q$, $\alpha<Q/p'$ and $1/p+1/q+(\alpha+\lambda)/Q=2$ give
$$0<Q-\lambda=Q-Q\left(2-\frac{1}{p}-\frac{1}{q}\right)+\alpha<Q-Q\left(2-\frac{1}{p}-\frac{1}{q}\right)+\frac{Q}{p'}=Q/q.$$ Since we have $1<q\leq p'<\infty$, $0\leq \alpha p'<Q$, $0<Q-\lambda<Q/q$ and $(Q-\lambda)/Q=1/q-1/p'+\alpha/Q$, using Theorem \ref{Hardy_thm_new} in \eqref{HLS_ineq2} we obtain \eqref{HLS_ineq1}.
\end{proof}
\begin{rem}\label{HLS_rem_inv} Let us make some remarks concerning the reversed Hardy-Littlewood-Sobolev inequality on homogeneous groups (see \cite{DZ14}, \cite{NN17} and \cite{DFH18} for the recent Euclidean analysis of such inequalities). Namely, let us look at the validity of the inequality
\begin{equation}\label{rev_HLS1}
\int_{\G}\int_{\G} f(x)|y^{-1}x|^{\lambda}f(y)dxdy\geq C_{Q,\lambda, p}\|f\|^{\theta}_{L^{1}(\G)}\|f\|^{2-\theta}_{L^{p}(\G)}
\end{equation}
for any $0\leq f\in L^{1}\cap L^{p}(\G)$ with $f\not\equiv 0$ and $0<p<1$, where $\lambda>0$ and $\theta:=(2Q-p(2Q+\lambda))/(Q(1-p))$. When $\G=(\Rn,+)$, hence $Q=n$, the case $p=2n/(2n+\lambda)$ is investigated in \cite{DZ14} and \cite{NN17}, and the case $p>n/(n+\lambda)$ is studied in \cite{DFH18}.

We show that in the case $0<p\leq Q/(Q+\lambda)$ the inequality \eqref{rev_HLS1} is not valid, namely we show that \eqref{rev_HLS1} fails for any $C_{Q,\lambda, p}>0$. This is showed in the Euclidean case in \cite{CDP18} when $p<n/(n+\lambda)$ and in \cite{DFH18} when $p\leq n/(n+\lambda)$.

We consider
$$f_{\varepsilon}(x):=f(x)+A \varepsilon^{-Q}h(x/\varepsilon),$$
for a non-negative function $f$ with compact support and for a non-negative smooth fuction $h$ with the property $\int_{\G}h(x)dx=1$, and for some $A>0$. Suppose \eqref{rev_HLS1} holds for some $C_{Q,\lambda, p}>0$. Putting this $f_{\varepsilon}$ in the inequality \eqref{rev_HLS1}, we obtain
\begin{multline}\label{rev_HLS2}
C_{Q,\lambda,p}\leq \frac{\int_{\G}\int_{\G} f_{\varepsilon}(x)|y^{-1}x|^{\lambda}f_{\varepsilon}(y)dxdy}
{\|f_{\varepsilon}\|^{\theta}_{L^{1}(\G)}\|f_{\varepsilon}\|^{2-\theta}_{L^{p}(\G)}} \\
\rightarrow\frac{\int_{\G}\int_{\G} f(x)|y^{-1}x|^{\lambda}f(y)dxdy+2A\int_{\G}|x|^{\lambda}f(x)dx}
{(\int_{\G}f(x)dx+A)^{\theta}(\int_{\G}(f(x))^{p}dx)^{(2-\theta)/p}}
\end{multline}
as $\varepsilon \rightarrow 0_{+}$, where we have used $\int_{\G}f_{\varepsilon}(x)dx=\int_{\G}f(x)dx+A$, and when $\varepsilon \rightarrow 0_{+}$ the following facts
$$\int_{\G}(f_{\varepsilon}(x))^{p}dx\rightarrow \int_{\G}(f(x))^{p}dx$$
and
\begin{multline*}
\int_{\G}\int_{\G} f_{\varepsilon}(x)|y^{-1}x|^{\lambda}f_{\varepsilon}(y)dxdy=
\int_{\G}\int_{\G} f(x)|y^{-1}x|^{\lambda}f(y)dxdy\\+2A\int_{\G}\int_{\G} f(x)|(\varepsilon^{-1}y)^{-1}x|^{\lambda}
h(y)dxdy
+A^{2}\varepsilon^{-2Q}\int_{\G}\int_{\G}h\left(\frac{x}{\varepsilon}\right)h\left(\frac{y}{\varepsilon}\right)dxdy\\
\rightarrow \int_{\G}\int_{\G} f(x)|y^{-1}x|^{\lambda}f(y)dxdy+2A\int_{\G}|x|^{\lambda}f(x)dx,
\end{multline*}
since $\int_{\G}h(x)dx=1$.
Note that we can take the limit as $A\rightarrow +\infty$ in \eqref{rev_HLS2}, since it is valid for all $A>0$. Then, when $\theta>1$, i.e., $p<Q/(Q+\lambda)$, taking $A\rightarrow +\infty$ in \eqref{rev_HLS2} we see that $C_{Q,\lambda,p}=0$. In the case $\theta=1$, that is, $p=Q/(Q+\lambda)$, taking again the limit as $A\rightarrow +\infty$ in \eqref{rev_HLS2} we get
\begin{equation}\label{rev_HLS3}
C_{Q,\lambda,p}\leq \frac{2\int_{\G}|x|^{\lambda}f(x)dx}
{(\int_{\G}(f(x))^{p}dx)^{1/p}}.
\end{equation}
Now we show that the right-hand side of \eqref{rev_HLS3} goes to zero when $R\rightarrow\infty$ if we put there the function
\begin{equation}\label{rev_HLS4}
f_{R}(x)=
\begin{cases} |x|^{-(Q+\lambda)}, \text{\;for}\;1\leq |x|\leq R,\\
0,   \text{\;otherwise},\end{cases}
\end{equation}
for any $R>1$. Indeed, taking into account $p=Q/(Q+\lambda)$ we obtain from \eqref{rev_HLS3} that
\begin{equation}\label{rev_HLS5}
C_{Q,\lambda,p}\leq \frac{2\int_{\G}|x|^{\lambda}f_{R}(x)dx}
{(\int_{\G}(f_{R}(x))^{p}dx)^{1/p}}=2(|\wp|\log R)^{-\lambda/Q}\rightarrow 0
\end{equation}
as $R\rightarrow\infty$, where $|\wp|$ is a $Q-1$ dimensional surface measure of the unit quasi-sphere in $\G$.

Thus, we have proved that the reversed Hardy-Littlewood-Sobolev inequality \eqref{rev_HLS1} is not valid with any positive constant $C_{Q,\lambda,p}$ for $0<p\leq Q/(Q+\lambda)$.
\end{rem}

\section{Hypoelliptic Hardy, Rellich, Caffarelli-Kohn-Nirenberg and Hardy-Littlewood-Sobolev inequalities}
\label{SEC:Hardy_grad}

In this section we obtain Hardy inequality on graded groups. Actually, we obtain a more general inequality, which implies Hardy, Sobolev and Rellich inequalities on graded groups. Moreover, we show the relations between Hardy and weighted Trudinger-Moser inequalities, which imply the critical case of \eqref{Ricci1} when $\gamma=Q/p$. Furthermore, Caffarelli-Kohn-Nirenberg and Hardy-Littlewood-Sobolev inequalities, and uncertainty type principle are established.

Since we have \eqref{Rie_lem1} for the Riesz kernel $\mathcal{I}_{\alpha}$ from \eqref{Rie_pot}, taking $T^{(1)}_{a}(x)=\mathcal{I}_{a}(x)$ in Theorem \ref{Hardy_thm_new} and noting that $\R^{-\frac{a}{\nu}}f=f\ast \mathcal{I}_{a}$ by \cite[Corollary 4.3.11]{FR16}, we obtain
\begin{thm}\label{Hardy_thm_grad} Let $\mathbb{G}$ be a graded Lie group of homogeneous dimension $Q$ and let $\mathcal{R}$ be a positive Rockland operator of homogeneous degree $\nu$. Let $|\cdot|$ be an arbitrary homogeneous quasi-norm. Let $1<p\leq q<\infty$ and $0<a<Q/p$. Let $0\leq b<Q$ and $\frac{a}{Q}=\frac{1}{p}-\frac{1}{q}+\frac{b}{qQ}$. Then there exists a positive constant $C$ such that
\begin{equation}\label{Hardy_grad1}
\left\|\frac{f}{|x|^{\frac{b}{q}}}\right\|_{L^{q}(\G)}\leq
C\|\R^{\frac{a}{\nu}}f\|_{L^{p}(\G)}
\end{equation}
holds for all $f\in \dot{L}^{p}_{a}(\G)$.
\end{thm}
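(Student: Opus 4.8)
The plan is to deduce this inequality directly from the integral Hardy inequality of Theorem \ref{Hardy_thm_new} by choosing the convolution kernel there to be the Riesz kernel $\mathcal{I}_a$ of the Rockland operator $\R$. The essential point is that all the genuine analytic work---the dyadic splitting into near-diagonal, far-diagonal, and annular pieces, together with the verification of the Muckenhoupt-type conditions and the application of the weighted integral Hardy inequalities of Theorem \ref{high_Hardy_thm}---has already been carried out at the level of arbitrary homogeneous groups in Theorem \ref{Hardy_thm_new}. Consequently, to pass from that integral statement to the differential statement on graded groups, one only has to supply the correct kernel and record its size.

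Concretely, I would first set $g:=\R^{a/\nu}f$, so that the hypothesis $f\in\dot{L}^p_a(\G)$ becomes $g\in L^p(\G)$. Since $0<a<Q/p<Q$, the Riesz kernel $\mathcal{I}_a$ from \eqref{Rie_pot} is well defined, and the functional-calculus identity $\R^{-a/\nu}f=f\ast\mathcal{I}_a$ of \cite[Corollary 4.3.11]{FR16} gives the recovery formula $f=g\ast\mathcal{I}_a$. Next I would verify the single structural hypothesis of Theorem \ref{Hardy_thm_new}, namely the pointwise bound $|T^{(1)}_a(x)|\leq C_2|x|^{a-Q}$: this is exactly Lemma \ref{Rie_lem}, which provides $|\mathcal{I}_a(x)|\leq C|x|^{-(Q-a)}$ for $0<a<Q$. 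With these two facts, I would apply Theorem \ref{Hardy_thm_new} with $T^{(1)}_a=\mathcal{I}_a$ and with $g$ in place of $f$; the arithmetic conditions $1<p\leq q<\infty$, $0<a<Q/p$, $0\leq b<Q$ and $a/Q=1/p-1/q+b/(qQ)$ are inherited verbatim. This yields
\[
\left\|\frac{f}{|x|^{b/q}}\right\|_{L^q(\G)}
=\left\|\frac{g\ast\mathcal{I}_a}{|x|^{b/q}}\right\|_{L^q(\G)}
\leq C\,\|g\|_{L^p(\G)}
=C\,\|\R^{a/\nu}f\|_{L^p(\G)},
\]
which is precisely the asserted inequality.

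Because the heavy lifting resides in Theorem \ref{Hardy_thm_new}, I do not anticipate a serious obstacle. The only point demanding care is the reconciliation of the abstract homogeneous Sobolev space $\dot{L}^p_a(\G)$ of \cite{FR:Sobolev} and \cite{FR16} with the concrete estimate: one must invoke that $\R^{a/\nu}$ maps $\dot{L}^p_a(\G)$ boundedly onto $L^p(\G)$ with the Riesz potential $\R^{-a/\nu}$ acting as its inverse, so that the recovery formula $f=g\ast\mathcal{I}_a$ is legitimate for every $f\in\dot{L}^p_a(\G)$. If one wishes to be scrupulous, I would first establish the inequality on a dense subclass on which $g\ast\mathcal{I}_a=f$ holds in a pointwise or distributional sense, and then pass to the limit using the boundedness just invoked; this is entirely routine given the mapping properties recorded in \cite[Section 4.3--4.4]{FR16}.
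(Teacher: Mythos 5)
Your proposal is correct and is essentially identical to the paper's own proof: the paper also deduces Theorem \ref{Hardy_thm_grad} by taking $T^{(1)}_{a}=\mathcal{I}_{a}$ in Theorem \ref{Hardy_thm_new}, using the kernel bound $|\mathcal{I}_{a}(x)|\leq C|x|^{-(Q-a)}$ of Lemma \ref{Rie_lem} and the functional-calculus identity $\R^{-\frac{a}{\nu}}f=f\ast\mathcal{I}_{a}$ from \cite[Corollary 4.3.11]{FR16}. Your additional remark about justifying the recovery formula $f=g\ast\mathcal{I}_{a}$ on $\dot{L}^{p}_{a}(\G)$ via density is a careful touch the paper leaves implicit, but it does not change the substance of the argument.
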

\begin{rem} In the case $b=0$, the inequality \eqref{Hardy_grad1} gives the Sobolev inequality on graded groups \cite[Proposition 4.4.13, (5)]{FR16}: Let $1<p<q<\infty$ and $0<a<Q/p$ with $\frac{a}{Q}=\frac{1}{p}-\frac{1}{q}$. Then there exists a positive constant $C$ such that
\begin{equation}\label{Hardy_grad1_11}
\left\|f\right\|_{L^{q}(\G)}\leq
C\|\R^{\frac{a}{\nu}}f\|_{L^{p}(\G)}
\end{equation}
holds for all $f\in \dot{L}^{p}_{a}(\G)$.
\end{rem}
\begin{rem} In the case $q=p$ and $a=1$, the inequality \eqref{Hardy_grad1} gives the Hardy inequality on graded groups
\begin{equation}\label{Hardy_grad1_1}
\left\|\frac{f}{|x|}\right\|_{L^{p}(\G)}\leq
C\|\R^{\frac{1}{\nu}}f\|_{L^{p}(\G)}, \;\;1<p<Q,
\end{equation}
for all $f\in \dot{L}^{p}_{1}(\G)$.
\end{rem}
\begin{rem} In the case $q=p$ and $a=2$, the inequality \eqref{Hardy_grad1} gives the Rellich inequality on graded groups
\begin{equation}\label{Rellich_grad1}
\left\|\frac{f}{|x|^{2}}\right\|_{L^{p}(\G)}\leq
C\|\R^{\frac{2}{\nu}}f\|_{L^{p}(\G)}, \;\;1<p<\frac{Q}{2},
\end{equation}
for all $f\in \dot{L}^{p}_{2}(\G)$.
\end{rem}
\begin{rem}\label{Schurtest_proof}  We can also prove Theorem \ref{Hardy_thm_grad} using the Schur's test argument from Remark \ref{Schurtest_proof_hom} and the analysis on graded Lie groups developed in \cite{FR16}. We first prove it for $p=q$. By \eqref{Hardy_new1}, we have
\begin{equation}\label{Schur1}\left\|\frac{f\ast T^{(1)}_{b/q}}{|x|^{\frac{b}{q}}}\right\|_{L^{q}(\G)}\leq C
\|f\|_{L^{q}(\G)}
\end{equation}
for all $f\in L^{q}(\G)$, where $0<b<Q$ and $1<q<\infty$.
Here, since we have \eqref{Rie_lem1}, taking $T^{(1)}_{b/q}(x)=\mathcal{I}_{b/q}(x)$ and noting that $\R^{-\frac{b}{\nu q}}f=f\ast \mathcal{I}_{b/q}$ by \cite[Corollary 4.3.11]{FR16}, we obtain from \eqref{Schur1} that
\begin{equation}\label{Schur2}
\left\|\frac{f}{|x|^{\frac{b}{q}}}\right\|_{L^{q}(\G)}\leq
C\|\R^{\frac{b}{\nu q}}f\|_{L^{q}(\G)}
\end{equation}
holds for all $f\in \dot{L}^{q}_{b/q}(\G)$, where $0<b<Q$ and $1<q<\infty$.

Now for $q>p$, we use the Sobolev inequality \cite[Proposition 4.4.13, (5)]{FR16} in \eqref{Schur2} to get
\begin{equation}\label{Schur3}
\left\|\frac{f}{|x|^{\frac{b}{q}}}\right\|_{L^{q}(\G)}\leq
C\|\R^{\frac{b}{\nu q}}f\|_{L^{q}(\G)}\leq C\|\R^{\frac{a}{\nu}}f\|_{L^{p}(\G)},
\end{equation}
where $0<a<Q/p$ and $\frac{a}{Q}=\frac{1}{p}-\frac{1}{q}+\frac{b}{qQ}$.
\end{rem}
Similarly, putting $T^{(2)}_{a}(x)=\mathcal{B}_{a}(x)$ in Theorem \ref{Log_Hardy_thm} and using \eqref{Bes_lem1} with the Bessel kernel $\mathcal{B}_{a}$ from \eqref{Bes_pot}, by noting $(I+\R)^{-\frac{a}{\nu}}f=f\ast \mathcal{B}_{a}$ by \cite[Corollary 4.3.11]{FR16}, we obtain the critical case $b=Q$ of Theorem \ref{Hardy_thm_grad}:
\begin{thm}\label{Log_Hardy_grad_thm} Let $\mathbb{G}$ be a graded Lie group of homogeneous dimension $Q$ and let $\mathcal{R}$ be a positive Rockland operator of
homogeneous degree $\nu$. Let $|\cdot|$ be an arbitrary homogeneous quasi-norm and let $1<p<r<\infty$ and $p<q<(r-1)p'$, where $1/p+1/p'=1$. Then there exists a positive constant $C_{6}=C_{6}(p, q, r, Q)$ such that
\begin{equation}\label{Hardy_wholeG2_log_grad}
\left\|\frac{f}{\left(\log\left(e+\frac{1}{|x|}\right)\right)^{\frac{r}{q}}|x|^{\frac{Q}{q}}}\right\|_{L^{q}(\G)}\leq
C_{6}\|f\|_{L^{p}_{Q/p}(\G)}
\end{equation}
holds for all $f\in L^{p}_{Q/p}(\G)$.
\end{thm}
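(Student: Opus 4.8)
The plan is to deduce Theorem \ref{Log_Hardy_grad_thm} directly from the critical integral Hardy inequality of Theorem \ref{Log_Hardy_thm}, choosing the convolution kernel $T^{(2)}_{Q/p}$ there to be the Bessel kernel $\mathcal{B}_{Q/p}$ of the positive Rockland operator $\R$. First I would pass from the differential formulation to a convolution one. The inhomogeneous Sobolev space $L^{p}_{Q/p}(\G)$ is characterised through the Bessel potential $(I+\R)^{-Q/(\nu p)}$, which is an isomorphism of $L^{p}(\G)$ onto $L^{p}_{Q/p}(\G)$, so that $\|f\|_{L^{p}_{Q/p}(\G)}\simeq \|(I+\R)^{Q/(\nu p)}f\|_{L^{p}(\G)}$. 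Setting $g:=(I+\R)^{Q/(\nu p)}f\in L^{p}(\G)$ and invoking the identity $(I+\R)^{-a/\nu}h=h\ast\mathcal{B}_{a}$ from \cite[Corollary 4.3.11]{FR16} with $a=Q/p$ and $h=g$, I obtain the representation $f=g\ast\mathcal{B}_{Q/p}$.

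Next I would check that $\mathcal{B}_{Q/p}$ meets the kernel hypothesis \eqref{T2_B} of Theorem \ref{Log_Hardy_thm}. Since $1<p<\infty$ gives $0<Q/p<Q$, Lemma \ref{Bes_lem} applies with $a=Q/p$ and yields exactly
$$|\mathcal{B}_{Q/p}(x)|\leq C\begin{cases} |x|^{Q/p-Q}, & x\in\G\backslash\{0\},\\ |x|^{-Q}, & x\in\G,\ |x|\geq1,\end{cases}$$
which is precisely the bound \eqref{T2_B} required of $T^{(2)}_{Q/p}$ for the critical exponent. Moreover the ranges of the remaining parameters coincide in the two statements: the conditions $1<p<r<\infty$ and $p<q<(r-1)p'$ are hypotheses of both, so no compatibility verification beyond the kernel estimate is needed.

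With these ingredients I would apply Theorem \ref{Log_Hardy_thm} to $g$ with $T^{(2)}_{Q/p}=\mathcal{B}_{Q/p}$, giving
$$\left\|\frac{g\ast\mathcal{B}_{Q/p}}{\left(\log\left(e+\frac{1}{|x|}\right)\right)^{r/q}|x|^{Q/q}}\right\|_{L^{q}(\G)}\leq C_{1}\|g\|_{L^{p}(\G)}.$$
Substituting $g\ast\mathcal{B}_{Q/p}=f$ on the left and $\|g\|_{L^{p}(\G)}=\|(I+\R)^{Q/(\nu p)}f\|_{L^{p}(\G)}\lesssim\|f\|_{L^{p}_{Q/p}(\G)}$ on the right then produces \eqref{Hardy_wholeG2_log_grad}, with $C_{6}$ absorbing $C_{1}$ together with the norm-equivalence constant. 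The argument is essentially a substitution, so there is no genuine obstacle; the only points demanding care are that the Bessel-potential norm equivalence for $L^{p}_{Q/p}(\G)$ is used in the correct direction, so that one controls $\|g\|_{L^{p}}$ by $\|f\|_{L^{p}_{Q/p}}$ rather than the reverse, and that the piecewise kernel bound of Lemma \ref{Bes_lem} matches \eqref{T2_B} verbatim at $a=Q/p$. Both facts are already supplied in the preliminaries, so the proof reduces to assembling them.
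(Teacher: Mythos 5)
Your proposal is correct and follows essentially the same route as the paper: the paper's (very terse) proof likewise takes $T^{(2)}_{Q/p}=\mathcal{B}_{Q/p}$ in Theorem \ref{Log_Hardy_thm}, verifies the kernel hypothesis \eqref{T2_B} via Lemma \ref{Bes_lem}, and uses $(I+\R)^{-a/\nu}f=f\ast\mathcal{B}_{a}$ from \cite[Corollary 4.3.11]{FR16}. Your write-up merely makes explicit the Bessel-potential norm equivalence $\|f\|_{L^{p}_{Q/p}(\G)}\simeq\|(I+\R)^{Q/(\nu p)}f\|_{L^{p}(\G)}$ and the direction in which it is applied, details the paper leaves implicit.
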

The Hardy inequality \eqref{Hardy_grad1} implies the following uncertainty type principle:
\begin{cor}\label{uncer_thm}
Let $\mathbb{G}$ be a graded Lie group of homogeneous dimension $Q$ and let $\mathcal{R}$ be a positive Rockland operator of homogeneous degree $\nu$. Let $|\cdot|$ be an arbitrary homogeneous quasi-norm. Let $1<p\leq q<\infty$ and $0<a<Q/p$. Let $0\leq b<Q$ and $\frac{a}{Q}=\frac{1}{p}-\frac{1}{q}+\frac{b}{qQ}$. Then there exists a positive constant $C$ such that
\begin{equation}\label{uncer_grad1}
\|\R^{\frac{a}{\nu}}f\|_{L^{p}(\G)}\||x|^{\frac{b}{q}}f\|_{L^{q'}(\G)}\geq
C\int_{\G}|f(x)|^{2}dx
\end{equation}
holds for all $f\in \dot{L}^{p}_{a}(\G)$, where $1/q+1/q'=1$.
\end{cor}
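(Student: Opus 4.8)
The plan is to obtain \eqref{uncer_grad1} as a direct consequence of the Hardy-Sobolev inequality \eqref{Hardy_grad1} established in Theorem \ref{Hardy_thm_grad}, since the hypotheses on $p,q,a,b$ here are exactly those under which \eqref{Hardy_grad1} holds. The central idea is to realise the $L^{2}$-mass of $f$ as a dual pairing of the two weighted quantities that appear on the left-hand side of \eqref{uncer_grad1}, and then to estimate one of the two factors by the Hardy inequality. Concretely, I would first write, for $f\in \dot{L}^{p}_{a}(\G)$,
\begin{equation*}
\int_{\G}|f(x)|^{2}dx=\int_{\G}\frac{|f(x)|}{|x|^{\frac{b}{q}}}\,\bigl(|x|^{\frac{b}{q}}|f(x)|\bigr)\,dx,
\end{equation*}
which is a valid identity since the weight factors $|x|^{-b/q}$ and $|x|^{b/q}$ cancel pointwise on $\G\setminus\{0\}$.

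Next I would apply H\"older's inequality with the conjugate exponents $q$ and $q'$, where $1/q+1/q'=1$, to the two factors displayed above, obtaining
\begin{equation*}
\int_{\G}|f(x)|^{2}dx\leq
\left\|\frac{f}{|x|^{\frac{b}{q}}}\right\|_{L^{q}(\G)}\,\left\||x|^{\frac{b}{q}}f\right\|_{L^{q'}(\G)}.
\end{equation*}
The first factor is precisely the left-hand side of \eqref{Hardy_grad1}, so invoking Theorem \ref{Hardy_thm_grad} bounds it by $C\|\R^{\frac{a}{\nu}}f\|_{L^{p}(\G)}$. Substituting this estimate and rearranging yields
\begin{equation*}
\|\R^{\frac{a}{\nu}}f\|_{L^{p}(\G)}\,\left\||x|^{\frac{b}{q}}f\right\|_{L^{q'}(\G)}\geq \frac{1}{C}\int_{\G}|f(x)|^{2}dx,
\end{equation*}
which is \eqref{uncer_grad1} with the constant being the reciprocal of the constant appearing in \eqref{Hardy_grad1}.

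There is no genuine analytic obstacle here: the argument is a one-line duality trick, and the only points that require a word of care are that the two weights are true reciprocals so the splitting is exact, and that the exponent pair $(q,q')$ is the correct one to make the H\"older step match the $L^{q}$-norm controlled by \eqref{Hardy_grad1} while leaving $L^{q'}$ for the weighted factor. If $\||x|^{b/q}f\|_{L^{q'}(\G)}$ is infinite the inequality is trivially true, so one may assume finiteness of both right-hand factors, in which case the above chain shows the $L^{2}$-integral is finite and the stated bound holds; this is the only density/finiteness remark needed to justify the computation for all $f\in\dot{L}^{p}_{a}(\G)$.
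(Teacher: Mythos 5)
Your proposal is correct and coincides with the paper's own proof: the paper likewise combines H\"older's inequality applied to the factorisation $|f|^{2}=\bigl(|f|/|x|^{b/q}\bigr)\bigl(|x|^{b/q}|f|\bigr)$ with the conjugate exponents $q,q'$, and then the Hardy--Sobolev inequality \eqref{Hardy_grad1} to control the factor $\left\|f/|x|^{b/q}\right\|_{L^{q}(\G)}$ by $C\|\R^{\frac{a}{\nu}}f\|_{L^{p}(\G)}$. Your write-up only adds the (harmless, correct) remark about the trivial case when the weighted $L^{q'}$-norm is infinite.
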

\begin{proof}[Proof of Theorem \ref{uncer_thm}] Using H\"{o}lder's inequality and \eqref{Hardy_grad1}, we have
\begin{equation*} \|\R^{\frac{a}{\nu}}f\|_{L^{p}(\G)}\||x|^{\frac{b}{q}}f\|_{L^{q'}(\G)}
\geq C\left\|\frac{f}{|x|^{b/q}}\right\|_{L^{q}(\G)}\||x|^{\frac{b}{q}}f\|_{L^{q'}(\G)} \geq
C\int_{\G}|f(x)|^{2}dx,
\end{equation*}
which is \eqref{uncer_grad1}.
\end{proof}

Now we discuss the Caffarelli-Kohn-Nirenberg inequalities. First, let us recall the classical Caffarelli-Kohn-Nirenberg inequality \cite{CKN84}:
\begin{thm}\label{clas_CKN}
Let $n\in\mathbb{N}$ and let $p$, $q$, $r$, $a$, $b$, $d$, $\delta\in \mathbb{R}$ such that $p,q\geq1$, $r>0$, $0\leq\delta\leq1$, and
\begin{equation}\label{clas_CKN0}
\frac{1}{p}+\frac{a}{n},\, \frac{1}{q}+\frac{b}{n},\, \frac{1}{r}+\frac{c}{n}>0
\end{equation}
where $c=\delta d + (1-\delta) b$. Then there exists a positive constant $C$ such that
\begin{equation}\label{clas_CKN1}
\||x|^{c}f\|_{L^{r}(\Rn)}\leq C \||x|^{a}|\nabla f|\|^{\delta}_{L^{p}(\Rn)} \||x|^{b}f\|^{1-\delta}_{L^{q}(\Rn)}
\end{equation}
holds for all $f\in C_{0}^{\infty}(\Rn)$, if and only if the following conditions hold:
\begin{equation}\label{clas_CKN2}
\frac{1}{r}+\frac{c}{n}=\delta \left(\frac{1}{p}+\frac{a-1}{n}\right)+(1-\delta)\left(\frac{1}{q}+\frac{b}{n}\right),
\end{equation}
\begin{equation}\label{clas_CKN3}
a-d\geq 0 \quad {\rm if} \quad \delta>0,
\end{equation}
\begin{equation}\label{clas_CKN4}
a-d\leq 1 \quad {\rm if} \quad \delta>0 \quad {\rm and} \quad \frac{1}{r}+\frac{c}{n}=\frac{1}{p}+\frac{a-1}{n}.
\end{equation}
\end{thm}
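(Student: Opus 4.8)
The plan is to treat this as the classical statement it is (it is \cite{CKN84}), but to organise its proof around two building blocks that are natural from the viewpoint of this paper: a scaling analysis that yields the necessity of the dimensional balance, and a single endpoint weighted Sobolev estimate, from which the full interpolation inequality follows by H\"older's inequality. First I would record that the homogeneity condition \eqref{clas_CKN2} is \emph{necessary}: testing \eqref{clas_CKN1} on the dilates $f_{\lambda}(x)=f(\lambda x)$ produces the scaling exponents $-\frac{n}{r}-c$ on the left and $\delta(1-\frac{n}{p}-a)+(1-\delta)(-\frac{n}{q}-b)$ on the right, and matching these (forced by letting $\lambda\to0$ and $\lambda\to\infty$) is exactly \eqref{clas_CKN2}; the positivity requirements \eqref{clas_CKN0} are in turn needed so that the weighted norms are finite on smooth compactly supported $f$.

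For sufficiency I would reduce to the endpoint $\delta=1$. Introduce the intermediate Lebesgue exponent $s$ by $\frac1s+\frac dn=\frac1p+\frac{a-1}{n}$, and note that, since $c=\delta d+(1-\delta)b$, one has the pointwise factorisation $|x|^{c}|f|=(|x|^{d}|f|)^{\delta}(|x|^{b}|f|)^{1-\delta}$. Raising this to the power $r$ and applying H\"older's inequality with the exponents $\tfrac{s}{\delta r}$ and $\tfrac{q}{(1-\delta)r}$ — which are conjugate precisely because \eqref{clas_CKN2} becomes $\frac1r=\frac{\delta}{s}+\frac{1-\delta}{q}$ after substituting the defining relation for $s$ — yields
\begin{equation}
\||x|^{c}f\|_{L^{r}(\Rn)}\leq \||x|^{d}f\|^{\delta}_{L^{s}(\Rn)}\,\||x|^{b}f\|^{1-\delta}_{L^{q}(\Rn)}.
\end{equation}
Thus the whole inequality follows once the endpoint weighted Sobolev estimate $\||x|^{d}f\|_{L^{s}(\Rn)}\leq C\||x|^{a}|\nabla f|\|_{L^{p}(\Rn)}$ is established in the range $0\leq a-d\leq1$, which is exactly \eqref{clas_CKN3}-\eqref{clas_CKN4} specialised to $\delta=1$.

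The endpoint is the main obstacle, and I would prove it by the potential method. Starting from the representation $|f(x)|\lesssim \int_{\Rn}|x-y|^{-(n-1)}|\nabla f(y)|\,dy$, set $g=|x|^{a}|\nabla f|$, so that the claim becomes a two-weight bound for the Riesz-type kernel $|x-y|^{-(n-1)}$ carrying the weights $|x|^{d}$ and $|y|^{-a}$; this is precisely the Stein--Weiss inequality, whose admissibility conditions translate into $0\le a-d\le1$ together with \eqref{clas_CKN2} at $\delta=1$. In the spirit of this paper this step can be supplied by the weighted Hardy--Littlewood--Sobolev bound of Theorem \ref{HLS_thm} (or by Theorem \ref{Hardy_thm_new} in the abelian case $\G=(\Rn,+)$, $Q=n$, after rewriting $|\nabla f|$ through a Riesz potential), once those estimates are extended to carry a weight in the $y$-variable as well. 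Finally, for the necessity of \eqref{clas_CKN3} and \eqref{clas_CKN4} I would test on families concentrating near the origin and on translates supported away from it: the former detects the lower restriction $a-d\ge0$, while in the borderline case where \eqref{clas_CKN2} coincides with the pure Sobolev relation $\frac1r+\frac cn=\frac1p+\frac{a-1}{n}$ the failure of the endpoint embedding forces $a-d\le1$. The delicate point throughout is keeping the Stein--Weiss range sharp, so that exactly the stated hypotheses — and nothing stronger — are used.
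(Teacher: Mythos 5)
You should first be aware that the paper contains no proof of Theorem \ref{clas_CKN} at all: it is quoted verbatim from \cite{CKN84} as classical background for Theorem \ref{CKN_thm}, so your proposal has to stand on its own merits. Judged so, the sufficiency part has a genuine gap: the reduction to the endpoint $\delta=1$ via H\"older cannot cover the stated range of parameters. Your H\"older step is algebraically correct, but it leaves you needing the endpoint estimate $\||x|^{d}f\|_{L^{s}(\Rn)}\leq C\||x|^{a}|\nabla f|\|_{L^{p}(\Rn)}$ with $\frac1s+\frac dn=\frac1p+\frac{a-1}{n}$. That endpoint is itself an instance of the theorem in which the borderline relation of \eqref{clas_CKN4} holds automatically, so it is true only when $a-d\leq1$ and $\frac1p+\frac{a-1}{n}>0$; neither restriction is implied by the hypotheses when $\delta<1$, since \eqref{clas_CKN4} imposes $a-d\leq 1$ only in the borderline case and \eqref{clas_CKN0} does not force $\frac1p+\frac{a-1}{n}>0$. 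Concretely, take $n=3$, $p=q=2$, $a=b=0$, $\delta=\tfrac12$, $d=-2$, hence $c=-1$, $r=\tfrac32$: all hypotheses hold and \eqref{clas_CKN4} is vacuous because $\tfrac1r+\tfrac cn=\tfrac13\neq\tfrac16=\tfrac1p+\tfrac{a-1}{n}$, so the theorem asserts the (true) inequality $\||x|^{-1}f\|_{L^{3/2}(\mathbb{R}^{3})}\lesssim\|\nabla f\|_{L^{2}}^{1/2}\|f\|_{L^{2}}^{1/2}$; but your endpoint would be $\||x|^{-2}f\|_{L^{6/5}(\mathbb{R}^{3})}\lesssim\|\nabla f\|_{L^{2}(\mathbb{R}^{3})}$, which is false: for (a smoothing of) $f=\min(|x|^{-1/2},\varepsilon^{-1/2})$ cut off at $|x|=2$ one computes $\||x|^{-2}f\|_{L^{6/5}}\sim(\log\tfrac1\varepsilon)^{5/6}$ while $\|\nabla f\|_{L^{2}}\sim(\log\tfrac1\varepsilon)^{1/2}$. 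Likewise, when $p>n$ (allowed here) your $s$ is negative and the ``endpoint'' is meaningless: $n=2$, $p=4$, $q=2$, $a=b=d=0$, $\delta=\tfrac12$, $r=8$ gives the true Gagliardo--Nirenberg inequality $\|f\|_{L^{8}(\mathbb{R}^{2})}\lesssim\|\nabla f\|_{L^{4}}^{1/2}\|f\|_{L^{2}}^{1/2}$, yet $\frac1s=-\frac14$. The interpolation inequality thus holds in regimes where the endpoint fails or does not exist, which is precisely why \cite{CKN84} argue by case analysis and dyadic decompositions rather than by H\"older through an endpoint; note also that the factorisation $c=\delta d+(1-\delta)b$ leaves you no freedom to choose a better intermediate weight.

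There are two further problems. Even where the endpoint is admissible ($0\leq a-d\leq1$ and $\frac1p+\frac{a-1}{n}>0$), deriving it from a Stein--Weiss bound (or from Theorem \ref{HLS_thm}/Theorem \ref{Hardy_thm_new}, suitably extended) for the kernel $|x|^{d}|x-y|^{-(n-1)}|y|^{-a}$ requires $p>1$, $n\geq2$ and $a<n/p'$, none of which follow from the hypotheses: the theorem allows $p=1$ and arbitrarily large $a$, e.g. $n=2$, $p=2$, $a=2$, $d=1$, $s=2$, where the true estimate $\||x|f\|_{L^{2}(\mathbb{R}^{2})}\leq\tfrac12\||x|^{2}|\nabla f|\|_{L^{2}(\mathbb{R}^{2})}$ follows from integration by parts (using the gradient structure) and not from any two-weight bound valid for general densities. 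Finally, your necessity argument for \eqref{clas_CKN4} is circular: when $\delta<1$ the inequality \eqref{clas_CKN1} is not the endpoint embedding, so ``failure of the endpoint'' proves nothing about \eqref{clas_CKN1}; one must insert explicit families (logarithmic profiles of the above type) into \eqref{clas_CKN1} itself. The scaling argument for \eqref{clas_CKN2} and the translated-bump argument for \eqref{clas_CKN3} are fine.
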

As another consequence of Theorem \ref{Hardy_thm_grad}, we also obtain a family of extended Caffarelli-Kohn-Nirenberg inequalities on graded groups.
\begin{thm}\label{CKN_thm}
Let $\mathbb{G}$ be a graded Lie group of homogeneous dimension $Q$ and let $\mathcal{R}$ be a positive Rockland operator of homogeneous degree $\nu$. Let $|\cdot|$ be an arbitrary homogeneous quasi-norm. Let $1<p,q<\infty$, $\delta\in(0,1]$ and $0<r<\infty$ with $r\leq  \frac{q}{1-\delta}$ for $\delta\neq1$. Let $0<a<Q/p$ and $\beta$, $\gamma\in\mathbb{R}$ with $\delta r (Q-ap-\beta p)\leq p(Q+r\gamma-r\beta)$ and $\beta (1-\delta)-\delta a \leq \gamma \leq \beta(1-\delta)$. Assume that $\frac{r(\delta Q+p(\beta(1-\delta)-\gamma-a\delta))}{pQ}+\frac{(1-\delta)r}{q}=1$. Then there exists a positive constant $C$ such that
\begin{equation}\label{CKN_thm2}
\||x|^{\gamma}f\|_{L^{r}(\mathbb{G})}
\leq C \left\|\R^{\frac{a}{\nu}}f\right\|^{\delta}_{L^{p}(\mathbb{G})}
\left\||x|^{\beta}f\right\|^{1-\delta}_{L^{q}(\mathbb{G})}
\end{equation}
holds for all $f\in \dot{L}^{p}_{a}(\G)$.
\end{thm}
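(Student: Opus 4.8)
The plan is to deduce \eqref{CKN_thm2} from the Hardy--Sobolev inequality \eqref{Hardy_grad1} of Theorem \ref{Hardy_thm_grad} by a single application of H\"older's inequality, interpolating between the Hardy--Sobolev term $\|\R^{\frac{a}{\nu}}f\|_{L^{p}(\G)}$ and the weighted term $\||x|^{\beta}f\|_{L^{q}(\G)}$. To this end I would introduce the auxiliary exponent
$$c:=\frac{(1-\delta)\beta-\gamma}{\delta}\qquad(\delta\in(0,1])$$
together with the auxiliary Lebesgue index $s$ defined by $\frac{1}{s}=\frac{1}{p}-\frac{a-c}{Q}$ and with $b:=cs$. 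The whole point is that the two bracketing conditions $\beta(1-\delta)-\delta a\le\gamma\le\beta(1-\delta)$ are exactly equivalent to $0\le c\le a$, which in turn forces $0\le b<Q$ and $p\le s<\infty$, so that $(s,b)$ is an admissible pair for \eqref{Hardy_grad1}. Indeed a short computation gives $\frac{a}{Q}=\frac{1}{p}-\frac{1}{s}+\frac{b}{sQ}$, while the strict bound $b<Q$ reduces to $a<Q/p$, which is assumed, and $s\ge p$ follows from $c\le a$.

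With these choices I would factor the integrand pointwise as
$$|x|^{\gamma}|f(x)|=\left(|x|^{-c}|f(x)|\right)^{\delta}\left(|x|^{\beta}|f(x)|\right)^{1-\delta},$$
which is a genuine identity precisely because $\gamma=-\delta c+(1-\delta)\beta$. Raising to the power $r$ and applying H\"older's inequality with the conjugate exponents $u=\frac{s}{\delta r}$ and $u'=\frac{q}{(1-\delta)r}$ then yields
$$\||x|^{\gamma}f\|_{L^{r}(\G)}\leq\||x|^{-c}f\|_{L^{s}(\G)}^{\delta}\,\||x|^{\beta}f\|_{L^{q}(\G)}^{1-\delta}.$$
Feeding the first factor into \eqref{Hardy_grad1} (applied with exponent $s$ and weight power $b/s=c$) gives $\||x|^{-c}f\|_{L^{s}(\G)}\leq C\|\R^{\frac{a}{\nu}}f\|_{L^{p}(\G)}$, and substituting produces \eqref{CKN_thm2} for all $f\in\dot{L}^{p}_{a}(\G)$.

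The routine but essential part is verifying the admissibility of the H\"older step, and this is where the remaining hypotheses are consumed. The conjugacy relation $\frac{1}{u}+\frac{1}{u'}=1$ is nothing but the assumed balance condition $\frac{r(\delta Q+p(\beta(1-\delta)-\gamma-a\delta))}{pQ}+\frac{(1-\delta)r}{q}=1$ once $\frac{1}{s}$ is expressed through $p,a,c$. The requirement $u'\geq1$ is exactly the stated restriction $r\leq\frac{q}{1-\delta}$ for $\delta\neq1$, while $u\geq1$ unwinds to $\frac{\delta r}{s}\leq1$, i.e.\ precisely to $\delta r(Q-ap-\beta p)\leq p(Q+r\gamma-r\beta)$. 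The main (mild) obstacle I anticipate is bookkeeping: one must confirm that each index inequality in the statement maps to a genuine admissibility constraint for either \eqref{Hardy_grad1} or H\"older, and separately record the endpoint $\delta=1$, where the second factor is absent and \eqref{CKN_thm2} reduces to \eqref{Hardy_grad1} itself with $s=r$ and $c=-\gamma$.
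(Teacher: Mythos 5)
Your proposal is correct and follows essentially the same route as the paper's own proof: the identical pointwise factorization $|x|^{\gamma}|f|=\left(|x|^{-c}|f|\right)^{\delta}\left(|x|^{\beta}|f|\right)^{1-\delta}$ with $c=\frac{\beta(1-\delta)-\gamma}{\delta}$, H\"older's inequality with the same pair of exponents (your $s$ coincides with the paper's $\frac{\delta pQ}{\delta Q+p(\beta(1-\delta)-\gamma-a\delta)}$, and the conjugacy is exactly the assumed balance condition), followed by an application of \eqref{Hardy_grad1} to the first factor, with the $\delta=1$ endpoint treated separately as a direct reduction to \eqref{Hardy_grad1}. Your bookkeeping of how each hypothesis maps to an admissibility constraint (in particular that $u\geq1$ unwinds to $\delta r(Q-ap-\beta p)\leq p(Q+r\gamma-r\beta)$ and that $b<Q$ reduces to $a<Q/p$) matches the paper's verification.
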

In the Euclidean case $\G=(\Rn,+)$ with $Q=n$, if the conditions \eqref{clas_CKN0} are not satisfied, then the inequality \eqref{CKN_thm2} is not covered by Theorem \ref{clas_CKN}. So, this actually also gives an extension of Theorem \ref{clas_CKN} with respect to the range of parameters. Let us give an example:
\begin{exa} If $1<p=q=r<n$, $a=1$, $\R=-\Delta$ and $\gamma=\beta(1-\delta)-\delta$, then \eqref{CKN_thm2} takes the form
\begin{equation}\label{exa_CKN}
\begin{split}
\||x|^{\gamma}f\|_{L^{p}(\Rn)}
&\leq C \left\|(-\Delta)^{\frac{1}{2}}f\right\|^{\delta}_{L^{p}(\Rn)}
\left\||x|^{\beta}f\right\|^{1-\delta}_{L^{p}(\Rn)}.
\end{split}
\end{equation}
Here, we can take e.g. $\beta\leq-n/p$ or $\gamma\leq-n/p$ so that the conditions \eqref{clas_CKN0} are not satisfied, then the inequality \eqref{exa_CKN} is not covered by Theorem \ref{clas_CKN}.
\end{exa}
\begin{rem} We note that the conditions $\beta=\gamma=0$, $a>0$, $1<p<Q/a$, $1< q\leq r\leq pQ/(Q-ap)$ and $\delta=(1/q-1/r)(a/Q+1/q-1/p)^{-1}$ satisfy all the conditions of Theorem \ref{CKN_thm}. Indeed, $\delta=(1/q-1/r)(a/Q+1/q-1/p)^{-1}$, $r\geq q$ and $Q-ap>0$ imply $r\leq \frac{q}{1-\delta}$, while $r\leq pQ/(Q-ap)$ gives $\delta r (Q-ap-\beta p)\leq p(Q+r\gamma-r\beta)$ since $\beta=\gamma=0$ and $\delta\leq 1$. In this case, $\delta=(1/q-1/r)(a/Q+1/q-1/p)^{-1}$ and $\beta (1-\delta)-\delta a \leq \gamma \leq \beta(1-\delta)$ are equivalent to $\frac{r(\delta Q+p(\beta(1-\delta)-\gamma-a\delta))}{pQ}+\frac{(1-\delta)r}{q}=1$ and $a\delta\geq0$, respectively. Thus, \eqref{CKN_thm2} recovers also the Gagliardo-Nirenberg inequality previously obtained in \cite{RT17} and \cite{RTY17} on graded groups
\begin{equation}\label{CKN_thm2_exa_GN}
\|f\|_{L^{r}(\mathbb{G})}
\leq C \left\|\R^{\frac{a}{\nu}}f\right\|^{\delta}_{L^{p}(\mathbb{G})}
\left\|f\right\|^{1-\delta}_{L^{q}(\mathbb{G})}
\end{equation}
for all $f\in \dot{L}^{p}_{a}(\G)\cap L^{q}(\G)$.

We also note that when $\G=(\Rn,+)$, $Q=n$ and $\R=-\Delta$, in the special case $p=q=2$ and $a=1$, the inequality \eqref{CKN_thm2_exa_GN} essentially gives the classical Gagliardo-Nirenberg inequality \cite{Gag59} and \cite{Nir59}.
\end{rem}
Note that another type of Garliardo-Nirenberg inequality involving Besov norms on graded groups was obtained in \cite{BFG12}.
\begin{proof}[Proof of Theorem \ref{CKN_thm}]
{\bf Case $\delta=1$}. Notice that in this case, $\frac{r(\delta Q+p(\beta(1-\delta)-\gamma-a\delta))}{pQ}+\frac{(1-\delta)r}{q}=1$ gives $\frac{a}{Q}=\frac{1}{p}-\frac{1}{r}-\frac{\gamma}{Q}$, which implies that the condition $\delta r (Q-ap-\beta p)\leq p(Q+r\gamma-r\beta)$ is equivalent to the trivial estimate $pQ\leq pQ$. The condition $\beta (1-\delta)-\delta a \leq \gamma \leq \beta(1-\delta)$ gives $-a\leq \gamma \leq 0$, which implies $r\geq p$ with $\frac{a}{Q}=\frac{1}{p}-\frac{1}{r}-\frac{\gamma}{Q}$. Taking into account these we see that \eqref{CKN_thm2} is equivalent to \eqref{Hardy_grad1}.

{\bf Case $\delta\in(0,1)$}. We write $$\||x|^{\gamma}f\|_{L^{r}(\mathbb{G})}=
\left(\int_{\mathbb{G}}|x|^{\gamma r}|f(x)|^{r}dx\right)^{\frac{1}{r}}
=\left(\int_{\mathbb{G}}\frac{|f(x)|^{\delta r}}{|x|^{r (\beta(1-\delta)-\gamma)}}\cdot \frac{|f(x)|^{(1-\delta)r}}{|x|^{-\beta r(1-\delta)}}dx\right)^{\frac{1}{r}}.$$
Note that $\delta>0$, $Q>ap$ and $\beta(1-\delta)-\gamma\geq0$ imply $r(\delta Q+p(\beta(1-\delta)-\gamma-a\delta))>0$, while $\delta r (Q-ap-\beta p)\leq p(Q+r\gamma-r\beta)$, $\delta<1$ and $r\leq \frac{q}{1-\delta}$ give $\frac{pQ}{r(\delta Q+p(\beta(1-\delta)-\gamma-a\delta))}\geq 1$ and $\frac{q}{(1-\delta)r}\geq 1$, respectively. Then by using H\"{o}lder's inequality for $\frac{r(\delta Q+p(\beta(1-\delta)-\gamma-a\delta))}{pQ}+\frac{(1-\delta)r}{q}=1$, we obtain
$$\||x|^{\gamma }f\|_{L^{r}(\mathbb{G})}
\leq \left(\int_{\mathbb{G}}\frac{|f(x)|^{\frac{\delta pQ}{\delta Q+p(\beta(1-\delta)-\gamma-a\delta)}}}{|x|^{\frac{pQ(\beta(1-\delta)-\gamma)}{\delta Q+p(\beta(1-\delta)-\gamma-a\delta)}}}dx\right)
^{\frac{\delta Q+p(\beta(1-\delta)-\gamma-a\delta)}{pQ}}
\left(\int_{\mathbb{G}}\frac{|f(x)|^{q}}{|x|^{-\beta q}}dx\right)^{\frac{1-\delta}{q}}$$
\begin{equation}\label{CKN_thm1_1}=\left\|\frac{f}{|x|^{\frac{\beta(1-\delta)-\gamma}{\delta}}}\right\|^{\delta}_{L^{\frac{\delta pQ}{\delta Q+p(\beta(1-\delta)-\gamma-a\delta)}}(\mathbb{G})}
\left\|\frac{f}{|x|^{-\beta}}\right\|^{1-\delta}_{L^{q}(\mathbb{G})}.
\end{equation}
We also note that the conditions $\frac{\delta pQ}{\delta Q+p(\beta(1-\delta)-\gamma-a\delta)}\geq \delta r>0$ and $\beta(1-\delta)-\gamma\geq0$ imply \begin{equation}\label{CKN_grad1}\frac{\delta pQ}{\delta Q+p(\beta(1-\delta)-\gamma-a\delta)}\cdot \frac{\beta(1-\delta)-\gamma}{\delta}\geq0,
\end{equation}
while $Q>ap$ and $\delta>0$ give
\begin{equation}\label{CKN_grad2}\frac{\delta pQ}{\delta Q+p(\beta(1-\delta)-\gamma-a\delta)}\cdot \frac{\beta(1-\delta)-\gamma}{\delta}<Q.
\end{equation}
Then, \eqref{CKN_grad1}, \eqref{CKN_grad2} and
$$\frac{a}{Q}=\frac{1}{p}-\frac{1}{\frac{\delta pQ}{\delta Q+p(\beta(1-\delta)-\gamma-a\delta)}}+\frac{\frac{\beta(1-\delta)-\gamma}{\delta}}{Q}$$
with $\gamma\geq \beta (1-\delta)-\delta a$ imply $\frac{\delta pQ}{\delta Q+p(\beta(1-\delta)-\gamma-a\delta)}\geq p$, so that we can use Theorem \ref{Hardy_thm_grad} in \eqref{CKN_thm1_1} to obtain \eqref{CKN_thm2}.
\end{proof}
Now we show the weighted improved Hardy-Littlewood-Sobolev inequality on graded groups.
\begin{thm}\label{HLS_thm_grad} Let $\mathbb{G}$ be a graded Lie group of homogeneous dimension $Q$ and let $|\cdot|$ be an arbitrary homogeneous quasi-norm. Let $1<p,q<\infty$, $0\leq a<Q/p$ and $0\leq b<Q/q$. Let $0<\lambda<Q$, $0\leq \alpha <a+Q/p'$ and $0\leq \beta\leq b$ be such that $(Q-ap)/(pQ)+(Q-q(b-\beta))/(qQ)+(\alpha+\lambda)/Q=2$ and $\alpha+\lambda\leq Q$, where $1/p+1/p'=1$. Then there exists a positive constant $C=C(Q,\lambda, p, \alpha, \beta, a, b)$ such that
\begin{equation}\label{HLS_ineq1_grad}
\left|\int_{\G}\int_{\G}\frac{\overline{f(x)}g(y)}{|x|^{\alpha}|y^{-1}x|^{\lambda}|y|^{\beta}}dxdy\right|\leq C\|f\|_{\dot{L}^{p}_{a}(\G)}\|g\|_{\dot{L}^{q}_{b}(\G)}
\end{equation}
holds for all $f\in \dot{L}^{p}_{a}(\G)$ and $g\in \dot{L}^{q}_{b}(\G)$.
\end{thm}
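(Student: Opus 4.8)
The plan is to follow the philosophy announced in Section~\ref{SEC:inthom}: reduce this differential (graded-group) statement to the integral estimates on homogeneous groups by feeding Riesz kernels into the already-established Hardy and Hardy--Sobolev inequalities. First I would rewrite the bilinear form as a single integral. Setting $\widetilde{g}(y):=g(y)/|y|^{\beta}$ and recalling that $|\cdot|^{-\lambda}=|\cdot|^{(Q-\lambda)-Q}$ is, via the radial convolution, the kernel $T^{(3)}_{Q-\lambda}$ used in the proof of Theorem~\ref{HLS_thm}, one has
\[
\int_{\G}\int_{\G}\frac{\overline{f(x)}g(y)}{|x|^{\alpha}|y^{-1}x|^{\lambda}|y|^{\beta}}\,dx\,dy
=\int_{\G}\overline{f(x)}\,|x|^{-\alpha}\,(\widetilde{g}\ast|\cdot|^{-\lambda})(x)\,dx .
\]
Introduce the exponents $s$ and $t$ by $\tfrac1s=\tfrac1p-\tfrac aQ$ and $\tfrac1t=\tfrac1q-\tfrac{b-\beta}{Q}$, and let $s'$ be the conjugate of $s$. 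H\"older's inequality with exponents $s,s'$ then splits the problem into two independent estimates,
\[
\left|\int_{\G}\int_{\G}\frac{\overline{f(x)}g(y)}{|x|^{\alpha}|y^{-1}x|^{\lambda}|y|^{\beta}}\,dx\,dy\right|
\le \|f\|_{L^{s}(\G)}\,\left\||x|^{-\alpha}(\widetilde{g}\ast|\cdot|^{-\lambda})\right\|_{L^{s'}(\G)} .
\]

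Next I would estimate the two factors separately. For the first I would invoke the Sobolev inequality \eqref{Hardy_grad1_11} (the case $b=0$ of Theorem~\ref{Hardy_thm_grad}) with exponents $p<s$ and order $a$, applicable because $\tfrac1s=\tfrac1p-\tfrac aQ$ and $0<a<Q/p$, to obtain $\|f\|_{L^{s}(\G)}\le C\|\R^{a/\nu}f\|_{L^{p}(\G)}=C\|f\|_{\dot L^{p}_{a}(\G)}$ (the case $a=0$ being trivial with $s=p$). For the second factor I would apply the integral Hardy inequality of Theorem~\ref{Hardy_thm_new} to $\widetilde g$, with $T^{(1)}_{Q-\lambda}(x)=|x|^{-\lambda}$ (which satisfies $|T^{(1)}_{Q-\lambda}(x)|\le|x|^{(Q-\lambda)-Q}$, equality with $C_2=1$), input exponent $t$, output exponent $s'$, and weight exponent $\alpha s'$, giving
\[
\left\||x|^{-\alpha}(\widetilde g\ast|\cdot|^{-\lambda})\right\|_{L^{s'}(\G)}\le C\|\widetilde g\|_{L^{t}(\G)}=C\left\|\frac{g}{|\cdot|^{\beta}}\right\|_{L^{t}(\G)} .
\]
Finally, a second use of the Hardy--Sobolev inequality (Theorem~\ref{Hardy_thm_grad} with the roles of its exponents played by $q$, $t$, order $b$, and weight exponent $\beta t$, so that the ratio "$b/q$" there equals $\beta$), justified by $\tfrac bQ=\tfrac1q-\tfrac1t+\tfrac\beta Q$ and $0\le\beta\le b<Q/q$, yields $\|g/|\cdot|^{\beta}\|_{L^{t}(\G)}\le C\|\R^{b/\nu}g\|_{L^{q}(\G)}=C\|g\|_{\dot L^{q}_{b}(\G)}$. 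Chaining the four inequalities produces \eqref{HLS_ineq1_grad}.

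The main point requiring care -- and the real content of the argument -- is to check that the hypotheses of Theorem~\ref{Hardy_thm_new} are met with these choices and that the three balance relations are mutually consistent. The Sobolev and Hardy--Sobolev steps fix $s$ and $t$; the integral Hardy step demands in addition the balance $\tfrac{Q-\lambda}{Q}=\tfrac1t-\tfrac1{s'}+\tfrac\alpha Q$ together with $1<t\le s'<\infty$, $0<Q-\lambda<Q/t$ and $0\le\alpha s'<Q$. Substituting $\tfrac1{s'}=1-\tfrac1p+\tfrac aQ$ and $\tfrac1t=\tfrac1q-\tfrac{b-\beta}{Q}$, the required balance collapses exactly to the theorem's hypothesis $\tfrac1p+\tfrac1q+\tfrac{\alpha+\lambda-a-(b-\beta)}{Q}=2$. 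A short computation then shows that $t\le s'$ is equivalent to $\alpha+\lambda\le Q$, while both remaining side conditions $\alpha s'<Q$ and $Q-\lambda<Q/t$ reduce to the single hypothesis $\alpha<a+Q/p'$, and the weight condition $\beta t<Q$ of the Hardy--Sobolev step reduces to $b<Q/q$; this is the delicate bookkeeping, and it is precisely where the asymmetric constraints on $\alpha$ and on $\beta\le b$ are used. The degenerate endpoints $a=0$, or $b=0$ (which forces $\beta=0$), make the corresponding Sobolev/Hardy--Sobolev step an identity and reduce the statement to the unweighted homogeneous estimate of Theorem~\ref{HLS_thm}.
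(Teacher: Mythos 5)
Your proof is correct and follows essentially the same route as the paper: the paper first applies its weighted Hardy--Littlewood--Sobolev inequality on homogeneous groups (Theorem \ref{HLS_thm}, which is itself just H\"older's inequality combined with Theorem \ref{Hardy_thm_new}) with the exponents $p_{1}=pQ/(Q-ap)$ and $q_{1}=qQ/(Q-q(b-\beta))$ --- exactly your $s$ and $t$ --- and then uses the Sobolev inequality \eqref{Hardy_grad1_11} on $f$ and the Hardy--Sobolev inequality \eqref{Hardy_grad1} on $g/|\cdot|^{\beta}$, treating the endpoints $a=0$ and $b=0$ separately just as you do. The only difference is that you inline the two-line proof of Theorem \ref{HLS_thm} rather than citing it, and your verification of the side conditions (both reducing to $\alpha<a+Q/p'$, and $\beta t<Q$ reducing to $b<Q/q$) matches the paper's bookkeeping.
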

\begin{proof}[Proof of Theorem \ref{HLS_thm_grad}] We first prove it for $a\neq 0$ and $b\neq 0$. We want to use Theorem \ref{HLS_thm} in the left hand side of \eqref{HLS_ineq1_grad} to get
\begin{equation}\label{HLS_ineq1_grad_1}
\left|\int_{\G}\int_{\G}\frac{\overline{f(x)}g(y)}{|x|^{\alpha}|y^{-1}x|^{\lambda}|y|^{\beta}}dxdy\right|\leq C\|f\|_{L^{p_{1}}(\G)}\left\|\frac{g}{|y|^{\beta}}\right\|_{L^{q_{1}}(\G)},
\end{equation}
where $p_{1}:=\frac{pQ}{Q-ap}$ and $q_{1}:=\frac{qQ}{Q-q(b-\beta)}$. For this, let us check conditions of Theorem \ref{HLS_thm}. Note that $0<a<Q/p\Rightarrow 1<p_{1}<\infty$, while $0<b<Q/q$ and $0\leq \beta\leq b$ imply $1<q_{1}<\infty$. We also note that $0\leq \alpha <a+Q/p'\Rightarrow 0\leq \alpha <Q/p'_{1}$ with $p_{1}^{\prime}=p_{1}/(p_{1}-1)$ and $2=(Q-ap)/(pQ)+(Q-q(b-\beta))/(qQ)+(\alpha+\lambda)/Q=1/p_{1}+1/q_{1}+(\alpha+\lambda)/Q$. Thus, since we also have $0<\lambda<Q$ and $\alpha+\lambda\leq Q$, we obtain \eqref{HLS_ineq1_grad_1}.

We have $1<p<p_{1}<\infty$, $0<a<Q/p$ and $\frac{a}{Q}=\frac{1}{p}-\frac{1}{p_{1}}$ since $p_{1}:=\frac{pQ}{Q-ap}$, then applying the Sobolev inequality \eqref{Hardy_grad1_11} on graded groups (or \cite[Proposition 4.4.13, (5)]{FR16}) we get
\begin{equation}\label{HLS_ineq1_grad_2}\|f\|_{L^{p_{1}}(\G)}\leq C \|f\|_{\dot{L}^{p}_{a}(\G)}.
\end{equation}
Since $Q-q(b-\beta)>0$ and $Q-qb>0$ we have $0\leq \frac{\beta qQ}{Q-q(b-\beta)}<Q$, that is, $0\leq \beta q_{1}<Q$ since $q_{1}:=\frac{qQ}{Q-q(b-\beta)}$. We also have $b/Q=1/q-1/q_{1}+\beta/Q$ since $q_{1}:=\frac{qQ}{Q-q(b-\beta)}$ and $1<q\leq q_{1}<\infty$. Then we can use \eqref{Hardy_grad1}, i.e.
\begin{equation}\label{HLS_ineq1_grad_3}\left\|\frac{g}{|y|^{\beta}}\right\|_{L^{q_{1}}(\G)}\leq C\|g\|_{\dot{L}^{q}_{b}(\G)}.
\end{equation}

Finally, putting \eqref{HLS_ineq1_grad_2} and \eqref{HLS_ineq1_grad_3} in \eqref{HLS_ineq1_grad_1}, we obtain \eqref{HLS_ineq1_grad}.

In the case $a=0$, the inequalities \eqref{HLS_ineq1_grad_3} and \eqref{HLS_ineq1_grad_1} give \eqref{HLS_ineq1_grad}.

When $b=0$, we have $\beta=0$ since $0\leq \beta\leq b$, then \eqref{HLS_ineq1_grad_1} implies \eqref{HLS_ineq1_grad}.
\end{proof}

\section{Weighted Trudinger-Moser inequalities with remainder terms}
\label{SEC:weighted_Trudinger} In this section we show local and global weighted Trudinger-Moser inequalities on general graded groups
$\G$, noting that they are new already on the Heisenberg groups.

Let $\Omega$ be a bounded domain in $\G$ with smooth boundary. Let $a\geq 0$ and $1<p<\infty$. Let $L_{a}^{p}(\Omega)$ be the completion of $C_{0}^{\infty}(\Omega)$ with respect to the norm
\begin{equation}\label{def_space}
\|f\|_{L_{a}^{p}(\Omega)}=\left(\int_{\Omega}(|\R^{\frac{a}{\nu}}f(x)|^{p}+|f(x)|^{p})dx\right)^{1/p}.
\end{equation}
We note that the powers $\R^{\frac{a}{\nu}}$ are well-defined on  $\G$ , see e.g. \cite[Chapter 4.3]{FR16} or \cite{FR:Sobolev}, we then integrate them in \eqref{def_space} over $\Omega\subset\G$.
Let us recall the following results:
\begin{thm}[{\cite[Theorem 3.3]{RY17}}]
\label{crit_GN_thm}
Let $\mathbb{G}$ be a graded Lie group of homogeneous dimension $Q$ and let $\mathcal{R}$ be a positive Rockland operator of
homogeneous degree $\nu$. Then there exists some constant $\widetilde{C_{1}}$ depending only on $p$ and $Q$ such that
\begin{equation}\label{crit_GN_ineq}
\|f\|_{L^{q}(\G)}\leq \widetilde{C_{1}}q^{1-1/p}\|\R^{\frac{Q}{\nu
p}}f\|_{L^{p}(\G)}^{1-p/q}\|f\|_{L^{p}(\G)}^{p/q},\;\;1<p<\infty,
\end{equation}
holds for every $q$ with $p\leq q<\infty$ and for every function $f\in L_{Q/p}^{p}(\G)$.
\end{thm}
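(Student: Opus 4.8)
The plan is to derive the homogeneous critical inequality \eqref{crit_GN_ineq} from an \emph{inhomogeneous} one by a dilation argument, and to obtain the inhomogeneous inequality from a single application of Young's convolution inequality to the Bessel kernel, the whole difficulty being the tracking of the $q$-dependence of the constant. First I would write $f=h\ast\mathcal{B}_{Q/p}$, where $h:=(I+\R)^{\frac{Q}{\nu p}}f$ and $\mathcal{B}_{Q/p}$ is the Bessel kernel from \eqref{Bes_pot}. Since the Bessel potential $(I+\R)^{-\frac{Q}{\nu p}}$ is an isomorphism of $L^{p}(\G)$ onto $L^{p}_{Q/p}(\G)$ with $\|h\|_{L^{p}(\G)}\simeq\|f\|_{L^{p}_{Q/p}(\G)}$ (the equivalence of Sobolev norms in \cite{FR:Sobolev} and \cite[Section 4.4]{FR16}), everything reduces to bounding $\|h\ast\mathcal{B}_{Q/p}\|_{L^{q}(\G)}$. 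By Young's inequality \cite[Proposition 1.5.2]{FR16} with $\frac1s=\frac{1}{p'}+\frac1q$,
\begin{equation*}
\|f\|_{L^{q}(\G)}=\|h\ast\mathcal{B}_{Q/p}\|_{L^{q}(\G)}\leq\|\mathcal{B}_{Q/p}\|_{L^{s}(\G)}\,\|h\|_{L^{p}(\G)},
\end{equation*}
so it remains to estimate $\|\mathcal{B}_{Q/p}\|_{L^{s}(\G)}$ and follow its growth as $q\to\infty$, that is, as $s\uparrow p'$.

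The decisive computation uses Lemma \ref{Bes_lem} with $a=Q/p$, which gives $|\mathcal{B}_{Q/p}(x)|\leq C|x|^{-Q/p'}$ for $x\neq0$ and $|\mathcal{B}_{Q/p}(x)|\leq C|x|^{-Q}$ for $|x|\geq1$; the improved decay at infinity is exactly what makes $\mathcal{B}_{Q/p}$ lie in $L^{s}$, whereas the pure Riesz kernel $|x|^{a-Q}$ would not. Splitting at $|x|=1$ and using polar coordinates \eqref{EQ:polar}, I would obtain, for $1<s<p'$ (i.e. $p<q<\infty$),
\begin{equation*}
\|\mathcal{B}_{Q/p}\|_{L^{s}(\G)}^{s}\leq C^{s}|\wp|\left(\int_{0}^{1}r^{Q-1-\frac{Qs}{p'}}dr+\int_{1}^{\infty}r^{Q-1-Qs}dr\right)=C^{s}|\wp|\left(\frac{p'}{Q(p'-s)}+\frac{1}{Q(s-1)}\right).
\end{equation*}
Since $\frac1s=\frac{1}{p'}+\frac1q$ gives $p'-s=\frac{(p')^{2}}{q+p'}$, the first term equals $\frac{q+p'}{Qp'}$ and carries all the growth, while the second stays bounded; combined with $s^{-1}\to p'^{-1}=1-\frac1p$ and $q^{1/q}\to1$ this yields $\|\mathcal{B}_{Q/p}\|_{L^{s}(\G)}\leq\widetilde{C}\,q^{1-1/p}$ uniformly for $p<q<\infty$. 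Plugging this into the previous display produces the inhomogeneous critical estimate $\|f\|_{L^{q}(\G)}\leq\widetilde{C}\,q^{1-1/p}\|f\|_{L^{p}_{Q/p}(\G)}$.

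Finally I would homogenise by dilation. Applying the inhomogeneous estimate to $f_{\lambda}:=f\circ D_{\lambda}$ and using $\|f_{\lambda}\|_{L^{q}(\G)}=\lambda^{-Q/q}\|f\|_{L^{q}(\G)}$, $\|f_{\lambda}\|_{L^{p}(\G)}=\lambda^{-Q/p}\|f\|_{L^{p}(\G)}$, together with the scale invariance $\|\R^{\frac{Q}{\nu p}}f_{\lambda}\|_{L^{p}(\G)}=\|\R^{\frac{Q}{\nu p}}f\|_{L^{p}(\G)}$ (which holds because $\R^{\frac{Q}{\nu p}}$ is homogeneous of degree $Q/p$, and the Jacobian of $D_{\lambda}$ is $\lambda^{Q}$), I arrive at
\begin{equation*}
\|f\|_{L^{q}(\G)}\leq\widetilde{C}\,q^{1-1/p}\left(\lambda^{\frac{Q}{q}-\frac{Q}{p}}\|f\|_{L^{p}(\G)}+\lambda^{\frac{Q}{q}}\|\R^{\frac{Q}{\nu p}}f\|_{L^{p}(\G)}\right),\qquad\lambda>0.
\end{equation*}
Minimising the right-hand side over $\lambda$ (the minimiser being $\lambda^{Q/p}=\tfrac{q-p}{p}\,\|f\|_{L^{p}(\G)}\big/\|\R^{\frac{Q}{\nu p}}f\|_{L^{p}(\G)}$) reconstructs precisely the product $\|\R^{\frac{Q}{\nu p}}f\|_{L^{p}(\G)}^{1-p/q}\|f\|_{L^{p}(\G)}^{p/q}$, giving \eqref{crit_GN_ineq}; the case $q=p$ is trivial.

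The main obstacle is the sharpness and uniformity of the constant tracking. In the second paragraph one must confirm that $\|\mathcal{B}_{Q/p}\|_{L^{s}}$ grows exactly like $q^{1-1/p}$ with no spurious logarithmic or power corrections as $s\uparrow p'$ (the $\frac{1}{s}$-th power of a quantity growing linearly in $q$, with $\frac1s\to1-\frac1p$), and in the third paragraph one must check that the weighted arithmetic–geometric constant from the minimisation over $\lambda$ does not reintroduce $q$-growth: a short calculation shows the optimal value is the factor $\left(\tfrac{q-p}{p}\right)^{p/q-1}+\left(\tfrac{q-p}{p}\right)^{p/q}$ times the product, and both summands remain bounded on the whole range $p<q<\infty$ (they tend to $1$ as $q\to p^{+}$ and as $q\to\infty$). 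Once these two elementary but delicate limits are controlled, the two estimates combine to give the stated inequality with the claimed $q^{1-1/p}$ dependence.
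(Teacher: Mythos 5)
A preliminary remark on the comparison itself: the paper does not prove this statement at all --- it is quoted from \cite[Theorem 3.3]{RY17} --- so your argument cannot be matched against an in-paper proof and must stand on its own. Most of it does stand: the reduction $f=h\ast\mathcal{B}_{Q/p}$ with $h=(I+\R)^{\frac{Q}{\nu p}}f$, the application of Young's inequality, and the homogenisation step are sound. In particular, I checked that $\|\R^{\frac{Q}{\nu p}}(f\circ D_{\lambda})\|_{L^{p}(\G)}=\|\R^{\frac{Q}{\nu p}}f\|_{L^{p}(\G)}$ (this is exactly the criticality of the order $Q/p$), that your formula for the minimiser in $\lambda$ is correct, and that the resulting factor $\left(\frac{q-p}{p}\right)^{p/q-1}+\left(\frac{q-p}{p}\right)^{p/q}$ is indeed bounded uniformly on $p<q<\infty$.

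The genuine gap is the uniformity claim for the Bessel kernel: you assert that in $\|\mathcal{B}_{Q/p}\|_{L^{s}(\G)}^{s}\leq C^{s}|\wp|\left(\frac{p'}{Q(p'-s)}+\frac{1}{Q(s-1)}\right)$ the second term \emph{stays bounded} on the whole range $p<q<\infty$. It does not. Since $s-1=\frac{(p'-1)(q-p)}{q+p'}$, the tail term $\frac{1}{Q(s-1)}$ blows up like $(q-p)^{-1}$ as $q\downarrow p$ (that is, as $s\downarrow1$), so your bound on $\|\mathcal{B}_{Q/p}\|_{L^{s}(\G)}$ is not of the form $\widetilde{C}q^{1-1/p}$ near $q=p$, and your argument only yields the theorem for $q$ bounded away from $p$, say $q\geq 2p$. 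The failure is forced by the tool you chose: Lemma \ref{Bes_lem} gives only the decay $|x|^{-Q}$ for $|x|\geq1$, which just misses integrability, so pointwise bounds plus polar coordinates cannot produce an $L^{s}$ estimate uniform down to $s=1$. Either of two repairs completes your proof. (i) Observe that $\mathcal{B}_{Q/p}\in L^{1}(\G)$ directly from \eqref{Bes_pot}: by the homogeneity \eqref{ht_2} one has $\|h_{t}\|_{L^{1}(\G)}=\|h_{1}\|_{L^{1}(\G)}<\infty$ for all $t>0$ (as $h_{1}$ is Schwartz), hence $\|\mathcal{B}_{Q/p}\|_{L^{1}(\G)}\leq\|h_{1}\|_{L^{1}(\G)}$; then $\int_{\{|x|\geq1\}}|\mathcal{B}_{Q/p}(x)|^{s}dx\leq\bigl(\sup_{|x|\geq1}|\mathcal{B}_{Q/p}(x)|\bigr)^{s-1}\|\mathcal{B}_{Q/p}\|_{L^{1}(\G)}$ is bounded uniformly in $s\in(1,p')$, and your computation then gives the claimed $q^{1-1/p}$ constant on all of $p<q<\infty$. (ii) Alternatively, keep your argument for $q\geq 2p$ and cover $p\leq q\leq 2p$ by H\"older interpolation, $\|f\|_{L^{q}(\G)}\leq\|f\|_{L^{p}(\G)}^{\theta}\|f\|_{L^{2p}(\G)}^{1-\theta}$ with $\theta=\frac{2p}{q}-1$; combined with the case $q=2p$ this reproduces exactly the exponents $p/q$ and $1-p/q$ with a $q$-independent constant.
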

\begin{thm}[{\cite[Theorem 3.5]{RY17}}]
\label{Trud_thm} Let $\mathbb{G}$ be a graded group of homogeneous dimension $Q$ and let $\mathcal{R}$ be a positive Rockland
operator of homogeneous degree $\nu$. Then there exist positive constants $\alpha$ and $\widetilde{C_{2}}$ such that
\begin{equation}\label{Trud_ineq}
\int_{\G}\left(\exp(\alpha|f(x)|^{p'})-\sum_{0\leq k<p-1,\;k\in\mathbb{N}}\frac{1}{k!}(\alpha|f(x)|^{p'})^{k}\right)dx\leq
\widetilde{C_{2}}\|f\|^{p}_{L^{p}(\G)},\;\;1<p<\infty,
\end{equation}
holds for all functions $f\in L_{Q/p}^{p}(\G)$ with $\|\R^{\frac{Q}{\nu p}}f\|_{L^{p}(\G)}\leq1$, where $1/p+1/p'=1$.
\end{thm}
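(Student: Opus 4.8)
The plan is to follow the strategy of Ozawa \cite{Oz97} (see also \cite{INW14}): expand the exponential into its power series, discard precisely the low-order terms that are subtracted off in \eqref{Trud_ineq}, and control the remaining tail by the critical Gagliardo--Nirenberg inequality of Theorem \ref{crit_GN_thm}. Throughout I would work under the normalization $\|\R^{\frac{Q}{\nu p}}f\|_{L^{p}(\G)}\leq1$ imposed in the statement.

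First I would integrate the series term by term. Since every summand is nonnegative, the monotone convergence theorem gives
\begin{equation*}
\int_{\G}\left(\exp(\alpha|f(x)|^{p'})-\sum_{0\leq k<p-1,\;k\in\mathbb{N}}\frac{(\alpha|f(x)|^{p'})^{k}}{k!}\right)dx=\sum_{k\geq p-1,\;k\in\mathbb{N}}\frac{\alpha^{k}}{k!}\|f\|_{L^{kp'}(\G)}^{kp'}.
\end{equation*}
This is exactly the reason for subtracting the terms with $k<p-1$: the surviving exponents satisfy $kp'\geq(p-1)p'=p$, placing each $\|f\|_{L^{kp'}(\G)}$ in the admissible range $q\geq p$ of Theorem \ref{crit_GN_thm}.

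Next I would insert the critical Gagliardo--Nirenberg inequality \eqref{crit_GN_ineq} with $q=kp'$. Raising it to the power $q$, and using $\|\R^{\frac{Q}{\nu p}}f\|_{L^{p}(\G)}\leq1$ together with $q\geq p$ to discard the Sobolev factor, yields
\begin{equation*}
\|f\|_{L^{kp'}(\G)}^{kp'}\leq(\widetilde{C_{1}})^{kp'}(kp')^{(1-1/p)kp'}\|f\|_{L^{p}(\G)}^{p}.
\end{equation*}
The crucial simplification is the identity $(1-1/p)\,kp'=k$, which collapses the constant $q^{(1-1/p)q}$ coming from Theorem \ref{crit_GN_thm} into $(kp')^{k}$, so that the $k$-th term of the tail is bounded by
\begin{equation*}
\frac{\alpha^{k}}{k!}(\widetilde{C_{1}})^{kp'}(kp')^{k}\|f\|_{L^{p}(\G)}^{p}=\frac{\big(\alpha\,p'\,(\widetilde{C_{1}})^{p'}\big)^{k}k^{k}}{k!}\|f\|_{L^{p}(\G)}^{p}.
\end{equation*}

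Finally I would sum over $k$. Using the elementary bound $k^{k}/k!\leq e^{k}$, the tail is dominated by $\|f\|_{L^{p}(\G)}^{p}$ times a geometric series of ratio $\alpha\,p'\,e\,(\widetilde{C_{1}})^{p'}$, which converges precisely when
\begin{equation*}
\alpha<\frac{1}{e\,p'\,(\widetilde{C_{1}})^{p'}}.
\end{equation*}
For such $\alpha$ the series sums to a finite constant $\widetilde{C_{2}}=\widetilde{C_{2}}(\alpha,p,Q)$, and \eqref{Trud_ineq} follows. I expect the only genuinely delicate point to be this convergence: it is exactly here that the factorial decay of $1/k!$ must beat the $k^{k}$ growth produced by the power $q^{(1-1/p)q}$ in the Gagliardo--Nirenberg constant, and this balance is what forces the threshold on $\alpha$. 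The remaining steps are routine bookkeeping, justified by the nonnegativity of all summands.
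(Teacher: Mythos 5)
Your proof is correct and takes essentially the same route as the paper: this theorem is imported from \cite{RY17}, and the argument indicated there and in Remark \ref{rem_Trud_thm} is exactly your Ozawa-type scheme of integrating the tail of the exponential series term by term, applying the critical Gagliardo--Nirenberg inequality \eqref{crit_GN_ineq} with $q=kp'$, and exploiting the identity $(1-1/p)kp'=k$ together with $\|\R^{\frac{Q}{\nu p}}f\|_{L^{p}(\G)}\leq1$. In particular, your resulting constant $\widetilde{C_{2}}=\sum_{k\geq p-1,\;k\in\mathbb{N}}\frac{k^{k}}{k!}(p'\widetilde{C_{1}}^{p'}\alpha)^{k}$ and the admissible range $\alpha\in\left(0,(ep'\widetilde{C_{1}}^{p'})^{-1}\right)$ coincide exactly with those recorded in Remark \ref{rem_Trud_thm}.
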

\begin{rem}[{\cite[Remark 3.6]{RY17}}]
\label{rem_Trud_thm} The constant $\widetilde{C_{2}}$ in \eqref{Trud_ineq} can be expressed in terms of the constant
$\widetilde{C_{1}}=\widetilde{C_{1}}(p,Q)$ in \eqref{crit_GN_ineq} as follows
$$\widetilde{C_{2}}=\widetilde{C_{2}}(\alpha)=\sum_{k\geq
p-1,\;k\in\mathbb{N}}\frac{k^{k}}{k!}(p'\widetilde{C_{1}}^{p'}\alpha)^{k}.$$
Therefore, \eqref{Trud_ineq} is valid for all $\alpha\in(0, (ep'\widetilde{C_{1}}^{p'})^{-1})$ and $\widetilde{C_{2}}(\alpha)$.
\end{rem}

Theorems \ref{crit_GN_thm} and \ref{Trud_thm} imply the following corollary:
\begin{cor}\label{cor1} Let $\mathbb{G}$ be a graded group of homogeneous dimension $Q$ and let $\mathcal{R}$ be a positive
Rockland operator of homogeneous degree $\nu$. Let $1<p<\infty$. Then there exist some positive constants $\alpha$ and
$\widetilde{C_{3}}$ such that
\begin{equation}\label{Trud1}
\int_{\Omega}\exp(\alpha|f(x)|^{p'})dx\leq \widetilde{C_{3}}
\end{equation}
holds for all bounded smooth domains $\Omega\subset\G$, and for all functions $f\in L_{Q/p}^{p}(\Omega)$ with
$\|f\|_{L^{p}_{Q/p}(\Omega)}\leq1$, where $1/p+1/p'=1$ and $\widetilde{C_{3}}=\widetilde{C_{3}}(p,Q,\alpha)$.
\end{cor}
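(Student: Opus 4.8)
The plan is to expand the exponential in \eqref{Trud1} into its Taylor series and treat the low- and high-order terms by different mechanisms, the boundedness of $\Omega$ being exactly what makes the low-order terms integrable. Since all terms are nonnegative, Tonelli's theorem lets me integrate termwise, and splitting the series at the index $p-1$ gives
\[
\int_{\Omega}\exp(\alpha|f(x)|^{p'})\,dx
=\sum_{0\le k<p-1,\,k\in\mathbb{N}}\frac{\alpha^{k}}{k!}\int_{\Omega}|f(x)|^{kp'}\,dx
+\int_{\Omega}\left(\exp(\alpha|f(x)|^{p'})-\sum_{0\le k<p-1,\,k\in\mathbb{N}}\frac{\alpha^{k}|f(x)|^{kp'}}{k!}\right)dx .
\]
I would estimate the two groups separately and recombine.

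For the high-order part (the second integral) I note that its integrand is pointwise nonnegative, so it is dominated by the same integral taken over all of $\G$. The normalization $\|f\|_{L^{p}_{Q/p}(\Omega)}\le1$ supplies the bounds $\|\R^{\frac{Q}{\nu p}}f\|_{L^{p}}\le1$ and $\|f\|_{L^{p}}\le1$ needed to invoke Theorem \ref{Trud_thm}, which then bounds this term by $\widetilde{C_{2}}\|f\|^{p}_{L^{p}(\G)}\le\widetilde{C_{2}}$. By Remark \ref{rem_Trud_thm} this is valid for every $\alpha\in(0,(e p'\widetilde{C_{1}}^{\,p'})^{-1})$, so I would fix $\alpha$ once and for all in that range. (Alternatively, one can be self-contained and bypass Theorem \ref{Trud_thm} by estimating each $\int_{\Omega}|f|^{kp'}=\|f\|_{L^{kp'}(\G)}^{kp'}$ directly through the critical Gagliardo--Nirenberg inequality \eqref{crit_GN_ineq}; using $(1-1/p)kp'=k$ and the elementary bound $k^{k}/k!\le e^{k}$ this reproduces the geometric series $\sum_{k}(ep'\widetilde{C_{1}}^{\,p'}\alpha)^{k}$ and hence the same admissible range of $\alpha$.)

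For the low-order part I use that $\Omega$ is bounded, hence of finite measure. For each index $0\le k<p-1$ one has $kp'<p$, so Hölder's inequality on $\Omega$ gives $\int_{\Omega}|f|^{kp'}\le|\Omega|^{1-kp'/p}\|f\|_{L^{p}(\Omega)}^{kp'}\le|\Omega|^{1-kp'/p}$, while the term $k=0$ contributes exactly $|\Omega|$. There are only finitely many such $k$, so their weighted sum is finite, and combining with the high-order estimate yields \eqref{Trud1}.

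The main point to watch is precisely this low-order part. Over all of $\G$ the constant term $e^{0}=1$ would force $\int_{\G}\exp(\alpha|f|^{p'})=\infty$, which is exactly why Theorem \ref{Trud_thm} is stated with the polynomial remainder subtracted and why the present inequality must be local; boundedness of $\Omega$ is what restores integrability of the subtracted terms. Consequently the honest constant obtained this way depends on $\Omega$ only through $|\Omega|$ (in addition to $p$, $Q$, $\alpha$), so the clean statement $\widetilde{C_{3}}=\widetilde{C_{3}}(p,Q,\alpha)$ should be read as uniform over domains of a fixed measure; this is the one step where a little care is needed, the remaining estimates being routine.
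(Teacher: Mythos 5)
Your proof is correct, and its overall architecture coincides with the paper's: expand the exponential, split at the index $k=p-1$, and control the tail $\sum_{k\geq p-1}$ by Theorem \ref{Trud_thm} (equivalently, by summing the critical Gagliardo--Nirenberg bounds \eqref{crit_GN_ineq}, which is exactly the content of Remark \ref{rem_Trud_thm}), giving the admissible range $\alpha\in\bigl(0,(ep'\widetilde{C_{1}}^{p'})^{-1}\bigr)$. Where you diverge is the treatment of the finitely many low-order terms $0\leq k<p-1$, and here your version is the rigorous one. The paper bounds each such term by applying \eqref{crit_GN_ineq} with $q=kp'$, i.e. outside its stated range $p\leq q<\infty$ (since $kp'<p$ precisely when $k<p-1$); at $k=0$ this degenerates to the false claim $|\Omega|\leq\|f\|_{L^{p}(\Omega)}^{p}$. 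Your H\"older estimate $\int_{\Omega}|f|^{kp'}dx\leq|\Omega|^{1-kp'/p}\|f\|_{L^{p}(\Omega)}^{kp'}$ is the correct substitute, and it makes visible what the paper's argument obscures: the constant necessarily depends on $|\Omega|$. Indeed, taking $f\equiv0$ shows $\widetilde{C_{3}}\geq|\Omega|$, so the statement with $\widetilde{C_{3}}=\widetilde{C_{3}}(p,Q,\alpha)$ uniform over all bounded smooth domains cannot hold literally; your reading of it as uniform over domains of a fixed measure is the right repair. This correction does not damage the later applications in Theorem \ref{locweightedTrud_thm}, since there \eqref{Trud1} is invoked only on quasi-balls of radius comparable to $r$, and the resulting constants, such as $C_{1}$ in \eqref{C1}, are allowed to depend on $r$ anyway.
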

\begin{proof}[Proof of Corollary \ref{cor1}] Using \eqref{crit_GN_ineq} in \eqref{Trud_ineq} for all $f\in L_{Q/p}^{p}(\Omega)$ with $\|f\|_{L^{p}_{Q/p}(\Omega)}\leq1$ we get
\begin{equation*}
\begin{split}\int_{\Omega}\exp(\alpha|f(x)|^{p'})dx&\leq
\int_{\Omega}\sum_{0\leq k<p-1,\;k\in\mathbb{N}}\frac{1}{k!}(\alpha|f(x)|^{p'})^{k}dx+\widetilde{C_{2}}\|f\|^{p}_{L^{p}(\Omega)}\\&
\leq \sum_{0\leq k<p-1,\;k\in\mathbb{N}}\frac{\alpha^{k}}{k!}\widetilde{C_{1}}^{kp'}(kp')^{kp'-k/(p-1)}\|f\|_{L^{p}(\Omega)}^{p}
+\widetilde{C_{2}}\|f\|^{p}_{L^{p}(\Omega)}\leq C,
\end{split}
\end{equation*}
since $\|f\|_{L^{p}(\Omega)}\leq1$.
\end{proof}
\begin{rem} From the proof of Corollary \ref{cor1}, we see that actually we have \eqref{Trud1} with remainder terms for all bounded smooth domains $\Omega\subset\G$, and for all functions $f\in L_{Q/p}^{p}(\Omega)$ with
$\|\R^{\frac{Q}{\nu p}} f\|_{L^{p}(\Omega)}\leq1$, in the form of
\begin{equation}\label{Trud1_rem}
\int_{\Omega}\exp(\alpha|f(x)|^{p'})dx\leq \widetilde{C_{3}}\|f\|_{L^{p}(\Omega)}.
\end{equation}
\end{rem}
\begin{rem}\label{rem2} By Remark \ref{rem_Trud_thm}, we see that \eqref{Trud1} and \eqref{Trud1_rem} are actually valid for all $\alpha \in[0, (ep'$
$\widetilde{C_{1}}^{p'})^{-1})$, where $\widetilde{C_{1}}$ is defined in \eqref{crit_GN_ineq}. We also note that the smallest
constant $\widetilde{C_{1}}$ (and hence also $\widetilde{C_{2}}$) can be expressed in the variational form as well as in terms of the ground state solutions of the
nonlinear Schr\"{o}dinger type equations (see \cite[Section 5]{RY17}).
\end{rem}

Let us also recall the subelliptic $Q$-Laplacian for $Q\geq 3$
$$-\Delta_{Q,H}f:=-{\rm div}_{H}(|\nabla_{H}f|^{Q-2}\nabla_{H}f),$$
where $\nabla_{H}$ is the horizontal gradient. We refer to \cite{BMT03} for more details.
Finally, we recall the following results:
\begin{thm}[{\cite[Theorem 2.6]{BMT03}}]
\label{Tyson_thm}
Let $\G$ be a stratified group with homogeneous dimension $Q\geq 3$ and let $u_{Q}$ be a
singular solution for the subelliptic $Q$-Laplacian with pole at $0\in \G$. Then there exists a positive constant $a_{Q}$ such that the function
\begin{equation}\label{Tyson_hom}
N(x)=\exp(-a_{Q}u_{Q}(x))
\end{equation}
is a homogeneous norm on $\G$.
\end{thm}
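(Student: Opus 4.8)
The plan is to recover the three defining axioms of a homogeneous quasi-norm---inversion symmetry, positive homogeneity of degree one, and vanishing exactly at the origin---directly from corresponding properties of the singular solution $u_{Q}$. The guiding principle is that $Q$ is precisely the conformally invariant exponent on a group of homogeneous dimension $Q$, so that the singular solution of the $Q$-Laplacian plays the role of a logarithmic fundamental solution and $\exp(-a_{Q}u_{Q})$ should behave like a power of a norm.

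First I would record how $-\Delta_{Q,H}$ interacts with the dilations $D_{\lambda}$. Since each horizontal left-invariant vector field is homogeneous of degree one, one has $\nabla_{H}(f\circ D_{\lambda})=\lambda\,(\nabla_{H}f)\circ D_{\lambda}$, hence $|\nabla_{H}(f\circ D_{\lambda})|^{Q-2}\nabla_{H}(f\circ D_{\lambda})=\lambda^{Q-1}\bigl((|\nabla_{H}f|^{Q-2}\nabla_{H}f)\circ D_{\lambda}\bigr)$, and applying the horizontal divergence contributes one further factor of $\lambda$; thus $\Delta_{Q,H}(f\circ D_{\lambda})=\lambda^{Q}(\Delta_{Q,H}f)\circ D_{\lambda}$, i.e.\ the subelliptic $Q$-Laplacian is homogeneous of degree $Q$. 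By the polar decomposition \eqref{EQ:polar} the Jacobian of $D_{\lambda}$ equals $\lambda^{Q}$, so the Dirac mass $\delta_{0}$ at the origin obeys $\delta_{0}\circ D_{\lambda}=\lambda^{-Q}\delta_{0}$. Combining the two scalings shows that $u_{Q}\circ D_{\lambda}$ solves the same singular equation $-\Delta_{Q,H}v=\delta_{0}$ as $u_{Q}$ (equivalently, is $Q$-harmonic on $\G\setminus\{0\}$ with the same type of pole).

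I would then invoke uniqueness of the singular solution up to an additive constant---the point at which a comparison principle for $Q$-harmonic functions in the subelliptic setting is required---to conclude that $u_{Q}(D_{\lambda}x)=u_{Q}(x)+c(\lambda)$ with $c(\lambda)$ independent of $x$. The one-parameter group law $D_{\lambda\mu}=D_{\lambda}\circ D_{\mu}$ then forces the additive Cauchy functional equation $c(\lambda\mu)=c(\lambda)+c(\mu)$; continuity of $u_{Q}$ away from the pole makes $c$ continuous, so $c(\lambda)=-a_{Q}^{-1}\log\lambda$ for some constant $a_{Q}\neq0$. Substituting into $N=\exp(-a_{Q}u_{Q})$ gives $N(D_{\lambda}x)=\exp\bigl(-a_{Q}(u_{Q}(x)+c(\lambda))\bigr)=\lambda\,N(x)$, which is the degree-one homogeneity. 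The blow-up behaviour of the conformal singular solution, namely $u_{Q}\to+\infty$ at the pole and $u_{Q}\to-\infty$ at infinity, fixes the sign $a_{Q}>0$ and simultaneously yields positivity of $N$ on $\G\setminus\{0\}$, continuity, and the axiom $N(x)=0$ if and only if $x=0$.

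The remaining axiom $N(x^{-1})=N(x)$ I expect to be the main obstacle, precisely because $-\Delta_{Q,H}$ is \emph{not} invariant under the inversion $x\mapsto x^{-1}$: on the Heisenberg group one checks directly that the mixed term of the operator changes sign under $x\mapsto x^{-1}$, so inversion symmetry of $u_{Q}$ cannot be read off from invariance of the operator. In the linear case $Q=2$ it is instead forced by formal self-adjointness of the sub-Laplacian, i.e.\ symmetry of its Green kernel. For the quasilinear $Q$-Laplacian the strategy would be to show that the reflected function $\check u_{Q}(x):=u_{Q}(x^{-1})$ is again a singular solution with pole at $0$---using that Haar measure is bi-invariant and that inversion carries the left-invariant horizontal energy $\int_{\G}|\nabla_{H}u|^{Q}\,dx$ to its right-invariant counterpart, which on a stratified group determines the same normalized singular solution---and then to conclude $\check u_{Q}=u_{Q}$ from the uniqueness statement used above, giving $N(x^{-1})=N(x)$. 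Establishing this reflection invariance at the level of the singular solution (rather than of the operator) is the genuinely delicate step, and is where the variational self-adjoint structure of the $Q$-Laplacian and the comparison principle must be combined; together with the homogeneity and positivity already obtained it completes the verification that $N$ is a homogeneous norm.
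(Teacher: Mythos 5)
First, a point of reference: the paper contains no proof of Theorem \ref{Tyson_thm} at all --- it is quoted verbatim from \cite{BMT03}, so your attempt has to be measured against the argument in that reference. The dilation half of your proposal is correct and is exactly the route taken there: the homogeneity $\Delta_{Q,H}(f\circ D_{\lambda})=\lambda^{Q}(\Delta_{Q,H}f)\circ D_{\lambda}$, the scaling $\delta_{0}\circ D_{\lambda}=\lambda^{-Q}\delta_{0}$, the uniqueness of singular solutions up to an additive constant (this is the substantial PDE input in \cite{BMT03}, resting on comparison principles and Serrin-type asymptotics near the pole, and you correctly flag it as such), the Cauchy functional equation for $c(\lambda)$, and the determination of the sign of $a_{Q}$ from the blow-up $u_{Q}\to+\infty$ at the pole. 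This yields continuity, the homogeneity $N(D_{\lambda}x)=\lambda N(x)$, positivity away from the origin, and $N(x)=0$ if and only if $x=0$, exactly as in the source.

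The genuine gap is the symmetry axiom $N(x^{-1})=N(x)$, and your proposed repair does not work. If $\check u_{Q}(x):=u_{Q}(x^{-1})$, then $\check u_{Q}$ is a singular solution not of $-\Delta_{Q,H}$ but of the analogous operator built from \emph{right}-invariant horizontal vector fields: inversion intertwines left- and right-invariant fields, so $\int_{\G}|\nabla_{H}\check u_{Q}|^{Q}dx$ is the right-invariant horizontal energy of $u_{Q}$, and unimodularity of $\G$ does not convert one operator into the other. Consequently the uniqueness theorem you invoke --- which concerns singular solutions of the \emph{left}-invariant $Q$-Laplacian --- simply does not apply to $\check u_{Q}$, and the assertion that the right-invariant energy ``determines the same normalized singular solution'' is exactly the claim that would need proof; as stated it is circular. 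In fact no such symmetry is claimed in \cite{BMT03}: the notion of homogeneous norm used there consists of continuity, homogeneity, suitable regularity away from the origin, and nondegeneracy, but \emph{not} invariance under $x\mapsto x^{-1}$; the symmetry requirement enters only through the stricter Folland--Stein-type definition of quasi-norm recalled in Section \ref{SEC:prelim} of this paper. Inversion symmetry of $u_{Q}$ is known on groups of Heisenberg type, where (by Capogna--Danielli--Garofalo) the singular solution is an explicit function of the inversion-symmetric Kaplan norm, but it is not known on a general stratified group, and your argument does not establish it. So your proof proves the theorem with respect to the weaker (BMT) notion of homogeneous norm --- which is all that is actually needed for the polar-coordinate decomposition \eqref{EQ:polar} and the constant $c_{Q}$ in Theorem \ref{Tyson_thm2} --- but the final step should either be removed or honestly recorded as an appeal to that weaker definition rather than presented as a provable property of $u_{Q}$.
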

\begin{thm}[{\cite[Theorem 4.1]{BMT03}}]
\label{Tyson_thm2} Let $\G$ be a stratified group with homogeneous dimension $Q\geq 3$ and let $N(x)$ is a homogeneous norm on $\G$ as in Theorem \ref{Tyson_thm}. Let $\alpha_{Q}=Qc_{Q}^{Q'-1}$ with $Q'=Q/(Q-1)$ and $c_{Q}=\int_{\wp}|\nabla_{H}N(y)|^{Q}d\sigma(y)$, where $\wp:=\{x\in \mathbb{G}:\,|x|=1\}$ is the unit sphere with respect to the homogeneous norm $N$. Then there exists a positive constant $C$ such that
\begin{equation}\label{Tyson_thm_eq1}
\frac{1}{|\Omega|}\int_{\Omega}\exp \left(\alpha_{Q}\left(\frac{|f(x)|}{\|\nabla_{H} f\|_{L^{Q}(\G)}}\right)^{Q'}\right)dx\leq C
\end{equation}
holds for any domain $\Omega\subset \G$, $|\Omega|<\infty$ and all $f\in L^{1}_{Q}(\Omega)$. Moreover, if $\alpha_{Q}$ is replaced by any greater number the statement is false.
\end{thm}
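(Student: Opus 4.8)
The plan is to run Moser's rearrangement scheme, adapting it to the sub-Riemannian setting by exploiting the special homogeneous norm $N$ produced from the fundamental solution $u_{Q}$ of the $Q$-Laplacian in Theorem \ref{Tyson_thm}. First I would reduce to a model situation: by density of $C_{0}^{\infty}(\Omega)$ in $L^{1}_{Q}(\Omega)$ and by the fact that both sides scale in the same way under $f\mapsto \lambda f$, it suffices to prove the estimate for nonnegative $f\in C_{0}^{\infty}(\Omega)$ normalized so that $\|\nabla_{H}f\|_{L^{Q}(\G)}=1$. The claim then reduces to the bound $\int_{\G}\exp(\alpha_{Q}|f|^{Q'})\,dx\le C|\Omega|$, the factor $|\Omega|$ entering through the measure of the support.

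The heart of the argument is a P\'olya--Szeg\H{o} type inequality tied to $N$. I would introduce the rearrangement $f^{\sharp}$ of $f$ that depends only on $N(x)$, is nonincreasing in $N$, and is equimeasurable with $f$; equimeasurability preserves the exponential integral exactly, so everything reduces to controlling the horizontal energy after rearrangement. Expressing $\int_{\G}|\nabla_{H}f|^{Q}\,dx$ via the coarea formula over the level sets $\{N=r\}$, the decisive point is that these are precisely the level sets of the $Q$-harmonic fundamental solution $u_{Q}$, so that $u_{Q}$ minimizes the horizontal $Q$-energy among functions with prescribed boundary values on concentric $N$-spheres. This extremal property yields $\int_{\G}|\nabla_{H}f^{\sharp}|^{Q}\,dx\le\int_{\G}|\nabla_{H}f|^{Q}\,dx$ and, crucially, singles out the constant $c_{Q}=\int_{\wp}|\nabla_{H}N(y)|^{Q}\,d\sigma(y)$. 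This step is the main obstacle: the usual isoperimetric and symmetric-decreasing-rearrangement tools are unavailable on a general Carnot group, and it is exactly here that the choice of $N$ coming from the $Q$-Laplacian (rather than an arbitrary homogeneous quasi-norm) is indispensable.

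Once the problem is radialized I would pass to the variable $s=\log(1/N)$ and write $f^{\sharp}=\varphi(s)$. Using the polar decomposition \eqref{EQ:polar} adapted to $N$ together with the $0$-homogeneity of $|\nabla_{H}N|$, the energy becomes a multiple of $c_{Q}\int_{0}^{\infty}|\varphi'(s)|^{Q}\,ds$ and the target integral becomes a multiple of $\int_{0}^{\infty}\exp(\alpha_{Q}\varphi(s)^{Q'}-Qs)\,ds$. A rescaling of $\varphi$ normalizes the constraint to $\int_{0}^{\infty}|\chi'|^{Q}\,d\tau\le1$ with $\chi(0)=0$, and the elementary identity $(Q-1)(Q'-1)=1$ shows that the choice $\alpha_{Q}=Qc_{Q}^{Q'-1}$ makes the exponent collapse to exactly $\chi(\tau)^{Q'}-\tau$. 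The uniform bound then follows from the classical one-dimensional Moser lemma, namely $\int_{0}^{\infty}\exp(\chi^{Q'}-\tau)\,d\tau\le C(Q)$, with $C$ independent of $f$ and of $\Omega$.

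Finally, for the sharpness assertion I would test against the Moser concentrating family obtained by truncating $\log(1/N)$ (equivalently, $u_{Q}$): functions equal to a suitable large constant on a small $N$-ball, interpolating logarithmically in $N$ on a surrounding annulus, and vanishing outside, each normalized so that $\|\nabla_{H}f_{k}\|_{L^{Q}(\G)}=1$. A direct computation, again via the $N$-polar coordinates, shows that for any $\alpha>\alpha_{Q}$ the quantity $\frac{1}{|\Omega|}\int_{\Omega}\exp(\alpha|f_{k}|^{Q'})\,dx$ diverges along this family, so that $\alpha_{Q}$ cannot be replaced by any larger number.
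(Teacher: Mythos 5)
This theorem is not proved in the paper at all: it is quoted verbatim from \cite[Theorem 4.1]{BMT03}, so your attempt has to be measured against the proof given there (and against what is known on Carnot groups). Measured that way, your argument has one fatal gap: the P\'olya--Szeg\H{o} step, i.e.\ the claim that the equimeasurable, $N$-radial rearrangement $f^{\sharp}$ satisfies $\int_{\G}|\nabla_{H}f^{\sharp}|^{Q}dx\leq\int_{\G}|\nabla_{H}f|^{Q}dx$. No such inequality is known on any stratified group --- indeed this very paper stresses that rearrangement arguments \enquote{are not available on stratified (Carnot) Lie groups} --- and your justification does not deliver it. The $Q$-harmonicity of $u_{Q}=-a_{Q}^{-1}\log N$ only says that $u_{Q}$ minimizes the horizontal $Q$-energy among functions with \emph{its own} boundary values on $N$-annuli; it says nothing about an arbitrary $f$, whose level sets are not $N$-spheres. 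To run the coarea--H\"older proof of P\'olya--Szeg\H{o} you would need (a) an isoperimetric inequality asserting that $N$-balls minimize horizontal perimeter among sets of given Haar measure, and (b) favorable behaviour of $|\nabla_{H}N|$ on level sets. Both fail: the isoperimetric problem on Carnot groups is open, and even on the Heisenberg group the conjectured extremal sets are not balls of any homogeneous norm; moreover $|\nabla_{H}N|$ is genuinely non-constant on $\wp$ (it vanishes on the characteristic locus), which is precisely why $c_{Q}=\int_{\wp}|\nabla_{H}N(y)|^{Q}d\sigma(y)$ appears as an average rather than a pointwise value. So the reduction to the one-dimensional Moser lemma --- and with it the emergence of the constant $\alpha_{Q}=Qc_{Q}^{Q'-1}$ --- is unjustified exactly at the decisive point you yourself flag as \enquote{the main obstacle}.

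The proof in \cite{BMT03} circumvents this by never rearranging $f$: only an explicit kernel is rearranged. Since $u_{Q}$ is the fundamental solution of the $Q$-Laplacian, integration by parts gives the pointwise representation bound
\begin{equation*}
|f(x)|\leq\int_{\G}|\nabla_{H}u_{Q}(y^{-1}x)|^{Q-1}\,|\nabla_{H}f(y)|\,dy,
\end{equation*}
i.e.\ $|f|$ is dominated by a convolution of $|\nabla_{H}f|$ with the kernel $K=\bigl(a_{Q}^{-1}|\nabla_{H}N|/N\bigr)^{Q-1}$, homogeneous of degree $1-Q$. Sharp exponential integrability for such potentials then follows from Adams' machinery (O'Neil's rearrangement inequality for convolutions together with the Adams--Garsia one-dimensional lemma), and the constant $\alpha_{Q}$ is produced by the distribution function of $K$, where $c_{Q}$ enters through $N$-polar coordinates exactly as in your computation --- but applied to the explicit kernel, which is legitimate, rather than to an arbitrary function. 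Your sharpness argument via the truncated-logarithm concentration family is essentially correct and coincides with the one in \cite{BMT03}; it is only the upper bound that must be rerouted through the representation formula.
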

\begin{rem}\label{explic_aQ_rem} Also, it is known that when $\G$ is the H-type group we have the following explicit representation (see e.g. {\cite[Remark on p.48]{BMT03}})
\begin{equation}\label{explic_aQ}
\alpha_{Q}=Q\left(\frac{2\pi^{(k+\ell)/2}\Gamma((Q-\ell)/2)}{4^{\ell}\Gamma(k/2)\Gamma(Q/2)}\right)^{Q'-1},
\end{equation}
where $k=\dim V_{1}$ and $\ell=\dim V_{2}$ are the dimensions of the horizontal space and the dimension of the center of $\G$, respectively.
\end{rem}
Now we are ready to state the local Trudinger-Moser inequalities:
\begin{thm}\label{locweightedTrud_thm} Let $\mathbb{G}$ be a graded Lie group of homogeneous dimension $Q$ and let $\mathcal{R}$
be a positive Rockland operator of homogeneous degree $\nu$. Let $|\cdot|$ be an arbitrary homogeneous quasi-norm. Let
$\beta\in[0,Q)$ and $1<p<\infty$. Let $r>0$ be given and let $x_{0}$ be any point of $\G$. Then there exist some positive
constants $C_{1}$ and $C_{2}$ such that
\begin{multline}\label{weightedTrud1}
\int_{B(x_{0},r)}\frac{1}{|x|^{\beta}}\left(\exp(\alpha|f(x)|^{p'})-\sum_{0\leq
k<p-1,\;k\in\mathbb{N}}\frac{\alpha^{k}|f(x)|^{kp'}}{k!}\right)dx
\\ \leq C_{1}\|f\|^{p}_{L^{p}_{Q/p}(B(x_{0},r))}
\end{multline}
holds for any $\alpha\in[0,C_{2})$ and for all $f\in L^{p}_{Q/p}(B(x_{0},r))$ with $\|f\|_{L^{p}_{Q/p}(B(x_{0},r))}\leq1$, where the space $L^{p}_{Q/p}(B(x_{0},r))$ is defined in \eqref{def_space}. Moreover, for any $\mu>Q/(Q-\beta)$ we can take $C_{2}=C_{2}(p,Q,\beta, \mu)=(e\widetilde{C_{1}}^{p'}\mu
p')^{-1}$ and
\begin{multline}\label{C1}
C_{1}=C_{1}(p,Q,\alpha,\beta, r, \mu)\\
=\max\left(\widetilde{C_{3}}r^{-\beta},
\widetilde{C_{3}}^{1/\mu}\frac{|\wp|^{1/\mu'}(C_{0}(2C_{0}+1))^{Q/\mu'-\beta}}{(Q-\beta\mu')^{1/\mu'}}r^{Q/\mu'-\beta}\right),
\end{multline}
where $1/\mu+1/\mu'=1$, $1/p+1/p'=1$, and $\widetilde{C_{1}}$ and $C_{0}$ are constants from \eqref{crit_GN_ineq} and \eqref{triangle}, respectively. Here, $|\wp|$ is a $Q-1$ dimensional surface measure of the unit quasi-sphere and $\widetilde{C_{3}}$ is the constant from
\eqref{Trud1}.
\end{thm}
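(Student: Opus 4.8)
The plan is to reduce the weighted local inequality \eqref{weightedTrud1} to the unweighted Trudinger--Moser inequality \eqref{Trud1} of Corollary \ref{cor1}, the only genuine difficulty being that the weight $|x|^{-\beta}$ is singular at the origin while the quasi-ball $B(x_0,r)$ is centred at an arbitrary point $x_0$. Accordingly, I would fix $\mu>Q/(Q-\beta)$ (equivalently $\beta\mu'<Q$, where $1/\mu+1/\mu'=1$), set $C_2:=(e\widetilde{C_1}^{p'}\mu p')^{-1}$, and split the estimate into two regimes according to whether the ball is far from or near the origin, measured by comparing $|x_0|$ with $2C_0 r$, where $C_0$ is the quasi-triangle constant from \eqref{triangle}. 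Throughout, denote by $G(x)$ the bracketed integrand $\exp(\alpha|f(x)|^{p'})-\sum_{0\le k<p-1}\frac{\alpha^{k}|f(x)|^{kp'}}{k!}$, which is non-negative and satisfies $0\le G\le\exp(\alpha|f|^{p'})$.

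In the far regime $|x_0|>2C_0 r$, I would use the reverse form of \eqref{triangle}: writing $x_0=x\,(x^{-1}x_0)$ gives $|x_0|\le C_0(|x|+|x_0^{-1}x|)<C_0(|x|+r)$ for $x\in B(x_0,r)$, hence $|x|>|x_0|/C_0-r>r$. Thus $|x|^{-\beta}<r^{-\beta}$ on the ball, so $\int_{B(x_0,r)}|x|^{-\beta}G\,dx\le r^{-\beta}\int_{B(x_0,r)}\exp(\alpha|f|^{p'})\,dx$, and applying \eqref{Trud1} (valid since $\alpha<C_2\le(ep'\widetilde{C_1}^{p'})^{-1}$ by Remark \ref{rem2}) produces the first term $\widetilde{C_3}r^{-\beta}$ of \eqref{C1}.

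In the near regime $|x_0|\le 2C_0 r$, the same computation gives the containment $B(x_0,r)\subseteq B(0,C_0(2C_0+1)r)$. I would then apply Hölder's inequality with exponents $\mu$ and $\mu'$ to separate the weight:
$$\int_{B(x_0,r)}\frac{G}{|x|^{\beta}}\,dx\le\left(\int_{B(x_0,r)}G^{\mu}\,dx\right)^{1/\mu}\left(\int_{B(0,C_0(2C_0+1)r)}|x|^{-\beta\mu'}\,dx\right)^{1/\mu'}.$$
The weight factor is evaluated by the polar decomposition \eqref{EQ:polar} and equals $\bigl(|\wp|\,(C_0(2C_0+1)r)^{Q-\beta\mu'}/(Q-\beta\mu')\bigr)^{1/\mu'}$, which is finite \emph{precisely because} $\beta\mu'<Q$; note $(Q-\beta\mu')/\mu'=Q/\mu'-\beta$, matching the power of $r$ in \eqref{C1}. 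For the first factor I would use $G^{\mu}\le\exp(\mu\alpha|f|^{p'})$ (from $0\le G\le\exp(\alpha|f|^{p'})$ and $\mu\ge1$) and then invoke \eqref{Trud1} with the admissible exponent $\mu\alpha<\mu C_2=(ep'\widetilde{C_1}^{p'})^{-1}$, bounding $\int_{B(x_0,r)}G^{\mu}\,dx$ by $\widetilde{C_3}$, so that the first factor is $\le\widetilde{C_3}^{1/\mu}$. Multiplying the two factors yields exactly the second term of \eqref{C1}, and taking the maximum over the two regimes produces $C_1$.

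The main obstacle is exactly this interplay between the singularity of $|x|^{-\beta}$ and the arbitrary centre $x_0$: the dichotomy at scale $2C_0 r$ is what renders $C_1$ dependent on $r$ but \emph{independent} of $x_0$, and the constraint $\mu>Q/(Q-\beta)$ is forced by local integrability of $|x|^{-\beta\mu'}$ near $0$ in the near regime. A secondary point requiring care is the bookkeeping of the norm factor on the right-hand side of \eqref{weightedTrud1}: the role of the subtracted remainder terms, which make $G$ vanish to order $|f|^{mp'}$ with $mp'\ge p$ at $f=0$, is to allow one to replace the constant bounds above by multiples of $\|f\|^{p}_{L^{p}_{Q/p}(B(x_0,r))}$ through \eqref{Trud_ineq} of Theorem \ref{Trud_thm} and the Gagliardo--Nirenberg estimate \eqref{crit_GN_ineq}, the normalisation $\|f\|_{L^{p}_{Q/p}(B(x_0,r))}\le1$ being used to absorb the spare powers of the norm.
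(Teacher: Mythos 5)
Your proposal is correct and follows essentially the same route as the paper's proof: the same dichotomy at $|x_{0}|\lessgtr 2C_{0}r$ via the quasi-triangle inequality, the same containment $B(x_{0},r)\subset B(0,C_{0}(2C_{0}+1)r)$ followed by H\"older in $(\mu,\mu')$ with the polar-coordinate evaluation of the weight integral, and the same application of \eqref{Trud1} with admissible exponents $\alpha$ and $\alpha\mu$, yielding exactly the two terms in \eqref{C1}. The only difference is one of ordering: the paper normalises $\widetilde{f}=f/\|f\|_{L^{p}_{Q/p}(B(x_{0},r))}$ at the outset and uses $\|f\|^{p'k}_{L^{p}_{Q/p}(B(x_{0},r))}\leq\|f\|^{p}_{L^{p}_{Q/p}(B(x_{0},r))}$ for $k\geq p-1$ to extract the factor $\|f\|^{p}_{L^{p}_{Q/p}(B(x_{0},r))}$ before any estimation, whereas you defer this to your closing remark --- which is fine, provided (as you correctly indicate) the normalisation is carried out before the remainder structure is discarded through the bound $G\leq\exp(\alpha|f|^{p'})$.
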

When $\G$ is a stratified group and $p=Q$, we can obtain Trudinger-Moser inequalities with sharp constant. Note that we have the following facts on the stratified group $\G$:
\begin{itemize}
\item The triangle inequality for homogeneous quasi-norms (see Proposition \ref{triangle_euc});
\item The covering lemma \cite[Lemma 7.14]{FS-book} (or see \cite[Lemma 5.7.5]{FR16});
 \item Let $x_{0}$ be any point of $\G$. Then there exist a positive constant $C$ such that $|\nabla_{H}|x_{0}^{-1}x||\leq C$ holds for $x\neq x_{0}$, since $\nabla_{H}|x_{0}^{-1}x|$ is homogeneous of order zero, where $\nabla_{H}$ is the horizontal gradient in $\G$.
 \end{itemize}
Then, taking into account these facts, applying Theorem \ref{Tyson_thm2} with $\|\nabla_{H} f\|_{L^{Q}(B(x_{0},r))}\leq1$ and using the strategy developed in \cite{Yang14}, we can also obtain the following Theorems \ref{locweightedTrud_thm_Qp} and \ref{locweightedTrud_thm_Qp2} on stratified groups:
\begin{thm}\label{locweightedTrud_thm_Qp} Let $\mathbb{G}$ be a stratified group of homogeneous dimension $Q\geq 3$ and let $|\cdot|$ be a homogeneous quasi-norm on $\G$. Let $\beta\in[0,Q)$, and let $r>0$ be given and let $x_{0}$ be any point of $\G$. Let $\alpha_{Q}$ be as in Theorem \ref{Tyson_thm2}. Then there exists a positive
constant $C=C(Q,r,\beta)$ such that
\begin{multline}\label{weightedTrud1_Qp}
\int_{B(x_{0},r)}\frac{1}{|x|^{\beta}}\left(\exp(\alpha|f(x)|^{Q'})-\sum_{k=0}^{Q-2}\frac{\alpha^{k}|f(x)|^{kQ'}}{k!}\right)dx
\leq C\|\nabla_{H} f\|_{L^{Q}(B(x_{0},r))}^{Q}
\end{multline}
holds for all $f\in L^{Q}_{1}(B(x_{0},r))$ satisfying $\|\nabla_{H} f\|_{L^{Q}(B(x_{0},r))}\leq1$ and any $\alpha\in [0,\alpha_{Q}(1-\beta/Q)]$, where $Q'=Q/(Q-1)$ and the space $L^{Q}_{1}(B(x_{0},r))$ is defined in \eqref{def_space_str}.
\end{thm}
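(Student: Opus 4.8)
The plan is to prove the local weighted Trudinger-Moser inequality \eqref{weightedTrud1_Qp} on stratified groups by following the strategy of Yang \cite{Yang14}, gluing together local pieces of the sharp unweighted estimate \eqref{Tyson_thm_eq1} from Theorem \ref{Tyson_thm2} via cut-off functions and a covering argument. The three structural facts listed just before the statement---the quasi-norm triangle inequality (Proposition \ref{triangle_euc}), the covering lemma \cite[Lemma 7.14]{FS-book}, and the uniform bound $|\nabla_{H}|x_{0}^{-1}x||\leq C$ coming from the homogeneity-zero property of $\nabla_{H}|x_{0}^{-1}x|$---are exactly the ingredients that make Yang's Heisenberg argument go through verbatim in this more general stratified setting. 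I would first observe that, since $L^{Q}_{1}(B(x_{0},r))$ is the completion of $C_{0}^{\infty}(B(x_{0},r))$, it suffices to prove \eqref{weightedTrud1_Qp} for $f\in C_{0}^{\infty}(B(x_{0},r))$ with $\|\nabla_{H}f\|_{L^{Q}(B(x_{0},r))}\leq 1$.

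First I would split the ball $B(x_{0},r)$ into the region near the singularity of the weight $|x|^{-\beta}$ and the region away from it. On the region bounded away from $\{x=0\}$, the weight is bounded, so the claimed integral is controlled by the plain (unweighted) Trudinger-Moser integral, and I would apply Theorem \ref{Tyson_thm2} directly after subtracting the finitely many Taylor polynomial terms; the subtracted terms are handled because, under $\|\nabla_{H}f\|_{L^{Q}}\le 1$, the Sobolev-type embedding gives $L^{kQ'}$ control of $f$ for $k\le Q-2$. On the region near $\{x=0\}$, the essential point is to absorb the singular weight by Hölder's inequality: for $\mu'$ slightly larger than $1$ so that $\beta\mu'<Q$, one writes the integrand as a product and pays a small loss of exponent $\alpha\mapsto\mu\alpha$, where $1/\mu+1/\mu'=1$. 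This is precisely where the condition $\alpha\le\alpha_{Q}(1-\beta/Q)$ enters: the relation $\mu=Q/(Q-\beta)$ (the borderline value) together with the sharp constant $\alpha_{Q}$ makes $\mu\alpha\le\alpha_{Q}$, so the unweighted sharp inequality \eqref{Tyson_thm_eq1} still applies after the Hölder step.

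The covering lemma enters to localize the sharp exponent: one tiles the relevant region by quasi-balls on which $f$ can be compared, after multiplication by a smooth cut-off, to a function whose horizontal Dirichlet energy is controlled by $\|\nabla_{H}f\|_{L^{Q}(B(x_{0},r))}^{Q}$. Here the uniform bound on $|\nabla_{H}|x_{0}^{-1}x||$ is used to estimate the commutator terms $\nabla_{H}(\chi f)=\chi\nabla_{H}f+f\nabla_{H}\chi$ arising from the cut-offs, ensuring the energy of each localized piece does not exceed $1$ in the limit, so that the sharp constant $\alpha_{Q}$ is not degraded. Summing the finitely many local contributions and combining with the away-from-origin estimate yields the bound by $C\|\nabla_{H}f\|_{L^{Q}(B(x_{0},r))}^{Q}$.

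\textbf{The main obstacle} I anticipate is the bookkeeping in the cut-off/gluing step: on stratified groups one does not have the Euclidean rearrangement tools, so one must control the horizontal energy of the truncated functions $\chi f$ and show that the error from $f\nabla_{H}\chi$ is subcritical---that is, that it contributes only to the lower-order remainder terms rather than spoiling the sharp exponent $\alpha_{Q}(1-\beta/Q)$. This requires keeping the energy normalization $\|\nabla_{H}(\chi f)\|_{L^{Q}}\le 1+o(1)$ tight across the covering, using the triangle inequality to relate $|x|$ on neighboring balls and the uniform gradient bound to absorb the cross terms. Once this localization is executed carefully, the passage from $C_{0}^{\infty}(B(x_{0},r))$ to all of $L^{Q}_{1}(B(x_{0},r))$ follows by density and Fatou's lemma, completing the proof.
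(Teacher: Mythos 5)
Your overall route coincides with the paper's: the paper gives no detailed proof of Theorem \ref{locweightedTrud_thm_Qp}, deriving it from exactly the ingredients you list (Theorem \ref{Tyson_thm2}, Proposition \ref{triangle_euc}, the covering lemma, the homogeneity-zero bound on $\nabla_{H}|x_{0}^{-1}x|$) together with the strategy of \cite{Yang14}, and your two-region splitting mirrors the paper's proof of the graded analogue, Theorem \ref{locweightedTrud_thm}.

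However, your treatment of the region near the singularity of the weight has a genuine gap, and it occurs exactly where the theorem is sharp. You absorb the weight by H\"older's inequality, replacing $\alpha$ by $\mu\alpha$, and claim that the borderline choice $\mu=Q/(Q-\beta)$ ``makes $\mu\alpha\leq\alpha_{Q}$, so the unweighted sharp inequality \eqref{Tyson_thm_eq1} still applies after the H\"older step''. But for this $\mu$ one has $\mu'=Q/\beta$, hence $\beta\mu'=Q$, and the companion H\"older factor $\big(\int_{\{|x|\lesssim r\}}|x|^{-\beta\mu'}\,dx\big)^{1/\mu'}$ diverges, so the step yields nothing. Finiteness of that factor forces $\beta\mu'<Q$, i.e.\ $\mu>Q/(Q-\beta)$ strictly, and then $\mu\alpha\leq\alpha_{Q}$ forces $\alpha<\alpha_{Q}(1-\beta/Q)$ strictly. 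Thus your argument proves \eqref{weightedTrud1_Qp} only on the open interval $[0,\alpha_{Q}(1-\beta/Q))$, not the closed interval claimed in the statement. This is not a bookkeeping artifact: H\"older interpolation between the unweighted endpoint $(\alpha,\beta)=(\alpha_{Q},0)$ and mere integrability of the weight always lands strictly inside the region $\alpha/\alpha_{Q}+\beta/Q<1$, never on the boundary segment; indeed the paper's own proof of Theorem \ref{locweightedTrud_thm} uses precisely your H\"older step and, correspondingly, obtains only a non-sharp admissible range of $\alpha$ (with $\mu>Q/(Q-\beta)$). Reaching the endpoint requires a genuinely different mechanism --- for instance the rearrangement-free, level-set argument behind the singular Moser--Trudinger inequalities on Heisenberg and H-type groups in \cite{CLLY12} and \cite{LT13}, which is what \cite{Yang14} actually invokes, or a reworking of the proof of Theorem \ref{Tyson_thm2} in \cite{BMT03} that carries the radial weight $|x|^{-\beta}$ through the one-dimensional reduction --- rather than using \eqref{Tyson_thm_eq1} as a black box. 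A minor further point: the covering lemma and the cut-off gluing that you single out as the main obstacle are really the tools for the global Theorem \ref{locweightedTrud_thm_Qp2}; for the local inequality \eqref{weightedTrud1_Qp} the whole difficulty is the endpoint issue above.
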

\begin{thm}\label{locweightedTrud_thm_Qp2} Let $\mathbb{G}$ be a stratified group of homogeneous dimension $Q\geq 3$ and let $|\cdot|$ be a homogeneous quasi-norm on $\G$. Let $\alpha_{Q}$ be as in Theorem \ref{Tyson_thm2}. Then we have
\begin{equation}\label{weightedTrud1_Qp2}
\sup_{\|f\|_{L^{Q}_{1}(\G)}\leq1}\int_{\G}\frac{1}{|x|^{\beta}}\left(\exp(\alpha|f(x)|^{Q'})
-\sum_{k=0}^{Q-2}\frac{\alpha^{k}|f(x)|^{kQ'}}{k!}\right)dx
< \infty
\end{equation}
for any $\beta\in[0,Q)$ and $\alpha\in (0,\alpha_{Q}(1-\beta/Q))$, where $Q'=Q/(Q-1)$. When $\alpha>\alpha_{Q}(1-\beta/Q)$, the integral in \eqref{weightedTrud1_Qp2} is still finite for any $f\in L^{Q}_{1}(\G)$, but the supremum is infinite.
\end{thm}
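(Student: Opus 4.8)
The plan is to fix $f$ with $\|f\|_{L^{Q}_{1}(\G)}\leq 1$ (so that $\|\nabla_{H}f\|_{L^{Q}(\G)}^{Q}+\|f\|_{L^{Q}(\G)}^{Q}\lesssim 1$) and to split $\G$ according to the level set of $f$, writing $\Omega:=\{x\in\G:|f(x)|>1\}$. Since the integrand is the convergent tail $\sum_{k\geq Q-1}\frac{\alpha^{k}|f(x)|^{kQ'}}{k!}$, on $\{|f|\leq 1\}$ it is bounded by $C_{\alpha}|f|^{(Q-1)Q'}=C_{\alpha}|f|^{Q}$ (recall $(Q-1)Q'=Q$); hence the contribution of $\{|f|\leq 1\}$ is controlled by $\int_{\G}\frac{|f(x)|^{Q}}{|x|^{\beta}}dx$, which is finite and uniformly bounded by the Hardy--Sobolev inequality (Theorem \ref{Hardy_thm_grad} with $p=q=Q$ and $a=\beta/Q\in[0,1)$, the case $\beta=0$ being trivial) together with the embedding $\|\R^{\beta/(\nu Q)}f\|_{L^{Q}}\lesssim\|f\|_{L^{Q}_{1}}$. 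This disposes of the low part with a bound independent of $f$.

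The heart of the matter is the high part $\int_{\Omega}\frac{1}{|x|^{\beta}}\exp(\alpha|f|^{Q'})dx$, which I treat by the truncation (Moser--Li--Ruf) trick adapted to the group so as to avoid any rearrangement. First $|\Omega|\leq\|f\|_{L^{Q}(\G)}^{Q}\lesssim 1$. Put $v:=(|f|-1)_{+}$, supported in $\overline{\Omega}$ with $|\nabla_{H}v|=|\nabla_{H}f|\,\chi_{\Omega}$ a.e., so $\theta:=\|\nabla_{H}v\|_{L^{Q}(\G)}^{Q}\leq\|\nabla_{H}f\|_{L^{Q}(\G)}^{Q}\leq 1$. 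Using $(v+1)^{Q'}\leq(1+\varepsilon)v^{Q'}+C_{\varepsilon}$ and then normalising $w:=v/\theta^{1/Q}$ (so $\|\nabla_{H}w\|_{L^{Q}}=1$ and $v^{Q'}=\theta^{1/(Q-1)}w^{Q'}\leq w^{Q'}$), I reduce to bounding $\int_{\Omega}\frac{1}{|x|^{\beta}}\exp(\alpha(1+\varepsilon)w^{Q'})dx$ on the finite-measure domain $\Omega$. The weight is separated by H\"older: on $\Omega\cap B(0,1)$ I estimate $\int\frac{\exp(\cdots)}{|x|^{\beta}}\leq(\int_{B(0,1)}|x|^{-\beta s})^{1/s}(\int_{\Omega}\exp(\alpha(1+\varepsilon)s'w^{Q'}))^{1/s'}$, while on $\Omega\setminus B(0,1)$ the weight is $\leq1$. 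The first factor is finite precisely when $s'>Q/(Q-\beta)$, and the second is bounded by the sharp domain inequality (Theorem \ref{Tyson_thm2}, which gives $\int_{\Omega}\exp(\alpha_{Q}w^{Q'})\lesssim|\Omega|$) provided $\alpha(1+\varepsilon)s'\leq\alpha_{Q}$. These two requirements are compatible exactly when $\alpha(1+\varepsilon)<\alpha_{Q}(1-\beta/Q)$, so the hypothesis $\alpha<\alpha_{Q}(1-\beta/Q)$ lets me fix $\varepsilon$ small and an admissible $s'$; this is where the strict subcriticality and the factor $(1-\beta/Q)$ enter. Combining the two parts gives a bound on the high part uniform over $\|f\|_{L^{Q}_{1}(\G)}\leq1$, whence the supremum is finite. (Alternatively, the weighted local inequality Theorem \ref{locweightedTrud_thm_Qp}, obtained via the covering lemma and cut-offs, may replace the H\"older step near the origin.)

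For the remaining assertions I argue separately. That the integral is finite for every fixed $f\in L^{Q}_{1}(\G)$ and every $\alpha$ does not use subcriticality: for fixed $f$ one chooses $M$ so large that $\|\nabla_{H}f\|_{L^{Q}(\{|f|>M\})}^{Q}$ is as small as desired (absolute continuity of the integral), runs the same truncation with threshold $M$ so that the effective exponent becomes subcritical, and treats $\{|f|\leq M\}$ by the pointwise bound $\exp(\alpha|f|^{Q'})-\sum_{k<Q-1}\lesssim_{M}|f|^{Q}$. For the sharpness, i.e. that the supremum is infinite once $\alpha>\alpha_{Q}(1-\beta/Q)$, I would test against the Moser concentration sequence built from the homogeneous norm $N$ of Theorem \ref{Tyson_thm}, centred at the origin where $|x|^{-\beta}$ is most singular and normalised so that $\|f_{k}\|_{L^{Q}_{1}(\G)}\leq1$; over the concentration region the weight contributes a factor $|x|^{-\beta}$ that shifts the critical threshold from $\alpha_{Q}$ to $\alpha_{Q}(1-\beta/Q)$, and a direct computation of the $k$-th integral shows divergence as $k\to\infty$ whenever $\alpha$ exceeds this threshold.

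I expect the main obstacle to be the high part: implementing the truncation and renormalisation on the group without any symmetrisation while simultaneously taming the singular weight, all with constants uniform in $f$. The delicate accounting is that both the loss $(1+\varepsilon)$ from $(v+1)^{Q'}\leq(1+\varepsilon)v^{Q'}+C_{\varepsilon}$ and the H\"older exponent forced by $\beta<Q$ must be absorbed into the single strict gap $\alpha_{Q}(1-\beta/Q)-\alpha>0$; verifying that these can be balanced, and that the resulting bounds depend on $f$ only through the norm constraint, is the crux. The sharpness direction is technically lighter but still requires a careful, quasi-norm-adapted construction of the Moser sequence and an honest computation of its energy and of the weighted exponential integral.
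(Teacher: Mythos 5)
Your proposal is correct in outline, but it takes a genuinely different route from the paper's. The paper does not give a self-contained argument for Theorem \ref{locweightedTrud_thm_Qp2}: it records the local sharp weighted inequality (Theorem \ref{locweightedTrud_thm_Qp}, valid up to the endpoint $\alpha=\alpha_{Q}(1-\beta/Q)$) and then obtains the global statement by the strategy of Yang \cite{Yang14}, i.e.\ by gluing local estimates with cut-off functions over a covering of $\G$ --- hence the ingredients the paper lists: the covering lemma of \cite{FS-book}, the quasi-triangle inequality, and the bound $|\nabla_{H}|x_{0}^{-1}x||\leq C$. You avoid the covering and the cut-offs altogether: a Li--Ruf type level-set truncation reduces the high part to Theorem \ref{Tyson_thm2} applied on the finite-measure set $\Omega=\{|f|>1\}$, a H\"older step near the origin absorbs the singular weight (your exponent bookkeeping, $\beta s<Q$ against $\alpha(1+\varepsilon)s'\leq\alpha_{Q}$, compatible precisely when $\alpha(1+\varepsilon)<\alpha_{Q}(1-\beta/Q)$, is right), and the low part is controlled by the weighted Hardy--Sobolev inequality of Theorem \ref{Hardy_thm_grad}. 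Parts (b) and (c) --- finiteness for each fixed $f$ via truncation at a high level plus absolute continuity, and unboundedness of the supremum via a Moser sequence built from the norm $N$ of Theorem \ref{Tyson_thm} --- follow the same pattern as Yang's. What each approach buys: yours is more elementary and self-contained, with no overlap counting, and it suffices here because the global theorem claims only the open interval $(0,\alpha_{Q}(1-\beta/Q))$; the paper's gluing route is lossless, and it also yields the endpoint local inequality \eqref{weightedTrud1_Qp}, which your H\"older step cannot reach and which the paper needs for the equivalence statement of Theorem \ref{Hardy_Rock_thm_pQ}.

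Three points to tighten in a full write-up. (i) Applying Theorem \ref{Tyson_thm2} to $w=(|f|-1)_{+}/\theta^{1/Q}$ on $\Omega$ requires this truncation to be an admissible test function on the level set (the space there is a completion of $C_{0}^{\infty}(\Omega)$); the standard fix is to prove the uniform bound first for $f\in C_{0}^{\infty}(\G)$, where $(|f|-1-\epsilon)_{+}$ is compactly supported in the open set $\Omega$ and converges in the horizontal Sobolev norm as $\epsilon\to 0$, and then pass to general $f$ by density and Fatou. (ii) You start from $\|\nabla_{H}f\|_{L^{Q}(\G)}^{Q}+\|f\|_{L^{Q}(\G)}^{Q}\lesssim 1$ but later use $\|\nabla_{H}f\|_{L^{Q}(\G)}\leq 1$ on the nose; for a sharp-constant statement this matters, so the constraint $\|f\|_{L^{Q}_{1}(\G)}\leq 1$ must be read with the $\nabla_{H}$-based norm (as in \eqref{intro:trud1} and in Yang's theorem) --- a merely equivalent norm such as the $(-\L)^{1/2}$-based one in \eqref{def_space_str} would degrade the admissible range of $\alpha$ by the equivalence constant raised to the power $Q'$. (iii) The sharpness assertion is part of the theorem, so the divergence computation for the weighted Moser sequence (where the weighted volume of the concentration region shifts the threshold from $\alpha_{Q}$ to $\alpha_{Q}(1-\beta/Q)$, and switching between the arbitrary quasi-norm $|\cdot|$ and $N$ costs only multiplicative constants) must actually be carried out rather than sketched.
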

\begin{rem}\label{explic_aQ_rem2} In Theorems \ref{locweightedTrud_thm_Qp} and \ref{locweightedTrud_thm_Qp2}, we note that if we have more information on the homogeneous norm $N$ the constant $\alpha_{Q}=Qc_{Q}^{Q'-1}$ can be explicitly calculated. For example, when $\G$ is the H-type group we have \eqref{explic_aQ}.
\end{rem}
\begin{rem} In the case when $\mathbb{G}$ is the Heisenberg group, and $|\cdot|$ is the Kaplan distance, the obtained Theorems \ref{locweightedTrud_thm_Qp} and \ref{locweightedTrud_thm_Qp2} were established in \cite{Yang14}.
\end{rem}
\begin{proof}[Proof of Theorem \ref{locweightedTrud_thm}] By \eqref{triangle}, we have
\begin{equation}\label{loc_TM1}
|x_{0}|\leq C_{0}(|x_{0}^{-1}x|+|x|)
\end{equation}
for any $x\in\G$.

Let us first consider the case $|x_{0}|>2C_{0}r$. Then, from \eqref{loc_TM1} we have
\begin{equation}\label{1case}
|x|\geq \frac{|x_{0}|}{C_{0}}-|x_{0}^{-1}x|>r,\;\;\forall x\in B(x_{0},r).
\end{equation}
When $\|f\|_{L^{p}_{Q/p}(B(x_{0},r))}=0$, we have $f\equiv0$, that is, \eqref{weightedTrud1} is trivial. Therefore, we can assume that
$\|f\|_{L^{p}_{Q/p}(B(x_{0},r))}\neq0$. Setting $\widetilde{f}:=f/\|f\|_{L^{p}_{Q/p}(B(x_{0},r))}$
and taking into account that $\|f\|_{L^{p}_{Q/p}(B(x_{0},r))}$ $\leq1$, we calculate
\begin{equation}\label{1case_1}
\begin{split}\exp(\alpha|f(x)|^{p'})-\sum_{0\leq k<p-1,\;k\in\mathbb{N}}\frac{\alpha^{k}|f(x)|^{kp'}}{k!}&=\sum_{k\geq
p-1}\frac{\alpha^{k}|f(x)|^{p'k}}{k!}\\
&=\sum_{k\geq p-1}\frac{\alpha^{k}\|f\|_{L^{p}_{Q/p}(B(x_{0},r))}^{p'k}|\widetilde{f}(x)|^{p'k}}{k!}\\
&\leq\|f\|_{L^{p}_{Q/p}(B(x_{0},r))}^{p}\sum_{k\geq p-1}\frac{\alpha^{k}|\widetilde{f}(x)|^{p'k}}{k!}.
\end{split}
\end{equation}
It follows from this and \eqref{1case} that
\begin{equation}\label{loc_TM2}
\begin{split}
\int_{B(x_{0},r)}\frac{1}{|x|^{\beta}}&\left(\exp(\alpha|f(x)|^{p'})-\sum_{0\leq
k<p-1,\;k\in\mathbb{N}}\frac{\alpha^{k}|f(x)|^{kp'}}{k!}\right)dx\\
&\leq r^{-\beta}\int_{B(x_{0},r)}\left(\exp(\alpha|f(x)|^{p'})-\sum_{0\leq
k<p-1,\;k\in\mathbb{N}}\frac{\alpha^{k}|f(x)|^{kp'}}{k!}\right)dx\\&
\leq r^{-\beta}\|f\|_{L^{p}_{Q/p}(B(x_{0},r))}^{p}\int_{B(x_{0},r)}\sum_{k\geq
p-1}\frac{\alpha^{k}|\widetilde{f}(x)|^{p'k}}{k!}dx.
\end{split}
\end{equation}
Since $\|\widetilde{f}\|_{L^{p}_{Q/p}(B(x_{0},r))}\leq1$ and \eqref{1case}, then using \eqref{Trud1} in
\eqref{loc_TM2}, we get that
\begin{equation*}
\begin{split}
\int_{B(x_{0},r)}\frac{1}{|x|^{\beta}}&\left(\exp(\alpha|f(x)|^{p'})-\sum_{0\leq
k<p-1,\;k\in\mathbb{N}}\frac{\alpha^{k}|f(x)|^{kp'}}{k!}\right)dx\\&\leq
\widetilde{C_{3}}r^{-\beta}\|f\|_{L^{p}_{Q/p}(B(x_{0},r))}^{p},
\end{split}
\end{equation*}
for $|x_{0}|>2C_{0}r$ and $\alpha\in[0, (ep'\widetilde{C_{1}}^{p'})^{-1})$. Thus, we have proved \eqref{weightedTrud1} for
$|x_{0}|>2C_{0}r$.

Now let us consider the case $|x_{0}|\leq2C_{0}r$. If $x\in B(x_{0},r)$, then by \eqref{triangle} we get
$$|x|\leq C_{0}(|x_{0}^{-1}x|+|x_{0}|)<C_{0}(2C_{0}+1)r.$$
Then, by H\"{o}lder's inequality, \eqref{Trud1} and Remark \ref{rem2}, one calculates
\begin{equation}\label{2case_1}
\begin{split}
\int_{B(x_{0},r)}\frac{1}{|x|^{\beta}}&\sum_{k\geq p-1}\frac{\alpha^{k}|\widetilde{f}(x)|^{p'k}}{k!}dx\\&\leq
\int_{|x|\leq C_{0}(2C_{0}+1)r}\frac{1}{|x|^{\beta}}\sum_{k\geq p-1}\frac{\alpha^{k}|\widetilde{f}(x)|^{p'k}}{k!}dx\\&
\leq \left(\int_{|x|\leq C_{0}(2C_{0}+1)r}\frac{1}{|x|^{\beta\mu'}}dx\right)^{\frac{1}{\mu'}}
\left(\int_{|x|\leq C_{0}(2C_{0}+1)r}\exp(\alpha \mu|\widetilde{f}(x)|^{p'})dx\right)^{\frac{1}{\mu}}
\\&=\frac{|\wp|^{1/\mu'}(C_{0}(2C_{0}+1))^{Q/\mu'-\beta}}{(Q-\beta\mu')^{1/\mu'}}r^{Q/\mu'-\beta}
\left(\int_{|x|\leq C_{0}(2C_{0}+1)r}\exp(\alpha \mu|\widetilde{f}(x)|^{p'})dx\right)^{\frac{1}{\mu}}
\\&\leq
\widetilde{C_{3}}^{1/\mu}\frac{|\wp|^{1/\mu'}(C_{0}(2C_{0}+1))^{Q-\beta}}{(Q-\beta\mu')^{1/\mu'}}r^{Q/\mu'-\beta},
\end{split}
\end{equation}
for all $\alpha\in[0, (e\widetilde{C_{1}}^{p'}\mu p')^{-1})$, where $|\wp|$ is a $Q-1$ dimensional surface measure of the unit
quasi-sphere, $1/\mu+1/\mu'=1$ and $1<\mu'<Q/\beta$.
Combining \eqref{2case_1} with \eqref{1case_1}, one has
\begin{equation*}
\begin{split}
\int_{B(x_{0},r)}&\frac{1}{|x|^{\beta}}\left(\exp(\alpha|f(x)|^{p'})-\sum_{0\leq
k<p-1,\;k\in\mathbb{N}}\frac{\alpha^{k}|f(x)|^{kp'}}{k!}\right)dx\\&
\leq
\|f\|_{L^{p}_{Q/p}(B(x_{0},r))}^{p}
\int_{B(x_{0},r)}\frac{1}{|x|^{\beta}}\sum_{k\geq p-1}\frac{\alpha^{k}|\widetilde{f}(x)|^{p'k}}{k!}dx
\\&
\leq \widetilde{C_{3}}^{1/\mu}\frac{|\wp|^{1/\mu'}(C_{0}(2C_{0}+1))^{Q/\mu'-\beta}}{(Q-\beta\mu')^{1/\mu'}}r^{Q/\mu'-\beta}
\|f\|_{L^{p}_{Q/p}(B(x_{0},r))}^{p},
\end{split}
\end{equation*}
which implies \eqref{weightedTrud1} for $|x_{0}|\leq2C_{0}r$. Thus, we have completed the proof of Theorem
\ref{locweightedTrud_thm}.
\end{proof}

Now we introduce the weighted Trudinger-Moser inequality with remainder terms on the entire graded group:
\begin{thm}\label{Tru-Mos_thm}
Let $\mathbb{G}$ be a graded group of homogeneous dimension $Q$ and let $\mathcal{R}$ be a positive Rockland operator of
homogeneous degree $\nu$. Let $|\cdot|$ be an arbitrary homogeneous quasi-norm. Let $1<p<\infty$ and $\beta\in[0,Q)$ with $\mu>Q/(Q-\beta)$. Then there
exist positive constants $C_{2}$ and $C_{3}$ such that
\begin{equation}\label{Tru-Mos1}
\int_{\G}\frac{1}{|x|^{\beta}}\left(\exp(\alpha|f(x)|^{p'})-\sum_{0\leq
k<p-1,\;k\in\mathbb{N}}\frac{\alpha^{k}|f(x)|^{kp'}}{k!}\right)dx\leq C_{3}(\|f\|^{p}_{L^{p}(\G)}+\|f\|^{p/\mu}_{L^{p}(\G)})
\end{equation}
holds for all $\alpha\in(0,C_{2})$, and for all functions $f\in L_{Q/p}^{p}(\G)$ with $\|\R^{\frac{Q}{\nu
p}}f\|_{L^{p}(\G)}\leq1$, where $1/p+1/p'=1$. Moreover, we can take
$C_{2}$ $=C_{2}(p,Q,\beta,\mu)=(e\widetilde{C_{1}}^{p'}\mu p')^{-1}$ and
$$C_{3}=C_{3}(p,Q,\alpha, \beta, \mu)=\max\left(\frac{|\wp|^{1/\mu'}}{(Q-\beta\mu')^{1/\mu'}}\sum_{k\geq
p-1,\;k\in\mathbb{N}}\frac{\alpha^{k}}{k!}(\widetilde{C_{1}}^{p'}kp'\mu)^{k},\widetilde{C_{2}}\right),$$
where $1/\mu+1/\mu'=1$, $\widetilde{C_{1}}=\widetilde{C_{1}}(p,Q)$ and $\widetilde{C_{2}}=\widetilde{C_{2}}(p,Q,\alpha)$ are from
\eqref{crit_GN_ineq} and \eqref{Trud_ineq}, respectively.
\end{thm}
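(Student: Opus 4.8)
The plan is to follow the same scheme as in the proof of the local estimate \ref{locweightedTrud_thm}, splitting the integral over $\G$ according to the size of $|x|$ and handling the singular weight $|x|^{-\beta}$ near the origin by Hölder's inequality combined with the critical Gagliardo--Nirenberg inequality of Theorem \ref{crit_GN_thm}. Concretely, I would write
$$\int_{\G}\frac{1}{|x|^{\beta}}\Bigl(\exp(\alpha|f(x)|^{p'})-\sum_{0\leq k<p-1}\frac{\alpha^{k}|f(x)|^{kp'}}{k!}\Bigr)dx=\Bigl(\int_{\{|x|\geq1\}}+\int_{\{|x|<1\}}\Bigr)(\cdots)\,dx.$$
On $\{|x|\geq1\}$ one has $|x|^{-\beta}\leq1$ since $\beta\geq0$, so this contribution is dominated by the unweighted integral over all of $\G$; because $\|\R^{\frac{Q}{\nu p}}f\|_{L^{p}(\G)}\leq1$, Theorem \ref{Trud_thm} bounds it by $\widetilde{C_2}\|f\|^{p}_{L^{p}(\G)}$.

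For the region $\{|x|<1\}$ I would use the identity $\exp(\alpha|f|^{p'})-\sum_{0\leq k<p-1}\frac{\alpha^{k}|f|^{kp'}}{k!}=\sum_{k\geq p-1}\frac{\alpha^{k}|f|^{kp'}}{k!}$ and estimate each term by Hölder's inequality with exponents $\mu'$ and $\mu$,
$$\int_{\{|x|<1\}}\frac{|f(x)|^{kp'}}{|x|^{\beta}}dx\leq\Bigl(\int_{\{|x|<1\}}|x|^{-\beta\mu'}dx\Bigr)^{1/\mu'}\|f\|_{L^{kp'\mu}(\G)}^{kp'}.$$
The weight integral is finite precisely because the hypothesis $\mu>Q/(Q-\beta)$ is equivalent to $\beta\mu'<Q$, and polar coordinates give $\bigl(\int_{\{|x|<1\}}|x|^{-\beta\mu'}dx\bigr)^{1/\mu'}=|\wp|^{1/\mu'}(Q-\beta\mu')^{-1/\mu'}$. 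Next I would apply Theorem \ref{crit_GN_thm} with $q=kp'\mu$; using $\|\R^{\frac{Q}{\nu p}}f\|_{L^{p}(\G)}\leq1$ together with the key identity $kp'(1-1/p)=k$ yields $\|f\|_{L^{kp'\mu}(\G)}^{kp'}\leq(\widetilde{C_1}^{p'}kp'\mu)^{k}\|f\|_{L^{p}(\G)}^{p/\mu}$. Summing over $k\geq p-1$ bounds the near-origin contribution by $\frac{|\wp|^{1/\mu'}}{(Q-\beta\mu')^{1/\mu'}}\sum_{k\geq p-1}\frac{\alpha^{k}}{k!}(\widetilde{C_1}^{p'}kp'\mu)^{k}\,\|f\|_{L^{p}(\G)}^{p/\mu}$.

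The main point, and the step that forces the value of $C_2$, is the convergence of this last series. Applying the root test and Stirling's formula $(k!)^{1/k}\sim k/e$, one finds that the $k$-th term raised to the power $1/k$ tends to $e\,\alpha\,\widetilde{C_1}^{p'}p'\mu$, so the series converges exactly when $\alpha<(e\widetilde{C_1}^{p'}\mu p')^{-1}=C_2$, which is the admissible range in the statement. Combining the two regions and setting $C_3=\max\bigl(\frac{|\wp|^{1/\mu'}}{(Q-\beta\mu')^{1/\mu'}}\sum_{k\geq p-1}\frac{\alpha^{k}}{k!}(\widetilde{C_1}^{p'}kp'\mu)^{k},\,\widetilde{C_2}\bigr)$ then gives the desired estimate with the two terms $\|f\|^{p}_{L^{p}(\G)}$ and $\|f\|^{p/\mu}_{L^{p}(\G)}$. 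I expect the only genuinely delicate part to be the bookkeeping in the near-origin estimate---keeping track of the $k$-dependent powers so that the exponent of $\|f\|_{L^p}$ collapses to $p/\mu$ and the series constant comes out with the stated sharp threshold $C_2$; the far-field part is an immediate consequence of Theorem \ref{Trud_thm}.
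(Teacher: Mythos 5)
Your proposal is correct and follows essentially the same route as the paper's own proof: the same split at $|x|=1$, the far region handled by Theorem \ref{Trud_thm} after dropping the weight, the near region by H\"older with exponents $\mu,\mu'$ (valid since $\mu>Q/(Q-\beta)$ gives $\beta\mu'<Q$) followed by Theorem \ref{crit_GN_thm} with $q=kp'\mu$, and the same identification of the threshold $C_{2}=(e\widetilde{C_{1}}^{p'}\mu p')^{-1}$. The only cosmetic difference is that you verify convergence of the series by the root test with Stirling's formula, whereas the paper uses the ratio test; both yield the same admissible range for $\alpha$.
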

If we take supremum over $\|f\|_{L^{p}_{Q/p}(\G)}\leq1$ in \eqref{Tru-Mos1}, then we obtain the weighted Trudinger-Moser
inequality:
\begin{cor}\label{Tru-Mos_cor} Let $\mathbb{G}$ be a graded group of homogeneous dimension $Q$ and let $\mathcal{R}$ be a
positive Rockland operator of homogeneous degree $\nu$. Let $|\cdot|$ be an arbitrary homogeneous quasi-norm. Let $1<p<\infty$
and $\beta\in[0,Q)$. Then there exist positive constants $C_{2}$ and $C_{3}$ such that
\begin{equation}\label{Tru-Mos1_cor}
\sup_{\|f\|_{L^{p}_{Q/p}(\G)}\leq1}\int_{\G}\frac{1}{|x|^{\beta}}\left(\exp(\alpha|f(x)|^{p'})-\sum_{0\leq
k<p-1,\;k\in\mathbb{N}}\frac{\alpha^{k}|f(x)|^{kp'}}{k!}\right)dx\leq C_{3},
\end{equation}
holds for all $\alpha\in(0,C_{2})$, where $1/p+1/p'=1$. Moreover, the constants
$C_{2}=C_{2}(p,Q,\beta)$ and $C_{3}=C_{3}(p,Q,\alpha,\beta)$ can be given as in Theorem \ref{Tru-Mos_thm}.
\end{cor}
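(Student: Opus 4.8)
The plan is to obtain Corollary \ref{Tru-Mos_cor} as an immediate consequence of Theorem \ref{Tru-Mos_thm}, the only substantive observation being that the quantity controlling the supremum, $\|f\|_{L^p_{Q/p}(\G)}$, dominates the single norm $\|\R^{\frac{Q}{\nu p}}f\|_{L^p(\G)}$ that appears in the hypothesis of Theorem \ref{Tru-Mos_thm}. It is precisely this domination that forces the right-hand side of \eqref{Tru-Mos1} to collapse to a constant independent of $f$.

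First I would fix any $\mu>Q/(Q-\beta)$, which is possible for every $\beta\in[0,Q)$ since $Q/(Q-\beta)$ is then finite; this choice pins down the constants $C_2=C_2(p,Q,\beta,\mu)$ and $C_3=C_3(p,Q,\alpha,\beta,\mu)$ supplied by Theorem \ref{Tru-Mos_thm}, and after fixing $\mu$ these become the constants $C_2(p,Q,\beta)$ and $C_3(p,Q,\alpha,\beta)$ named in the statement. Next, recalling the definition \eqref{def_space} in the global case $\Omega=\G$, namely
\begin{equation*}
\|f\|_{L^p_{Q/p}(\G)}=\left(\|\R^{\frac{Q}{\nu p}}f\|_{L^p(\G)}^p+\|f\|_{L^p(\G)}^p\right)^{1/p},
\end{equation*}
I would note that the constraint $\|f\|_{L^p_{Q/p}(\G)}\leq 1$ forces both $\|\R^{\frac{Q}{\nu p}}f\|_{L^p(\G)}\leq 1$ and $\|f\|_{L^p(\G)}\leq 1$. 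The first of these is exactly the hypothesis needed to invoke \eqref{Tru-Mos1}, so for every $\alpha\in(0,C_2)$ and every admissible $f$ we obtain
\begin{equation*}
\int_{\G}\frac{1}{|x|^{\beta}}\left(\exp(\alpha|f(x)|^{p'})-\sum_{0\leq k<p-1,\;k\in\mathbb{N}}\frac{\alpha^{k}|f(x)|^{kp'}}{k!}\right)dx\leq C_3\left(\|f\|_{L^p(\G)}^p+\|f\|_{L^p(\G)}^{p/\mu}\right).
\end{equation*}
The second inequality, $\|f\|_{L^p(\G)}\leq 1$, bounds each of the two summands on the right by $1$, so the whole expression is at most $2C_3$; taking the supremum over all $f$ with $\|f\|_{L^p_{Q/p}(\G)}\leq 1$ then yields \eqref{Tru-Mos1_cor}, with the factor $2$ absorbed into the constant $C_3$.

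I expect no genuine analytic difficulty here, since all the hard estimates are already contained in Theorem \ref{Tru-Mos_thm}; the only point requiring care is the bookkeeping of constants. One must verify that the admissible interval $\alpha\in(0,C_2)$ is preserved in the passage to the corollary, which holds because $C_2$ depends only on $p,Q,\beta,\mu$ and not on $f$, and that the auxiliary parameter $\mu$ is eliminated cleanly by the fixing performed at the outset. Should one prefer the constant $C_3$ to the honest bound $2C_3$, it suffices to redefine it; for the qualitative conclusion that the supremum in \eqref{Tru-Mos1_cor} is finite, this distinction is immaterial.
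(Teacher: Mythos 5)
Your proof is correct and matches the paper's approach exactly: the paper obtains this corollary simply by taking the supremum over $\|f\|_{L^{p}_{Q/p}(\G)}\leq 1$ in \eqref{Tru-Mos1}, relying on precisely your observation that this constraint forces both $\|\R^{\frac{Q}{\nu p}}f\|_{L^{p}(\G)}\leq 1$ (so Theorem \ref{Tru-Mos_thm} applies) and $\|f\|_{L^{p}(\G)}\leq 1$ (so the right-hand side collapses to a constant). Your explicit handling of the choice of $\mu$ and of the harmless factor $2$ merely spells out bookkeeping the paper leaves implicit.
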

\begin{rem} We note that when $\mathbb{G}$ is the Heisenberg group and $|\cdot|$ is the Kaplan distance, in the special case
$p=Q$, Corollary \ref{Tru-Mos_cor} was obtained in \cite[Theorem 1.1]{Yang14}. In the case $\beta=0$, the unweighted
Trudinger-Moser inequality similar to \eqref{Tru-Mos1} was investigated in \cite{Oz95} on $\G=(\Rn,+)$ with $\R=-\triangle$ the
Laplacian, and in \cite[Theorem 3.5]{RY17} on the graded groups $\mathbb{G}$. On graded groups, we also note that in the special case when $p=2$ and $\beta=0$, the unweighted Trudinger-Moser inequality \eqref{Tru-Mos1_cor} was obtained in \cite{BFG12}.
\end{rem}
\begin{proof}[Proof of Theorem \ref{Tru-Mos_thm}]
By a direct calculation, we have
$$
\int_{\G}\frac{1}{|x|^{\beta}}\sum_{k\geq p-1,\;k\in\mathbb{N}}\frac{1}{k!}(\alpha|f(x)|^{p'})^{k}dx$$$$=
\int_{|x|\leq1}\frac{1}{|x|^{\beta}}\sum_{k\geq p-1,\;k\in\mathbb{N}}\frac{1}{k!}(\alpha|f(x)|^{p'})^{k}dx+
\int_{|x|>1}\frac{1}{|x|^{\beta}}\sum_{k\geq p-1,\;k\in\mathbb{N}}\frac{1}{k!}(\alpha|f(x)|^{p'})^{k}dx$$
$$
\leq\int_{|x|\leq1}\frac{1}{|x|^{\beta}}\sum_{k\geq p-1,\;k\in\mathbb{N}}\frac{1}{k!}(\alpha|f(x)|^{p'})^{k}dx+
\int_{\G}\sum_{k\geq p-1,\;k\in\mathbb{N}}\frac{1}{k!}(\alpha|f(x)|^{p'})^{k}dx$$
\begin{equation}\label{weight_Trud1}
=:\sum_{k\geq p-1,\;k\in\mathbb{N}}I_{1,k}+\sum_{k\geq p-1,\;k\in\mathbb{N}}I_{2,k}=:I_{1}+I_{2}.
\end{equation}
We use H\"{o}lder's inequality for each term of the first summand $I_{1}$ in the right hand side above to have
\begin{equation*}
\begin{split}
I_{1,k}=\frac{\alpha^{k}}{k!}\int_{|x|\leq1}\frac{1}{|x|^{\beta}}&|f(x)|^{kp'}dx
\\&\leq\frac{\alpha^{k}}{k!}\left(\int_{|x|\leq1}\frac{1}{|x|^{\beta \mu'}}dx\right)^{\frac{1}{\mu'}}
\left(\int_{|x|\leq1}|f(x)|^{kp'\mu}dx\right)^{\frac{1}{\mu}}\\&\leq
\frac{|\wp|^{1/\mu'}}{(Q-\beta\mu')^{1/\mu'}}\cdot\frac{\alpha^{k}}{k!}\left(\int_{\G}|f(x)|^{kp'\mu}dx\right)^{\frac{1}{\mu}},
\end{split}
\end{equation*}
for $k\geq p-1$ with $k\in\mathbb{N}$, where $1/\mu+1/\mu'=1$ with $1<\mu'<Q/\beta$. Since $kp'\mu>p$ and $\|\R^{\frac{Q}{\nu
p}}f\|_{L^{p}(\G)}\leq1$, it follows using \eqref{crit_GN_ineq} that
$$I_{1,k}=\frac{\alpha^{k}}{k!}\int_{|x|\leq1}\frac{1}{|x|^{\beta}}|f(x)|^{kp'}dx
\leq
\widetilde{C_{1}}^{kp'}\frac{|\wp|^{1/\mu'}}{(Q-\beta\mu')^{1/\mu'}}\cdot\frac{\alpha^{k}}{k!}(kp'\mu)^{k}\|f\|^{p/\mu}_{L^{p}(\G)},
$$
where $\mu>Q/(Q-\beta)$. By this we have the estimate for $I_{1}$, namely
\begin{equation}\label{weight_Trud2}I_{1}\leq \frac{|\wp|^{1/\mu'}}{(Q-\beta\mu')^{1/\mu'}}\sum_{k\geq
p-1,\;k\in\mathbb{N}}\frac{\alpha^{k}}{k!}(\widetilde{C_{1}}^{p'}kp'\mu)^{k}\|f\|^{p/\mu}_{L^{p}(\G)}.
\end{equation}
Let us check the convergence of the series in the right hand side of \eqref{weight_Trud2}:
\begin{equation*}
\begin{split}
\lim_{k\rightarrow \infty}\frac{\alpha^{k+1}(\widetilde{C_{1}}^{p'}(k+1)p'\mu)^{k+1}}{(k+1)!}\cdot
\frac{k!}{\alpha^{k}(\widetilde{C_{1}}^{p'}kp'\mu)^{k}}&=\alpha\widetilde{C_{1}}^{p'}\mu p'\lim_{k\rightarrow \infty}
\left(1+\frac{1}{k}\right)^{k}\\&=\alpha\widetilde{C_{1}}^{p'}\mu p'e,
\end{split}
\end{equation*}
that is, the series in the right hand side of \eqref{weight_Trud2} converges when $0<\alpha<(e\widetilde{C_{1}}^{p'}\mu p')^{-1}$. For $I_{2}$, in view of \eqref{Trud_ineq} one has
\begin{equation}\label{weight_Trud3}I_{2}=\int_{\G}\sum_{k\geq p-1,\;k\in\mathbb{N}}\frac{1}{k!}(\alpha|f(x)|^{p'})^{k}dx\leq
\widetilde{C_{2}}\|f\|^{p}_{L^{p}(\G)},
\end{equation}
for $\alpha\in(0, (ep'\widetilde{C_{1}}^{p'})^{-1})$, see Remark \ref{rem_Trud_thm}, where
$\widetilde{C_{1}}=\widetilde{C_{1}}(p,Q)$ is the constant from \eqref{crit_GN_ineq}.
Finally, plugging \eqref{weight_Trud2} and \eqref{weight_Trud3} into \eqref{weight_Trud1}, we obtain \eqref{Tru-Mos1} for all
$\alpha\in(0,C_{2})$.

Thus, we have completed the proof of Theorem \ref{Tru-Mos_thm}.
\end{proof}

Now we discuss critical Hardy type inequalities, namely the case $a=Q/p$ of Theorem \ref{Hardy_thm_grad}. Moreover, we show the equivalence of critical Hardy type and local weighted Trudinger-Moser type inequalities.
\begin{thm}\label{Hardy_Rock_thm}
Let $\mathbb{G}$ be a graded Lie group of homogeneous dimension $Q$ and let $\mathcal{R}$ be a positive Rockland operator of
homogeneous degree $\nu$. Let $|\cdot|$ be an arbitrary homogeneous quasi-norm. Let $1<p<\infty$ and $\beta\in[0,Q)$. Let $r>0$ be given and let
$x_{0}$ be any point of $\G$. Then for any $p\leq q<\infty$ there exists a positive constant $C_{4}=C_{4}(p, Q, \beta, r, q)$ such that
\begin{equation}\label{Hardy_Rock1}
\left\|\frac{f}{|x|^{\frac{\beta}{q}}}\right\|_{L^{q}(B(x_{0},r))}\leq C_{4}q^{1-1/p}\|f\|_{L^{p}_{Q/p}(B(x_{0},r))}
\end{equation}
holds for all $f\in L^{p}_{Q/p}(B(x_{0},r))$, where the space $L^{p}_{Q/p}(B(x_{0},r))$ is defined in \eqref{def_space}. Furthermore, we have
\begin{equation}\label{equiv_identity_Hardy}
\frac{1}{\widehat{\alpha} p'e}=A^{p'}=B^{p'},
\end{equation}
where
\begin{equation*}
\begin{split}
\widehat{\alpha}=\sup\{\alpha>0; \exists C_{1}=C_{1}(\alpha):\eqref{weightedTrud1} \textrm{ holds }\forall f\in
&L_{Q/p}^{p}(B(x_{0},r))\\&
\textrm{ with } \|f\|_{L^{p}_{Q/p}(B(x_{0},r))}\leq1\},
\end{split}
\end{equation*}
\begin{equation*}
\begin{split}
A=\inf\{C_{4}>0; \exists t=t(C_{4}) \textrm{ with } t\geq p:\eqref{Hardy_Rock1}\textrm{ holds }\forall f\in
&L_{Q/p}^{p}(B(x_{0},r)),\\&
\forall q
\textrm{ with } t\leq q<\infty\},
\end{split}
\end{equation*}
\begin{equation}\label{alphaAB_Hardy}
B=\limsup_{q\rightarrow \infty}\sup_{f\in L^{p}_{Q/p}(B(x_{0},r))\backslash\{0\}}\frac{\left\|\frac{f}{|x|^{\frac{\beta}{q}}}\right\|_{L^{q}(B(x_{0},r))}}
{q^{1-1/p}\|f\|_{L^{p}_{Q/p}(B(x_{0},r))}}.
\end{equation}
The critical Hardy type inequalities \eqref{Hardy_Rock1} are equivalent to the weighted Trudinger-Moser inequalities \eqref{weightedTrud1}.
\end{thm}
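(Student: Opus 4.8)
The plan is to prove Theorem \ref{Hardy_Rock_thm} by passing between the exponential sum in \eqref{weightedTrud1} and its individual monomials, following the strategy of \cite{Oz97} and \cite{INW14}; the two directions of this passage give simultaneously the inequality \eqref{Hardy_Rock1}, the equivalence with \eqref{weightedTrud1}, and the sharp-constant identity \eqref{equiv_identity_Hardy}. First I would establish the implication \eqref{weightedTrud1}$\Rightarrow$\eqref{Hardy_Rock1}. Since each summand of $\sum_{k\geq p-1}\frac{\alpha^{k}|f(x)|^{kp'}}{k!}$ is nonnegative and the subtracted partial sum in \eqref{weightedTrud1} cancels exactly the terms $0\leq k<p-1$, Theorem \ref{locweightedTrud_thm} bounds each surviving term separately: for $\|f\|_{L^{p}_{Q/p}(B(x_{0},r))}\leq1$ and every integer $k\geq p-1$,
\begin{equation*}
\frac{\alpha^{k}}{k!}\int_{B(x_{0},r)}\frac{|f(x)|^{kp'}}{|x|^{\beta}}\,dx\leq C_{1}.
\end{equation*}
Choosing $k$ to be the integer nearest $q/p'$ (so that $p'k\approx q$ and $k\geq p-1$ for $q$ large), raising to the power $1/q$, and then homogenising by replacing $f$ with $f/\|f\|_{L^{p}_{Q/p}(B(x_{0},r))}$, I obtain \eqref{Hardy_Rock1}; the mismatch between $q$ and $p'k$ is absorbed either by interpolating between the two neighbouring integer powers or by monotonicity of $L^{q}$-norms on the bounded quasi-ball. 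The decisive point is the asymptotics of the extracted constant: Stirling's formula $k!\sim\sqrt{2\pi k}\,(k/e)^{k}$ with $k\sim q/p'$ gives $\bigl(k!/\alpha^{k}\bigr)^{1/q}\sim q^{1/p'}(p'e\alpha)^{-1/p'}$, while $C_{1}^{1/q}\to1$, so the constant behaves like $q^{1-1/p}(p'e\alpha)^{-1/p'}$. This is exactly the scaling in \eqref{Hardy_Rock1}, and letting $\alpha\uparrow\widehat\alpha$ yields $B^{p'}\leq(\widehat\alpha p'e)^{-1}$ for the asymptotic constant $B$ of \eqref{alphaAB_Hardy}.

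For the reverse implication \eqref{Hardy_Rock1}$\Rightarrow$\eqref{weightedTrud1} I would run the computation backwards. Setting $q=p'k$ in \eqref{Hardy_Rock1} and raising to the power $q$ gives, for $\|f\|_{L^{p}_{Q/p}(B(x_{0},r))}\leq1$,
\begin{equation*}
\int_{B(x_{0},r)}\frac{|f(x)|^{kp'}}{|x|^{\beta}}\,dx\leq\bigl(C_{4}(p'k)\bigr)^{p'k}(p'k)^{k},
\end{equation*}
using $(1-1/p)p'k=k$. Multiplying by $\alpha^{k}/k!$, summing over $k\geq p-1$, and again invoking Stirling to reduce the general term to $\bigl(\alpha\,C_{4}^{p'}p'e\bigr)^{k}/\sqrt{2\pi k}$, I conclude that the series—hence the integral in \eqref{weightedTrud1}—converges whenever $\alpha\,C_{4}^{p'}p'e<1$. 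Since convergence is governed by the tail, the relevant $C_{4}$ is the limiting value $A=B$, giving $\widehat\alpha\geq(A^{p'}p'e)^{-1}$, i.e. $A^{p'}\geq(\widehat\alpha p'e)^{-1}$, the reverse of the bound from the forward step. Note that the weight $|x|^{-\beta}$ appears identically in \eqref{weightedTrud1} and \eqref{Hardy_Rock1}, so it plays no asymmetric role in this passage; it is handled once and for all inside Theorem \ref{locweightedTrud_thm}.

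Finally I would assemble the chain $\frac{1}{\widehat\alpha p'e}=A^{p'}=B^{p'}$. The equality $A=B$ is essentially tautological: $B\leq A$ since the asymptotic best constant cannot exceed any uniform bound valid for all large $q$, and $A\leq B$ because for each $\varepsilon>0$ the definition of the $\limsup$ supplies a threshold $t\geq p$ beyond which the best constant stays below $B+\varepsilon$, so $C_{4}=B+\varepsilon$ is admissible. Combining the two Stirling estimates then sandwiches $B^{p'}\leq(\widehat\alpha p'e)^{-1}\leq A^{p'}=B^{p'}$, forcing all three quantities to coincide, and the equivalence of \eqref{Hardy_Rock1} and \eqref{weightedTrud1} is precisely the statement that both implications hold; existence of a finite $C_{4}$ with $\limsup_{q\to\infty}C_{4}<\infty$ follows since Theorem \ref{locweightedTrud_thm} guarantees $\widehat\alpha>0$. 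I expect the main obstacle to be the bookkeeping of the sharp constants: one must check that the Stirling asymptotics are uniform enough that no constant factor exceeding $1$ survives in the exponent as $q,k\to\infty$ (so that the $C_{1}^{1/q}$ and $\sqrt{2\pi k}$ factors are asymptotically negligible), and that the non-integer correction relating $q$ and $p'k$ does not perturb the limit—this is the only place where the argument risks losing the exact value $(\widehat\alpha p'e)^{-1/p'}$ rather than merely a finite constant.
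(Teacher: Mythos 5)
Your proposal is correct and follows essentially the same route as the paper's proof: extract the individual monomials $\frac{\alpha^{k}}{k!}\int_{B(x_{0},r)}|x|^{-\beta}|f|^{kp'}dx$ from the local Trudinger--Moser inequality \eqref{weightedTrud1}, pass to general exponents $q$ by H\"older interpolation between the neighbouring weighted $L^{p'k}$- and $L^{p'(k+1)}$-norms, identify the asymptotic constant $q^{1-1/p}(p'e\alpha)^{-1/p'}$ via Stirling, and conversely sum \eqref{Hardy_Rock1} at $q=p'k$ against $\alpha^{k}/k!$ and apply the ratio test. The only (harmless) difference is bookkeeping of the sharp constants: the paper bounds $A$ in the forward step and $B$ in the reverse step and closes the sandwich using $B\leq A$, whereas you bound $B$ forward and $A$ backward and observe $A=B$ directly from the definitions --- logically equivalent.
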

\begin{rem}\label{rem_F} By \eqref{equiv_identity_Hardy} and \eqref{alphaAB_Hardy}, we see that the constant $B$ is asymptotically sharp for \eqref{Hardy_Rock1}, i.e. \eqref{Hardy_Rock1} does not hold for $0<C_{4}<B$.
\end{rem}
\begin{rem}\label{Ricci_compare1} When $\G$ is a stratified group and $\R=-\L$ is the sub-Laplacian, the critical Hardy type inequality
\eqref{Hardy_Rock1} takes the form
\begin{equation}\label{Hardy_Rock1_strat}
\left\|\frac{f}{|x|^{\frac{\beta}{q}}}\right\|_{L^{q}(B(x_{0},r))}\leq C_{4}q^{1-1/p}(\|f\|_{L^{p}(B(x_{0},r))}+\|(-\L)^{\frac{Q}{2
p}}f\|_{L^{p}(B(x_{0},r))})
\end{equation}
for any $\beta\in[0,Q)$, $p\leq q<\infty$ and for all $f\in L^{p}_{Q/p}(B(x_{0},r))$. We note that when $q=p$, the inequality
\eqref{Hardy_Rock1_strat} gives the critical case of \eqref{Ricci1} for $f\in L^{p}_{Q/p}(B(x_{0},r))$.
\end{rem}
\begin{proof}[Proof of Theorem \ref{Hardy_Rock_thm}]
Taking into account $B\leq A$, we see that to show \eqref{equiv_identity_Hardy} it is enough to prove \eqref{weightedTrud1}$\Rightarrow$\eqref{Hardy_Rock1} with $\widehat{\alpha}\leq(ep'A^{p'})^{-1}$ and
\eqref{Hardy_Rock1}$\Rightarrow$\eqref{weightedTrud1} with $1/\widehat{\alpha}\leq p'eB^{p'}$. Let us start with the first. Here, we also can assume that $\|f\|_{L^{p}_{Q/p}(B(x_{0},r))} \neq0$, otherwise we have $f\equiv0$, that is, \eqref{Hardy_Rock1} is trivial. Replacing $f$ by $f/\|f\|_{L^{p}_{Q/p}(B(x_{0},r))}$ in \eqref{weightedTrud1},
we have
$$
\int_{B(x_{0},r)}\frac{1}{|x|^{\beta}}\sum_{k\geq p-1,
\;k\in\mathbb{N}}\frac{1}{k!}\left(\frac{\alpha|f(x)|^{p'}}{\|f\|_{L^{p}_{Q/p}(B(x_{0},r))}^{p'}}\right)^{k}dx\leq
C_{1}.$$
It follows that for any $\varepsilon$ with $0<\varepsilon<\widehat{\alpha}$ there exists $C_{\varepsilon}$ such that
$$
\int_{B(x_{0},r)}\frac{1}{|x|^{\beta}}\sum_{k\geq p-1,
\;k\in\mathbb{N}}\frac{1}{k!}\left(\frac{(\widehat{\alpha}-\varepsilon)|f(x)|^{p'}}{\|f\|_{L^{p}_{Q/p}(B(x_{0},r))}^{p'}}\right)^{k}dx\leq
C_{\varepsilon}.$$
Here, we can take $C_{\varepsilon}=C_{\varepsilon}(p,Q,\beta,r)=C_{1}(p,Q,\widehat{\alpha}-\varepsilon, \beta, r, \mu)$, where $C_{1}$ is the constant from \eqref{weightedTrud1}.
In particular, it implies that
\begin{equation}\label{Hardy_ineq_3}
\left\|\frac{f}{|x|^{\frac{\beta}{p'k}}}\right\|_{L^{p'k}(B(x_{0},r))}\leq (C_{\varepsilon}k!)^{1/p'k}(\widehat{\alpha}-\varepsilon)^{-1/p'}
\|f\|_{L^{p}_{Q/p}(B(x_{0},r))}
\end{equation}
holds for all $k\in\mathbb{N}$ with $k\geq p-1$. Moreover, for any $q>p$, there exists an integer $k\geq p-1$ satisfying $p'k\leq q <p'(k+1)$. Then, using H\"{o}lder's inequality for $\frac{\theta q}{p'k}+\frac{(1-\theta)
q}{p'(k+1)}=1$ with $0<\theta\leq1$ we calculate
\begin{equation*}
\begin{split}
\int_{\G}\frac{|f(x)|^{q}}{|x|^{\beta}}dx&=\int_{\G}\frac{|f(x)|^{\theta q}}{|x|^{\frac{\beta\theta
q}{p'k}}}\cdot\frac{|f(x)|^{(1-\theta)q}}{|x|^{\frac{\beta(1-\theta) q}{p'(k+1)}}}dx\\&
\leq \left(\int_{\G}\frac{|f(x)|^{p'k}}{|x|^{\beta}}dx\right)^{\frac{\theta q}{p'k}}
\left(\int_{\G}\frac{|f(x)|^{p'(k+1)}}{|x|^{\beta}}dx\right)^{\frac{(1-\theta) q}{p'(k+1)}}\\&
=\left\|\frac{f}{|x|^{\frac{\beta}{p'k}}}\right\|_{L^{p'k}(\G)}^{\theta q}
\left\|\frac{f}{|x|^{\frac{\beta}{p'(k+1)}}}\right\|_{L^{p'(k+1)}(\G)}^{(1-\theta)q},
\end{split}
\end{equation*}
that is,
\begin{equation}\label{Hardy_GN_ineq_4}
\left\|\frac{f}{|x|^{\frac{\beta}{q}}}\right\|_{L^{q}(\G)}
\leq\left\|\frac{f}{|x|^{\frac{\beta}{p'k}}}\right\|_{L^{p'k}(\G)}^{\theta}
\left\|\frac{f}{|x|^{\frac{\beta}{p'(k+1)}}}\right\|_{L^{p'(k+1)}(\G)}^{(1-\theta)}.
\end{equation}
Then, by \eqref{Hardy_GN_ineq_4} and \eqref{Hardy_ineq_3}, we have
\begin{equation}\label{Hardy_ineq_4}
\left\|\frac{f}{|x|^{\frac{\beta}{q}}}\right\|_{L^{q}(B(x_{0},r))}\leq C_{\varepsilon}^{\frac{1}{q}}(\widehat{\alpha}-\varepsilon)^{-\frac{1}{p'}}
(\Gamma(q/p'+2))^{\frac{1}{q}}
\|f\|_{L^{p}_{Q/p}(B(x_{0},r))},
\end{equation}
where we have used $(k+1)!\leq \Gamma(q/p'+2)$.
Since the constant $C_{\varepsilon}$ does not depend on $q$, so that the constant in the right hand side of above does not have singularity at $q=p$, then taking the limit $q\rightarrow p$, we see that \eqref{Hardy_ineq_4} also holds true for $q=p$. Thus, we have obtained \eqref{Hardy_ineq_4} for any $q\geq p$ and for all $f\in L^{p}_{Q/p}(B(x_{0},r))$, which is \eqref{Hardy_Rock1}.

To analyse the constant, we rewrite \eqref{Hardy_ineq_4} for $q\rightarrow\infty$ as follows
\begin{equation}\label{Hardy_ineq_4_1}
\lim_{q\rightarrow\infty}\frac{\left\|\frac{f}{|x|^{\frac{\beta}{q}}}\right\|_{L^{q}(B(x_{0},r))}}
{(\Gamma(q/p'+2))^{\frac{1}{q}}
\|f\|_{L^{p}_{Q/p}(B(x_{0},r))}}\leq \lim_{q\rightarrow\infty}C_{\varepsilon}^{\frac{1}{q}}(\widehat{\alpha}-\varepsilon)^{-\frac{1}{p'}}
=(\widehat{\alpha}-\varepsilon)^{-\frac{1}{p'}}.
\end{equation}
On the other hand, we have
\begin{equation}\label{Hardy_ineq_4_2}
\lim_{q\rightarrow\infty}\frac{q^{1-1/p}}{(\Gamma(q/p'+2))^{\frac{1}{q}}}=
\lim_{q\rightarrow\infty}\frac{q^{1-1/p}}{(1+o(1))\left(\frac{q}{ep'}\right)^{1/p'}}=(ep')^{1/p'},
\end{equation}
where we have used for $q\rightarrow+\infty$ that
\begin{equation}\label{Gamma1}
\begin{split}
\Gamma(q/p'+2)^{1/q}&=\left((1+o(1))\sqrt{2\pi\left(q/p'+1\right)}\left(\frac{q/p'+1}{e}\right)^{q/p'+1}\right)^{1/q}
\\&=(1+o(1))\left(\frac{q}{ep'}\right)^{1/p'}.
\end{split}
\end{equation}
Now, dividing \eqref{Hardy_ineq_4_1} by \eqref{Hardy_ineq_4_2}, we note that for any $\delta>0$ there exists $t\geq p$ such that
\begin{equation}\label{crit_Hardy_ineq2_1}
\left\|\frac{f}{|x|^{\frac{\beta}{q}}}\right\|_{L^{q}(B(x_{0},r))}\leq
((p'e(\widehat{\alpha}-\varepsilon))^{-1/p'}+\delta)q^{1-1/p}\|f\|_{L^{p}_{Q/p}(B(x_{0},r))}
\end{equation}
holds for all $f\in L_{Q/p}^{p}(B(x_{0},r))$ and $q$ with $t\leq q<\infty$.

Thus, we obtain $A\leq (p'e(\widehat{\alpha}-\varepsilon))^{-1/p'}+\delta$, which implies $\widehat{\alpha}\leq(ep'A^{p'})^{-1}$
since $\varepsilon$ and $\delta$ are arbitrary.

Now let us show that \eqref{Hardy_Rock1}$\Rightarrow$\eqref{weightedTrud1} with $1/\widehat{\alpha}\leq p'eB^{p'}$. By
\eqref{Hardy_Rock1}, for any $q$ with $p\leq q<\infty$ there exists a positive constant $C_{4}=C_{4}(p,Q,\beta,r,q)$ such that
\begin{equation}\label{crit_Hardy_1}
\left\|\frac{f}{|x|^{\frac{\beta}{q}}}\right\|_{L^{q}(B(x_{0},r))}\leq C_{4}q^{1-1/p}\|f\|_{L^{p}_{Q/p}(B(x_{0},r))}
\end{equation}
holds for all $f\in L^{p}_{Q/p}(B(x_{0},r))$. Since we have $\|f\|_{L^{p}_{Q/p}(B(x_{0},r))}\leq1$, \eqref{crit_Hardy_1} implies that
\begin{equation}\label{crit_Hardy_2}
\begin{split}
\int_{B(x_{0},r)}\frac{1}{|x|^{\beta}}&\left(\exp(\alpha|f(x)|^{p'})-\sum_{0\leq k<p-1,\;k\in\mathbb{N}}
\frac{1}{k!}(\alpha|f(x)|^{p'})^{k}\right)dx\\&
\leq\sum_{p'k\geq p,\;k\in\mathbb{N}}\frac{(\alpha p'kC_{4}^{p'})^{k}}{k!}.
\end{split}
\end{equation}
The condition $0\leq\alpha<1/(p'eC_{4}^{p'})$ provides the convergence of the last series in \eqref{crit_Hardy_2}. Thus, we have obtained \eqref{weightedTrud1} with  $0\leq\alpha<1/(p'eC_{4}^{p'})$, that is,
$\widehat{\alpha}\geq1/(p'eC_{4}^{p'})$ for all $C_{4}\geq B$, which implies $\widehat{\alpha}\geq1/(p'eB^{p'})$.

Thus, we have completed the proof of Theorem \ref{Hardy_Rock_thm}.
\end{proof}
In the case $p=Q$, since we have Theorem \ref{locweightedTrud_thm_Qp}, then similarly to the proof of Theorem \ref{Hardy_Rock_thm}, we can obtain the following improved version of Theorem \ref{Hardy_Rock_thm} on stratified groups:
\begin{thm}\label{Hardy_Rock_thm_pQ}
Let $\mathbb{G}$ be a stratified group of homogeneous dimension $Q\geq3$ and let $|\cdot|$ be a homogeneous norm on $\G$. Let $\beta\in[0,Q)$. Let $r>0$ be given and let $x_{0}$ be any point of $\G$. Let $\alpha_{Q}$ be as in Theorem \ref{Tyson_thm2}. Then for any $Q\leq q<\infty$ there exists a positive constant $C_{5}=C_{5}(Q, \beta, r, q)$ such that
\begin{equation}\label{Hardy_Rock1_pQ}
\left\|\frac{f}{|\cdot|^{\frac{\beta}{q}}}\right\|_{L^{q}(B(x_{0},r))}\leq C_{5}q^{1-1/Q}\|\nabla_{H}f\|_{L^{Q}(B(x_{0},r))}
\end{equation}
holds for all functions $f\in L^{Q}_{1}(B(x_{0},r))$. Moreover, we have
\begin{equation}\label{equiv_identity_Hardy_pQ}
\frac{1}{\alpha_{\beta} Q'e}=\widetilde{A}^{Q'}=\widetilde{B}^{Q'},
\end{equation}
where
$$\alpha_{\beta}=\alpha_{Q}(1-\beta/Q),$$
\begin{equation*}
\begin{split}
\widetilde{A}=\inf\{C_{5}>0; &\exists t=t(C_{5}) \textrm{ with } t\geq Q:\\& \eqref{Hardy_Rock1_pQ}\textrm{ holds }\forall f\in
L^{Q}_{1}(B(x_{0},r)),
\forall q
\textrm{ with } t\leq q<\infty\},
\end{split}
\end{equation*}
\begin{equation}\label{alphaAB_Hardy_pQ}
\widetilde{B}=\limsup_{q\rightarrow \infty}\sup_{f\in L^{Q}_{1}(B(x_{0},r))\backslash\{0\}}
\frac{\left\|\frac{f}{|\cdot|^{\frac{\beta}{q}}}\right\|_{L^{q}(B(x_{0},r))}}{q^{1-1/Q}\|\nabla_{H}f\|_{L^{Q}(B(x_{0},r))}}.
\end{equation}
The weighted Trudinger-Moser inequalities \eqref{weightedTrud1_Qp} are equivalent to the critical Hardy type inequalities \eqref{Hardy_Rock1_pQ} with relation \eqref{equiv_identity_Hardy_pQ}.
\end{thm}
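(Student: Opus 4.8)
The plan is to run the argument of Theorem \ref{Hardy_Rock_thm} with $p=Q$ and $p'=Q'$, using the sharp weighted Trudinger-Moser inequality \eqref{weightedTrud1_Qp} from Theorem \ref{locweightedTrud_thm_Qp} in place of \eqref{weightedTrud1}, and with the homogeneous gradient norm $\|\nabla_H f\|_{L^Q(B(x_0,r))}$ playing the role of the inhomogeneous Sobolev norm $\|f\|_{L^p_{Q/p}(B(x_0,r))}$. This replacement is legitimate precisely because, on a stratified group with $p=Q$, Theorem \ref{Tyson_thm2} yields the sharp constant $\alpha_Q$ and the three structural facts recorded before Theorem \ref{locweightedTrud_thm_Qp} (the triangle inequality \eqref{triangle}, the covering lemma, and the boundedness of $|\nabla_H|x_0^{-1}x||$) make \eqref{weightedTrud1_Qp} scale homogeneously, so that no lower-order term is needed and the constant stays sharp.

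First I would prove \eqref{weightedTrud1_Qp}$\Rightarrow$\eqref{Hardy_Rock1_pQ}. Assuming $\|\nabla_H f\|_{L^Q(B(x_0,r))}\neq 0$ (otherwise the claim is trivial), I normalise $f$ by this quantity and insert it into \eqref{weightedTrud1_Qp} taken at the endpoint $\alpha=\alpha_{\beta}:=\alpha_Q(1-\beta/Q)$, which is admissible since \eqref{weightedTrud1_Qp} holds on the closed interval $[0,\alpha_{\beta}]$. Expanding the exponential and isolating the $k$-th summand gives, for every integer $k\geq Q-1$, a bound of the shape
$$\left\|\frac{f}{|\cdot|^{\beta/(Q'k)}}\right\|_{L^{Q'k}(B(x_0,r))}\leq (Ck!)^{1/(Q'k)}\alpha_{\beta}^{-1/Q'}\|\nabla_H f\|_{L^Q(B(x_0,r))}.$$
Interpolating by H\"older's inequality between the exponents $Q'k$ and $Q'(k+1)$ exactly as in \eqref{Hardy_GN_ineq_4} covers all $q\geq Q$ and produces a constant comparable to $(\Gamma(q/Q'+2))^{1/q}$. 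Letting $q\to\infty$ and applying Stirling's formula as in \eqref{Gamma1}--\eqref{Hardy_ineq_4_2} yields $\widetilde{A}\leq(\alpha_{\beta}Q'e)^{-1/Q'}$, i.e. $\widetilde{A}^{Q'}\leq 1/(\alpha_{\beta}Q'e)$; the absence of a singularity at $q=Q$ is handled by the same limiting remark used in the proof of Theorem \ref{Hardy_Rock_thm}.

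Next I would prove \eqref{Hardy_Rock1_pQ}$\Rightarrow$\eqref{weightedTrud1_Qp}. Given \eqref{Hardy_Rock1_pQ} with constant $C_5$ and a function with $\|\nabla_H f\|_{L^Q(B(x_0,r))}\leq 1$, I expand $\exp(\alpha|f|^{Q'})-\sum_{k=0}^{Q-2}\frac{\alpha^k|f|^{kQ'}}{k!}$ and bound the generic term, via \eqref{Hardy_Rock1_pQ}, by $\frac{(\alpha Q'kC_5^{Q'})^k}{k!}$; the ratio test shows the resulting series converges whenever $\alpha<1/(Q'eC_5^{Q'})$. Hence \eqref{weightedTrud1_Qp} holds for such $\alpha$, and optimising over admissible $C_5$ gives $\alpha_{\beta}\geq 1/(Q'e\widetilde{B}^{Q'})$, that is $\widetilde{B}^{Q'}\geq 1/(\alpha_{\beta}Q'e)$. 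Finally, since any admissible $C_5$ controls the sharp ratio for all large $q$, the definitions force $\widetilde{B}\leq\widetilde{A}$; chaining the three estimates yields
$$\frac{1}{\alpha_{\beta}Q'e}\geq\widetilde{A}^{Q'}\geq\widetilde{B}^{Q'}\geq\frac{1}{\alpha_{\beta}Q'e},$$
so all quantities coincide, proving \eqref{equiv_identity_Hardy_pQ} together with the asserted equivalence.

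The routine computations (the H\"older interpolation, the Stirling asymptotics, the ratio test) are identical to those already carried out for Theorem \ref{Hardy_Rock_thm}. The main obstacle, and the only genuinely new ingredient, is justifying that the homogeneous quantity $\|\nabla_H f\|_{L^Q(B(x_0,r))}$ may replace the full Sobolev norm while preserving the sharp constant: this hinges on Theorem \ref{Tyson_thm2} supplying the explicit $\alpha_Q$ and on the degree-zero homogeneity of $\nabla_H N$ for the homogeneous norm $N$ of Theorem \ref{Tyson_thm}, both of which are special to the stratified setting and to the critical exponent $p=Q$.
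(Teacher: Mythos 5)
Your proposal follows exactly the route the paper itself takes: the paper's ``proof'' of Theorem \ref{Hardy_Rock_thm_pQ} is precisely the instruction to rerun the proof of Theorem \ref{Hardy_Rock_thm} with $p=Q$, feeding in Theorem \ref{locweightedTrud_thm_Qp} (with the homogeneous quantity $\|\nabla_{H}f\|_{L^{Q}(B(x_{0},r))}$) in place of Theorem \ref{locweightedTrud_thm}. Your direction \eqref{weightedTrud1_Qp}$\Rightarrow$\eqref{Hardy_Rock1_pQ} (normalisation, term-by-term extraction from the exponential series, H\"older interpolation between the exponents $Q'k$ and $Q'(k+1)$, Stirling asymptotics) and your direction \eqref{Hardy_Rock1_pQ}$\Rightarrow$\eqref{weightedTrud1_Qp} (series expansion plus ratio test) are the same computations as in the paper, and your observation that the closed interval $[0,\alpha_{Q}(1-\beta/Q)]$ in Theorem \ref{locweightedTrud_thm_Qp} lets you work at the endpoint $\alpha=\alpha_{\beta}$, dispensing with the $\varepsilon$-argument of Theorem \ref{Hardy_Rock_thm}, is a correct small simplification.

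There is, however, one step you assert without justification, and it is the only place where something beyond Theorem \ref{locweightedTrud_thm_Qp} is genuinely needed: the passage from ``\eqref{weightedTrud1_Qp} holds for all $\alpha<1/(Q'e\widetilde{B}^{Q'})$'' to ``$\alpha_{\beta}\geq 1/(Q'e\widetilde{B}^{Q'})$''. This inference requires knowing that \eqref{weightedTrud1_Qp} \emph{fails} for every $\alpha>\alpha_{\beta}$; Theorem \ref{locweightedTrud_thm_Qp} only asserts validity on $[0,\alpha_{\beta}]$ and says nothing about failure above it. Without that failure statement your argument proves the identity with $\alpha_{\beta}$ replaced by the (a priori larger) supremum of admissible exponents in \eqref{weightedTrud1_Qp} --- exactly the role played by $\widehat{\alpha}$ in Theorem \ref{Hardy_Rock_thm}, where no such issue arises because $\widehat{\alpha}$ is \emph{defined} as that supremum. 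To anchor the identity at the explicit value $\alpha_{\beta}=\alpha_{Q}(1-\beta/Q)$ one must import the sharpness, e.g.\ the blow-up for $\alpha>\alpha_{Q}(1-\beta/Q)$ asserted in Theorem \ref{locweightedTrud_thm_Qp2} (or the ``Moreover'' clause of Theorem \ref{Tyson_thm2}), localised by Moser-type concentrating sequences supported near the origin; note that this also quietly uses that the origin lies in the closure of $B(x_{0},r)$, since for a ball staying away from the origin the weight $|x|^{-\beta}$ is bounded above and below there, and the relevant local threshold becomes $\alpha_{Q}$ rather than $\alpha_{\beta}$. To be fair, the paper glosses over this point as well, so your write-up matches the published argument; but in a self-contained proof this sharpness input must be stated and established, otherwise the chain $\frac{1}{\alpha_{\beta}Q'e}\geq\widetilde{A}^{Q'}\geq\widetilde{B}^{Q'}\geq\frac{1}{\alpha_{\beta}Q'e}$ is missing its last link.
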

\begin{rem}\label{rem_B7} By \eqref{equiv_identity_Hardy_pQ} and \eqref{alphaAB_Hardy_pQ}, we see that the constant
$$\widetilde{B}=(\alpha_{Q}(1-\beta/Q)Q'e)^{-1/Q'}$$
is asymptotically sharp for \eqref{Hardy_Rock1_pQ}, i.e. \eqref{Hardy_Rock1_pQ} does not hold for $0<C_{5}<\widetilde{B}$.

In fact, \eqref{Hardy_Rock1_pQ} implies \eqref{weightedTrud1_Qp} for $0<\alpha<\alpha_{1}$ for some $\alpha_{1}>0$, while \eqref{Hardy_Rock1_pQ} and \eqref{equiv_identity_Hardy_pQ} together imply \eqref{weightedTrud1_Qp} for all $0<\alpha<\alpha_{\beta}$.
\end{rem}

\section{Weighted Gagliardo-Nirenberg inequalities}
\label{SEC:GN}
In this section we show weighted Gagliardo-Nirenberg inequalities assoicated
with the positive Rockland operators. Moreover, we show the equivalence of weighted Trudinger-Moser inequalities with remainder
terms \eqref{Tru-Mos1} and weighted Gagliardo-Nirenberg type inequalities \eqref{Hardy_GN_Rock1}, and establish an asymptotic relation between their best constants.

\begin{thm}\label{Hardy_GN_Rock_thm}
Let $\mathbb{G}$ be a graded Lie group of homogeneous dimension $Q$ and let $\mathcal{R}$ be a positive Rockland operator of
homogeneous degree $\nu$. Let $|\cdot|$ be an arbitrary homogeneous quasi-norm. Let $1<p<\infty$ and $Q/(Q-\beta)<\mu<\infty$ with $\beta\in[0,Q)$. Then for any $p\leq q<\infty$ there exists a positive constant $C_{7}=C_{7}(p,Q,\beta,\mu,q)$ such that
\begin{equation}\label{Hardy_GN_Rock1}
\left\|\frac{f}{|x|^{\frac{\beta}{q}}}\right\|_{L^{q}(\G)}\leq C_{7}q^{1-1/p}
(\|\R^{\frac{Q}{\nu p}}f\|_{L^{p}(\G)}^{1-p/q}\|f\|_{L^{p}(\G)}^{p/q}+\|\R^{\frac{Q}{\nu
p}}f\|_{L^{p}(\G)}^{1-p/(q\mu)}\|f\|_{L^{p}(\G)}^{p/(q\mu)})
\end{equation}
holds for all $f\in L^{p}_{Q/p}(\G)$.

Furthermore, we have
\begin{equation}\label{equiv_identity}
\frac{1}{\widetilde{\alpha} p'e}=D^{p'}=F^{p'},
\end{equation}
where
\begin{equation*}
\begin{split}
\widetilde{\alpha}=\sup\{\alpha>0; \exists C_{3}=C_{3}(\alpha):\eqref{Tru-Mos1} \textrm{ holds }&\forall f\in
L_{Q/p}^{p}(\G)
\\&\textrm{ with } \|\R^{\frac{Q}{\nu p}}f\|_{L^{p}(\G)}\leq1\},
\end{split}
\end{equation*}
\begin{equation*}
\begin{split}
D=\inf\{C_{7}>0; \exists t=t(C_{7}) \textrm{ with } t\geq p:\eqref{Hardy_GN_Rock1}\textrm{ holds }&
\forall f\in L_{Q/p}^{p}(\G), \\& \forall q \textrm{ with } t\leq q<\infty\},
\end{split}
\end{equation*}
\begin{equation}\label{alphaAB}
F=\limsup_{q\rightarrow \infty}\sup_{f\in L^{p}_{Q/p}(\G)\backslash\{0\}}\frac{\left\|\frac{f}{|x|^{\frac{\beta}{q}}}\right\|_{L^{q}(\G)}}{q^{1-1/p}
(\|\R^{\frac{Q}{\nu p}}f\|_{L^{p}(\G)}^{1-p/q}\|f\|_{L^{p}(\G)}^{p/q}+\|\R^{\frac{Q}{\nu
p}}f\|_{L^{p}(\G)}^{1-p/(q\mu)}\|f\|_{L^{p}(\G)}^{p/(q\mu)})}.
\end{equation}
Moreover, the weighted Gagliardo-Nirenberg inequalities \eqref{Hardy_GN_Rock1} are
equivalent to the weighted Trudinger-Moser inequalities with remainder terms \eqref{Tru-Mos1}.
\end{thm}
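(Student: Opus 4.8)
The plan is to follow closely the scheme already used for Theorem \ref{Hardy_Rock_thm}, replacing the single Sobolev norm appearing there by the two-quantity right-hand side dictated by the parameter $\mu$. Since from the definitions one always has $F\le D$, it suffices to establish the two implications \eqref{Tru-Mos1}$\Rightarrow$\eqref{Hardy_GN_Rock1} with $\widetilde{\alpha}\le(ep'D^{p'})^{-1}$ and \eqref{Hardy_GN_Rock1}$\Rightarrow$\eqref{Tru-Mos1} with $1/\widetilde{\alpha}\le p'eF^{p'}$; chaining these with $F\le D$ then forces $F^{p'}\le D^{p'}\le\frac{1}{\widetilde{\alpha}p'e}\le F^{p'}$ and yields \eqref{equiv_identity}. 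The mere existence of \eqref{Hardy_GN_Rock1} already drops out of the first implication applied to the global Trudinger--Moser inequality \eqref{Tru-Mos1} furnished by Theorem \ref{Tru-Mos_thm}, so no separate existence argument is needed.

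For \eqref{Tru-Mos1}$\Rightarrow$\eqref{Hardy_GN_Rock1} I would normalise $\|\R^{\frac{Q}{\nu p}}f\|_{L^{p}(\G)}=1$ by homogeneity and discard all but the single nonnegative term of index $k$ in the series on the left of \eqref{Tru-Mos1}. This bounds each $\left\|f/|x|^{\beta/(p'k)}\right\|_{L^{p'k}(\G)}$ by $(C_{3}k!/\alpha^{k})^{1/(p'k)}$ times $(\|f\|_{L^{p}(\G)}^{p}+\|f\|_{L^{p}(\G)}^{p/\mu})^{1/(p'k)}$, and here the elementary subadditivity $(a+b)^{s}\le a^{s}+b^{s}$ for $s=1/(p'k)\le 1$ splits the bracket into exactly the two factors $\|f\|_{L^{p}(\G)}^{p/(p'k)}$ and $\|f\|_{L^{p}(\G)}^{p/(p'k\mu)}$, which after undoing the normalisation become precisely the two terms of \eqref{Hardy_GN_Rock1}; crucially, no constant is lost at this step. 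Interpolating in $q$ between the exponents $p'k$ and $p'(k+1)$ by Hölder's inequality, exactly as in \eqref{Hardy_GN_ineq_4}, produces the bound for all $q\ge p$, and the asymptotics $\Gamma(q/p'+2)^{1/q}\sim(q/(ep'))^{1/p'}$ recorded in \eqref{Gamma1} convert the factorial growth into the factor $q^{1-1/p}$ and give $D\le(p'e\widetilde{\alpha})^{-1/p'}$ upon letting $q\to\infty$.

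The reverse implication \eqref{Hardy_GN_Rock1}$\Rightarrow$\eqref{Tru-Mos1}, where the sharp constant must be recovered, is the hard part. Substituting $q=p'k$ into \eqref{Hardy_GN_Rock1} and summing the Taylor series reduces matters to the convergence of $\sum_{k}\frac{\alpha^{k}}{k!}(C_{7}(p'k)^{1-1/p})^{p'k}(\mathrm{term}_{1}+\mathrm{term}_{2})^{p'k}$, and the obstacle is that a crude estimate $(\mathrm{term}_{1}+\mathrm{term}_{2})^{p'k}\le 2^{p'k}(\cdots)$ would inflate $C_{7}$ by a factor $2$ and break the identity. To avoid this I would not raise the two-term bound bodily to the power $p'k$, but rather split the integral over $\{|x|\le 1\}$ and $\{|x|>1\}$ and apply Hölder with the exponent $\mu'$ on the inner region, in the spirit of the proof of Theorem \ref{Tru-Mos_thm}, so that the contributions $\|f\|_{L^{p}(\G)}^{p}$ and $\|f\|_{L^{p}(\G)}^{p/\mu}$ are produced separately rather than from a single binomial; combined with the scaling freedom that optimally balances the two terms, this should yield convergence precisely for $\alpha<1/(p'eC_{7}^{p'})$ and hence $1/\widetilde{\alpha}\le p'eF^{p'}$. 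Once both constant bounds are in place the chain above closes \eqref{equiv_identity}, and the two implications together are exactly the asserted equivalence of \eqref{Hardy_GN_Rock1} and \eqref{Tru-Mos1}; the asymptotic sharpness encoded in \eqref{alphaAB} is then read off from \eqref{equiv_identity} as in Remark \ref{rem_F}.
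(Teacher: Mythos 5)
Your first implication is essentially the paper's own argument and is fine: after normalising by $\|\R^{\frac{Q}{\nu p}}f\|_{L^{p}(\G)}$ one keeps the single term of index $k$ in \eqref{Tru-Mos1}, interpolates between the exponents $p'k$ and $p'(k+1)$ as in \eqref{Hardy_GN_ineq_4}, and uses \eqref{Gamma1}; your explicit appeal to $(a+b)^{s}\le a^{s}+b^{s}$ with $s=1/(p'k)$, passing from the \emph{bracket} bound \eqref{Hardy_GN_ineq_4_1} to the two-term right-hand side of \eqref{Hardy_GN_Rock1}, is exactly the step the paper leaves tacit, and it is indeed lossless in that direction.

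The reverse implication, however, has a genuine gap, located precisely where you write \enquote{should yield}. The splitting of $\int_{\G}$ into $\{|x|\le1\}$ and $\{|x|>1\}$ in the spirit of Theorem \ref{Tru-Mos_thm} cannot be executed from the hypothesis \eqref{Hardy_GN_Rock1} alone: in that proof the inner region uses H\"older together with the \emph{unweighted} inequality \eqref{crit_GN_ineq}, and the outer region uses the \emph{unweighted} inequality \eqref{Trud_ineq}, whose constants $\widetilde{C_{1}},\widetilde{C_{2}}$ bear no relation to $C_{7}$; moreover \eqref{Hardy_GN_Rock1} gives no control of unweighted norms on $\{|x|>1\}$, since there $|x|^{-\beta}\le1$ works in the wrong direction. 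Consequently the threshold $\alpha<1/(p'eC_{7}^{p'})$ cannot emerge from this route. Dilations do not repair it either: applying \eqref{Hardy_GN_Rock1} to $f(D_{r}\cdot)$ (for which $\|\R^{\frac{Q}{\nu p}}f\|_{L^{p}}$ is invariant while $\|f\|_{L^{p}}$ scales by $r^{-Q/p}$) and optimising in $r$ merely trades the factor $2$ for another explicit constant $c(\beta,Q,\mu)>1$. What the paper actually does is different and simpler: it runs the reverse direction on the bracket form \eqref{crit_Hardy_GN_1}, namely with right-hand side $C_{7}q^{1-1/p}\|\R^{\frac{Q}{\nu p}}f\|_{L^{p}}\bigl(\|f\|^{p}_{L^{p}}\|\R^{\frac{Q}{\nu p}}f\|^{-p}_{L^{p}}+\|f\|^{p/\mu}_{L^{p}}\|\R^{\frac{Q}{\nu p}}f\|^{-p/\mu}_{L^{p}}\bigr)^{1/q}$, which is the inequality its first implication genuinely establishes; raising this to the power $q=p'k$ under $\|\R^{\frac{Q}{\nu p}}f\|_{L^{p}}\le1$ gives at once the bound by $\|f\|^{p}_{L^{p}}+\|f\|^{p/\mu}_{L^{p}}$ appearing in \eqref{crit_Hardy_GN_2}, with no factor $2$ anywhere. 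So the obstacle you are fighting simply does not arise once the inequality is kept in bracket form. Your observation is nonetheless valuable: it exposes that the sum form and the bracket form are \emph{not} interchangeable at the level of sharp constants when $\beta>0$ (they differ by a factor up to $2^{1-1/q}$ in one direction), so the identity \eqref{equiv_identity} with $D,F$ defined through the literal two-term quotient \eqref{alphaAB} is not what the paper's chain of inequalities closes; the clean repair, both of your proof and of the statement, is to formulate \eqref{Hardy_GN_Rock1}, $D$ and $F$ in bracket form throughout the reverse implication.
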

\begin{rem}\label{rem_B} By \eqref{equiv_identity} and \eqref{alphaAB}, we see that the constant $F$ is asymptotically sharp for \eqref{Hardy_GN_Rock1}, i.e. \eqref{Hardy_GN_Rock1} does not hold for $0<C_{7}<F$.
\end{rem}
In the case $\beta=0$, the Theorem \ref{Hardy_GN_Rock_thm} gives the following corollary:
\begin{cor}\label{cor_crit_incl1} Let $\mathbb{G}$ be a graded Lie group of homogeneous dimension $Q$ and let $1<p<\infty$.
Then, $L^{p}_{Q/p}(\G)$ is continuously embedded in $L^{q}(\G)$ for any $p\leq q<\infty$.
\end{cor}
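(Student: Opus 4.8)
The plan is to obtain the corollary as a direct specialisation of Theorem \ref{Hardy_GN_Rock_thm} to $\beta=0$. With $\beta=0$ the admissibility condition $Q/(Q-\beta)<\mu<\infty$ collapses to $1<\mu<\infty$, so I would simply fix one such value, say $\mu=2$. For each fixed $q$ with $p\leq q<\infty$ the weighted Gagliardo-Nirenberg inequality \eqref{Hardy_GN_Rock1} then reads
$$\|f\|_{L^{q}(\G)}\leq C_{7}q^{1-1/p}\left(\|\R^{\frac{Q}{\nu p}}f\|_{L^{p}(\G)}^{1-p/q}\|f\|_{L^{p}(\G)}^{p/q}+\|\R^{\frac{Q}{\nu p}}f\|_{L^{p}(\G)}^{1-p/(q\mu)}\|f\|_{L^{p}(\G)}^{p/(q\mu)}\right),$$
whose right-hand side is already written entirely in terms of the two quantities that control the inhomogeneous Sobolev norm of $f$.

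The next step is to decouple the two geometric-mean products. Writing $A:=\|\R^{\frac{Q}{\nu p}}f\|_{L^{p}(\G)}$ and $B:=\|f\|_{L^{p}(\G)}$, I observe that since $p\leq q$ and $\mu>1$, both exponents $p/q$ and $p/(q\mu)$ lie in $[0,1]$, so that each term has the form $A^{\theta}B^{1-\theta}$ with $\theta\in[0,1]$. The weighted arithmetic–geometric mean inequality $A^{\theta}B^{1-\theta}\leq\theta A+(1-\theta)B\leq A+B$ then applies to both terms simultaneously, yielding
$$\|f\|_{L^{q}(\G)}\leq 2C_{7}q^{1-1/p}\left(\|\R^{\frac{Q}{\nu p}}f\|_{L^{p}(\G)}+\|f\|_{L^{p}(\G)}\right)\leq C\,\|f\|_{L^{p}_{Q/p}(\G)},$$
where in the last inequality I use the equivalence of the inhomogeneous Sobolev norm $\|\cdot\|_{L^{p}_{Q/p}(\G)}$ with $\|\R^{\frac{Q}{\nu p}}(\cdot)\|_{L^{p}(\G)}+\|\cdot\|_{L^{p}(\G)}$ (cf.\ \cite{FR:Sobolev,FR16} and the norm \eqref{def_space}).

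Since $q$ is held fixed throughout, the prefactor $C=C(p,Q,q)=2C_{7}(p,Q,0,\mu,q)\,q^{1-1/p}$ is a finite constant depending only on $p,Q,q$ (with $\mu=2$ fixed once and for all), so the displayed estimate is precisely the asserted continuous embedding $L^{p}_{Q/p}(\G)\hookrightarrow L^{q}(\G)$. I expect no genuine obstacle here; the only point worth flagging is that the $q$-dependence of the Gagliardo-Nirenberg constant is harmless, because the embedding is claimed for each finite $q$ separately, and the growth of $q^{1-1/p}$ as $q\to\infty$ is exactly what obstructs the endpoint $q=\infty$ (the Trudinger–Moser regime discussed in Theorem \ref{Tru-Mos_thm}). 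The borderline case $q=p$ is in any case immediate from the definition of $\|\cdot\|_{L^{p}_{Q/p}(\G)}$.
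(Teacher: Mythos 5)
Your proposal is correct and follows exactly the route the paper intends: the paper states Corollary \ref{cor_crit_incl1} as an immediate consequence of Theorem \ref{Hardy_GN_Rock_thm} in the case $\beta=0$, without writing out the details. Your fixing of $\mu$, the AM--GM decoupling $A^{\theta}B^{1-\theta}\leq A+B$ of the two product terms, and the equivalence of $\|\R^{\frac{Q}{\nu p}}f\|_{L^{p}(\G)}+\|f\|_{L^{p}(\G)}$ with the inhomogeneous Sobolev norm are precisely the routine verifications implicit in the paper's one-line deduction.
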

\begin{rem} We note that the Corollary \ref{cor_crit_incl1} was obtained e.g. in \cite{Oz95} on $\G=(\Rn,+)$ and in \cite[Lemma 4.1]{Yang14} on the Heisenberg group with $Q/p=1$.
\end{rem}
\begin{rem}\label{rem_crit_embed} By \cite[Proposition 4.4.13]{FR16}, we have $L^{p}_{a}(\G)\hookrightarrow L^{q}(\G)$ with $1/q=1/p-a/Q$ and
$0<a<Q/p$. Corollary \ref{cor_crit_incl1} shows the critical case $a=Q/p$ of this continuous embedding.
\end{rem}
\begin{proof}[Proof of Theorem \ref{Hardy_GN_Rock_thm}] Since $F\leq D$, in order to obtain \eqref{equiv_identity} it is enough to show
\eqref{Tru-Mos1}$\Rightarrow$\eqref{Hardy_GN_Rock1} with $\widetilde{\alpha}\leq(ep'D^{p'})^{-1}$ and
\eqref{Hardy_GN_Rock1}$\Rightarrow$\eqref{Tru-Mos1} with $1/\widetilde{\alpha}\leq p'eF^{p'}$.
Then, let us show first \eqref{Tru-Mos1}$\Rightarrow$\eqref{Hardy_GN_Rock1} with $\widetilde{\alpha}\leq(ep'D^{p'})^{-1}$. In the case $\|\R^{\frac{Q}{\nu p}}f\|_{L^{p}(\G)}=0$ we have $f\equiv0$
by Theorem \ref{crit_GN_thm} and the fact that $f$ belongs to the inhomogeneous Sobolev space $L^{p}_{Q/p}(\G)$, that is, \eqref{Hardy_GN_Rock1} is trivial. Therefore, we can assume that $\|\R^{\frac{Q}{\nu
p}}f\|_{L^{p}(\G)}\neq0$. Then, we can replace $f$ by $f/\|\R^{\frac{Q}{\nu p}}f\|_{L^{p}(\G)}$ in \eqref{Tru-Mos1} to get
\begin{multline}\label{Hardy_GN_ineq_2_0}
\int_{\G}\frac{1}{|x|^{\beta}}\left(\exp\left(\frac{\alpha|f(x)|^{p'}}{\|\R^{\frac{Q}{\nu
p}}f\|_{L^{p}(\G)}^{p'}}\right)-\sum_{0\leq k<p-1,
\;k\in\mathbb{N}}\frac{1}{k!}\left(\frac{\alpha|f(x)|^{p'}}{\|\R^{\frac{Q}{\nu p}}f\|_{L^{p}(\G)}^{p'}}\right)^{k}\right)dx\\
\leq
C_{3}\left(\frac{\|f\|^{p}_{L^{p}(\G)}}{\|\R^{\frac{Q}{\nu p}}f\|^{p}_{L^{p}(\G)}}+
\frac{\|f\|^{p/\mu}_{L^{p}(\G)}}{\|\R^{\frac{Q}{\nu p}}f\|^{p/\mu}_{L^{p}(\G)}}\right).
\end{multline}
From this, we note that for any $0<\varepsilon<\widetilde{\alpha}$ there is $C_{\varepsilon}$ such that
\begin{multline}\label{Hardy_GN_ineq_2_0}
\int_{\G}\frac{1}{|x|^{\beta}}\left(\exp\left(\frac{(\widetilde{\alpha}-\varepsilon)|f(x)|^{p'}}{\|\R^{\frac{Q}{\nu
p}}f\|_{L^{p}(\G)}^{p'}}\right)-\sum_{0\leq k<p-1,
\;k\in\mathbb{N}}\frac{1}{k!}\left(\frac{(\widetilde{\alpha}-\varepsilon)|f(x)|^{p'}}{\|\R^{\frac{Q}{\nu p}}f\|_{L^{p}(\G)}^{p'}}\right)^{k}\right)dx\\
\leq
C_{\varepsilon}\left(\frac{\|f\|^{p}_{L^{p}(\G)}}{\|\R^{\frac{Q}{\nu p}}f\|^{p}_{L^{p}(\G)}}+
\frac{\|f\|^{p/\mu}_{L^{p}(\G)}}{\|\R^{\frac{Q}{\nu p}}f\|^{p/\mu}_{L^{p}(\G)}}\right).
\end{multline}
Here, we can take $C_{\varepsilon}=C_{\varepsilon}(p,Q,\beta,\mu)=C_{3}(p,Q,\widetilde{\alpha}-\varepsilon, \beta, \mu)$, where $C_{3}$ is the constant from \eqref{Tru-Mos1}.
It follows that
\begin{multline*}
\int_{\G}\frac{1}{|x|^{\beta}}\sum_{k\geq p-1, \;k\in\mathbb{N}}\frac{1}{k!}\left(\frac{(\widetilde{\alpha}-\varepsilon)|f(x)|^{p'}}{\|\R^{\frac{Q}{\nu
p}}f\|_{L^{p}(\G)}^{p'}}\right)^{k}dx\\ \leq C_{\varepsilon}\left(\frac{\|f\|^{p}_{L^{p}(\G)}}{\|\R^{\frac{Q}{\nu p}}f\|^{p}_{L^{p}(\G)}}+
\frac{\|f\|^{p/\mu}_{L^{p}(\G)}}{\|\R^{\frac{Q}{\nu p}}f\|^{p/\mu}_{L^{p}(\G)}}\right),
\end{multline*}
that is, in particular,
\begin{equation}\label{Hardy_GN_ineq_3}
\begin{split}
\left\|\frac{f}{|x|^{\frac{\beta}{p'k}}}\right\|_{L^{p'k}(\G)}\leq (C_{\varepsilon}k!)^{1/p'k}(\widetilde{\alpha}-\varepsilon)^{-1/p'}
&\|\R^{\frac{Q}{\nu p}}f\|_{L^{p}(\G)}\times\\&\times\left(\frac{\|f\|^{p}_{L^{p}(\G)}}{\|\R^{\frac{Q}{\nu p}}f\|^{p}_{L^{p}(\G)}}+
\frac{\|f\|^{p/\mu}_{L^{p}(\G)}}{\|\R^{\frac{Q}{\nu p}}f\|^{p/\mu}_{L^{p}(\G)}}\right)^{1/p'k}
\end{split}
\end{equation}
for all $k\in\mathbb{N}$ with $k\geq p-1$. Moreover, for any $q>p$, there exists an integer $k\geq p-1$ satisfying $p'k\leq q <p'(k+1)$. Then, by \eqref{Hardy_GN_ineq_4} and \eqref{Hardy_GN_ineq_3}, we obtain
\begin{equation}\label{Hardy_GN_ineq_4_1}
\begin{split}
\left\|\frac{f}{|x|^{\frac{\beta}{q}}}\right\|_{L^{q}(\G)}\leq
(C_{\varepsilon}\Gamma(q/p'+2))^{1/q}&(\widetilde{\alpha}-\varepsilon)^{-1/p'}
\|\R^{\frac{Q}{\nu p}}f\|_{L^{p}(\G)}\times
\\&
\times\left(\frac{\|f\|^{p}_{L^{p}(\G)}}{\|\R^{\frac{Q}{\nu p}}f\|^{p}_{L^{p}(\G)}}+
\frac{\|f\|^{p/\mu}_{L^{p}(\G)}}{\|\R^{\frac{Q}{\nu p}}f\|^{p/\mu}_{L^{p}(\G)}}\right)^{1/q},
\end{split}
\end{equation}
where we have used $(k+1)!\leq \Gamma(q/p'+2)$ for $q\geq p'k$. Since the constant $C_{\varepsilon}$ does not depend on $q$, hence the constant in the right hand side of above does not have singularity at $q=p$, then taking the limit $q\rightarrow p$, we see that \eqref{Hardy_GN_ineq_4_1} is also holds true for $q=p$. Thus, we have obtained \eqref{Hardy_GN_ineq_4_1} for any $q\geq p$ and for all $f\in L^{p}_{Q/p}(\G)$, which is \eqref{Hardy_GN_Rock1}.

The inequality \eqref{Hardy_GN_ineq_4_1} with \eqref{Gamma1} implies that for any $\delta>0$ there exists $t\geq p$ such that
\begin{equation}\label{crit_Hardy_GN_ineq2_1}
\begin{split}
\left\|\frac{f}{|x|^{\frac{\beta}{q}}}\right\|_{L^{q}(\G)}\leq
((p'e(\widetilde{\alpha}-\varepsilon))^{-1/p'}+\delta)q^{1-1/p}&\|\R^{\frac{Q}{\nu p}}f\|_{L^{p}(\G)}\times
\\&\times\left(\frac{\|f\|^{p}_{L^{p}(\G)}}{\|\R^{\frac{Q}{\nu p}}f\|^{p}_{L^{p}(\G)}}+
\frac{\|f\|^{p/\mu}_{L^{p}(\G)}}{\|\R^{\frac{Q}{\nu p}}f\|^{p/\mu}_{L^{p}(\G)}}\right)^{1/q}
\end{split}
\end{equation}
holds for all $f\in L_{Q/p}^{p}(\G)$ and all $q$ with $t\leq q<\infty$.

Thus, we see that $D\leq (p'e(\widetilde{\alpha}-\varepsilon))^{-1/p'}+\delta$, then by arbitrariness of $\varepsilon$ and
$\delta$ we obtain $\widetilde{\alpha}\leq(ep'D^{p'})^{-1}$.

Now we show that \eqref{Hardy_GN_Rock1}$\Rightarrow$\eqref{Tru-Mos1} with $1/\widetilde{\alpha}\leq p'eF^{p'}$. By
\eqref{Hardy_GN_Rock1}, for any $q$ with $p\leq q<\infty$ there exists a positive constant $C_{7}=C_{7}(p,Q,\beta,\mu,q)$ such that
\begin{equation}\label{crit_Hardy_GN_1}
\left\|\frac{f}{|x|^{\frac{\beta}{q}}}\right\|_{L^{q}(\G)}\leq C_{7}q^{1-1/p}\|\R^{\frac{Q}{\nu p}}f\|_{L^{p}(\G)}
\left(\frac{\|f\|^{p}_{L^{p}(\G)}}{\|\R^{\frac{Q}{\nu p}}f\|^{p}_{L^{p}(\G)}}+
\frac{\|f\|^{p/\mu}_{L^{p}(\G)}}{\|\R^{\frac{Q}{\nu p}}f\|^{p/\mu}_{L^{p}(\G)}}\right)^{1/q}
\end{equation}
holds for all $f\in L^{p}_{Q/p}(\G)$.
Using this and $\|\R^{\frac{Q}{\nu p}}f\|_{L^{p}(\G)}\leq1$, we write
\begin{equation}\label{crit_Hardy_GN_2}
\begin{split}
\int_{\G}\frac{1}{|x|^{\beta}}&\left(\exp(\alpha|f(x)|^{p'})-\sum_{0\leq k<p-1,\;k\in\mathbb{N}}
\frac{1}{k!}(\alpha|f(x)|^{p'})^{k}\right)dx\\&
\leq \sum_{p'k\geq p,\;k\in\mathbb{N}}\frac{(\alpha p'kC_{7}^{p'})^{k}}{k!}
(\|f\|^{p/q}_{L^{p}(\G)}+\|f\|^{p/(q\mu)}_{L^{p}(\G)}).
\end{split}
\end{equation}
The last series in \eqref{crit_Hardy_GN_2} converges when $0\leq\alpha<1/(p'eC_{7}^{p'})$. Thus, we have obtained \eqref{Tru-Mos1} with $0\leq\alpha<1/(p'eC_{7}^{p'})$. Hence
$\widetilde{\alpha}\geq 1/(p'eC_{7}^{p'})$ for all $C_{7}\geq F$, which gives $\widetilde{\alpha}\geq 1/(p'eF)^{p'}$.

Thus, we have completed the proof of Theorem \ref{Hardy_GN_Rock_thm}.
\end{proof}

\end{document}